\newtheorem{prop}{Proposition}[section]
\newtheorem{lem}[prop]{Lemma}
\newtheorem{cor}[prop]{Corollary}
\newtheorem{thm}[prop]{Theorem}
\newtheorem{defn}[prop]{Definition}
\newtheorem{example}[prop]{Example}
\newtheorem{ex}[prop]{Example}
\newtheorem{rmk}[prop]{Remark}
\newtheorem{rem}[prop]{Remark}
\theoremstyle{remark}
\numberwithin{equation}{section}
\newenvironment{proof}{\begin{trivlist}\item[]{\sc Proof.}}%
            { $\Box$ \end{trivlist}}
\newcommand{\eq}{\begin{eqnarray}}
\newcommand{\eneq}{\end{eqnarray}}
\newcommand{\eqn}{\begin{eqnarray*}}
\newcommand{\eneqn}{\end{eqnarray*}}
\newcommand{\bnum}{\begin{enumerate}[{\rm(i)}]}
\newcommand{\enum}{\end{enumerate}}
\newcommand{\banum}{\begin{enumerate}[{\rm(a)}]}
\newcommand{\eanum}{\end{enumerate}}
\newcommand{\noprint}[1]{}
\renewcommand{\tilde}{\widetilde}
\newcommand{\toto}{\rightrightarrows}
\newcommand{\f}{{\rm f}}
\newcommand{\X}{\mathop{\sf X}}
\newcommand{\XX}{{\mathfrak X}}
\renewcommand{\AA}{{\mathfrak A}}
\newcommand{\DD}{{\mathfrak D}}
\newcommand{\TT}{{\mathfrak T}}
\newcommand{\YY}{{\mathfrak Y}}
\newcommand{\UU}{{\mathfrak U}}
\newcommand{\ZZ}{{\mathfrak Z}}
\newcommand{\EE}{{\mathfrak E}}
\newcommand{\FF}{{\mathfrak F}}
\newcommand{\GG}{{\mathfrak G}}
\newcommand{\NN}{{\mathfrak N}}
\newcommand{\KK}{{\mathfrak K}}
\newcommand{\VV}{{\mathfrak V}}
\newcommand{\Aa}{{\mathfrak a}}
\newcommand{\Bb}{{\mathfrak b}}
\newcommand{\pr}{\mathop{\rm pr}\nolimits}
\newcommand{\Mor}{\mathop{\rm Mor}\nolimits}
\newcommand{\ob}{\mathop{\rm ob}}
\newcommand{\id}{\mathop{\rm id}\nolimits}
\newcommand{\Hom}{\mathop{\rm Hom}\nolimits}
\newcommand{\ldiag}[1]%
       {\makebox[0cm]{${\scriptstyle#1}\downarrow\phantom{\scriptstyle#1}$}}
\newcommand{\ldiagup}[1]%
       {\makebox[0cm]{${\scriptstyle#1}\uparrow\phantom{\scriptstyle#1}$}}
\newcommand{\rdiag}[1]%
       {\makebox[0cm]{$\phantom{\scriptstyle#1}\downarrow{\scriptstyle#1}$}}
\newcommand{\sediagr}[1]%
       {\makebox[0cm]{$\phantom{\scriptstyle#1}\searrow{\scriptstyle#1}$}}
\newcommand{\nediagr}[1]%
       {\makebox[0cm]{$\phantom{\scriptstyle#1}\nearrow{\scriptstyle#1}$}}
\newcommand{\rdiagup}[1]%
       {\makebox[0cm]{$\phantom{\scriptstyle#1}\uparrow{\scriptstyle#1}$}}
\newcommand{\swdiag}[1]%
       {\makebox[0cm]{$\phantom{\scriptstyle#1}\swarrow{\scriptstyle#1}$}}
\newcommand{\sediag}[1]%
       {\makebox[0cm]{${\scriptstyle#1}\searrow\phantom{\scriptstyle#1}$}}
\newcommand{\nediag}[1]%
       {\makebox[0cm]{${\scriptstyle#1}\nearrow\phantom{\scriptstyle#1}$}}
\newcommand{\doublearrowstack}[2]%
                      {{{{\scriptstyle#1}\atop{\textstyle\longrightarrow}}\atop{{\textstyle\longrightarrow}\atop{\scriptstyle#2}}}}
\newcommand{\rightleftarrowstack}[2]%
                      {{{{\scriptstyle#1}\atop{\textstyle\longrightarrow}}\atop{{\textstyle\longleftarrow}\atop{\scriptstyle#2}}}}
\newcommand{\leftrightarrowstack}[2]%
                      {{{{\scriptstyle#1}\atop{\textstyle\longleftarrow}}\atop{{\textstyle\longrightarrow}\atop{\scriptstyle#2}}}}
\newcommand{\Ra}{\Rightarrow}
\newcommand{\overtoparrow}%
{\makebox[0cm]{\beginpicture
\setcoordinatesystem units <.8cm,.4cm> point at 0 0
\setplotarea x from -3 to 3, y from 0 to 1
\setquadratic
\plot -3 0 0 1 3 0 /
\put{\vector(3,-1){0}}[Bl] at 3 0
\endpicture}}
\newcommand{\underbottomarrow}%
{\makebox[0cm]{\beginpicture
\setcoordinatesystem units <.8cm,.4cm> point at 0 0
\setplotarea x from -3 to 3, y from 0 to 1
\setquadratic
\plot -3 1 0 0 3 1 /
\put{\vector(3,1){0}}[Bl] at 3 1
\endpicture}}
\newcommand{\ses}[5]%
{0\longrightarrow#1\stackrel{#2}{ \longrightarrow}#3\stackrel{#4}{
\longrightarrow}#5\longrightarrow0}
\newcommand{\dt}[6]%
{#1\stackrel{#2}{longrightarrow}#3 \stackrel{#4}{\longrightarrow}#5
\stackrel{#6}{\longrightarrow} #1[1]}
\newcommand{\cat}[1]%
{(\mbox{\rm #1})}
\newcommand{\Top}{\mathsf{Top}}
\newcommand{\St}{\mathsf{St}}
\newcommand{\Fib}{\mathsf{Fib}}
\def\smashedlongrightarrow{\setbox0=\hbox{$\longrightarrow$}\ht0=1pt\box0}
\def\risom{\buildrel\sim\over{\smashedlongrightarrow}}
\def\smashedst{\setbox0=\hbox{$\rightrightarrows$}\ht0=4pt\box0}
\newcommand{\sst}[1]{\stackrel{#1}{\smashedst}}
\newcommand{\Map}{\operatorname{Map}}
\def \Lo {{\rm L}}
\newcommand{\lllra}[4]{\xymatrix@C=14pt{#1 \ar[r]^{#2}_*+[o][F-]{#3} & {#4}}}
\newcommand{\al}{\alpha}
\newcommand{\be}{\beta}
\newcommand{\BV}{{\bf BV}}
\def \LXX {{\rm L}\XX}
\def \ob {{\rm ob}}
\def \kor {{ k}}
\newcommand{\bbZ}{\mathbb{Z}}
\newcommand{\bbR}{\mathbb{R}}
\newcommand{\gy}[1]%
{ {{#1}^!} }
\newcommand{\cgy}[1]%
{ {{#1}_!} }
\def\smashedlongrightarrow{\setbox0=\hbox{$\longrightarrow$}\ht0=1pt\box0}
\def\risom{\buildrel\sim\over{\smashedlongrightarrow}}
\let\oldcite\cite
\def \Y {\YY}
\def \X {\XX}
\def \K {\KK}
\def \Z {\ZZ}
\def \N {\NN}
\def \sz {(\mathbb{Z}/2\mathbb{Z})^{n+1}}
\newcommand{\sfT}{\mathsf{T}}
\newcommand{\arf}[5]{\ar@{}@<#2>#1 | (#3){#5}="#4" \ar#1  }
\newcommand{\ars}[3]{\arf{#1}{#2}{#3}{s}{}}
\newcommand{\art}[3]{\arf{#1}{#2}{#3}{t}{.}}
\newcommand{\CGTop}{\mathsf{CGTop}}
\newcommand{\Prin}{\mathfrak{Prin}}
\newcommand{\TrivPrin}{\mathfrak{TrivPrin}}
\newcommand{\Prinu}{\mathfrak{Prin}^{un}}
\newcommand{\Conj}{\operatorname{Conj}}
\newcommand{\pq}{q}
\newcommand{\mcG}{\mathcal{G}}
\begin{document}
\sloppy

\title{Group actions on stacks and applications to equivariant string topology for stacks}

\author{Gr\'egory Ginot\footnote{Sorbonne Universit\'es, UPMC Univ Paris 06, 
Institut de Math\'ematiques de Jussieu - Paris Rive Gauche,  UMR 7586, CNRS, Univ Paris Diderot, Sorbonne
Paris Cit\'e,  F-75005 Paris, France }
\and  Behrang~Noohi\footnote{School of Mathematical Sciences,
Queen Mary, Univ. of London, Mile End Road, 
London E1 4NS, United Kingdom}
 }

\maketitle

\begin{abstract} 
This paper is a continuations of the project initiated in~\cite{BGNX}. We construct string operations 
 on the $S^1$-equivariant homology of the (free) loop space $\Lo\XX$ of an oriented  differentiable 
 stack $\XX$ and show that $H^{S^1}_{*+\dim\XX -2}(\Lo\XX)$ is a graded Lie algebra. In the particular case where 
 $\XX$ is a 2-dimensional orbifold we give a Goldman-type description for the string bracket.
To prove these results, we develop a general machinery of (weak) group actions on topological stacks
which should be of independent interest. 
 We explicitly construct the quotient stack of a group acting on a stack
and show that it is a topological stack and further geometric  if $\XX$ is geometric. Then use its homotopy type to 
 define equivariant (co)homology for stacks, transfer maps, and so on.   
\end{abstract}

\tableofcontents

\section*{Introduction}
\addcontentsline{toc}{section}{Introduction}

One of the original motivations for the fundamental work of Chas-Sullivan in 
String Topology was to 
study the $S^1$-equivariant homology of the free loop space $LM=\Map(S^1,M)$ of a 
closed oriented manifold $M$. In particular, they showed in~\cite{CS} that 
this homology has a natural graded
Lie algebra structure which generalizes the classical Goldman bracket~\cite{Go}  
on free homotopy classes of loops on an oriented surface.  The main goal of this paper is 
to construct a similar graded
Lie algebra structure on the $S^1$-equivariant homology of the free 
loop stack $\Lo\XX=\Map(S^1,\XX)$ of an oriented differentiable stack. In particular,
we obtain  a generalization of the Goldman bracket to 2-dimensional orbifolds. Applied to
the case $\XX=[M/G]$, where $G$ is a (compact) Lie group acting on an oriented manifold
$M$, our construction gives rise to a $G$-equivaraint version of Chas-Sullivan's Lie algebra.

The following is one of the main results of this paper (see Corollary \ref{C:LieLoopStack} 
for a more precise statement
and Section \ref{SS:transfer} for the definition of the transfer map).

\begin{thm} 
  Let $\XX$ be an oriented Hurewicz stack of dimension $d$.
  Let $q:\LXX \to [S^1\backslash \LXX]$ be the projection map from the 
  loop stack to its quotient stack
  by the natural $S^1$-action. Let $T: H_*^{S^1}(\LXX)[2-d]\to H_*(\LXX)[1-d]$ 
  be the transfer map.   Then,
  for $x,y\in H_*(\LXX)$, the bracket defined by  the formula   
           $$\{x,y\}:= (-1)^{|x|} q\big(T(x)\star T(y)\big)$$  
makes the equivariant homology $H_*^{S^1}(\LXX)[2-d]$ into a graded Lie algebra. Furthermore,
the transfer map 
   $$T: H_*^{S^1}(\LXX)[2-d]\to H_*(\LXX)[1-d]$$ 
is a Lie algebra homomorphism. Here, $H_*(\LXX)[1-d]$ 
   is the Lie algebra structure underlying the \BV-algebra structure on $H_*(\LXX)$.
\end{thm}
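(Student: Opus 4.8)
The plan is to derive the graded Lie algebra structure formally from the Batalin--Vilkovisky structure on $H_*(\LXX)$ constructed earlier, using only two structural identities relating the transfer $T$, the pushforward $q_*\colon H_*(\LXX)\to H^{S^1}_*(\LXX)$ induced by $q$, and the BV operator $\Delta$ on $H_*(\LXX)$. These identities come from the Gysin exact sequence of the oriented circle bundle $ES^1\times\LXX\to[S^1\backslash\LXX]$ underlying the $S^1$-action, namely
\[
\cdots\to H_n(\LXX)\xrightarrow{\,q_*\,}H^{S^1}_n(\LXX)\xrightarrow{\,\cap e\,}H^{S^1}_{n-2}(\LXX)\xrightarrow{\,T\,}H_{n-1}(\LXX)\xrightarrow{\,q_*\,}\cdots,
\]
where $e$ is the equivariant Euler class. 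Exactness at $H_{n-1}(\LXX)$ gives $q_*\circ T=0$, while the description of $T$ as fibre integration along the $S^1$-orbits identifies the degree $+1$ composite $T\circ q_*$ with $\Delta$ (after fixing the sign normalisation of $\Delta$). I will use the two consequences $q_*\circ\Delta=(q_*T)q_*=0$ and $\Delta\circ T=T(q_*T)=0$ throughout.

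Granting these, the Lie algebra axioms are pure BV algebra. First I would check that $T$ preserves internal degree in the shifted gradings $[2-d]\to[1-d]$ and that, with these shifts, $\{x,y\}=(-1)^{|x|}q_*(Tx\star Ty)$ has degree $0$, so it is a candidate degree-zero bracket on $H^{S^1}_*(\LXX)[2-d]$; graded antisymmetry then follows from graded commutativity of the loop product $\star$ and Koszul sign bookkeeping. For Jacobi I would avoid any injectivity assumption on $T$ and argue directly. Put $a=Tx$, $b=Ty$, $c=Tz$; since $\Delta\circ T=0$ we have $\Delta a=\Delta b=\Delta c=0$, so the defining second-order (seven-term) relation for the BV operator (writing juxtaposition for $\star$) collapses to
\[
\Delta(abc)=\Delta(ab)\,c+(-1)^{|a|}a\,\Delta(bc)+(-1)^{(|a|+1)|b|}b\,\Delta(ac).
\]
Applying $q_*$ and using $q_*\circ\Delta=0$ on the left kills the left-hand side, leaving a relation among $q_*\!\big(\Delta(ab)\,c\big)$, $q_*\!\big(a\,\Delta(bc)\big)$ and $q_*\!\big(b\,\Delta(ac)\big)$. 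Since $T\{x,y\}=(-1)^{|x|}Tq_*(Tx\star Ty)=(-1)^{|x|}\Delta(ab)$, unwinding $\{\{x,y\},z\}=(-1)^{|y|}q_*\!\big(\Delta(ab)\,c\big)$ and reordering factors by graded commutativity shows these three terms are exactly the graded cyclic summands of the Jacobiator; hence it vanishes identically. The same computation gives $T\{x,y\}=(-1)^{|x|}\Delta(Tx\star Ty)=\{Tx,Ty\}$, the loop (Gerstenhaber) bracket, because $\Delta(Tx)=\Delta(Ty)=0$ annihilate the correction terms in the BV formula for $\{Tx,Ty\}$; thus $T$ is a homomorphism onto the Lie algebra underlying the BV structure on $H_*(\LXX)[1-d]$.

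The real work, and the main obstacle, is establishing the two inputs in the world of topological stacks rather than this formal manipulation. One must first know that the quotient stack $[S^1\backslash\LXX]$ produced by the group-action machinery of the paper carries the homotopy type of the Borel construction, so that $H^{S^1}_*(\LXX)$ is genuinely computed by $[S^1\backslash\LXX]$ and $q$ models the projection of the associated principal circle bundle; only then does the Gysin sequence above apply and yield $q_*\circ T=0$. The delicate point is the identification $T\circ q_*=\Delta$: it requires comparing two a priori different incarnations of the same $S^1$, the one parametrising the homotopy quotient and the one acting by rotation of loops, at the level of homotopy types of stacks, and checking that fibre integration along the orbit circles reproduces the rotation operator used to build the BV structure---all while keeping track of the orientation of $\XX$ and the dimension shifts by $d$ that enter the loop product. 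Once these compatibilities are in place, together with the fact (established earlier) that $(H_*(\LXX),\star,\Delta)$ is a BV algebra, the displayed computation delivers both the graded Lie algebra structure on $H^{S^1}_*(\LXX)[2-d]$ and the homomorphism property of $T$.
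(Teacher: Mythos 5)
Your proposal is correct and follows essentially the same route as the paper: the paper likewise reduces everything to the two identities $q_*\circ T=0$ and $T\circ q_*=D$ (its Proposition on the Gysin sequence for $S^1$-stacks and Lemma asserting $D=T\circ q_*$, both established via classifying spaces/Borel constructions and naturality of bivariant Gysin maps), and then invokes a purely algebraic lemma---whose Jacobi-identity computation, credited to Chas--Sullivan, is exactly the one you spell out by applying $q_*$ to the seven-term BV relation with $\Delta(Tx)=\Delta(Ty)=\Delta(Tz)=0$. The only cosmetic difference is that you write out that algebraic computation and the homomorphism property of $T$ explicitly, where the paper isolates them in a standalone lemma and cites Chas--Sullivan for the Jacobi step.
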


The non-equivariant string topology for manifolds equipped with a $G$-action 
(or more generally for 
differentiable stacks, for instance orbifolds) has been studied by many authors 
(for example, in~\cite{LuUrXi} 
for finite groups $G$,  in~\cite{GrWe, CM, HL} for a Lie group acting trivially 
on a point, \cite{FT2} sor some global quotients 
and in our previous work~\cite{BGNX} for general oriented differentiable stacks). 
In~\cite{BGNX} we 
build a general setting allowing us to study string topology for stacks. In particular we define
\emph{functorial} loop stacks $L\mathfrak{X}=\mathop{Map}(S^1,\mathfrak{X})$, which are
again  topological stacks,  
and construct functorial $S^1$-actions on them.
 Contrary to the case of manifolds, constructing suitable\footnote{for instance models which have a well-defined 
 homotopy type and commutes with colimits of spaces when the target stack $\XX$ is differentiable} models  for 
 mapping stacks  is a nontrivial task, as the usual constructions using groupoids are rather complicated and not 
 functorial (nevertheless see~\cite{LuUrLoop} for a groupoid approach).  
 That is why we have chosen to work with stacks rather than groupoids (see~\cite{Mapping}). 

In~\cite{BGNX}, we  proved that the appropriately shifted homology of the 
free loop stack of an oriented stack is a Batalin-Vilkovisky
 algebra. Thereby, once we have the right tools to deal with the equivariant homology of stacks,  
 it is  possible to carry out Chas-Sullivan's original
  method for constructing the string bracket in the framework of stacks. 
This was the main motivation for the first part of this paper, in which we study the quotient of 
a (weak) action of a group $G$ on a (topological or differentiable) stack $\XX$, following the work of~\cite{Romagny}. 
This part is of independent interest and is expected to have applications beyond string topology; for instance it allows 
to define and study equivariant (co)homology for groups acting on stacks.  
Our main result in this part is the following 
(see Section \ref{SS:presentation} and Propositions \ref{P:toplogicalquotient}, \ref{P:differentiable}).

\begin{thm}
  Let $G$ be a topological (resp., Lie) group acting on a topological (resp., differentiable) stack $\XX$. 
  Then, there is a topological (resp., differentiable) stack $[G\backslash \XX]$
  together with  a map $\XX\to [G\backslash \XX]$ making $\XX$ into a $G$-torsor.  (Note that we do not need to define
  $[G\backslash \XX]$ as a 2-stack.)
\end{thm}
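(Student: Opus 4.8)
The plan is to realise $[G\backslash\XX]$ as the quotient stack of an explicit topological (resp.\ Lie) groupoid, rather than as the stackification of the a priori $2$-categorical moduli of $G$-torsors carrying an equivariant map to $\XX$; this is what keeps the quotient a $1$-stack and explains the parenthetical remark. Fix an atlas $p\colon X\to\XX$, so that $\XX$ is presented by $X\times_\XX X\rightrightarrows X$. The weak action $a\colon G\times\XX\to\XX$ need not act strictly on $X$, but, after fixing the unit and associativity $2$-isomorphisms of the action, it determines a groupoid $\Ex\rightrightarrows X$ with object space $X$ and morphism space
\[
\Ex\;=\;(G\times X)\times_{\XX}X\;=\;\big\{(g,x_1,x_2,\alpha)\;:\;\alpha\colon g\cdot p(x_1)\risom p(x_2)\big\},
\]
the fibre product being formed through $a\circ(\id_G\times p)\colon G\times X\to\XX$ and through $p$, and $\alpha$ denoting a $2$-isomorphism in $\XX$. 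Since $p$ is representable, $\Ex$ is a genuine space (a manifold, with all structure maps smooth, in the differentiable case). The first step is to check that source, target, unit, inverse and composition make $\Ex\rightrightarrows X$ into a topological (resp.\ Lie) groupoid: composition sends $(g,x_1,x_2,\alpha)$ and $(h,x_2,x_3,\beta)$ to $(hg,x_1,x_3,\,\beta\circ(h\cdot\alpha)\circ\theta^{-1})$, where $\theta\colon (hg)\cdot p(x_1)\risom h\cdot(g\cdot p(x_1))$ is the associativity constraint, and its strict associativity is forced by the pentagon coherence of the weak action.

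I would then \emph{define} $[G\backslash\XX]\;:=\;[\Ex\rightrightarrows X]$ to be the associated quotient stack. Because it is presented by a topological (resp.\ Lie) groupoid whose space of objects is an honest space, it is by construction a topological (resp.\ differentiable) stack, and in particular a $1$-stack. The projection $q\colon\XX\to[G\backslash\XX]$ is induced by the morphism of groupoids that is the identity on objects $X$ and embeds $X\times_\XX X\hookrightarrow\Ex$ via $g=e$; functoriality of the quotient-stack construction makes $q$ a well-defined morphism of stacks, and $p$ itself provides a lift of the atlas $X\to[G\backslash\XX]$ through $q$.

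The remaining and most substantive step is to show that $q$ is a $G$-torsor, which in particular entails that it is representable. I would verify this by computing fibre products directly from the presentation. The quotient-stack formalism gives $X\times_{[G\backslash\XX]}X\cong\Ex$, and unwinding the $2$-commutative squares upgrades this to the identity
\[
G\times\XX\;\xrightarrow{\ \sim\ }\;\XX\times_{[G\backslash\XX]}\XX,\qquad (g,\xi)\longmapsto(\,g\cdot\xi,\ \xi\,),
\]
whose content is that a $T$-point of the right-hand side is exactly a pair $\xi_1,\xi_2\colon T\to\XX$ together with a map $g\colon T\to G$ and a $2$-isomorphism $\xi_1\simeq g\cdot\xi_2$. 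This isomorphism is the defining condition for a stacky principal $G$-bundle, while the lift $p\colon X\to\XX$ of the atlas shows that $q$ is an epimorphism admitting local sections. Hence $\XX\to[G\backslash\XX]$ makes $\XX$ a $G$-torsor, and representability of $q$ confirms that the composite $X\to\XX\xrightarrow{q}[G\backslash\XX]$ is an atlas.

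The differentiable case runs along identical lines: one only needs $\Ex=(G\times X)\times_\XX X$ to be a manifold with smooth structure maps and $q$ to be a submersion, both of which follow because the atlas $p$ and the action $a$ are smooth and $q$ is locally modelled on $G\times(-)\to(-)$. I expect the main obstacle to be the very first step—turning the coherence $2$-cells of a merely weak action into a strictly associative groupoid law on $\Ex\rightrightarrows X$—since this is precisely where the passage from a $2$-categorical action to a $1$-dimensional atlas has to be made honest; once this groupoid is in hand, the stack-theoretic statements reduce to formal manipulations of representable morphisms and fibre products.
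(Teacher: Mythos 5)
Your proposal is correct, but it runs the paper's argument in the opposite logical order, and the comparison is instructive. The paper defines $[G\backslash\XX]$ intrinsically, as the stackification of the fibered category $T\mapsto[G(T)\backslash\XX(T)]$ obtained by applying the transformation-groupoid construction sectionwise ($\S$\ref{SS:quotient}); with that definition the 2-cartesian square exhibiting $(\pr_2,\mu)\: G\times\XX \risom \XX\times_{[G\backslash\XX]}\XX$, and hence the $G$-torsor property (Lemma \ref{L:representableproj}), are essentially formal, because stackification commutes with 2-fiber products. The substantive step there is Proposition \ref{P:toplogicalquotient}: showing the stackification is a topological stack, via representability of the diagonal (deduced from that same square together with Lemmas \ref{L:representable1} and \ref{L:representable2}) and the atlas $X\to\XX\to[G\backslash\XX]$. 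Only afterwards does the paper extract the explicit presentation $[E\toto X]$ ($\S$\ref{SS:presentation}), where $E$ is the bibundle of the action map --- literally your $\Ex=(G\times X)\times_{\XX}X$, with composition twisted by the associativity cell exactly as you wrote it --- and it is this presentation that settles the differentiable case (Proposition \ref{P:differentiable}). You instead take this groupoid as the \emph{definition}. That makes topological (resp.\ differentiable) stack-ness tautological, but it moves the burden onto the two points the paper gets for free: (i) strict associativity of the composition on $\Ex$, which you correctly reduce to axiom (A1) of Definition \ref{D:action} (the paper's counterpart is its bibundle analysis of (A1)--(A2), and, in the discrete case, the observation that (A1)--(A2) are equivalent to $[G\backslash\XX]$ being a groupoid); and (ii) the equivalence $G\times\XX\simeq\XX\times_{[G\backslash\XX]}\XX$, which in your setup is not formal: one must construct the 2-cell $q\circ\mu\Rightarrow q\circ\pr_2$ from the presentation and then check that the comparison map is an equivalence by pulling back along the epimorphism $X\times X\to\XX\times\XX$; your phrase ``unwinding the 2-commutative squares'' compresses a genuine, if standard, descent argument, which succeeds because $\XX$ is a stack and $G$ a sheaf. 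Two further trade-offs: the paper's construction is manifestly independent of the atlas and strictly functorial in $G$-equivariant maps (the square \eqref{eq:quotientequivcartesian}), which is what later makes equivariant (co)homology functorial, whereas yours depends a priori on the choice of $p\:X\to\XX$ --- harmless for the existence statement, but requiring an extra comparison for functoriality; and in the smooth case you should still record why the source and target of $\Ex\toto X$ are submersions (both are base changes of the submersion $p$, using that $\mu$ is equivalent to a projection via the shear map), which is precisely the content of the explicit diagram in $\S$\ref{SS:presentation}.
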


 This result allows us to define the $G$-equivariant
 (co)homology of a $G$-stack $\XX$ as the (co)homology of $[G\backslash \XX]$, in the same way that the $G$-equivariant
  (co)homology of a manifold is the (co)homology of the stack $[G\backslash M]$. In particular, we can apply the general 
 machinery of bivariant cohomology for stacks developed in~\cite{BGNX}, which allows  us to construct easily   
 Gysin (or \lq\lq{}umkehr\rq\rq{}) maps for equivariant cohomology (among other applications). In particular, we
 obtain transfer maps and the long homology 
 exact sequence relating the $S^1$-equivariant homology of an $S^1$-stack $\XX$ and its ordinary homology
 (Sections \ref{SS:transfer}, \ref{SS:GysinSequence}). This set of tools enable us to perform, 
 in a more or less formal manner, the standard constructions
 of manifold string topology in the more general setting of oriented stacks.

 One should note that even in the case of an orbifold $\XX$, 
 one has to consider the general theory of stacks to consider the free loop $\LXX$ with its $S^1$-action; further, the quotient 
 $[S^1\backslash \LXX]$ is indeed a stack but never an orbifold. 

\medskip
\noindent{\em Plan of the paper}

\smallskip

In Section \ref{S:Review} we review some basic results on topological stacks. 
We recall the notions of a {\em classifying space}
for a topological stack (which enables us to do algebraic topology),  
a {\em mapping stack} (which
enables us to define functorial loop stacks), and  a {\em bivariant theory} 
for topological stacks (which allows us to do
intersection theory, define Gysin maps, and so on). 

Sections \ref{S:Actions}--\ref{S:actionmapping} are devoted to the study of group actions on stacks. In Section
\ref{S:Quotient} we construct the quotient stack $[G\backslash \XX]$ of the action of a topological group $G$ 
on a topological stack $\XX$ and establish its main properties. We give two explicit
constructions for $[G\backslash \XX]$; one in terms of transformation groupoids, and one in
terms of torsors. We prove that $[G\backslash \XX]$ 
is always a topological stack, and that in the differentiable context it is a differentiable stack. In Section
\ref{S:EquivariantCoh} we use the results of Section \ref{S:Quotient}  to define the equivariant (co)homology
of a $G$-equivariant stack $\XX$. In Section \ref{S:actionmapping}  we focus on the case where $G$ is acting
on the mapping stack $\Map(G,\XX)$ by left multiplication.  In Section \ref{S:Homotopytype}, we
look at the homotopy type
of the unparameterized mapping stack $[G\backslash \Map(G,\XX)]$ .
The tools developed in the previous sections are robust enough to allow us to  carry forward standard constructions
in algebraic topology,
such as transfer maps and Gysin spectral sequence, to the stack setting in a more or less straightforward manner.
This is discussed is Section \ref{S:Transfer}.

In Section \ref{S:Stringproduct} we embark on proving the main result of the paper, namely the existence of a 
Lie algebra structure
on $H_*^{S^1}(\LXX)[2-\dim\XX]$.  We illustrate this result by looking at a few examples in Section \ref{SS:examples}. We refer to this Lie algebra structure as the 
\emph{string algebra} of $\XX$.

Our next goal is to study the case of a 2-dimensional oriented reduced orbifold $\XX$. This is done in Section \ref{S:Goldman}, 
where we give a
Goldman-type description for the Lie bracket of $H_0^{S^1}(\LXX)$. Observe that, as in the case of ordinary surfaces,
$H_0^{S^1}(\LXX)$ can be identified with the free module spanned by free homotopy classes of loops on $\XX$ (see 
Lemma \ref{L:surjectionpi1}). In Section \ref{SS:explicitGoldman} we give an algorithm for computing the Goldman
bracket on a reduced orbifold surface.

The main result that is used to prove that the Goldman-type bracket on $H_0^{S^1}(\LXX)$ coincides with 
the bracket constructed in Section \ref{S:Stringproduct} is the functoriality of the Batalin-Vilkovisky structure on 
$H_{*+\dim\XX}(\LXX)$
with respect to open embeddings. This is established in Section \ref{S:Lemma} and is a result which is 
interesting in its own right.

\medskip
\noindent{\em Further results}

\smallskip

In an upcoming paper we will study the Turaev cobracket and the coLie algebra structure on the equivariant homology of
the loop stack. We will also investigate the important role played by the ghost loops (the inertia stack). 

Moreover, slightly after a first version of our paper appears on the arxiv, a definition of string bracket for 
reduced 2-dimensional orbifolds obtained as quotient hyperbolic plane by fuchsian groups was given by Chas Gadgil~\cite{CG} purely in group theoretic terms. We will compare their 
bracket with ours in an upcoming paper. 

\medskip

\noindent{\bf Acknowledgement.} 
 Part of the research for this project was carried out at Max Planck Institute for Mathematics, Bonn.
The authors wish to thank MPIM for its support and  providing excellent  research conditions. The 
second author would like to also thank Fondation Sciences Math\'ematiques de Paris and \'Ecole Normale Sup\'erieure for their support while this project was initiated.
The first author was partially supported by the ANR grant GESAQ and ANR grant HOGT.
\vfill
\eject

\section{Notation and conventions}{\label{S:Notation}}

Throughout the notes, by a fibered groupoid we mean a category fibered in groupoids. 
We often identify a  space by the functor it represents and also by the corresponding fibered
category. We use the same notation for all.

When dealing with stack, by a fiber product we always mean a 2-fiber product.

We use multiplicative notation for composition of arrows in a groupoid. Namely, 
given composeable arrows
 $$\xymatrix@M=6pt{      
             x \ar[r]^{\alpha}   & y \ar[r]^{\beta}  & z   }$$      
their composition is denoted $\alpha\beta\: x \to z$.

If $V_*$ is a (homologically) graded $\kor$-module (or chain complex), 
we will denote by $V_*[1]$ its suspension, that is the graded $\kor$-module given by $(V_*[1])_i:=V_{i-1}$.

\section{Review of stacks}{\label{S:Review}}

In this section, we review some basic facts about stacks and fix some notation. For more details
on stacks the reader is referred to~\cite{Foundations}. For a quick introduction to stacks
which is in the spirit of this paper, the reader can consult~\cite{Rapid}.

Fix  a Grothendieck site $\sfT$ with a subcanonical topology (i.e.,
all representable functors are sheaves). Our favorite Grothendieck sites are 
$\Top$, the site of all topological spaces (with the open-cover topology), the site
$\CGTop$ of compactly generated topological spaces (with the open-cover topology) or the site $\mathsf{Diff}$ of smooth manifolds
(with the open cover topology).

A {\em stack} is a  fibered  groupoid $\XX$ over the site $\sfT$ satisfying the 
descent condition~\cite[\S~1.3]{Rapid}. Alternatively, we can use presheaves of groupoids 
instead of fibered groupoids, however, this is less practical for applications.

Stacks over $\sfT$ form a 2-category $\St_{\sfT}$ in which all 2-morphisms are isomorphisms. 
This is a full subcategory of the 2-category $\Fib_{\sfT}$
of fibered groupoids over $\sfT$.
A crucial property of the 2-category of fibered groupoids is that it has  2-fiber products.
The 2-fiber product is a fiberwise version of the following construction for groupoids.

Let $\XX$, $\YY$ and $\ZZ$ be groupoids and $f \: \YY \to \XX$ and $g \: \ZZ \to \XX$
functors. The {\em 2-fiber product}  $\Y\times_{\X}\Z$ is the groupoid which is defined as follows:
{\small

      $$\ob(\Y\times_{\X}\Z)=\{ 
              (y,z,\al) \ | \ y \in \ob\Y, z \in \ob\Z, \ 
                                      \al \: g(z) \to  f(y)  \ \text{an arrow in} \ \X
                                 \}$$
  $$\Mor_{\Y\times_{\X}\Z}\big( (y_1,z_1,\al),(y_2,z_2,\be) \big)=\left\{\begin{array}{rcl}
              (u,v) & | &  u\: y_1 \to y_2 ,\ v\: z_1 \to z_2 \ \text{s.t.:} \\
                    &   &   \hspace{0.5in}
         \xymatrix@R=10pt@C=12pt{
             f(y_1) \ar[r]^{f(u)}  \ar@{}[rd]|{\circlearrowleft}
                                                   &  f(y_2) \\
                            g(z_1)  \ar[u]^{\al}  \ar[r]_{g(u)}  &  g(z_2) \ar[u]_{\be}   }
                                                          \end{array}\right\}$$}

The 2-category of stacks is closed under 2-fiber products.
Since we will never use the strict fiber product of groupoids in this paper, we will often refer to the
2-fiber product as fiber product. 

To every object $T$ in $\sfT$ we associate a fibered groupoid by applying the Grothendieck
construction to te functor it represents. We use the same notation $T$ for this fibered groupoid.
This induced a functor from $\sfT$ to the 2-category of stacks over $\sfT$. This functor 
is fully faithful thanks to the following lemma. So there is no loss of information in regarding $T$ as 
a stack over $\sfT$.

\begin{lem}[Yoneda lemma]{\label{L:Yoneda}}
  Let $\X$ be a category fibered in groupoids over $\sfT$, and let $T$
  be an object in $\sfT$. Then, the natural functor
    $$\Hom_{\Fib_{\sfT}}(T,\X) \to \X(T)$$
  is an equivalence of groupoids.
\end{lem}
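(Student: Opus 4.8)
The plan is to prove the Yoneda lemma for fibered groupoids over $\sfT$ by writing down explicit functors in both directions and checking they are mutually inverse up to natural isomorphism. The statement says that evaluation at the universal object induces an equivalence $\Hom_{\Fib_{\sfT}}(T,\X) \to \X(T)$, so first I would unwind what each side is. On the left, an object is a morphism of fibered categories $T \to \X$ over $\sfT$, i.e.\ a functor commuting with the projections to $\sfT$; here $T$ denotes the fibered groupoid obtained from the representable presheaf via the Grothendieck construction, whose objects over $S \in \sfT$ are the morphisms $S \to T$ in $\sfT$. On the right, $\X(T)$ is the fiber groupoid of $\X$ over $T$, whose objects are objects of $\X$ lying over $T$ and whose arrows are the vertical (cartesian over $\id_T$) arrows.

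**The functors.** The evaluation functor $\ev\: \Hom_{\Fib_{\sfT}}(T,\X) \to \X(T)$ sends a morphism $F\: T \to \X$ to $F(\id_T)$, where $\id_T \in T(T)$ is the tautological object sitting over $T$; on a 2-morphism (natural transformation) $\eta\: F \Rightarrow G$ it returns the component $\eta_{\id_T}$. For the quasi-inverse I would construct a functor $\Phi\: \X(T) \to \Hom_{\Fib_{\sfT}}(T,\X)$ that uses the cleavage (choice of cartesian pullbacks) on $\X$. Given $\xi \in \X(T)$, define a morphism of fibered categories $F_\xi$ by sending an object $\varphi\: S \to T$ of $T$ over $S$ to the chosen pullback $\varphi^*\xi \in \X(S)$, and sending an arrow in $T$ --- which over a morphism $S' \to S$ in $\sfT$ is a commuting triangle of maps into $T$ --- to the induced arrow between pullbacks supplied by the universal property of cartesian morphisms.

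**Checking the two composites.** I would then verify the two round-trips. First, $\ev \comp \Phi$ sends $\xi$ to $F_\xi(\id_T) = \id_T^*\xi$, which is canonically isomorphic to $\xi$ via the cartesian structure; these isomorphisms are natural in $\xi$, giving $\ev\comp\Phi \cong \id_{\X(T)}$. Second, for the composite $\Phi\comp\ev$ applied to a morphism $F\: T \to \X$, I must produce a natural isomorphism $F_{F(\id_T)} \cong F$; for each object $\varphi\: S\to T$ the comparison arrow comes from the fact that $F$ preserves cartesian arrows, so $F(\varphi) = \varphi^*\big(F(\id_T)\big)$ up to the canonical isomorphism in $\X(S)$, and one checks these assemble into a 2-morphism of fibered categories. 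Since all arrows in a groupoid are isomorphisms, both composites land up to isomorphism at the identity, establishing the equivalence.

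**The main obstacle** is purely bookkeeping: keeping track of the chosen cleavage on $\X$ and verifying that $\Phi$ is genuinely a functor of fibered categories --- i.e.\ that it commutes strictly with the projection to $\sfT$ and sends cartesian arrows to cartesian arrows --- and that the comparison 2-morphisms are natural and compatible with composition. The coherence of these canonical isomorphisms (associativity and unit constraints of the pseudofunctor underlying $\X$) is where the only real care is needed; once the cleavage is fixed, everything else is a routine diagram chase in the groupoids involved, and the fact that $\X$ is fibered in \emph{groupoids} rather than general categories removes any question about invertibility of the comparison arrows.
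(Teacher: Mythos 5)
Your proof is correct and follows the standard argument for the $2$-categorical Yoneda lemma (evaluation at the tautological object $\id_T \in T(T)$, with a quasi-inverse built from a chosen cleavage); the paper itself states this lemma without proof, treating it as a standard fact, so your write-up supplies exactly the expected argument. One simplification worth noting: since $\X$ is fibered in \emph{groupoids}, every arrow of $\X$ is cartesian, so the checks you flag about $\Phi$ and $F$ preserving cartesian arrows are automatic rather than genuine obstacles.
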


A morphism $f \: \XX \to \YY$ of fibered groupoids is called an {\em epimorphism}
if for every object $T$ in $\sfT$, every $y \in \YY(T)$ can be lifted, up to isomorphism, 
to some $x \in \XX(T)$, possibly after replacing $T$ with an open cover.
For example, in the case where $\XX$ and $\YY$ are honest topological spaces,
$f \: \XX \to \YY$ is an epimorphism if and only if it admits local sections. 

The inclusion $\St_{\sfT} \to \Fib_{\sfT}$ of the 2-category of stacks in the 2-category of
fibered groupoids has a left adjoint which is called the {\em stackification functor}
and is denoted by $\XX \mapsto \XX^+$. There is a natural map
$\XX \to \XX^+$ (the unit of adjunction). The naturality means that we have a natural 
2-commutative diagram
  $$\xymatrix@M=6pt{\XX \ar[d] \ar[r]^f & \XX \ar[d] \\
     \XX^+ \ar[r]_f & \YY^+}$$

(In fact, the stackification functor can be constructed in way that the above diagram is
strictly commutative, but we do not really need this here.) The 
construction of the stackification 
functor involves taking limits and filtered colimits, hence it commutes with 2-fiber products.

Now let $\sfT$ be a category of topological spaces. A morphism
$f \: \XX \to \YY$ of stacks over $\sfT$ is called {\em representable}, if for every
morphism $T \to \YY$ from a topological space $T$, the fiber product $T\times_{\XX} \YY$
is equivalent to a topological space.

To any topological groupoid $[R\toto X]$ in $\sfT$ we can associate its {\em quotient
stack} (see~\cite[Definition 3.3]{Foundations}, or \cite[\S~1.5]{Rapid}). 
 A stack $\XX$ over $\sfT$ is a \textbf{topological stack} if and only if the following two
 conditions are satisfied~\cite[\S~7]{Foundations}:
 \begin{itemize}
   \item The diagonal $\XX \to \XX\times \XX$ is representable;
   \item There exists a topological space $X$  (called an {\em atlas}
   for $\XX$) and an epimorphism $p \: X \to \XX$.
 \end{itemize}
The first condition is indeed equivalent to all morphisms $T \to \XX$ from a topological
space $T$ to $\XX$ being representable. 
Given an atlas $X \to \XX$, we obtain a groupoid presentation $[R \toto X]$ for
$\XX$, where $R=X\times_{\XX} X$ and the source and target
maps $s,t \: R \to X$ are the projection maps.

In this paper we will mainly be interested in {\bf Hurewicz stacks}. These are topological 
stacks for which there exists a groupoid presentation $[R\toto X]$ with the source (hence also 
target) map $s \: R \to X$ a local Hurewicz fibration. The latter means that every point
in $R$ has a neighborhood $U$ such that $s|_U$ is a Hurewicz fibration onto its image.

Recall that a {\bf differentiable stack} is a stack over the site 
$\sfT=\mathsf{Diff}$  of $C^\infty$-manifolds that admits a 
 groupoid presentation $[R\toto X]$ with
the source map $s \: R \to X$ a smooth submersion of differentiable manifolds (that is,
$[R\toto X]$ is a Lie groupoid). 
Applying the  forgetful functor 
$\mathsf{Diff} \to \mathsf{CGTop}$ to (a groupoid presentation of) a differentiable 
stack $\XX$ we obtain a topological stack which we view as  the underlying topological stack
of $\XX$.  This topological stack is automatically
Hurewicz.

\subsection{Classifying spaces of topological stacks}{\label{SS:classifying}}

Incorporating ideas of  Haefliger~\cite{Haefliger} and Segal~\cite{Segal} on 
classifying spaces of categories, 
and  fibration techniques developed by Dold~\cite{Dold}, in~\cite{Homotopytypes} the second 
author has developed a machinery of
classifying spaces for topological stacks which is particularly suitable for our applications in
this paper. In this section we recall some of the basic facts from~\cite{Homotopytypes}.

Let $\XX$ be a topological stack. By a {\em classifying space}
for $\XX$ \cite[Definition 6.2]{Homotopytypes}, we mean a topological
space $X$ and a morphism $\varphi \: X \to \X$ which is a {\em universal weak equivalence}.
The latter means that, for every  map $T \to \X$ from
a  topological space $T$, the base extension  $\varphi_T \: X_T \to T$ is a weak equivalence of
 topological spaces

\begin{thm}[\oldcite{Homotopytypes}, Theorem 6.3]{\label{T:nice}}
 Every topological stack $\X$ admits an atlas $\varphi \: X \to \X$
  with the following property. For every map $T \to \X$ from
  a paracompact topological space $T$, the base extension
  $\varphi_T \: X_T \to T$ is shrinkable map of
  topological spaces, in the sense that, it admits a
  a section $s \: T \to X_t$ and a fiberwise deformation retraction
  of $X_T$ onto $s(T)$. 
   In particular, $\varphi \: X \to \X$ makes $X$ a classifying space for $\XX$.
\end{thm}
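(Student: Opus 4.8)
The result is taken from \cite{Homotopytypes}; I sketch how its proof goes. The plan is to build $X$ as a fat geometric realization of the {\v C}ech groupoid of an atlas, to verify the shrinkability over paracompact bases by a partition-of-unity argument, and to deduce the universal weak equivalence by probing with cells. First I would fix an atlas $p \: X_0 \to \X$ and form the associated {\v C}ech groupoid $[X_1 \toto X_0]$ with $X_1 = X_0 \times_\X X_0$ and source/target the two projections. Let $X_\bullet$ be its nerve, the simplicial space with $X_n = X_0 \times_\X \cdots \times_\X X_0$ ($n+1$ factors), and set $X := \|X_\bullet\|$, the fat (thick) realization, which is built from the face maps alone, so that no cofibrancy/\emph{goodness} hypothesis on the degeneracies is needed. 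The groupoid structure equips $X_\bullet$ with a canonical augmentation to $\X$, inducing a map $\varphi \: X \to \X$ whose restriction to $X_0 = X_0 \times \Delta^0 \subset X$ is $p$. Because $p$ is an epimorphism and factors through $\varphi$, the map $\varphi$ is an epimorphism; it is representable since the diagonal of the topological stack $\X$ is representable. Hence $\varphi$ is an atlas.

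It then remains to establish the shrinkability assertion, after which the \emph{classifying space} conclusion follows formally. Recall that a shrinkable map (a section together with a fiberwise strong deformation retraction onto it) is a homotopy equivalence, and that shrinkability is preserved under arbitrary base change. Granting that $\varphi_T \: X_T \to T$ is shrinkable for every paracompact $T$, I would deduce that $\varphi$ is a universal weak equivalence as follows: for an arbitrary map $T \to \X$, to see that $\varphi_T$ is a weak equivalence it suffices to probe $\pi_*(\varphi_T)$ by cells $D^n \to T$; the composite $D^n \to T \to \X$ is a paracompact probe of $\X$, so $\varphi_{D^n}$ is shrinkable, in particular a homotopy equivalence, and its section supplies the lift needed to conclude. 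Thus the entire content is the shrinkability of $\varphi_T$ for $T$ paracompact.

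Fix $T$ paracompact together with a map $T \to \X$. Pulling the presentation back along $T \to \X$ and using $X_1 = X_0 \times_\X X_0$, one computes $(X_1)_T = (X_0)_T \times_T (X_0)_T$; that is, the pulled-back groupoid is exactly the {\v C}ech groupoid of the map $e \: (X_0)_T \to T$, and $X_T \cong \|(X_\bullet)_T\|$ since (fat) realization commutes with this levelwise pullback in compactly generated spaces. Now $e$ admits local sections (it is the base change of the epimorphism $p$), and over each $t \in T$ the {\v C}ech groupoid is codiscrete — there is exactly one morphism between any two lifts of $t$ — so its fat realization is fiberwise contractible. The remaining, decisive point is to globalize this. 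Using paracompactness of $T$, choose a locally finite open cover $\{U_i\}$ with local sections $s_i \: U_i \to (X_0)_T$ of $e$ and a subordinate partition of unity $\{\rho_i\}$. Reading the $\rho_i$ as barycentric coordinates and using the (unique) {\v C}ech morphisms $s_i \to s_j$ over the overlaps as edges, the formula $\sum_i \rho_i s_i$ defines a continuous global section $\sigma \: T \to X_T$ into the realization, in the manner of the Milnor/Segal construction. The extra degeneracy of the {\v C}ech nerve (its fiberwise codiscreteness), again interpolated by the $\{\rho_i\}$, then yields a fiberwise strong deformation retraction of $X_T$ onto $\sigma(T)$ over $T$, exhibiting $\varphi_T$ as shrinkable.

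The main obstacle is precisely this last globalization. While the fiberwise contractibility is immediate from codiscreteness, assembling the merely local sections $s_i$ and the fiberwise contraction into a genuinely continuous global section $\sigma$ and a continuous fiberwise deformation retraction — that is, checking that $\sum_i \rho_i s_i$ and the accompanying homotopy are well defined and continuous into the fat realization, with all simplicial face identifications respected on overlaps — is where paracompactness is indispensable and where the real work sits. By contrast, the verifications that $\varphi$ is an atlas, that realization commutes with the pullback along $T \to \X$, and the reduction of universal weak equivalence to compact probes are comparatively formal.
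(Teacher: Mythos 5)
Your sketch reconstructs the right skeleton of the proof of this theorem --- which, note, is not proved in the present paper but quoted from \cite{Homotopytypes} --- and the Dold--Milnor partition-of-unity argument you describe (local sections $s_i$, a subordinate locally finite partition of unity $\{\rho_i\}$, the section $\sum_i \rho_i s_i$, and a Milnor-style fiberwise contraction onto it) is indeed the engine of that proof; paracompactness of $T$ enters exactly there. However, your implementation via the fat realization $\|X_\bullet\|$ of the \v{C}ech nerve, with its quotient topology, has a genuine gap, and it sits precisely at the two steps you declare ``comparatively formal''. (i) The augmentation $X_\bullet \to \X$ does \emph{not} formally induce a map $\varphi \: \|X_\bullet\| \to \X$. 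The fat realization is a quotient of $\coprod_n \Delta^n\times X_n$, and stacks over $\Top$ satisfy descent along open covers, not along quotient maps of spaces, so compatible maps on the pieces need not glue. To produce $\varphi$ one must exhibit an honest $[X_1\toto X_0]$-torsor on $\|X_\bullet\|$, i.e.\ prove that the realized universal object $\|n\mapsto X_{n+1}\|\to\|X_\bullet\|$ is locally trivial. With the quotient topology this is exactly the problematic point for an arbitrary topological groupoid: over the charts $U_i=\{t_i>0\}$ the candidate trivializations are continuous bijections whose inverses are defined coordinatewise, and a coordinatewise-defined map \emph{into} a quotient-topology realization need not be continuous (the simplicial length of points is not locally bounded, so no local finiteness rescues continuity --- unlike for your section $\sum_i\rho_i s_i$, which is continuous for exactly that reason). (ii) For the same reason, ``realization commutes with the pullback along $T\to\X$'' is not formal: $-\times_\X T$ is a $2$-fiber product of stacks and need not preserve the topological colimit defining $\|X_\bullet\|$. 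Without the identification $X_T\cong\|(X_\bullet)_T\|$, your partition-of-unity argument establishes shrinkability of the wrong map.

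The proof in \cite{Homotopytypes} avoids both problems by using, instead of the fat realization, the Haefliger--Milnor join construction on a groupoid presentation: the same underlying set of finite formal convex combinations, but equipped with Milnor's strong topology, in which the barycentric coordinates and vertex maps are continuous by definition. With that topology the sets $U_i$ do give local trivializations (after the classical replacement of $\{t_i\}$ by a locally finite partition of unity), so the map to $\X$ exists, is representable and an epimorphism, and its base change along any $T\to\X$ is canonically the fiberwise Milnor join of $P=X_0\times_\X T\to T$; Dold's partition-of-unity techniques \cite{Dold} then give shrinkability for paracompact $T$. So your emphasis is inverted: the globalization by partitions of unity is the classical, well-understood part, while the genuinely stack-theoretic content --- the existence of $\varphi$ and the base-change identification --- is what forces the choice of topology and cannot be waved through.
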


The fact that $\varphi \: X \to \X$ is universal weak equivalence essentially means that
we can identify the homotopy theoretic information in $\XX$ and $X$ via $\varphi$. 

The classifying space is unique up to a unique (in the weak homotopy category)
weak equivalence. In the case where $\X$ is the quotient stack $[G\backslash M]$ 
of a group action,
it can be shown~\cite[\S~4.3]{Homotopytypes} that the Borel construction $M\times_GEG$ is a
classifying space for $\X$. Here, $EG$ is the universal $G$-bundle in the sense of Milnor.

Classifying spaces can be used to define homotopical invariants for topological 
stack~\cite[\S~11]{Homotopytypes}. For example, to define the relative homology
of a pair $\AA\subset \XX$, we choose a classifying space $\varphi \: X \to \XX$
and define $H_*(\XX,\AA):=H_*(X,\varphi^{-1}\AA)$. The fact that
$\varphi$ is a universal weak equivalence guarantees that 
this is well defined up to a canonical
isomorphism. In the case where  $\X$ is the quotient stack $[G\backslash M]$
 of a group action and $\AA=[G\backslash A]$ is the quotient
of a $G$-equivariant subset $A$ of $M$,
this gives us the $G$-equivariant homology  of the pair $(M,A)$ (as defined via the Borel 
construction).

\subsection{Bivariant theory for stacks}\label{SS:bivarianttheory}
In~\cite{BGNX} we showed that the (singular) (co)homology for topological stacks extends to a (generalized) 
\emph{bivariant theory} \`a la Fulton-Mac Pherson~\cite{FM}. In fact, 
in~\cite{BGNX}, we associate, to any map 
$f:\XX\to \YY$ of stacks, graded $k$-modules $H^\bullet(\XX\stackrel{f}\to \YY)$ (called the bivariant homology 
group of $f$) such that $H^{\bullet}(X\stackrel{id}\to \XX)$ is the singular cohomology of $\XX$ and similarly, 
the homology groups of $\XX$ are given by  $H_n(\XX) \cong H^{-n}(\XX\to pt)$.

This bivariant theory is endowed with three kinds of operations:
\begin{itemize}
 \item \textit{(composition)} or products generalizing the cup-product;
 \item \textit{(pushforward)} generalizing the homology pushforward;
 \item \textit{(pullback)} generalizing the pullback maps in cohomology.
\end{itemize}
These operations satisfy various compatibilities and allow us to build Poincar\'e duality, Gysin and transfer homomorphisms easily.

\subsection{Mapping stacks}{\label{SS:mapping}}

We begin by recall  the definition and the main properties
of mapping stacks. For more details see~\cite{Mapping}.

Let $\X$
and $\Y$ be   stacks over $\sfT$. We define the stack
$\Map(\Y,\X)$, called the {\bf mapping stack} from $\Y$ to $\X$, by
the defining its groupoid of section over $T \in \sfT$ to be 
$\Hom(\Y\times T,\X)$,
   $$\Map(\Y,\X)(T)=\Hom(\Y\times T,\X),$$
where $\Hom$ denotes the groupoid of stack morphisms. This is easily
seen to be a stack.

We have a natural equivalence of groupoids
    $$\Map(\Y,\X)(*)\cong\Hom(\Y,\X),$$
where $*$ is a point. In particular, the underlying set of the coarse 
moduli space of 
$\Map(\Y,\X)$ is the set of 2-isomorphism classes of morphisms
from $\Y$ to $\X$.

If $\sfT=\CGTop$, it follows from the exponential law for mapping
spaces that when $X$ and $Y$ are spaces, then
$\Map(Y,X)$ is representable by the usual mapping space from $Y$ to
$X$ (endowed with the compact-open topology).

The mapping stacks are functorial in both variables.

\begin{lem}{\label{L:functorial}}
 The mapping stacks $\Map(\Y,\X)$ are functorial in $\X$ and
 $\Y$.
 That is, we have natural functors
 $\Map(\Y,-) \: \St \to \St$ and $\Map(-,\X) \: \St^{op} \to
 \St$. Here, $\St$ stands for the 2-category of stacks over
 $\sfT$ and  $\St^{op}$ is the opposite category (obtained by inverting the direction of
 1-morphisms in $\St$).
\end{lem}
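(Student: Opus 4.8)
The plan is to prove functoriality by exhibiting, for each morphism in the relevant category, an induced functor on mapping stacks defined sectionwise, and then checking that these assignments respect composition and identities up to the coherent natural isomorphisms required for a 2-functor. Since $\Map(\Y,\X)$ is defined by the rule $T \mapsto \Hom(\Y\times T,\X)$, the whole construction reduces to the already-known functoriality of the $\Hom$-groupoids of stacks in each variable, transported uniformly over all $T \in \sfT$.

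\medskip
\noindent\textbf{Covariant variable.}
First I would treat $\Map(\Y,-)$. Given a morphism $g\: \X_1 \to \X_2$ of stacks, I define $\Map(\Y,g)\: \Map(\Y,\X_1) \to \Map(\Y,\X_2)$ on $T$-sections by postcomposition: a section $\phi \in \Hom(\Y\times T,\X_1)$ is sent to $g\comp\phi \in \Hom(\Y\times T,\X_2)$, and a 2-morphism is sent to its whiskering by $g$. One checks this is compatible with the restriction functors coming from maps $T'\to T$ in $\sfT$ (this is automatic, since postcomposition with $g$ commutes with precomposition by $\id_\Y \times (T'\to T)$), so it is indeed a morphism of stacks. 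Functoriality in $g$ then follows because postcomposition satisfies $(g_2\comp g_1)\comp\phi = g_2\comp(g_1\comp\phi)$ and $\id_{\X}\comp\phi = \phi$; the only subtlety is that these equalities hold merely up to the canonical associativity and unit 2-isomorphisms of composition in $\St$, so strictly speaking $\Map(\Y,-)$ is a 2-functor and I must supply the coherence data $\Map(\Y,g_2)\comp\Map(\Y,g_1) \cong \Map(\Y,g_2\comp g_1)$ induced sectionwise from those of $\St$.

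\medskip
\noindent\textbf{Contravariant variable.}
Dually, for a morphism $h\: \Y_1 \to \Y_2$ of stacks I define $\Map(h,\X)\: \Map(\Y_2,\X) \to \Map(\Y_1,\X)$ by precomposition: a section $\psi \in \Hom(\Y_2\times T,\X)$ is sent to $\psi\comp(h\times \id_T) \in \Hom(\Y_1\times T,\X)$. This reverses the direction of $1$-morphisms, which is exactly why the source is $\St^{op}$. The verification of compatibility with restriction and of (contravariant) functoriality in $h$ is formally identical to the covariant case, again using associativity and unit isomorphisms in $\St$. One should also note that the two variances commute, giving a bifunctor, but the statement as written only asks for separate functoriality in each slot.

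\medskip
\noindent\textbf{Main obstacle.}
The genuinely routine content --- defining the induced maps by pre/postcomposition --- is immediate; the only real care required is the \emph{coherence bookkeeping} forced by the $2$-categorical setting. Because $\St_{\sfT}$ is a $2$-category in which composition is associative and unital only up to specified isomorphism, the assignments above do not define strict functors on the nose: I must check that the associativity/unit $2$-isomorphisms of $\St$ induce, sectionwise and compatibly across all $T$, the compositor and unitor data making $\Map(\Y,-)$ and $\Map(-,\X)$ into honest (pseudo)functors of $2$-categories, and that these satisfy the pentagon and triangle axioms. This is the step I expect to be the most delicate to state carefully, though it is entirely formal, inherited directly from the coherence already present in $\St_{\sfT}$ and transported through the defining formula $\Map(\Y,\X)(T)=\Hom(\Y\times T,\X)$. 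In practice, since the paper works with $2$-categories whose $2$-morphisms are all invertible and does not insist on strictness elsewhere, it suffices to record that the induced natural isomorphisms exist and are coherent, which follows from the corresponding facts for the $\Hom$-groupoids.
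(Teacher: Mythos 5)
Your proposal is correct and follows the only natural route; note that the paper itself gives no proof of this lemma --- it is recalled from \cite{Mapping}, where functoriality is obtained by precisely the sectionwise pre/post-composition you describe, checked compatible with the restriction functors along maps $T'\to T$ in $\sfT$. One correction to what you call the main obstacle: in the model used here, stacks are fibered groupoids over $\sfT$ and $1$-morphisms are functors commuting strictly with the projections to $\sfT$, so composition of $1$-morphisms in $\St_{\sfT}$ is strictly associative and unital (it is composition of functors); hence $\St_{\sfT}$ is a strict $2$-category, the equalities $\Map(\Y,g_2)\comp\Map(\Y,g_1)=\Map(\Y,g_2\comp g_1)$ and $\Map(h_1,\X)\comp\Map(h_2,\X)=\Map(h_2\comp h_1,\X)$ hold on the nose, and the compositor/unitor coherence data you propose to supply is vacuous --- your assignments are already strict $2$-functors.
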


The exponential law holds for mapping stacks.

\begin{lem}{\label{L:exponential}}
 For stacks $\X$, $\Y$ and $\Z$ we have a natural equivalence of stacks
   $$\Map(\Z\times\Y,\X)\cong\Map(\Z,\Map(\Y,\X)\big).$$
\end{lem}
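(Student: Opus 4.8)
The plan is to prove the exponential law for mapping stacks by reducing it, via the Yoneda lemma, to the analogous statement about groupoids of sections over test objects, and then invoking the definition of the mapping stack together with the ordinary adjunction between product and internal hom in the 2-category of stacks. Since a morphism of stacks is determined by what it does on $T$-points functorially in $T$, it suffices to exhibit, for each $T \in \sfT$, a natural (in $T$) equivalence of groupoids between the sections of the two sides.

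First I would unwind both sides. By definition, the left-hand side satisfies
$$\Map(\Z\times\Y,\X)(T)=\Hom\big((\Z\times\Y)\times T,\X\big).$$
For the right-hand side, applying the definition of $\Map$ twice gives
$$\Map\big(\Z,\Map(\Y,\X)\big)(T)=\Hom\big(\Z\times T,\Map(\Y,\X)\big),$$
and I would then use the fact that a morphism $\Z\times T\to\Map(\Y,\X)$ of stacks is itself, by the universal property (i.e. the Yoneda-type characterization of maps into a mapping stack), the same data as a morphism $(\Z\times T)\times\Y\to\X$. Concretely, one shows $\Hom\big(\Z\times T,\Map(\Y,\X)\big)\cong\Hom\big((\Z\times T)\times\Y,\X\big)$; this is the key intermediate identification and is where the definition $\Map(\Y,\X)(S)=\Hom(\Y\times S,\X)$ gets used with $S$ ranging over an atlas presentation of $\Z\times T$.

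Once both sides are rewritten as groupoids of maps out of a triple product into $\X$, the result follows from the evident associativity and symmetry isomorphism $(\Z\times\Y)\times T\cong(\Z\times T)\times\Y$ in the $2$-category of stacks, which induces an equivalence on the $\Hom$-groupoids into $\X$. I would check that this equivalence is natural in $T$: all the identifications are induced by canonical reassociation/commutation isomorphisms of products, which are natural, and the Yoneda lemma (Lemma \ref{L:Yoneda}) then upgrades the family of equivalences of section-groupoids to an equivalence of stacks. Strictly, to make the middle identification rigorous one passes through a presentation and uses that $\Hom$ into a stack commutes with the relevant colimits, but at the level of this plan these are routine.

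The main obstacle is the step identifying $\Hom\big(\Z\times T,\Map(\Y,\X)\big)$ with $\Hom\big((\Z\times T)\times\Y,\X\big)$, i.e. establishing the defining adjunction when the source $\Z\times T$ is a genuine stack rather than a representable test object: the definition of $\Map(\Y,\X)$ only directly supplies the sections over representables $S$, so one must verify that mapping a general stack into $\Map(\Y,\X)$ behaves correctly. I would handle this by choosing an atlas for $\Z\times T$ and using descent together with the fact that $\Map(\Y,\X)$ is a stack, so that a map from $\Z\times T$ is determined by compatible maps from the atlas and its fiber products, each of which is representable and hence covered by the definition. Everything else is formal $2$-categorical bookkeeping of associativity isomorphisms for the $2$-fiber product.
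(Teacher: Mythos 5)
Your overall skeleton is the right one (and it is the standard one; note the paper itself does not prove Lemma \ref{L:exponential} but recalls it from \cite{Mapping}): unwind both sides on $T$-points, identify $\Hom\big(\Z\times T,\Map(\Y,\X)\big)$ with $\Hom\big((\Z\times T)\times\Y,\X\big)$, and finish with associativity/symmetry of products. However, the way you propose to establish the key identification has a genuine gap. The lemma is stated for \emph{arbitrary} stacks over the site $\sfT$, and an arbitrary stack need not admit an atlas: the existence of a representable epimorphism $U\to\Z\times T$ is part of the definition of a \emph{topological} stack, not of a stack. Moreover, even if you use the tautological epimorphism from a coproduct of representables $\coprod T_i\to\Z\times T$ (which always exists, modulo size issues), your descent argument requires evaluating $\Map(\Y,\X)$ on the \v{C}ech fiber products $U\times_{\Z\times T}U$, and these are representable only when the diagonal of $\Z\times T$ is representable --- again false for general stacks. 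So the objects carrying your descent data are not covered by the defining formula $\Map(\Y,\X)(S)=\Hom(\Y\times S,\X)$, and the argument as stated does not close.

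The fix is that no presentation is needed at all: the adjunction $\Hom\big(\WW,\Map(\Y,\X)\big)\cong\Hom\big(\WW\times\Y,\X\big)$ holds for an arbitrary fibered groupoid $\WW$ by a purely formal Yoneda argument. Define the evaluation morphism $\ev\:\Map(\Y,\X)\times\Y\to\X$ on $T$-points by sending a pair $(f,y)$, with $f\:\Y\times T\to\X$ an object of $\Map(\Y,\X)(T)$ and $y\in\Y(T)$, to $f\circ(y,\id_T)\in\X(T)$, where $(y,\id_T)\:T\to\Y\times T$ comes from Lemma \ref{L:Yoneda}. Then $F\mapsto\ev\circ(F\times\id_\Y)$ in one direction and, in the other, $G\mapsto\big(w\in\WW(T)\longmapsto G\circ(w\times\id_\Y)\ \text{precomposed with the swap}\ \Y\times T\cong T\times\Y\big)$ are mutually inverse up to canonical $2$-isomorphism, naturally in $\WW$ and $T$; taking $\WW=\Z\times T$ gives exactly your middle step, and the rest of your bookkeeping goes through. (Alternatively, your colimit idea can be run correctly via the density theorem --- every stack is a $2$-colimit of representables indexed by its category of elements, $-\times\Y$ preserves these colimits, and $\Hom(-,\X)$ turns them into $2$-limits --- but the evaluation-map argument is shorter and needs neither descent, nor atlases, nor even the stack condition on $\Z$ and $\X$.)
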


For the following theorem to be true we need to assume that $\sfT=\CGTop$.

\begin{thm}[\cite{Mapping}, Theorem 4.2]{\label{T:mapping}}
  Let $\X$ and $\K$ be  topological stacks. 
  Assume that
  $\K\cong[K_0/K_1]$, where $[K_1\sst{}K_0]$ is a topological
  groupoid with $K_0$ and $K_1$ compact.
  Then, $\Map(\K,\X)$ is a topological stack.
\end{thm}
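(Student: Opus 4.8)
The plan is to verify the two defining conditions of a topological stack for $\Map(\K,\X)$ — representability of the diagonal and existence of an atlas — by bootstrapping from the case in which the source is a compact \emph{space}. Using the presentation $\K\cong[K_0/K_1]$, I would first reduce the theorem to the assertion that $\Map(K,\X)$ is a topological stack for every compact space $K$, and then recover the general case by descent. Concretely, $\K\times T\cong[(K_0\times T)/(K_1\times T)]$, so a morphism $\K\times T\to\X$ is the same datum as a morphism $K_0\times T\to\X$ together with a descent isomorphism along $K_1\times T$ satisfying the cocycle condition on $K_2\times T$, where $K_2:=K_1\times_{K_0}K_1$ (which is again compact, being closed in $K_1\times K_1$ when $K_0$ is Hausdorff). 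Thus $\Map(\K,\X)$ is the $2$-limit (descent object) of the truncated cosimplicial diagram with terms $\Map(K_0,\X)$, $\Map(K_1,\X)$, $\Map(K_2,\X)$ arising from the nerve of $[K_1\toto K_0]$, the coface maps being the pullbacks supplied by Lemma \ref{L:functorial}. This $2$-limit is assembled from finitely many $2$-fibre products of the stacks $\Map(K_i,\X)$; since the class of topological stacks is stable under $2$-fibre products, it suffices to settle the base case for each compact $K_i$.

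For the base case I fix a compact space $K$ and treat the two conditions separately. For the diagonal, given two $T$-points $F,G\colon K\times T\to\X$, I must show $\Isomu_T(F,G)$ is representable. Since $\X$ has representable diagonal, pulling back $\X\to\X\times\X$ along $(F,G)\colon K\times T\to\X\times\X$ produces an honest space $E\to K\times T$ whose fibre over $(k,t)$ is the set of isomorphisms $F(k,t)\cong G(k,t)$; a $2$-isomorphism $F\cong G$ is then exactly a section of $E\to K\times T$. Hence $\Isomu_T(F,G)$ is the relative space of sections of $E\to K\times T$ over $T$, and this is representable precisely because $K$ is compact (via the exponential law, Lemma \ref{L:exponential}). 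This yields representability of the diagonal of $\Map(K,\X)$ and therefore, as recalled in Section \ref{S:Review}, representability of every morphism into $\Map(K,\X)$ from a space.

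It remains to produce an atlas for $\Map(K,\X)$, and this is where the main work lies. I would choose, via Theorem \ref{T:nice}, an atlas $\varphi\colon X\to\X$ that is shrinkable over paracompact test spaces, and propose $\Map(K,X)\to\Map(K,\X)$ as an atlas. Here $\Map(K,X)$ is an honest space, since $K$ and $X$ are spaces in $\CGTop$, and the map is representable by the previous paragraph. The crux is to check that it is an epimorphism, i.e.\ that an arbitrary $F\colon K\times T\to\X$ lifts through $\varphi$ after refining $T$ by an open cover. When $K\times T$ is paracompact — which holds for $K$ compact and $T$ paracompact — the pullback $X_{K\times T}\to K\times T$ of $\varphi$ along $F$ is shrinkable, and a section of it, composed with $X_{K\times T}\to X$, is exactly the desired lift $\tilde F\colon K\times T\to X$.

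The main obstacle is precisely this lifting step for general $T$: one must exploit the compactness of $K$ to pass from pointwise local lifts of $F|_{K\times\{t\}}$, obtained from local sections of $\varphi$, to a genuine lift over $K\times U$ for a neighbourhood $U$ of each $t\in T$. The natural route is a finite-subcover argument producing finitely many local lifts over $K\times U$, followed by a gluing of these lifts by means of the groupoid $R=X\times_\X X$; controlling this gluing uniformly in the $K$-direction is the delicate point, and it is exactly what the compactness hypothesis on $K$ (and, in the descent step, on $K_0,K_1,K_2$) is there to supply. Once this epimorphism is established, $\Map(K,\X)$ is a topological stack, the base case is complete, and the descent argument of the first paragraph finishes the proof.
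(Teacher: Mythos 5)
First, a point of calibration: the paper does not prove Theorem \ref{T:mapping} at all --- it is imported verbatim from \cite{Mapping}, Theorem 4.2 --- so your proposal can only be judged on its own terms and against that cited proof. Your outer layers are broadly sound. The reduction to mapping stacks out of compact \emph{spaces} via descent along $[K_1\toto K_0]$ is legitimate, granted the (true, but not entirely trivial) facts that the descent object is built from finitely many $2$-fibre products and that topological stacks are closed under these; note though that compactness of $K_2=K_1\times_{K_0}K_1$ requires $K_0$ Hausdorff, which is not assumed --- this is avoidable, since the cocycle locus only needs representability of the diagonal of $\Map(K_2,\X)$. The diagonal step is also correct, but your attribution is off: in $\CGTop$ the exponential law (Lemma \ref{L:exponential}) is unconditional, so compactness of $K$ plays no role there; this is exactly why Proposition \ref{L:repdiag} holds with no compactness hypothesis whatsoever.

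The genuine gap is the atlas, i.e.\ precisely the step you defer. You propose $\Map(K,X)\to\Map(K,\X)$, with $\varphi\colon X\to\X$ as in Theorem \ref{T:nice}, and assert that epimorphy follows from a finite-subcover argument plus a gluing of the resulting local lifts by means of $R=X\times_\X X$, compactness of $K$ being what makes the gluing possible. This cannot work as sketched. Your gluing argument uses only that $\varphi$ admits local sections, a property \emph{every} atlas has; so, if correct, it would show that $\Map(K,X)\to\Map(K,\X)$ is an epimorphism for an arbitrary atlas $X\to\X$ and compact $K$. That is false: take $\X=[*/G]$ with the atlas $X=*$, take $K=S^2$ and $T=*$; a nontrivial principal $G$-bundle on $S^2$ is a $T$-point of $\Map(S^2,[*/G])$ admitting no lift to $\Map(S^2,*)=*$, and since $T$ is a point, refining $T$ by an open cover does not help. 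The obstruction is real: gluing the local lifts over boxes $W_i\times U$ into a lift over $K\times U$ is exactly the problem of trivializing the $[R\toto X]$-torsor $F^*X\to K\times U$, and compactness of $K$ does not trivialize torsors. Shrinkability of the good atlas removes this obstruction only over \emph{paracompact} bases; but an epimorphism must be tested against arbitrary $T$, whose points need not have paracompact neighbourhoods, so your paracompact-case argument does not globalize. (Consistently with this, \cite{Mapping} keeps the paracompact statement separate, as Theorem \ref{T:invariance}: for paracompact $Y$ the map $\Map(Y,X)\to\Map(Y,\X)$ is a universal weak equivalence --- which is weaker than being an epimorphism and does not yield an atlas.) A correct construction must not glue the local lifts but \emph{remember} them: roughly, one takes as atlas a disjoint union, over finite open covers $\mathcal{U}$ of $K$ (resp.\ of $[K_1\toto K_0]$), of spaces of strict \v{C}ech data, i.e.\ groupoid morphisms from the \v{C}ech groupoid of $\mathcal{U}$ into a presentation $[R\toto X]$ of $\X$. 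Then a $T$-point $F$ of $\Map(K,\X)$ lifts near $t$ as soon as $F^*X$ has sections over finitely many boxes $W_i\times U$ covering $K\times\{t\}$ --- which is exactly what the tube lemma and compactness of $K$ provide --- and no gluing of sections is ever required. Until your gluing step is replaced by something of this kind, the proposal does not prove the theorem.
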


We define  the {\em free loop stack} of a stack $\X$ to be $\Lo\X:=\Map(S^1,\X)$. 
If $\X$ is a  topological stack, then it follows from the above theorem that  $\Lo\X$ is also a 
topological stack. 

Theorem \ref{T:mapping} does not seem to be true in general without the compactness condition on the
$\K$. However, it is good to keep in mind the following general fact~\cite[Lemma 4.1]{Mapping}.

\begin{prop}{\label{L:repdiag}}
     Let $\X$ and $\Y$ be  topological stacks.  Then,
   for every topological space $T$, every morphism $T \to
   \Map(\Y,\X)$ is representable. (Equivalently,  
   $\Map(\Y,\X)$ has a representable diagonal.)
\end{prop}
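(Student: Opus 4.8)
The plan is to prove the statement in its second form, that the diagonal $\Delta\colon\Map(\Y,\X)\to\Map(\Y,\X)\times\Map(\Y,\X)$ is representable; the passage to the first form (``every $T\to\Map(\Y,\X)$ is representable'') is the general equivalence recalled in Section~\ref{S:Review}. First I would use that mapping into a product is a product of mapping stacks, so that $\Map(\Y,\X)\times\Map(\Y,\X)\cong\Map(\Y,\X\times\X)$, and that by functoriality (Lemma~\ref{L:functorial}) the diagonal is identified with $\Map(\Y,\Delta_\X)$, where $\Delta_\X\colon\X\to\X\times\X$ is the diagonal of $\X$. Since $\X$ is a topological stack, $\Delta_\X$ is representable, so the whole problem reduces to the assertion that for a \emph{representable} morphism $g\colon\X\to\Z$ of stacks and a topological stack $\Y$, the induced morphism $g_*\colon\Map(\Y,\X)\to\Map(\Y,\Z)$ is representable. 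Unwinding the definitions, given a space $T$ and a map $T\to\Map(\Y,\Z)$, i.e.\ a morphism $c\colon\Y\times T\to\Z$, the fibre product $T\times_{\Map(\Y,\Z)}\Map(\Y,\X)$ is the functor whose value on $\phi\colon S\to T$ is the set of sections of the pullback of $W:=(\Y\times T)\times_{\Z}\X\to\Y\times T$ along $\id_\Y\times\phi$; here $W\to\Y\times T$ is representable precisely because $g$ is. It therefore suffices to show that such a relative space of sections is representable by a space.

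Second, I would trivialize $\Y$ by an atlas. Choose $p\colon Y\to\Y$ with $Y$ a space and set $R=Y\times_\Y Y$, giving a groupoid presentation $[R\toto Y]$; note $R$ is a space because $\Y$ has representable diagonal. Since $W\to\Y\times T$ is representable, its pullbacks $W_Y$ and $W_R$ along the maps $Y\times T\to\Y\times T$ and $R\times T\to\Y\times T$ are honest spaces over $Y\times T$ and $R\times T$. The relative space of sections of $W_Y\to Y\times T$ is then representable: working in $\CGTop$ it is the fibre product $\Sigma_Y:=T\times_{\Map(Y,\,Y\times T)}\Map(Y,W_Y)$, where $T\to\Map(Y,Y\times T)$ sends $\phi$ to $\id_Y\times\phi$ and the other arrow is post-composition with $W_Y\to Y\times T$. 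All three corners are ordinary mapping spaces of ordinary spaces, hence spaces, and therefore so is $\Sigma_Y$. I would likewise form the space $\Sigma_R$ of sections of $W_R\to R\times T$.

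Third --- and this is the step I expect to be the main obstacle --- I would descend from $Y$ to $\Y$. Since $\Z$ is a stack, sections of $W_\phi\to\Y\times S$ satisfy descent along the epimorphism $p$: such a section is the same datum as a section of the corresponding $(W_Y)_\phi\to Y\times S$ whose two pullbacks along the projections $R\toto Y$ agree after twisting by the descent datum of $c$. This compatibility is exactly the equality of two morphisms $\Sigma_Y\toto\Sigma_R$ of spaces, so the relative space of sections over $\Y$ is their equalizer, realized as the fibre product $\Sigma:=\Sigma_Y\times_{\Sigma_R\times\Sigma_R}\Sigma_R$ (along the two restriction maps and the diagonal of $\Sigma_R$), again a space. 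The delicate part is the bookkeeping of this cocycle condition: checking that the two restriction maps $\Sigma_Y\toto\Sigma_R$ are continuous, that their equalizer is precisely the functor of sections over $\Y\times(-)$, and that every mapping space used stays inside $\CGTop$ so as to be genuinely representable. Granting this, $\Sigma$ represents $T\times_{\Map(\Y,\Z)}\Map(\Y,\X)$, which proves $g_*$ representable and hence, taking $g=\Delta_\X$, that $\Map(\Y,\X)$ has representable diagonal.
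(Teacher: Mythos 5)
There is no proof in the paper to compare yours against: Proposition \ref{L:repdiag} is stated as a quoted fact, with the proof deferred to \cite{Mapping}, Lemma 4.1, so the only thing to judge is your argument itself. It is correct and follows the natural route: reduce the diagonal, via $\Map(\Y,\X)\times\Map(\Y,\X)\cong\Map(\Y,\X\times\X)$ and the identification of the diagonal with $\Map(\Y,\Delta_{\X})$, to representability of the relative section sheaf of a representable morphism $W\to\Y\times T$, then compute that sheaf from an atlas of $\Y$ using cartesian closedness of $\CGTop$. Moreover, the steps you flag as the ``main obstacle'' are routine rather than genuine obstacles. Continuity of the two restriction maps $\Sigma_Y\toto\Sigma_R$ is automatic: they are morphisms of sheaves which you have already shown to be representable, hence by Yoneda (Lemma \ref{L:Yoneda}) they are induced by unique continuous maps, and no point-set check is needed. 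The cocycle bookkeeping collapses for the same reason that your first step produces a \emph{set} (rather than a groupoid) of sections: since $W\to\Y\times T$ is representable, its pullbacks $W_Y$, $W_R$, and the further pullback to $(R\times_Y R)\times T$ are honest spaces, so any $2$-morphism between sections over the atlas is an identity; descent for the stack $W$ along $Y\times S\to\Y\times S\simeq[(R\times S)\toto (Y\times S)]$ therefore amounts to exactly your equalizer condition (with the twist by the canonical identification $s^*W_Y\cong W_R\cong t^*W_Y$), and the condition over triple overlaps is vacuous. Finally, since limits of sheaves are computed objectwise and representable functors preserve limits, the equalizer space $\Sigma$ does represent the section sheaf over $\Y$. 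Two small corrections: the descent step is justified by $W$ being a stack (a $2$-fiber product of stacks), not by ``$\Z$ being a stack''; and the statement must be read over $\sfT=\CGTop$, which is the paper's standing assumption wherever mapping stacks are discussed (see the remarks before Theorem \ref{T:mapping}).
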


The following result is useful in computing homotopy types of mapping stacks.

\begin{thm}[\oldcite{Mapping}, Corollary 6.5]{\label{T:invariance}}
   Let $Y$ be a paracompact topological space and $\X$ a topological stack. Let
   $X$ be a classifying space for $\X$  with $\varphi \: X \to \X$ as in Theorem \ref{T:nice}. 
   Then,  the induced map
   $\Map(Y,X) \to \Map(Y,\X)$ makes $\Map(Y,X)$ a classifying space for 
   $ \Map(Y,\X)$ (in particular,  it 
   is a universal weak equivalence).
\end{thm}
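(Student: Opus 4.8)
The plan is to verify directly that the induced map $\Map(Y,\varphi)\colon \Map(Y,X)\to\Map(Y,\X)$ is a universal weak equivalence; since $\Map(Y,X)$ is an honest space (the usual mapping space, because $X$ and $Y$ are spaces and $\sfT=\CGTop$), this will exhibit it as a classifying space for $\Map(Y,\X)$. So I fix a test space $T$ together with a map $T\to\Map(Y,\X)$, which by the exponential law (Lemma \ref{L:exponential}) is the same datum as a map $f\colon Y\times T\to\X$, and I must show that the base extension $\pi\colon\Map(Y,X)_T\to T$ is a weak equivalence of spaces.

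First I would identify the total space. Pulling back the atlas $\varphi\colon X\to\X$ along $f$ yields $P:=(Y\times T)\times_{\X}X$, which is a genuine space since $\varphi$ is representable (every morphism from a space to a topological stack is), together with its projection $\pi_P\colon P\to Y\times T$. Unwinding the definition of the mapping stack and of the $2$-fiber product, an $S$-point of $\Map(Y,X)_T$ is a map $\alpha\colon S\to T$ together with a lift of $f\circ(\id_Y\times\alpha)\colon Y\times S\to\X$ through $\varphi$, i.e. a fiberwise section of $P$ over $\id_Y\times\alpha$. Hence $\Map(Y,X)_T$ is the space $\Gamma_T$ of sections of $P\to Y\times T$ taken along the $Y$-direction and parametrized by $T$, and $\pi$ is the evaluation map to $T$.

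The heart of the argument is that when $Y\times T$ is paracompact the projection $\pi$ is not merely a weak equivalence but \emph{shrinkable}. Indeed, Theorem \ref{T:nice} then supplies a section $s\colon Y\times T\to P$ and a fiberwise deformation retraction $H\colon P\times I\to P$ of $P$ onto $s$. Restricting $s$ to each slice $Y\times\{t\}$ gives a section $\bar s\colon T\to\Gamma_T$ of $\pi$, and the rule $(\gamma,u)\mapsto H_u\circ\gamma$ defines a fiberwise (over $T$) deformation retraction of $\Gamma_T$ onto $\bar s$; here I use that $H$ preserves the $Y\times T$-coordinate, so $H_u\circ\gamma$ is again a section, and continuity follows from the exponential law. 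Thus $\pi$ is shrinkable, hence a weak equivalence enjoying the strict right lifting property against the inclusions $S^{n-1}\hookrightarrow D^n$.

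It remains to drop the paracompactness hypothesis on $T$. Since weak equivalences of spaces are detected by compact test objects — a map $g$ is a weak equivalence once every lifting problem of $g$ against some $S^{n-1}\hookrightarrow D^n$ is solvable (with the lower triangle commuting up to homotopy rel $S^{n-1}$) — I would pull the whole situation back along an arbitrary map $D^n\to T$. As $Y$ is paracompact and $D^n$ is compact, the product $Y\times D^n$ is paracompact, so the previous paragraph shows that $\Map(Y,X)_{D^n}=\Map(Y,X)_T\times_T D^n\to D^n$ is shrinkable and therefore solves the lifting problem on the nose; pushing this lift forward to $\Map(Y,X)_T$ settles the case of general $T$. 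The main obstacle is exactly this point-set bookkeeping: confirming that $P$ and the section space $\Gamma_T$ live in $\CGTop$, transferring shrinkability from $P\to Y\times T$ to $\pi$ cleanly, and arranging the reduction to compact bases so that the product-of-paracompact-and-compact lemma can be invoked. Once these are in place, the shrinkability transfer and the exponential-law identification are routine.
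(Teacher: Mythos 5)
The paper itself does not prove this theorem: it is imported verbatim from \cite{Mapping}, Corollary 6.5, so there is no internal proof to compare against. Your argument is correct and is, in substance, the argument of the cited reference: identify the base extension of $\Map(Y,X)\to\Map(Y,\X)$ along $T\to\Map(Y,\X)$ with the space of $Y$-direction sections of the pulled-back atlas $P\to Y\times T$, transfer shrinkability (Theorem \ref{T:nice}) through that identification by post-composing sections with the fiberwise retraction, and reduce arbitrary $T$ to compact disks via the disk-lifting criterion for weak equivalences. Your reduction to $D^n$ is not optional bookkeeping but the crux: paracompactness is not preserved by products of paracompact spaces, and it is exactly the combination ``$Y$ paracompact, $D^n$ compact'' that puts $Y\times D^n$ within the scope of Theorem \ref{T:nice}. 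One small wording caveat: shrinkability of $\Map(Y,X)_{D^n}\to D^n$ solves the $(D^n,S^{n-1})$ lifting problem with the upper triangle strict but the lower triangle only commuting up to homotopy rel $S^{n-1}$, not ``on the nose''; this is all your stated criterion requires, so the proof stands as written.
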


\begin{cor}[\oldcite{Mapping}, Corollary 6.6]{\label{C:invariance}}
   Let $\X$ be a topological stack and $X$ a classifying space for it, 
   with $\varphi \: X \to \X$ as in Theorem \ref{T:nice}. Then, the 
   induced map $L\varphi \: \Lo X \to \LXX$ on free 
   loop spaces makes $LX$   a classifying space for $\LXX$ 
   (in particular, $L\varphi$
   is a universal weak equivalence).
\end{cor}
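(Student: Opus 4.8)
The plan is to deduce Corollary \ref{C:invariance} from Theorem \ref{T:invariance} by specializing to the case $Y=S^1$ and then interpreting the mapping stacks as loop spaces and loop stacks. First I would invoke Theorem \ref{T:invariance} with $Y=S^1$: since $S^1$ is a paracompact (indeed compact) topological space and $\X$ is a topological stack with classifying space $\varphi\colon X\to\X$ as in Theorem \ref{T:nice}, the theorem guarantees that the induced map $\Map(S^1,X)\to\Map(S^1,\X)$ makes $\Map(S^1,X)$ a classifying space for $\Map(S^1,\X)$, and in particular is a universal weak equivalence. The substance of the corollary is then purely a matter of identifying these mapping stacks with the objects named in the statement.

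Next I would make the identifications explicit. By the definition of the free loop stack, $\LXX=\Map(S^1,\X)$ and likewise $L\Map(S^1,\cdot)$ applied to the space $X$ is $\Map(S^1,X)$. Here I would use the remark in Section \ref{SS:mapping} that when $\sfT=\CGTop$ and the source and target are honest spaces, the mapping stack $\Map(S^1,X)$ is representable by the usual mapping space $\Map(S^1,X)=LX$ with the compact-open topology (this is the exponential law for mapping spaces). Thus $\Map(S^1,X)$ is precisely the free loop space $LX$, and the map furnished by Theorem \ref{T:invariance} is exactly the induced map $L\varphi\colon LX\to\LXX$ obtained by applying the functor $\Map(S^1,-)$ (functoriality from Lemma \ref{L:functorial}) to $\varphi\colon X\to\X$. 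Combining these identifications with the conclusion of Theorem \ref{T:invariance} yields that $LX$ is a classifying space for $\LXX$ and that $L\varphi$ is a universal weak equivalence, which is the assertion of the corollary.

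The only point requiring a small amount of care -- and the step I would treat as the main obstacle, though it is a routine one -- is checking that the map produced abstractly by Theorem \ref{T:invariance} genuinely coincides, under the identification $\Map(S^1,X)\cong LX$, with the map $L\varphi$ as defined via the loop functor. This is a compatibility of two functorial constructions: the mapping-stack functor $\Map(S^1,-)$ and its representation on spaces by the loop-space functor. I would verify it by unwinding the definition of the induced map $\Map(S^1,\varphi)\colon\Map(S^1,X)\to\Map(S^1,\X)$ on $T$-points, namely postcomposition with $\varphi$, and observing that under the exponential law this is exactly postcomposition at the level of loop spaces, i.e.\ $L\varphi$. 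Since no further genuine argument is needed beyond this bookkeeping, the corollary follows immediately.
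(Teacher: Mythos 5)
Your proposal is correct and is exactly the paper's argument: Corollary \ref{C:invariance} is the special case $Y=S^1$ of Theorem \ref{T:invariance}, using that $S^1$ is compact (hence paracompact), that $\LXX=\Map(S^1,\X)$ by definition, and that $\Map(S^1,X)$ is the usual loop space $LX$ for an honest space $X$ in $\CGTop$. The compatibility check you flag (that the map from Theorem \ref{T:invariance} is $L\varphi$ under these identifications) is the same routine bookkeeping implicit in the paper's citation, so nothing further is needed.
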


\section{Group actions on stacks}{\label{S:Actions}}

\subsection{Definition of a group action}{\label{SS:definition}}

In this subsection we recall the definition of a weak group action 
on a groupoid from~\cite{Romagny}.
This definition is more general than what is needed
for our application  (\S~\ref{SS:S1actionloop}) as in our case the action 
will be strict (i.e., the transformations
$\al$ and $\Aa$ in Definition \ref{D:action} will be identity transformations).

Let $\XX$ be a  fibered  groupoid
over $\sfT$ and $G$ a group over $\sfT$. (More generally, we can take $\XX$ to be an
object and $G$ a strict monoid object in any fixed 2-category.)

\begin{defn}[\cite{Romagny}, 1.3(i)]{\label{D:action}}
   A left {\bf action} of $G$ on $\XX$ is a triple $(\mu,\al,\Aa)$ where $\mu \: G\times\XX 
   \to \XX$
   is a morphism (of fibered groupoids), and $\al$ and $\Aa$ are 2-morphisms as in the
   diagrams
        $$\xymatrix@R=22pt@C=38pt@M=5pt{  G\times G  \times \XX 
               \art{[r]}{-1ex}{0.4}  ^(0.55){m\times\id_{\XX}}  
               \ars{[d]}{1ex}{0.4} _{\id_G\times \mu} 
                                  & G \times  \XX \ar[d]^{\mu}    \ar  @/_/ @{=>}_{\al} "s";"t"  \\
                                       G \times \XX \ar[r]_{\mu}        &   \XX         }  \ \ \ \ \    
            \xymatrix@R=22pt@C=38pt@M=5pt{ G \times \XX 
                \art{[r]}{-1ex}{0.37}  ^{\mu}   & \XX   \\
                                \XX    \ars{[ru]}{+1ex}{0.5} _{\id_{\XX}} 
                                 \ar[u]^{1\times  \id_{\XX} }    \ar   @{=>}^(0.3){\Aa} "s";"t"  &    }$$                                                                
    We require the following equalities:
    \begin{itemize}
       \item[A1)] $(g\cdot \al_{h,k}^x)\al_{g,hk}^x=\al_{g,h}^{k\cdot x}\al_{gh,k}^x$, 
       for all $g,h,k$ in $G$  and $x$ an object in $\XX$. 
       \item[A2)] $(g\cdot\Aa^x)\al_{g,1}^x=1_{g\cdot x}=\Aa^{g\cdot x}\al_{1,g}^x$, 
       for every $g$ in $G$
       and $x$ an object in $\XX$. 
    \end{itemize}   
    The dot in the above formulas is a short for the multiplication $\mu$. 
    Also, $\al_{g,h}^x$ stands for the arrow $\al(x,g,h)\: g\cdot(h \cdot x) \to 
   (gh) \cdot  x$ in $\XX$ and $\Aa^x$ for the arrow
    $\Aa(x) \: x \to 1\cdot  x$.  
\end{defn}

Let $\XX$ and $\YY$ be  fibered   groupoids 
over $\sfT$ endowed with an action of $G$ as in definition \ref{D:action}.

\begin{defn}[\cite{Romagny}, 1.3(ii)]{\label{D:morphism}}
    A $G$-{\bf equivariant} morphism between $(\XX,\mu,\al,\Aa)$ and 
    $(\YY,\nu,\be,\Bb)$ is a morphism $F \: \XX \to \YY$
    together with a 2-morphism $\sigma$ as in the diagram
      $$\xymatrix@R=22pt@C=30pt@M=5pt{G \times \XX 
               \ars{[r]}{-1ex}{0.4}  ^(0.55){\mu}  
               \art{[d]}{1ex}{0.4} _{\id_G\times F} 
                                  & \XX  \ar[d]^{F}    \ar  @/^/ @{=>}^{\sigma} "s";"t"  \\
                                     G\times  \YY  \ar[r]_{\nu}        &   \YY         } $$
     such that 
     \begin{itemize}
        \item[B1)] $\sigma_g^{h\cdot x}(g\cdot\sigma_h^x)\be_{g,h}^{F(x)}=
        F(\al_{g,h}^x)\sigma_{gh}^x$,
           for every $g,h$ in $G$ and $x$ an object in $\XX$.
        \item[B2)] $F(\Aa^x)\sigma_1^x=\Bb^{F(x)}$, for every object $x$ in $\XX$.  
     \end{itemize}
     Here, $\sigma^x_g$ stands for the arrow $\sigma(x,g) \:   F(g\cdot x)
     \to g\cdot F(x)$ in $\YY$.
     We often drop $\sigma$ from the notation and denote such a morphism simply by $F$.
\end{defn}

\begin{defn}{\label{D:2morphism}}
  Let $(F,\sigma)$ and $(F',\sigma')$ be $G$-equivariant morphisms from $\XX$ to $\YY$
  as in Definition \ref{D:morphism}. A {\bf $G$-equivariant 2-morphism} from 
  $(F,\sigma)$ to $(F',\sigma')$ is a 2-morphism $\varphi \: F \Ra F'$ such that
    \begin{itemize}
         \item[C1)]  $(\sigma_g^x)(g\cdot \varphi_x)=(\varphi_{g\cdot x})(\sigma'_g{}^{x})$, 
         for every $g$ in $G$ and $x$ an object in $\XX$.  
    \end{itemize}
  Here, $\varphi_x \: F(x) \to F'(x)$ stands for the effect of $\varphi$ on $x \in \ob\XX$.  
\end{defn}

\subsection{Transformation groupoid of a group action}{\label{SS:transformationgpd}}

Suppose now that $G$ is a discrete group and $\XX$ a groupoid (i.e., the base category $\sfT$
is just a point). Given a group action $\mu \: G\times \XX  
   \to \XX$ as in Definition \ref{D:action}, we define the transformation groupoid 
   $[G\backslash\XX]$ as follows.
   The objects of $[G\backslash\XX]$ are the same as those of $\X$,
       $$\ob[G\backslash\XX]=\ob\X.$$
   The morphisms of    $[G\backslash\XX]$ are 
     $$\Mor [G\backslash\XX]=\{(\gamma,g,x) \ | \ y \in \ob\X, \ g \in G, \ \gamma \in 
     \Mor\X, \ t(\gamma)=g\cdot y\}.$$  
    We visualize the arrow $(\gamma,g,x)$ as follows:
              $$\xymatrix@M=6pt{    &   y   \\   
             x   \ar[r]^{\gamma}    &  g\cdot y \ar@{..>}[u]_{g}   }$$  
   The source and target maps are defined by
     $$s(\gamma,g,y)=s(\gamma)=x \ \text{and}\  \ t(\gamma,g,y)=y.$$
   The composition of arrows is defined by
     $$(\gamma,g,y)(\delta,h,z)=\big(\gamma(g\cdot \delta)\al_{g,h}^z,\,gh,\,z\big).$$
   The identity morphism of an object $x \in \ob[G\backslash\XX]=\ob\XX$ is
       $$(\Aa^x,1,x).$$
   Pictorially, this is
          $$\xymatrix@M=6pt{  &   x   \\   
             x \ar[r]^{\Aa^x}    &    1\cdot x \ar@{..>}[u]_1  }$$
          
   Finally, the inverse of an arrow $(\gamma,g,y)$ in $[G\backslash\XX]$ is given by
     $$\big(\Aa^y(\al_{g^{-1},g}^y)^{-1}(g^{-1}\cdot\gamma^{-1}),\, g^{-1},\, x\big),$$
  where $x=s(\gamma)$.
  
It follows from the axioms (A1) and (A2) of Definition
\ref{D:action} that the above definition  makes $[G\backslash\XX]$ into a
groupoid. In fact,  axioms (A1) and (A2) are equivalent to $[G\backslash\XX]$ being a groupoid.

There is a natural functor $\pq \: \XX \to [G\backslash\XX]$ which is the identity  on the objects,
that is, $\pq(x)=x$. On arrows it is defined by 
     $$\pq(\gamma)=\big(\gamma\Aa^y,\, 1,\, y\big),$$    
where $y=t(\gamma)$. Pictorially, this is
      $$\xymatrix@M=6pt{     &   &  y \\   
             x \ar[r]^{\gamma}   & y \ar[r]^{\Aa^y}  & 1\cdot y \ar@{..>}[u]_1  }$$      
The functor $\pq$ is faithful.

Since $\pq \: \XX \to [G\backslash\XX]$ is faithful, we can regard $\XX$ as a subcategory of $[G\backslash\XX]$.
For this reason, we will often denote $\pq(\gamma)$ simply by $\gamma$, if there is no fear
of confusion. We also use the short hand notation $g^y$ for the arrow $(1_{g\cdot y},\, g,\, y)$.
This way, we can write $(\gamma,g,y)=\gamma g^y$. 

The groupoid $[G\backslash\XX]$ can be defined,
alternatively, as the groupoid generated by $\XX$ and the additional arrows $g^x$ subject
to certain relations which we will not spell out here. It is important, however, to observe the
following commutativity relation $g^x\gamma=(g\cdot \gamma)g^y $, as in the following
commutative diagram:
        $$\xymatrix@M=6pt{ x   \ar[r]^{\gamma}
         &   y   \\   
             g\cdot x   \ar[u]^{g^x}  \ar[r]_{g\cdot \gamma}
         & g\cdot  y  \ar[u]_{g^y}   }$$
         
Yet another way to define the groupoid $[G\backslash\XX]$ is to define it as the groupoid of trivialized $G$-torsors $P$, endowed with
a $G$-equivariant map $\chi \: P \to \X$ which satisfies $\chi(g)=g\cdot\chi(1)$, for every $g\in P$. 
Here, by a trivialized $G$-torsors $P$ we mean $P=G$ viewed as
a $G$-torsor via left multiplication.  

This definition gives a groupoid that is isomorphic to the one defined above. It also explains
our rather unnatural looking convention of having the arrow $g^x$ go from $g\cdot x$ to $x$ rather than other way around.  
If we drop the extra condition  $\chi(g)=g\cdot\chi(1)$ in the definition, we get a groupoid which is naturally equivalent 
(but not isomorphic) to $[G\backslash\XX]$.
For more on torsors 
see \S~\ref{SS:torsors}.

\begin{rem}{\label{R:strict}}
  In the case where the action of $G$ on $\XX$ is strict, an arrow $(\gamma,g,x)$ in
  $[G\backslash\XX]$ is uniquely determined by $(\gamma,g)$, i.e., $x$ is redundant. When
  $\XX$ is a set, $[G\backslash\XX]$ is equal to the usual transformation groupoid of the action
  of a group on  a set.
\end{rem}

\begin{example}{\label{E:crossproduct}}
 Let $\XX$ be a groupoid with one object, and let $H$ be its group of morphisms.
 Suppose that
 we are given a strict action of a group $G$ on $\XX$ (this amounts to an action of $G$ on 
 $H$ by homomorphisms). Then, $[G\backslash\XX]$ is the groupoid with one object whose
 group of morphisms is $H\rtimes G$. In other words, $[G\backslash BH]=B(H\rtimes G)$.

\end{example}

Given a $G$-equivariant morphism $F$ as in Definition \ref{D:morphism}, we obtain a functor
  $$[F] \: [G\backslash\XX] \to [G\backslash\YY]$$
 as follows. The effect of $[F]$ on objects is the same as that of $F$, i.e., $[F](x):=F(x)$. 
 For a morphism
 $(\gamma,g,y)$ in $ [G\backslash\XX] $ we define
   $$[F](\gamma,g,y):=\big(F(\gamma)\sigma_g^y,\, g,\, F(y)\big).$$

It follows from the axioms (B1) and (B2) of Definition \ref{D:morphism} that $[F]$ is a
functor. In fact, axioms (B1) and (B2) are equivalent to $[F]$ being a functor. Furthermore,
the  diagram 
      $$\xymatrix@M=6pt{\XX
           \ar[d]_{\pq_{\XX}} \ar[r]^{F}   &   
              \YY \ar[d]^{\pq_{\YY}}  \\   
         [G\backslash\XX]    \ar[r]_{[F]}  
        &    [G\backslash\YY]    }$$
is 2-cartesian and strictly commutative.

Given a $G$-equivariant 2-morphism $\varphi$ as in Definition \ref{D:2morphism}, we
obtain a natural transformation of functors $[\varphi] \: [F] \Ra [F']$ whose effect on
$x \in \ob\XX$ is defined by $[\varphi](x):=\varphi(x)  \: F(x) \to F'(x)$.
It follows from the axiom (C1) of Definition \ref{D:2morphism} that $[\varphi]$ is a
natural transformation of functors. In fact, axiom (C1) is equivalent to $[\varphi]$ being a 
natural transformation of functors.  

\subsection{The main property of the transformation groupoid}{\label{SS:properties}}

The most important property of the transformation groupoids for us is the fact that the
diagram 
        $$\xymatrix@M=6pt{G\times \XX
           \ar[d]_{\pr_2} \ar[r]^{\mu}   &   
              \XX \ar[d]^{\pq}  \\   
         \XX    \ar[r]_{\pq}  
        &   [G\backslash\XX]    }$$
is 2-cartesian. In other words, the functor
  $$(\pr_2,\mu) \:G\times \XX \to \XX\times_{[G\backslash\XX]}\XX$$ 
 is an equivalence of groupoids. This is an easy verification and we leave it to the reader.
 
 \begin{lem}{\label{L:projection}}
     Let $T$ be a set (viewed as a groupoid with only identity morphisms) and $f \: T \to 
     [G\backslash\XX]$ a functor. 
    Then, the groupoid $T\times_{[G\backslash\XX]}\XX$ is equivalent to a set. 
    If we denote the set of isomorphism classes of $T\times_{[G\backslash\XX]}\XX$  by $P$,
    then the natural left $G$-action on $P$ (induced from the action of $G$ on the second factor 
    of fiber product) makes $P$ a left $G$-torsor.
 \end{lem}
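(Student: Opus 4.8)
The plan is to reduce the computation of the $2$-fibre product to the universal $2$-cartesian square of $\S$\ref{SS:properties} and then read everything off explicitly. First I would exploit that $T$ is discrete: since $\pq\:\XX\to[G\backslash\XX]$ is the identity on objects, the functor $f$ factors strictly as $f=\pq\comp\tilde f$, where $\tilde f\:T\to\XX$ sends each $t$ to the object $f(t)$ now regarded in $\XX$ and sends identities to identities. This lets me paste $\tilde f$ onto the $2$-cartesian square of $\S$\ref{SS:properties}, along which $(\pr_2,\mu)\:G\times\XX\to\XX\times_{[G\backslash\XX]}\XX$ is an equivalence. Pulling back the left leg $\pr_2$ along $\tilde f$ and invoking the pasting lemma for $2$-cartesian squares, the outer rectangle yields an equivalence
$$T\times_{[G\backslash\XX]}\XX\ \simeq\ T\times_{\XX,\,\pr_2}(G\times\XX).$$

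Next I would compute this right-hand groupoid directly from the description of the $2$-fibre product in Section \ref{S:Review}. Its objects are quadruples $(t,g,x,\al)$ with $t\in T$, $g\in G$, $x\in\ob\XX$ and $\al\:x\to\tilde f(t)$ an arrow of $\XX$; a morphism $(t_1,g_1,x_1,\al_1)\to(t_2,g_2,x_2,\al_2)$ consists of a morphism of $T$ and a morphism of $G\times\XX$ satisfying the compatibility square. Because $T$ and $G$ carry only identity arrows, such a morphism forces $t_1=t_2$ and $g_1=g_2$ and reduces to a single arrow $w\:x_1\to x_2$ in $\XX$, while the compatibility square $\al_1=w\,\al_2$ determines $w=\al_1\al_2^{-1}$ uniquely. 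Hence between any two objects there is \emph{at most one} morphism, and none unless the pairs $(t,g)$ agree; so $T\times_{[G\backslash\XX]}\XX$ is equivalent to a set, with isomorphism classes $P\cong T\times G$, the class of $(t,g,x,\al)$ being $(t,g)$.

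It then remains to identify the $G$-action. The group $G$ acts on the fibre product through its action $\mu$ on the second factor $\XX$, the $2$-morphism trivialising $\pq\comp\mu\cong\pq\comp\pr_2$ being supplied by the canonical arrows $g^x$. Concretely, $g'$ sends an object $(x,\al)$ (with $\al=(\gamma,g,f(t))$ and $\gamma\:x\to g\cdot f(t)$) to $(g'\cdot x,\,(g')^x\al)$, and the composition law $(\gamma,g,y)(\delta,h,z)=(\gamma(g\cdot\delta)\al_{g,h}^z,gh,z)$ gives
$$(g')^x\,(\gamma,g,f(t))=\big((g'\cdot\gamma)\,\al_{g',g}^{f(t)},\,g'g,\,f(t)\big),$$
whose group component is $g'g$. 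Thus on $P\cong T\times G$ the induced action is $g'\cdot(t,g)=(t,g'g)$, i.e. left translation in the $G$-coordinate; it preserves the projection $P\to T$ and is free and transitive on each fibre, so $P$ is a left $G$-torsor (over $T$; for $T=\ast$ one gets $P\cong G$, with $g=1$ providing a section).

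The step I expect to be the main obstacle is the bookkeeping of the coherence data $\al,\Aa$ when checking that the action genuinely descends to isomorphism classes and equals left translation. This is tamed by the fact that the square of $\S$\ref{SS:properties} already absorbs all the coherence into the single clean equivalence $\XX\times_{[G\backslash\XX]}\XX\simeq G\times\XX$, so that once the identifications above are in place the remaining verifications use only the composition law and the invertibility of the $2$-morphisms $\al_{g',g}^{f(t)}$ (together with axiom (A2), which collapses the stray $\Aa$-terms arising from $\pq$).
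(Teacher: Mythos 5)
Your proof is correct and takes essentially the approach the paper intends: the paper's own proof of this lemma is literally the remark ``This is a simple exercise (e.g., using the above 2-cartesian square),'' and your argument is exactly that exercise carried out in full — the strict factorization $f=\pq\comp\tilde f$ (valid since $T$ is discrete and $\pq$ is the identity on objects), pasting against the 2-cartesian square of \S\ref{SS:properties}, the explicit computation showing trivial automorphism groups and isomorphism classes $P\cong T\times G$, and the identification of the induced action with left translation in the $G$-coordinate. The coherence bookkeeping you flag at the end (the composition law, invertibility of the associators, and axiom (A2) collapsing the stray $\Aa$-terms coming from $\pq$) is indeed the only content of the remaining verifications, so nothing further is needed.
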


\begin{proof}
  This is a simple exercise (e.g., using the above 
  2-cartesian square). 
\end{proof}

\section{Quotient stack of a group action}{\label{S:Quotient}}

In this section we study the global version of the construction of the transformation groupoid
introduced in \S~\ref{SS:transformationgpd} and use it to define the quotient stack of 
a weak group action on a stack. We fix a Grothendieck site $\sfT$ through this section.
The reader may assume that $\sfT$ is the site $\Top$ of all topological spaces, or the site
$\CGTop$ of compactly generated topological spaces.

\subsection{Definition of the quotient stack}{\label{SS:quotient}}

Let $\XX$ be a  fibered groupoid over $\sfT$ and $G$ a presheaf of groupoids over $\sfT$ 
viewed as a fibered groupoid). Suppose that we have a right action  $\mu \: G\times \XX\to \X$
of $G$ on $\XX$ as in Definition \ref{D:action}. Repeating the construction of the
transformation groupoid as in \S~\ref{SS:transformationgpd}, we obtain a a category fibered
in groupoids $\lfloor G\backslash\XX \rfloor$. (The reason for not using the square brackets 
becomes clear shortly.)
In terms of section, $\lfloor G\backslash\XX \rfloor$ is determined by the following property:
  $$\lfloor G\backslash\XX \rfloor(T)=[G(T)\backslash\XX(T)], \ \text{for every} \ T \in \sfT.$$

The following lemma is straightforward.

\begin{lem}{\label{L:prestack}}
  Notation being as above, if $\XX$ is a prestack and $G$ is a sheaf of groups, then
  $\lfloor G\backslash\XX \rfloor$ is a prestack.
\end{lem}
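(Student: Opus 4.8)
The plan is to verify the prestack (i.e., separated presheaf) condition for $\lfloor G\backslash\XX \rfloor$ directly from its defining property $\lfloor G\backslash\XX \rfloor(T)=[G(T)\backslash\XX(T)]$. Recall that a fibered groupoid is a \emph{prestack} precisely when, for every object $T$ and every pair of objects $a,b$ in the fiber over $T$, the presheaf on the slice $\sfT/T$ given by $U \mapsto \Hom_{\lfloor G\backslash\XX \rfloor(U)}(a|_U, b|_U)$ is a sheaf of sets. So the task reduces to a purely morphism-level statement: morphisms in the transformation groupoids glue uniquely along covers.

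First I would unwind what a morphism in $\lfloor G\backslash\XX \rfloor(T)=[G(T)\backslash\XX(T)]$ is, using $\S\ref{SS:transformationgpd}$. Given two objects $x,y$ (which are just objects of $\XX(T)$), a morphism is a triple $(\gamma,g,-)$ consisting of a group element $g\in G(T)$ and an arrow $\gamma$ in $\XX(T)$ with the appropriate source and target (involving $g\cdot y$). Thus a morphism in $[G(T)\backslash\XX(T)]$ between fixed objects amounts to the datum of a pair $(g,\gamma)$, where $g$ ranges over $G(T)$ and $\gamma$ ranges over a suitable $\Hom$-set in $\XX(T)$. The key point is that the presheaf $U\mapsto \Hom_{[G(U)\backslash\XX(U)]}(x|_U,y|_U)$ is built, as a presheaf of sets, out of $U \mapsto G(U)$ and out of the $\Hom$-presheaves of $\XX$ restricted to $\sfT/T$.

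The heart of the argument is then the following observation. Since $\XX$ is a prestack, its $\Hom$-presheaves are sheaves of sets on each slice; since $G$ is a sheaf of groups, $U\mapsto G(U)$ is a sheaf of sets. The morphism presheaf of $\lfloor G\backslash\XX\rfloor$ is, for fixed source and target, a subpresheaf of a product of such sheaves (the $G$-component and the $\gamma$-component), cut out by the target condition $t(\gamma)=g\cdot y$, which is itself a sheaf-theoretic (equalizer) condition because the action map $\mu$ and the target map are morphisms of presheaves. Hence I would check that given a cover $\{U_i\to T\}$, a compatible family of local morphisms $(g_i,\gamma_i)$ glues: the $g_i$ glue to a unique $g\in G(T)$ because $G$ is a sheaf, the $\gamma_i$ glue to a unique $\gamma$ because the relevant $\Hom$-presheaf of $\XX$ is a sheaf, and the glued pair still satisfies the source/target compatibility by uniqueness of gluing (both sides of $t(\gamma)=g\cdot y$ glue from data that agree locally, so they agree globally). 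Uniqueness of the glued morphism follows from the separatedness already present in $\XX$ and $G$.

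I expect the only genuine subtlety — and the main thing to get right rather than a serious obstacle — is the bookkeeping of which exact $\Hom$-presheaf of $\XX$ the component $\gamma$ lives in, since the target involves $g\cdot y$ and so the relevant $\Hom$-set depends on $g$. The clean way to handle this is to \emph{not} fix $g$ first, but to treat the whole morphism presheaf as a subsheaf of the presheaf $U\mapsto \coprod_{g\in G(U)}\Hom_{\XX(U)}(x|_U,\, g\cdot y|_U)$ and to observe that this coproduct-over-$G$ construction, combined with the action, is again separated because $G$ is separated (distinct global $g$'s cannot be locally equal) and each summand is a sheaf. Once this is set up, gluing and uniqueness are immediate from the sheaf axioms for $G$ and for $\XX$, which is why the lemma is labeled straightforward; I would present it as a short verification of the separated-presheaf condition rather than a computation with the explicit composition and inverse formulas of $\S\ref{SS:transformationgpd}$.
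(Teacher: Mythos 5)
Your proof is correct, and it is essentially the proof the paper has in mind: the paper gives no written argument at all, merely flagging the lemma as ``straightforward'' right after defining $\lfloor G\backslash\XX \rfloor$ by $\lfloor G\backslash\XX \rfloor(T)=[G(T)\backslash\XX(T)]$. Your identification of the morphism presheaf with $U\mapsto\coprod_{g\in G(U)}\Hom_{\XX(U)}\bigl(x|_U,\,g\cdot y|_U\bigr)$, gluing the $g$-component by the sheaf axiom for $G$ and then the $\gamma$-component by the prestack property of $\XX$ (with uniqueness from separatedness of both), is exactly the intended direct verification.
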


If in the above lemma $\XX$ is a stack, it is not necessarily true that $\lfloor G\backslash\XX \rfloor$ is
a stack (this is already evident in the case where $\XX$ is a sheaf of sets). Therefore, we make
the following definition.

\begin{defn}{\label{D:quotientstack}}
  Let $\XX$ be a stack and $G$ acting on $\XX$ (Definition \ref{D:action}). We define
  $[G\backslash\XX]$ to be the stackification of the prestack $\lfloor G\backslash\XX \rfloor$.
\end{defn}

There is a natural epimorphism of stacks $\pq \: \XX \to [G\backslash\XX]$. This morphism is strictly functorial,
in the sense that, for every $G$-equivariant morphism (Definition \ref{D:morphism}) $F \: \XX \to \YY$
of $G$-stacks, there is a natural induced morphism 
$[F] \: [G\backslash\XX] \to [G\backslash\YY]$ of stack such that
 the diagram
        \begin{equation}\label{eq:quotientequivcartesian} \xymatrix@M=6pt{\XX
           \ar[d]_{\pq_{\XX}} \ar[r]^{F}   &   
              \YY \ar[d]^{\pq_{\YY}}  \\   
         [G\backslash\XX]    \ar[r]_{[F]}  
        &    [G\backslash\YY]  }\end{equation}
 is 2-cartesian and strictly commutative. 
 This follows from the corresponding statement in the discrete case
 (see end of \S~\ref{SS:transformationgpd}) and the similar properties of the stackification functor 
 \S~\ref{S:Review}. Similarly, given a $G$-equivariant 2-morphism $\varphi
 \: F \Ra F'$, we
obtain a 2-morphism $[\varphi] \: [F] \Ra [F']$.

Since the stackification functor commutes with 2-fiber products, we have a 2-cartesian
square (see \S~\ref{SS:properties})
          $$\xymatrix@M=6pt{G\times \XX 
           \ar[d]_{\pr_2} \ar[r]^{\mu}   &   
              \XX \ar[d]^{\pq}  \\   
         \XX    \ar[r]_{\pq}  
        &   [G\backslash\XX]    }$$
and the functor
  $$(\pr_2,\mu) \:\XX\times G \to \XX\times_{[G\backslash\XX]}\XX$$ 
 is an equivalence of stacks.  Here $\mu \: G\times  \XX \to \XX$ stands for the action of
 $G$ on $\XX$.
 
 \begin{lem}{\label{L:representableproj}}
     Let $\XX$ be a stack with a group action as above. 
     Let $T$ be a sheaf of sets (viewed as a fibered groupoid over $\sfT$) and $f \: T \to 
     [G\backslash\XX]$ a morphism. 
    Then, the stack $T\times_{[G\backslash\XX]}\XX$ is equivalent to the sheaf of sets $P$,
    where $P$ is the sheaf of isomorphism classes of $T\times_{[G\backslash\XX]}\XX$.   
    Furthermore,
    the natural left $G$-action on $P$ (induced from the action of $G$ on the second factor 
    of fiber product) makes $P$ a left $G$-torsor.
 \end{lem}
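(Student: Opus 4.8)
The plan is to deduce this from the discrete case, Lemma \ref{L:projection}, together with the 2-cartesian square recalled in $\S$\ref{SS:properties} and stack descent, by reducing everything to a purely local computation on $T$. Both claims---that $\mathcal{Y}:=T\times_{[G\backslash\XX]}\XX$ is equivalent to a sheaf of sets $P$, and that $P$ is a left $G$-torsor over $T$---are local conditions on $T$, so I am free to verify them after passing to a cover. Since $\pq\:\XX\to[G\backslash\XX]$ is an epimorphism and $T$ is a sheaf of sets, each section of $T$ maps under $f$ to a section of $[G\backslash\XX]$ that lifts to $\XX$ after refinement; working locally on $T$, I may therefore assume that $f=\pq\comp g$ for some morphism $g\:T\to\XX$.

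Under this assumption I would compute
$$\mathcal{Y}=T\times_{[G\backslash\XX]}\XX\;\simeq\;T\times_{\XX}\bigl(\XX\times_{[G\backslash\XX]}\XX\bigr),$$
where $T\to\XX$ is $g$ and $\XX\times_{[G\backslash\XX]}\XX$ enters through its first projection (this is the standard pasting of cartesian squares). By the equivalence $(\pr_2,\mu)\:G\times\XX\risom\XX\times_{[G\backslash\XX]}\XX$ of $\S$\ref{SS:properties}, this first projection is identified with $\pr_2\:G\times\XX\to\XX$, so that
$$\mathcal{Y}\;\simeq\;T\times_{\XX}(G\times\XX)\;\simeq\;T\times G.$$
This is a sheaf of sets, and the $G$-action on the second (the $\XX$-) factor of $\mathcal{Y}$ corresponds, under $(\pr_2,\mu)$, to left translation on the $G$-factor; hence locally $\mathcal{Y}$ is the trivial left $G$-torsor $T\times G$ over $T$.

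To conclude globally, I would observe that the local computation exhibits $\mathcal{Y}$---a stack, being a 2-fiber product of stacks---as having trivial automorphism sheaves over a cover of $T$; since automorphisms form a sheaf on $T$ that vanishes locally, they vanish, so every object of $\mathcal{Y}$ is rigid and $\mathcal{Y}$ is equivalent to its presheaf $P$ of isomorphism classes, which is then automatically a sheaf of sets. The $G$-action on the $\XX$-factor commutes with $\pq$ up to the structural 2-isomorphism, hence descends to a $G$-action on $P$ covering the identity of $T$, and the trivializations $P|_U\simeq U\times G$ obtained above show that $P\to T$ is a locally trivial, i.e.\ a genuine, left $G$-torsor.

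The main obstacle is the globalization step: one must check carefully that ``locally a sheaf of sets'' upgrades to ``globally a sheaf of sets'', i.e.\ that rigidity of all objects makes the presheaf of isomorphism classes satisfy descent, and that the torsor property---being local on the base $T$---indeed follows from the single trivialization $U\times G$. Some care is also needed in the 2-categorical bookkeeping that matches the two projections of $\XX\times_{[G\backslash\XX]}\XX$ with $\pr_2$ and $\mu$, since it is precisely this identification that pins down the correct $G$-action and hence the torsor structure.
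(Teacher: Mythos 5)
Your proof is correct, and it follows a genuinely different route from the one in the paper. The paper's proof is a two-line reduction to the discrete case: it cites Lemma \ref{L:projection} (applied section by section over the site) to conclude that $T\times_{[G\backslash\XX]}\XX$ is equivalent to a presheaf of sets $P$ carrying a torsorial $G$-action, and then upgrades ``presheaf'' to ``sheaf'' by noting that a 2-fiber product of stacks is a stack. You never invoke Lemma \ref{L:projection}; instead you localize on $T$ using the epimorphism property of $\pq\:\XX\to[G\backslash\XX]$, identify the pullback of $T\times_{[G\backslash\XX]}\XX$ over a member $U$ of a cover with the trivial torsor $G\times U$ via the 2-cartesian square of $\S$\ref{SS:properties}, and then globalize by a rigidity argument (automorphism sheaves vanish locally, hence vanish; a stack whose objects have no nontrivial automorphisms is equivalent to its sheaf of isomorphism classes, which is then a sheaf precisely because the fiber product is a stack). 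Your route is longer but more self-contained: it makes explicit two points that the paper's citation glosses over, namely that sections of the stackified quotient $[G\backslash\XX]$ need not come from the prestack $\lfloor G\backslash\XX\rfloor$, so the discrete lemma does not literally apply sectionwise (your use of the epimorphism $\pq$ is what bridges this), and that a sheaf-theoretic torsor must be \emph{locally trivial}, not merely free and transitive as in the discrete statement --- local triviality is exactly what your trivializations $G\times U$ provide, and it is where the epimorphism property of $\pq$ is genuinely needed. What the paper's route buys is brevity, by reusing the already-proven discrete computation. One caveat of wording rather than substance in your write-up: since $T$ is a sheaf and not an object of $\sfT$, ``working locally on $T$'' and ``assume $f=\pq\circ g$ with $g\:T\to\XX$'' should be read as base-changing along a jointly epimorphic family of representables $U\to T$ over which (after refinement) the composite lifts through $\pq$; your final gluing step is already phrased compatibly with this reading, so no gap results.
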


 \begin{proof}
   It follows from Lemma \ref{L:projection} that $T\times_{[G\backslash\XX]}\XX$ is (equivalent to)
   a presheaf of sets, namely $P$. On the other hand, since stacks are closed under fiber product,
   $T\times_{[G\backslash\XX]}\XX$ is a  stack. Hence, it is (equivalent to) a sheaf of sets. Thus, 
   $P$ is indeed a sheaf. It follows from Lemma \ref{L:projection} that $P$ is a left $G$-torsor.
 \end{proof}

\subsection{Interpretation in terms of torsors}{\label{SS:torsors}}

Let $\XX$ be a stack with an action of a sheaf of groups $G$. Let $T$ be an object in $\sfT$.
We define the groupoid $\Prin_{G,\XX}(T)$ as follows. 
 {\small   $$\ob\Prin_{G,\XX}(T)=\left\{\begin{array}{rcl}
              (P,\chi) & | &  P\to T \ \ \text{left $G$-torsor}, \\
                    &   &    \chi \: P \to \XX \ \
              \text{$G$-equivariant map}   
                                                          \end{array}\right\}$$
      $$\Mor_{\Prin_{G,\XX}(T)}\big((P,\chi),(P',\chi')\big)=\left\{\begin{array}{rcl}
              (u,\phi) & | &  u \: P \to P'  \ \text{map of $G$-torsors}, \\
                    &   &    \phi \: \chi \Ra \chi'\circ u \ \ \text{$G$-equivariant}
                                                          \end{array}\right\}$$  }                                                      
 The groupoid $\Prin_{G,\XX}(T)$ contains a full subgroupoid 
 $\TrivPrin_{G,\XX}(T)$ consisting of those pairs $(P,\chi)$ such that
 $P$ admits a section (i.e., is isomorphic to the trivial torsor). 
 
 We can enhance the above construction to a fibered groupoid $\Prin_{G,\XX}$ over $\sfT$. In fact,
$\Prin_{G,\XX}$ is a stack over $\sfT$. The stack $\Prin_{G,\XX}$ contains 
 $\TrivPrin_{G,\XX}$ as a full subprestack. Furthermore, since every $G$-torsor is locally trivial,
 $\Prin_{G,\XX}$ is (equivalent to) the stackification of  $\TrivPrin_{G,\XX}$.

We define a morphism of prestacks
                $$F_{pre} \: \lfloor G\backslash\XX \rfloor \to  \Prin_{G,\XX}$$ 
as follows (see \S~\ref{SS:quotient} for the definition of $\lfloor G\backslash\XX \rfloor$). 
For $T \in \sfT$, an object $x \in  \lfloor G\backslash\XX\rfloor(T)$  is, by definition,
the same as an object in $\XX(T)$. This, by Yoneda, gives a map $\f_x \: T \to \X$. 
Define $F(x)$ to be the pair $(G\times T,\chi_x)$, where $G\times T$ is viewed as a trivial $G$-torsor
over $T$, and $\chi_x:=\mu\circ (\id_G\times f_{x})$ , as in the diagram
       $$\xymatrix@M=6pt@C=12pt{ G\times T \ar[rr]^(0.47){\id_G\times f_x} && G\times \X  
         \ar[r]^(0.57){\mu} & \X.  }$$
(Note that producing $f_x$ from $x$ involves making choices, so our functor $F_{pre}$ depends on
all these choices.) Symbolically, $\chi_x$ can be written as $\chi_x \: h \mapsto h\cdot x$,
where $h$ is an element of $G$ (over $T$).
%

The effect of $F_{pre}$ on arrows is defined as follows. Given an arrow $(\gamma,g,y)$ 
                 $$\xymatrix@M=6pt{    &   y   \\   
             x   \ar[r]^{\gamma}    &  g\cdot y \ar@{..>}[u]_{g}   }$$  
in
$\lfloor G\backslash\XX\rfloor(T)$ from $x$ to $y$, we define $F_{pre}(\gamma,g,y)$ to be the pair
$(m_g,\phi)$, where $m_g \: G\times T \to G\times T$ is right multiplication by $g$ (on the first factor), and 
$\phi \: \chi_x \Ra \chi_y\circ u$ is  the composition  
   $$\xymatrix@M=6pt@C=20pt{ \chi_x \ar@{=>}[rr]^(0.47){\mu\circ (\id_G\times f_{\gamma})} 
      &&  \chi_{g\cdot y} 
         \ar@{=>}[r]^(0.43){\al_{-,g}^y} & \chi_y\circ m_g.  }$$
It is not hard to see that 
$F$ is fully faithful and it lands in  $\TrivPrin_{G,\XX}$, hence, after stackification,
we obtain an equivalence of stacks
  $$F \: [G\backslash\XX] \risom \Prin_{G,\XX}.$$

There is an alternative description of $F$ in terms of pullback torsors which is more geometric.
For any $x \in [G\backslash\XX](T)$, let $f'_x \: T \to [G\backslash\XX]$ be the morphism obtained from
Yoneda and the natural map $q: \XX\to [G\backslash\XX]$, and form the following fiber square
        $$\xymatrix@M=6pt{ P \ar[r]^{\chi} \ar[d]_{p_1}  & \XX \ar[d]^{\pq} \\
            T \ar[r]_{f'_x} &  [G\backslash\XX]}$$
Here, $P$ is the sheaf of set obtained from $T\times_{[G\backslash\XX]}\XX$
by contracting each isomorphism class to a point, as in Lemma \ref{L:representableproj}. 
The maps $p_1$ and $\chi$ are
obtained from the first and the second projection maps, respectively,
by choosing an inverse
equivalence to the projection $T\times_{[G\backslash\XX]}\XX \to P$. There is an obvious
left action of $G$ on $T\times_{[G\backslash\XX]}\XX$ in which $G$ acts on the second
factor (so the projection $\pr_2 \: T\times_{[G\backslash\XX]}\XX \to \XX$ is strictly
$G$-equivariant). This induces a $G$-action on $P$ such that $\chi$ is $G$-equivariant
(not necessarily strictly any more). 

  Sending $x$ to the pair $(P,\chi)$ gives rise to a morphism of stacks 
       $$F' \: [G\backslash\XX] \to \Prin_{G,\XX}.$$
The effect of $F'$ on arrows is defined in the obvious way. The morphism $F'$ is canonically
2-isomorphic to $F$ (hence is an equivalence of stacks). What the functor $F'$ says
is that the pair $(\XX,\id)$ is a universal pair with $\XX \to [G\backslash\XX]$ a ``$G$-torsor''
and $\id \: \XX \to \XX$ a $G$-equivariant map.

There is  a natural inverse morphism of stacks 
  $$Q \: \Prin_{G,\XX} \risom [G\backslash\XX]$$
for $F$ (or $F'$) 
which is defined as follows. Let $(P,\chi)$ be an object in $\Prin_{G,\XX}(T)$. The $G$-equivariant
map $\chi \: P \to \XX$ induced a map $[\chi] \: [G\backslash P] \to   [G\backslash\XX]$ on
the quotient stacks (\S~\ref{SS:quotient}). Since $P$ is a $G$-torsor, the natural map
$[G\backslash P] \to T$ is an equivalence of stacks. {\em Choose} an inverse $f_P \: T \to
[G\backslash P]$ for it. The composition $[\chi]\circ f_P \: T \to  [G\backslash\XX]$ determines
an object in  $ [G\backslash\XX]$ which we define to be $Q(P,\chi)$. The effect on arrows
is defined similarly (for this you do not to make additional choices).

In conclusion we have proved the following
\begin{lem}
 There are natural equivalences of stacks $[G\backslash\XX] \cong \Prin_{G,\XX}$ (induced by $F=F'$ and $Q$).
\end{lem}

\begin{rem}
  As we pointed out above, construction of the morphisms $F$, $F'$ and $Q$ 
 requires making certain choices. In the case 
  of $F$ and $F'$, the choice involves associating a map $f_x \: T \to \XX$ to an
  element $x \in \XX(T)$. The map $f_x$ is unique up to  a unique 2-morphism. In the case
  of $Q$, the choice involves choosing an inverse for the equivalence of stacks
  $[G\backslash P] \to T$. Again, such an inverse is unique up to a unique 2-morphism. The 
  conclusion is that the morphisms $F$, $F'$ and $Q$ are well defined up to a unique 2-morphism.
  As we pointed out above, $F$ and $F'$ are canonically 2-isomorphic and $Q$ is an 
  inverse equivalence to both.
\end{rem}

\subsection{Quotients of topological stacks}{\label{SS:presentation}}

In this section we assume that our Grothendieck site  $\sfT$ is either $\Top$ or $\CGTop$.
We are particularly interested in the case where the sheaf of groups $G$ indeed comes from
a topological group (denoted again by $G$). We point out that, in this case, the sheaf theoretic
notion of a $G$-torsor used in the previous subsections coincides with the usual one. More
precisely, given a topological space $T$, a sheaf theoretic $G$-torsor $P$ over $T$ always
comes from a topological space (again denoted by $P$). The reason for this is that $P$
is locally (on $T$) of the form $U\times G$, which is indeed a topological space. Gluing
these along intersections yields a topological space representing $P$.

The main result we prove in this
subsection is that if $G$ is a topological group and $\XX$ a topological stack,
then $[G\backslash \XX]$ is also a topological stack. To prove this we need two lemmas.

\begin{lem}{\label{L:representable1}}
     Let $f,g \: \XX \to \YY$ be representable morphisms of stacks. Assume further that
     the diagonal $\Delta \: \XX \to \XX\times \XX$ is representable. Then,
     $(f,g) \: \XX \to \YY \times \YY$ is representable.
\end{lem}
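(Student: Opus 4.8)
The plan is to factor $(f,g)$ through the diagonal of $\XX$ and reduce everything to the stability of representable morphisms under composition and base change. Concretely, I would write
$$(f,g) = (f\times g)\comp \Delta_{\XX},$$
where $\Delta_{\XX}\: \XX \to \XX\times\XX$ is the diagonal and $f\times g\: \XX\times\XX \to \YY\times\YY$ is the product morphism $(x_1,x_2)\mapsto\big(f(x_1),g(x_2)\big)$. One checks at once that $(f\times g)\comp\Delta_{\XX}$ sends $x$ to $\big(f(x),g(x)\big)$, so it equals $(f,g)$. Since representable morphisms are closed under composition, it suffices to show that each factor is representable.

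The first factor $\Delta_{\XX}$ is representable by hypothesis. For the second factor I would decompose further as
$$f\times g\: \XX\times\XX \xrightarrow{\,f\times\id_{\XX}\,} \YY\times\XX \xrightarrow{\,\id_{\YY}\times g\,} \YY\times\YY.$$
Each of these arrows is a base change of a representable morphism: $f\times\id_{\XX}$ is the pullback of $f\: \XX\to\YY$ along the first projection $\pr_1\: \YY\times\XX\to\YY$ (indeed $\XX\times\XX$ is equivalent to $(\YY\times\XX)\times_{\YY}\XX$ formed via $\pr_1$ and $f$), and likewise $\id_{\YY}\times g$ is the pullback of $g$ along $\pr_1\: \YY\times\YY\to\YY$. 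As representable morphisms are stable under base change, both are representable, hence so is their composite $f\times g$, and therefore so is $(f,g)$. The equivalences of fiber products and the two stability properties are standard for stacks and I would cite them from \cite{Foundations}.

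The point requiring care — and the real reason the diagonal hypothesis on $\XX$ is needed — is that one must resist building $T\times_{\YY\times\YY}\XX$ by first pulling back $f$ and then imposing the second compatibility through the diagonal of $\YY$; that approach would require $\Delta_{\YY}$ to be representable, which is \emph{not} assumed. The factorization above sidesteps this by routing the ``doubling'' through $\Delta_{\XX}$ rather than $\Delta_{\YY}$, so that the only diagonal ever invoked is the representable one. Thus the main (and essentially only) subtlety is choosing this factorization; the remaining verifications are routine.
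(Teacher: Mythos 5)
Your proof is correct and is essentially the paper's own argument: the paper likewise writes $(f,g)$ as the composite of $\Delta\:\XX\to\XX\times\XX$ with $f\times g\:\XX\times\XX\to\YY\times\YY$ and invokes closure of representable morphisms under composition. Your extra step verifying that $f\times g$ is representable (as a composite of base changes of $f$ and $g$) merely fills in a detail the paper leaves implicit, so the two proofs coincide in substance.
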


\begin{proof}
   We can write $(f,g)$ as a composition of two representable maps
   $\Delta \: \XX \to \XX\times \XX$ and 
   $f\times g \: \XX \times \XX \to \YY \times \YY$.
\end{proof}

\begin{lem}{\label{L:representable2}}
  Let $f \: \XX \to \YY$ be a morphism of stacks and $\YY' \to \YY$ an epimorphism
  of stacks. If the base extension $f' \: \XX' \to \YY'$ of $f$ over $\YY'$ is representable,
  then so is $f$ itself.
\end{lem}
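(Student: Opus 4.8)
The plan is to verify the defining condition of representability directly: given any morphism $T\to\YY$ from a topological space $T$, I must show that the $2$-fiber product $T\times_{\YY}\XX$ is equivalent to a topological space. The strategy is to compute this fiber product locally on $T$, reducing it to a fiber product over $\YY'$ where representability is known by hypothesis, and then to glue the resulting local spaces together.

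First I would exploit the epimorphism hypothesis. The morphism $T\to\YY$ corresponds to an object $y\in\YY(T)$, and since $\YY'\to\YY$ is an epimorphism, after passing to an open cover $\{U_i\}$ of $T$ the restriction $y|_{U_i}$ lifts (up to $2$-isomorphism) to a morphism $h_i\:U_i\to\YY'$; that is, the composite $U_i\to\YY'\to\YY$ is $2$-isomorphic to $U_i\hra T\to\YY$. Using associativity of $2$-fiber products together with this factorization, I obtain canonical equivalences
$$U_i\times_{\YY}\XX \;\cong\; U_i\times_{\YY'}\bigl(\YY'\times_{\YY}\XX\bigr) \;\cong\; U_i\times_{\YY'}\XX'.$$
Because $f'\:\XX'\to\YY'$ is representable and $U_i\to\YY'$ is a morphism from a space, the right-hand side is equivalent to a topological space $W_i$. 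Since moreover $U_i\times_{\YY}\XX\cong U_i\times_T(T\times_{\YY}\XX)$ is the restriction of $\XX_T:=T\times_{\YY}\XX$ to $U_i$, this shows that $\XX_T$ becomes representable after restriction to each member of the open cover $\{U_i\}$.

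The remaining, and main, step is a gluing (descent) argument. Writing $U_{ij}=U_i\cap U_j$, the restrictions of $W_i$ and of $W_j$ over $U_{ij}$ are both canonically equivalent to $\XX_T\times_T U_{ij}$, so I get homeomorphisms $\varphi_{ij}$ between the open subspaces of $W_i$ and $W_j$ lying over $U_{ij}$ (open because the structure maps $W_i\to U_i$ are continuous and $U_{ij}\subseteq U_i$ is open). These satisfy the cocycle condition on triple overlaps, since all the identifications come from the canonical equivalences with $\XX_T\times_T U_{ij}$ and $\XX_T\times_T U_{ijk}$. As topological spaces satisfy descent for the open-cover topology (the topology on $\sfT$ is subcanonical), the $W_i$ glue along the $\varphi_{ij}$ to a topological space $W\to T$. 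Finally, the local equivalences $W_i\cong\XX_T\times_T U_i$ are compatible with the gluing data, so—using that $\XX_T$ is itself a stack and hence satisfies descent—they assemble into an equivalence $W\cong\XX_T$. Thus $T\times_{\YY}\XX$ is a topological space, and since $T\to\YY$ was arbitrary, $f$ is representable.

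I expect the gluing step to be the main obstacle, or at least the only place requiring genuine care: one must check that the preimages of $U_{ij}$ in the $W_i$ are genuinely open, verify the cocycle identity, and confirm that the glued space $W$ really represents $\XX_T$ rather than merely agreeing with it locally. This last point is precisely where one invokes that both $W$ and $\XX_T$ are sheaves (resp.\ stacks) and that representability is a local property on the target for the subcanonical open-cover topology; alternatively, one could cite this locality statement directly and bypass the explicit construction of $W$.
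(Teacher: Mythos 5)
Your proof is correct. Note that the paper itself does not prove this lemma but simply cites Lemma 6.3 of \cite{Foundations}; your argument---lifting $T\to\YY$ locally along the epimorphism $\YY'\to\YY$, identifying $U_i\times_{\YY}\XX\cong U_i\times_{\YY'}\XX'$ to invoke representability of $f'$, and then gluing the local representing spaces $W_i$ by descent---is exactly the standard proof of that cited result, and you correctly isolate the one delicate point (that the glued space genuinely represents $T\times_{\YY}\XX$, i.e., that representability is local on the target for the subcanonical open-cover topology, which uses descent for morphisms into the stack $T\times_{\YY}\XX$ and the fact that an equivalence of stacks over $T$ can be detected on an open cover of $T$).
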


\begin{proof}
 This is Lemma 6.3 of~\cite{Foundations}.
\end{proof}

\begin{prop}\label{P:quotientmapisrepresentable}
 Let $\XX$ be a topological stack and $G$ a topological group acting on $\XX$. Then the canonical epimorphism $\XX\to [G\backslash \XX]$ is representable.
\end{prop}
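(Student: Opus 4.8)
The plan is to unwind the definition of representability and reduce everything to the torsor description already established in the previous subsections. By definition, the epimorphism $\pq\:\XX\to[G\backslash\XX]$ is representable precisely when, for every topological space $T$ and every morphism $f\:T\to[G\backslash\XX]$, the fiber product $T\times_{[G\backslash\XX]}\XX$ is equivalent to a topological space. Since the topology on $\sfT$ is subcanonical, such a $T$ is in particular a sheaf of sets, so I am exactly in the situation covered by Lemma \ref{L:representableproj}, which I would invoke directly.

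Lemma \ref{L:representableproj} tells me that $T\times_{[G\backslash\XX]}\XX$ is equivalent to the sheaf of sets $P$ of its isomorphism classes, and moreover that the induced $G$-action (on the $\XX$-factor of the fiber product) makes $P$ a left $G$-torsor over $T$. Thus the only remaining point is to check that this \emph{sheaf-theoretic} torsor $P$ is in fact represented by an honest topological space. For this I would appeal to the observation recorded at the beginning of $\S$\ref{SS:presentation}: because $G$ genuinely comes from a topological group, any sheaf-theoretic $G$-torsor over a topological space $T$ is, locally on $T$, isomorphic to $U\times G$, which is a topological space; gluing these local models along the overlaps of a trivializing open cover of $T$ produces a topological space representing $P$. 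Hence $T\times_{[G\backslash\XX]}\XX\simeq P$ is a topological space, and as $T$ and $f$ were arbitrary, $\pq$ is representable.

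The argument is essentially a matter of assembling the two earlier results, so I do not anticipate a serious obstacle. The one point that genuinely deserves care is that the representability of $P$ uses the hypothesis that $G$ is an actual \emph{topological group}, not merely a sheaf of groups: this is precisely what guarantees that the local models $U\times G$ are spaces and that the gluing is a gluing of topological spaces. A secondary subtlety worth flagging is that passing from the abstract $2$-fiber product to the sheaf $P$ of isomorphism classes is legitimate only because Lemma \ref{L:representableproj} first shows the fiber product is equivalent to a set-valued sheaf (equivalently, that it has no nontrivial automorphisms); without that input one could not contract isomorphism classes to points without losing information.
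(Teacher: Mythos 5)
Your proof is correct, but it follows a different route from the paper's. The paper's proof is a one-liner: it applies Lemma \ref{L:representable2} (representability descends along base extension by an epimorphism) to the 2-cartesian square
$$\xymatrix@M=6pt{G\times \XX \ar[d]_{\pr_2} \ar[r]^{\mu} & \XX \ar[d]^{\pq} \\ \XX \ar[r]_{\pq} & [G\backslash\XX] }$$
from \S\ref{SS:properties}, observing that the base extension of $\pq$ along the epimorphism $\pq$ is (equivalent to) the projection $G\times\XX\to\XX$, which is evidently representable. You instead verify the definition of representability directly: Lemma \ref{L:representableproj} identifies $T\times_{[G\backslash\XX]}\XX$ with a sheaf-theoretic $G$-torsor $P$ over $T$, and the observation at the start of \S\ref{SS:presentation} (local triviality plus gluing of the local models $U\times G$) shows $P$ is an honest topological space. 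Both arguments are sound. The paper's approach is more economical and purely formal, outsourcing the local-sections-and-gluing mechanism to the cited descent lemma (Lemma 6.3 of \cite{Foundations}); yours keeps the argument self-contained in the machinery of \S\ref{SS:torsors}--\ref{SS:presentation} and has the virtue of actually identifying the fiber product (it is a $G$-torsor over $T$, not merely some space), which also makes transparent exactly where the hypothesis that $G$ is a genuine topological group, rather than a mere sheaf of groups, enters. Your two flagged subtleties (representability of sheaf torsors, and the legitimacy of contracting isomorphism classes) are indeed the points where care is needed, and both are covered by the results you cite.
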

\begin{proof}
 This is Lemma~\ref{L:representable2} applied to the $2$-cartesian square $$\xymatrix@M=6pt{G\times \XX
           \ar[d]_{\mu} \ar[r]^{\pr_2}   &   
              \XX \ar[d]^{\pq}  \\   
         \XX    \ar[r]_{\pq}  
        &   [G\backslash\XX]    }$$ from \S~\ref{SS:properties} since the map $\pr_2 \: G\times \XX \to \XX$ is representable.
\end{proof}

\begin{prop}{\label{P:toplogicalquotient}}
   Let $G$ be a topological group acting on a topological stack $\XX$. Then, the
   quotient  $[G\backslash \XX]$ is also a topological stack.
\end{prop}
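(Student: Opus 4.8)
The plan is to verify directly the two defining conditions of a topological stack for $[G\backslash\XX]$, namely the existence of an atlas and the representability of the diagonal. The atlas is immediate and should be disposed of first. Since $\XX$ is a topological stack, I would choose an atlas $p \: X \to \XX$ with $X$ a topological space and $p$ an epimorphism. The canonical morphism $\pq \: \XX \to [G\backslash\XX]$ is an epimorphism, so the composite $\pq\circ p \: X \to [G\backslash\XX]$ is an epimorphism from a topological space; this is the required atlas (and it is automatically representable once the diagonal is known to be, so by Proposition \ref{P:quotientmapisrepresentable} the map $\pq$ itself is moreover representable). All the real content therefore lies in the diagonal.

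To show the diagonal $\Delta \: [G\backslash\XX] \to [G\backslash\XX]\times[G\backslash\XX]$ is representable, I would combine the $2$-cartesian square of $\S$\ref{SS:properties} with the two representability lemmas. Because $\pq$ is an epimorphism, so is $\pq\times\pq \: \XX\times\XX \to [G\backslash\XX]\times[G\backslash\XX]$. By Lemma \ref{L:representable2} it then suffices to check that the base extension of $\Delta$ along $\pq\times\pq$ is representable. This base extension is precisely the projection $\XX\times_{[G\backslash\XX]}\XX \to \XX\times\XX$, and via the equivalence $(\pr_2,\mu) \: G\times\XX \risom \XX\times_{[G\backslash\XX]}\XX$ recorded in $\S$\ref{SS:properties} it is identified with the morphism $(\pr_2,\mu) \: G\times\XX \to \XX\times\XX$ sending $(g,x)$ to $(x, g\cdot x)$.

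It then remains to prove that $(\pr_2,\mu) \: G\times\XX \to \XX\times\XX$ is representable, and here I would invoke Lemma \ref{L:representable1} with source $G\times\XX$ and target $\XX$. The two component maps are representable: for $\pr_2 \: G\times\XX \to \XX$ the base change along any $T \to \XX$ from a space is $G\times T$, and $\mu \: G\times\XX \to \XX$ differs from $\pr_2$ by the automorphism $(g,x)\mapsto(g, g\cdot x)$ of $G\times\XX$, hence is representable as well. Moreover $G\times\XX$ is a topological stack, being the product of the space $G$ and the topological stack $\XX$, so its diagonal is representable. Lemma \ref{L:representable1} then gives that $(\pr_2,\mu)$ is representable, which completes the reduction and hence shows $\Delta$ is representable.

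The step I expect to be the crux is the identification of the base extension of $\Delta$ along $\pq\times\pq$ with the action-and-projection map $(\pr_2,\mu)$; once the $2$-cartesian square of $\S$\ref{SS:properties} is applied correctly this is formal, but matching the pullback against the morphism $(g,x)\mapsto(x,g\cdot x)$ is exactly where the geometry of the quotient construction enters. The remaining verifications are routine bookkeeping with the two representability lemmas, and (together with the atlas) establish both conditions, so $[G\backslash\XX]$ is a topological stack.
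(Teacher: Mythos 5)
Your proposal is correct and follows essentially the same route as the paper: the atlas comes from composing an atlas of $\XX$ with the epimorphism $\pq$, and the diagonal is handled by base-changing along the epimorphism $\pq\times\pq$ (Lemma \ref{L:representable2}), identifying the pullback with $(\pr_2,\mu)\: G\times\XX \to \XX\times\XX$ via the $2$-cartesian square of $\S$\ref{SS:properties}, and applying Lemma \ref{L:representable1} after observing that $\mu$ is representable because it differs from a projection by the shearing automorphism $(g,x)\mapsto(g,g\cdot x)$. If anything, your treatment is slightly more careful than the paper's at one point: you correctly note that Lemma \ref{L:representable1} requires the diagonal of the \emph{source} $G\times\XX$ to be representable (which holds since $G\times\XX$ is a topological stack), whereas the paper only cites representability of the diagonal of $\XX$.
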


\begin{proof}
   We need to prove two things.
   
   \medskip
   \noindent{\em The diagonal 
   $\Delta \: [G\backslash \XX] \to [G\backslash \XX]\times [G\backslash \XX]$ 
   is representable.} To see this, we consider the 2-cartesian diagram
    $$\xymatrix@M=6pt{\XX\times_{[G\backslash \XX]}\XX \ar[r]  \ar[d]_{(\pr_1,\pr_2)} &  
      [G\backslash \XX] \ar[d]^{\Delta} \\
    \XX\times \XX \ar[r]_(0.4){\pq\times\pq} & [G\backslash \XX]\times [G\backslash \XX]}$$
  Since the map $q\times q$ is an epimorphism, it is enough to prove that the
  map $(\pr_1,\pr_2) \: \XX\times_{[G\backslash \XX]}\XX  \to  \XX\times \XX$
  is representable. As we saw in \S~\ref{SS:quotient}, this map is equivalent to
  the map $(\pr_2,\mu) \: G\times \XX \to \XX\times \XX$. The map
  $\pr_2 \: G\times \XX \to \XX$ is clearly representable. On the other hand,
   $\mu \: G\times \XX \to \XX$ is equivalent to $\pr_1$ as a map, so
  $\mu$ is  also representable. On the other hand, the diagonal $\XX \to \XX\times
  \XX$ is representable because $\XX$ is a topological stack. It follows from
  Lemma \ref{L:representable1} that  
  $(\pr_1,\pr_2) \: \XX\times_{[G\backslash \XX]}\XX  \to  \XX\times \XX$
  is  representable.
  
    \medskip
   \noindent{\em The stack $[G\backslash \XX]$ admits an atlas.} Let $X \to \XX$
   be an atlas for $\XX$. Since $q \: \XX \to [G\backslash \XX]$ is an epimorphism,
   the composition $X \to  \XX \to [G\backslash \XX]$ is an epimorphism, hence
   is an atlas for $[G\backslash \XX]$. 
\end{proof}

Using Proposition \ref{P:toplogicalquotient} we can give an explicit groupoid presentation
for $[G\backslash \XX]$ starting from a groupoid presentation
$[R \toto X]$ for $\XX$. Consider the action map $\mu \: G\times \X \to \X$.
It corresponds to a  bibundle 
        $$\xymatrix@M=6pt@C=0pt@R=10pt{ & E \ar[rd]^{\mu_2} \ar[ld]_{\mu_1} & \\
      G\times X && X}$$
The claim is that $[E \toto X]$,   
with source and target maps $s=\mu_2$ and $t=\pr_2\circ\mu_1$,
is a groupoid presentation for $[G\backslash \XX]$. It is in fact easy to see why this is the case by
staring at the 2-cartesian diagram
     $$\xymatrix@M=6pt{E \ar[r]\ar[d]_{\mu_1} \ar@/^1pc/[rr]^{\mu_2} 
         &  G\times X \ar[r]_{\pr_2} \ar[d]^{\psi} & X \ar[d]^p \\
         G\times X \ar[r]^{\id_G\times p}\ar[d]_{\pr_2} &  G\times \XX \ar[r]^{\mu}\ar[d]^{\pr_2} & 
              \XX \ar[d]^{\pq} \\
         X \ar[r]_{p} & \XX \ar[r]_{\pq} &  [G\backslash \XX] \\
                         }$$
Here $\psi \: G \times X \to G\times \XX$ is the map $(g,x) \mapsto (g,g^{-1}\cdot p(x)\big)$. Perhaps
it is helpful to remind the reader that, in general, the  bibundle $E$ associated
to a morphism of topological stacks $f \: \XX \to \YY$ given by groupoid presentations $[R\toto X]$ and
$[S\toto Y]$ is defined by the  2-cartesian diagram
 $$\xymatrix@M=6pt{E \ar[d] \ar[rr] & & Y \ar[d]^{p_Y} \\
   X \ar[r]_{p_X} & \XX \ar[r]_f & \YY }$$
                         
We now work out the composition rule in $[E \toto X]$. This relies on the analysis of the axioms 
of a group action (Definition \ref{D:action}) in
terms of bibundles. Consider the commutative square in Definition \ref{D:action}.
The composition
        $$\xymatrix@M=6pt@C=12pt{ G\times G \times \X \ar[rr]^{\id_G\times \mu} && G\times \X  
         \ar[r]^(0.57){\mu} & \X  }$$
is given by the bibundle $E\times_{s,X,t} E$ from $G\times G \times X$ to $X$ as in the diagram
  $$\xymatrix@M=6pt{  && E\times_{s,X,t} E \ar[dl]_{\pi_1}  \ar[dr]^{\pi_2} && \\
     & G\times E  \ar[dl]_{\id_G\times \mu_1}  \ar[dr]^{\id_G\times \mu_1} && E \ar[dl]_{\mu_1}  \ar[dr]^{\mu_2} & \\
  G\times G \times X \ar@{..>}[rr]_(0.47){\id_G\times \mu} && G\times X  
         \ar@{..>}[rr]_(0.57){\mu} && X
  }$$
where the maps in the cartesian square are $\pi_1(u,v)=(\pr_1\mu_1(v),u)$ and  $\pi_2(u,v)=v$. Similarly,
the composition
        $$\xymatrix@M=6pt@C=12pt{ G\times G \times \X \ar[rr]^{m\times \id_{\X}} && G\times \X  
         \ar[r]^(0.57){\mu} & \X  }$$
is given by the bibundle $B$
    $$\xymatrix@M=6pt{  && B \ar[dl]_{\pi_1'}  \ar[dr]^{\pi_2'} && \\
     & G\times G \times R  \ar[dl]_{\id_{G\times G}\times \tau}  \ar[dr]^{m\times \sigma} && E \ar[dl]_{\mu_1}  \ar[dr]^{\mu_2} & \\
  G\times G \times X \ar@{..>}[rr]_(0.47){m\times \id_{\X}} && G\times X  
         \ar@{..>}[rr]_(0.57){\mu} && X
  }$$
where $\sigma,\tau \: R \to X$ are the source and target maps of groupoid presentation of $\X$. The 2-isomorphism
$\al$ in  Definition \ref{D:action} corresponds to an isomorphism $E\times_{s,X,t} E \to B$ of bibundles. Composing this
with the projection $\pi_2' \: B \to E$ gives rise to the desired composition map $E\times_{s,X,t} E \to E$.
 
\medskip 
The above discussion immediately implies the following. 

\begin{prop}{\label{P:differentiable}}
   Let $\XX$ be a differentiable stacks and $G$ a Lie group acting smoothly on $\XX$.
   Then  $ [G\backslash \XX]$ is a differentiable stack.
\end{prop}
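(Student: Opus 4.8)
The plan is to prove Proposition \ref{P:differentiable} by transporting the entire topological argument into the smooth setting, relying on the bibundle presentation constructed in the preceding discussion. The key observation is that Proposition \ref{P:toplogicalquotient} already gives us that $[G\backslash\XX]$ is a topological stack; since differentiable stacks are precisely the topological stacks admitting a presentation by a Lie groupoid, it suffices to exhibit a Lie groupoid $[E\toto X]$ presenting $[G\backslash\XX]$ whose structure maps are smooth and whose source and target maps are surjective submersions. The candidate groupoid is exactly the one built above: starting from a Lie groupoid presentation $[R\toto X]$ of the differentiable stack $\XX$, we form the bibundle $E$ associated to the action map $\mu\: G\times\XX\to\XX$, with $s=\mu_2$ and $t=\pr_2\circ\mu_1$.

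First I would verify that $E$ is a manifold and that the relevant maps are smooth. Since $\mu$ is a smooth morphism of differentiable stacks and $[R\toto X]$, $[G\times R\toto G\times X]$ are Lie groupoid presentations, the bibundle $E$ defining $\mu$ (via the $2$-cartesian diagram $E=\XX\times_{\pq\circ p_X,\,[G\backslash\XX],\,\mu}(G\times X)$ spelled out in the excerpt) is a smooth manifold: it is a fiber product of manifolds along the appropriate maps, and smoothness of $\mu$ guarantees the transversality/submersion needed for the fiber product to remain a manifold. The anchor maps $\mu_1$ and $\mu_2$ are smooth by construction. Next I would check that the source and target maps $s=\mu_2$ and $t=\pr_2\circ\mu_1$ are surjective submersions; surjectivity follows from $p\: X\to\XX$ and $\XX\to[G\backslash\XX]$ being epimorphisms (so their composite, which factors through $E$, covers the atlas), and the submersion property follows because $E\to G\times X\to X$ is built from submersions of a Lie groupoid presentation together with the projection $\pr_2$, which is a submersion.

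Then I would confirm that the composition map $E\times_{s,X,t}E\to E$ is smooth. This is where the analysis of the group-action axioms in terms of bibundles pays off: the composition is obtained by identifying $E\times_{s,X,t}E\cong B$ via the $2$-isomorphism $\al$ of Definition \ref{D:action} and then applying the smooth projection $\pi_2'\: B\to E$. Since $\al$ is a smooth $2$-isomorphism of bibundles between differentiable stacks, the induced isomorphism $E\times_{s,X,t}E\to B$ is a diffeomorphism, and $\pi_2'$ is visibly smooth, so the composition is smooth; smoothness of the unit and inverse maps is checked analogously using axiom (A2) and the smooth inversion data, respectively.

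The main obstacle I expect is purely the bookkeeping needed to certify that every fiber product appearing (in particular $E$ itself and $E\times_{s,X,t}E$) is genuinely a smooth manifold rather than merely a topological space, i.e.\ that the maps being pulled back along are submersions so that the fiber products inherit a manifold structure. This is exactly the transversality information encoded in ``$\mu$ acts smoothly'' and in the requirement that source/target maps of Lie groupoids be surjective submersions; once this is in hand the rest is a translation of the topological proof. Since the excerpt explicitly says ``the above discussion immediately implies the following,'' the intended proof is short: having already written down the bibundle groupoid $[E\toto X]$ with its smooth composition in the differentiable world, one simply observes that all of its structure maps are smooth and $s,t$ are surjective submersions, whence $[G\backslash\XX]$ is presented by a Lie groupoid and is therefore a differentiable stack.
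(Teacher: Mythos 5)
Your proposal is correct and follows essentially the same route as the paper: the paper's own ``proof'' is just the remark that the preceding bibundle discussion immediately gives the result, i.e.\ the groupoid $[E\toto X]$ built from the action map $\mu$ is a Lie groupoid (with $E$ a manifold since it is a fiber product along submersions, $s,t$ surjective submersions, and composition smooth via the bibundle isomorphism induced by $\al$), so it presents $[G\backslash\XX]$ as a differentiable stack. Your write-up simply makes explicit the smoothness checks the paper leaves implicit, with only a harmless notational slip in your description of the fiber product defining $E$ (it should be $(G\times X)\times_{\XX}X$, not a fiber product over $[G\backslash\XX]$).
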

\begin{rem}
 The same discussion will also apply to other kind of geometric stacks, for instance for analytic or complex stacks. 
 Further, if $\XX$ is an orbifold and $G$ is finite, then $[G\backslash \XX]$ is an orbifold as well. 
\end{rem}

\section{Equivariant (co)homology of stacks}{\label{S:EquivariantCoh}}

Let $\XX$ be a topological stack with an action of a topological group $G$. We saw in
Proposition \ref{P:toplogicalquotient} that the quotient stack $[G\backslash \XX]$ is again a 
topological stack. We can apply the definitions in~\cite[\S~11,12]{Homotopytypes} to define
homotopy groups, (co)homology theories, etc., for $[G\backslash \XX]$ as recalled in 
\S~\ref{SS:classifying}. The resulting theories
are regarded as $G$-equivariant theories for $\XX$.

For example, let $H$ be the singular homology (with coefficients in any ring). Let
$(\XX,\AA)$ be a $G$-equivariant pair, namely, $\XX$ is a topological stack with 
a $G$-action, and $\AA$ is a $G$-invariant substack. 
\begin{defn} We define the {\em $G$-equivariant
singular homology} of the pair  $(\XX,\AA)$ to be
  $$H^G_*(\XX,\AA):=H_*(Y,B),$$
where $Y \to [G\backslash \XX]$ is a classifying space for $[G\backslash \XX]$
(\S~\ref{SS:classifying}), and $B\subseteq Y$ is the inverse image of $\AA$ in $Y$.
\end{defn}

More generally,  if $h$ is a (co)homology theory for topological 
spaces that is invariant under weak equivalences, we can define $G$-equivariant  
(co)homology  $h_G(\XX)$ for a $G$-equivariant stack $\XX$ (or a pair of topological
stacks) using the same procedure. 

The functoriality of the construction of the quotient stack $[G\backslash \XX]$ implies that
a $G$-equivariant morphism $f \: \XX \to \GG$ induces an natural morphism
$h(f) \:  h_G(\XX) \to h_G(\YY)$ on $G$-equivariant homology (in the covariant case)
or $h(f) \:  h_G(\YY) \to h_G(\XX)$ on  $G$-equivariant cohomology (in the contravariant case).
\begin{lem}
If $f, f' \: \XX\to\YY$ are related by a $G$-equivariant 2-morphism, the induced maps
on $G$-equivariant (co)homology are the same.
\end{lem}
\begin{proof}
 Since $f, f'$ are $G$-equivariantly 2-isomorphic, the induced maps $[f], [f']: [G\backslash \XX] \to [G\backslash \YY]$ are 
 $2$-isomorphic as well. 
\end{proof}

\subsection{(Co)homology theories that are only homotopy invariant}{\label{SS:hoparacompact}}

There are certain (co)homology theories that are only invariant under homotopy equivalences
of topological spaces. Among these are certain sheaf cohomology theories or \v{C}ech type
theories. 

As discussed in~\cite[\S~11.1]{Homotopytypes}, such (co)homology theories can be extended
to topological stacks that admit a paracompact classifying space $\varphi \: X \to \XX$ 
(satisfying the condition of Theorem \ref{T:nice}). 

\begin{prop}{\label{P:hoparacompact}}
  Let $\XX$ be a topological stack and 
  and $G$ a topological group acting on it. Let
  $[R\toto X]$ a topological groupoid presentation for $\XX$.
   Assume that  $R$, $X_0$ and $G$ are metrizable.
   Then, the quotient stack $[G\backslash \XX]$ admits a paracompact classifying space
   (which satisfies the condition of Theorem \ref{T:nice}).  
\end{prop}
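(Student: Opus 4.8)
The plan is to reduce the statement to an assertion about an \emph{explicit} groupoid presentation of $[G\backslash\XX]$ and then to invoke the construction of paracompact classifying spaces for metrizable topological groupoids from \cite{Homotopytypes}. Concretely, I would start from the presentation $[E\toto X]$ of $[G\backslash\XX]$ produced in $\S$\ref{SS:presentation} following Proposition \ref{P:toplogicalquotient}: here $[R\toto X]$ is the given presentation of $\XX$ (so $X=X_0$), and $E$ is the bibundle associated to the action map $\mu\:G\times\XX\to\XX$, with source and target $s=\mu_2$ and $t=\pr_2\circ\mu_1$. Thus it suffices to check that $[E\toto X]$ is a \emph{metrizable} groupoid, i.e., that both its object space $X$ and its arrow space $E$ are metrizable; the object space $X=X_0$ is metrizable by hypothesis, so the content is the metrizability of $E$.

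The first key step is therefore the metrizability of $E$. By construction ($\S$\ref{SS:presentation}) $E$ is the $2$-fiber product $(G\times X)\times_{G\times\XX}(G\times X)$, one leg being the atlas $\id_G\times p$ of $G\times\XX$, whose morphism space is $G\times R$. Unwinding this $2$-fiber product realizes $E$ as a space obtained from the metrizable spaces $G$, $X$ and $R$ by finitely many $2$-fiber products over $\XX$ and $G\times\XX$. Since each such fiber product of (Hausdorff) spaces embeds as a closed subspace of the corresponding ordinary product, and finite products and closed subspaces of metrizable spaces are again metrizable, $E$ is metrizable. Hence $[E\toto X]$ is a metrizable topological groupoid presenting $[G\backslash\XX]$.

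The second key step — and the one I expect to be the main obstacle — is passing from a metrizable groupoid to a paracompact classifying space of the shrinkable type demanded by Theorem \ref{T:nice}. Here I would not re-prove the point-set topology but appeal directly to the classifying-space construction of \cite{Homotopytypes}: the classifying space of $[G\backslash\XX]$ is the (fat) geometric realization of the nerve of $[E\toto X]$, whose simplicial terms $X,\ E,\ E\times_X E,\dots$ are all fiber products of the metrizable spaces $E$ and $X$ and hence again metrizable. The genuine difficulty is that the realization of such a simplicial space is paracompact and that the canonical map to the stack is a universal weak equivalence satisfying the condition of Theorem \ref{T:nice}; this is precisely the metrizable case treated in \cite{Homotopytypes}, which I would cite. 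Combining the two steps produces a paracompact classifying space $\varphi\:Y\to[G\backslash\XX]$ of the required form, completing the argument.
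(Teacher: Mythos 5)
Your overall strategy coincides with the paper's: pass to the explicit presentation $[E\toto X]$ of $[G\backslash\XX]$ from \S\ref{SS:presentation}, prove that the arrow space $E$ is metrizable, and then invoke the classifying-space result of \cite{Homotopytypes} (the paper cites its Proposition 8.5) to get a paracompact classifying space satisfying Theorem \ref{T:nice}. However, your argument for the metrizability of $E$ --- which is the actual mathematical content of the proposition --- rests on a false claim. You assert that a $2$-fiber product of (Hausdorff) spaces over a \emph{stack} embeds as a closed subspace of the ordinary product. This fails: the map $T_1\times_{\XX}T_2 \to T_1\times T_2$ is the base change of the diagonal $\XX\to\XX\times\XX$, which is representable but is not an embedding in general. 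For instance, with $\XX=[*/H]$ for a topological group $H$, one has $*\times_{\XX}*\cong H$, which does not embed into $*\times *$; concretely, if $G$ acts trivially on $\XX=[*/H]$, then $E\cong G\times H$ while the ambient product is just $G$. So the ``closed subspace of a metrizable product'' argument only applies to fiber products over a \emph{space}, and cannot by itself give metrizability of $E$. (A lesser issue: $E$ is $(G\times X)\times_{\XX}X$, with legs $\mu\circ(\id_G\times p)$ and $p$; the space $(G\times X)\times_{G\times\XX}(G\times X)$ you write down is $G\times R$, not $E$.)

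The missing idea is exactly the paper's Lemma \ref{L:metrizable}. The structure map $E\to G\times X$ is a \emph{locally trivial torsor} for the groupoid $[R\toto X]$, and local triviality is what reduces the problem to fiber products over the space $X$: locally over the metrizable (hence paracompact) base $G\times X$, the space $E$ is homeomorphic to $T_i\times_X R_i\subseteq T_i\times R_i$, which is metrizable. One must then reassemble these local pieces, and metrizability is not a local property for free: the paper does this via the Smirnov metrization theorem, verifying that $E$ is locally metrizable, paracompact (using a locally finite trivializing cover, available because the base is paracompact, together with paracompactness of each piece), and Hausdorff (checked directly, separating points either over distinct base points or within a single chart). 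Without exploiting this torsor structure and the paracompactness of the base, your proof does not go through, even though your remaining steps and the final appeal to \cite{Homotopytypes} agree with the paper's.
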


Before proving the proposition we need a lemma.

\begin{lem}{\label{L:metrizable}}
  Let $[R\toto X]$ be a topological groupoid such that $R$ is metrizable. Let $T$ be a metrizable
  topological space and $f \: E \to T$ a (locally trivial) torsor for $[R\toto X]$. Then, $E$ is metrizable.
\end{lem}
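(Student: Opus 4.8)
The plan is to reduce metrizability of $E$ to a local statement via the local triviality of the torsor, and then to globalize it using a metrization theorem, exploiting throughout that the base $T$ is metrizable (hence paracompact and normal).

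First I would record that the object space $X$ is metrizable. The unit section $u\:X\to R$ (sending each object to its identity arrow) is a section of the source map $s$, since $s\comp u=\id_X$, and therefore a topological embedding. Moreover its image equals $\{r\in R : r=u(s(r))\}$, the equalizer of $\id_R$ and $u\comp s$, which is closed because $R$ is Hausdorff. Thus $X$ is (homeomorphic to) a closed subspace of the metrizable space $R$, and so $X$ is metrizable and in particular Hausdorff. Next, local triviality of the torsor $f\:E\to T$ gives an open cover $\{U_i\}$ of $T$ together with maps $\phi_i\:U_i\to X$ for which $f^{-1}(U_i)$ is isomorphic to a trivial torsor sitting as a subspace of $U_i\times R$. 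Since $U_i$ (as a subspace of the metrizable $T$) and $R$ are metrizable, so is $U_i\times R$, and hence so is its subspace $f^{-1}(U_i)$. Therefore $E$ is covered by the open metrizable subspaces $f^{-1}(U_i)$; in particular $E$ is locally metrizable. A routine local-to-global argument then shows that $E$ is Hausdorff and regular: each fiber of $f$ lies entirely in a single chart $f^{-1}(U_i)$, so two points in a common fiber are separated inside one metrizable chart, while points in distinct fibers are separated by pulling back disjoint opens from the Hausdorff (regular) space $T$.

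The main work is to upgrade local metrizability to genuine metrizability. Since $T$ is metrizable it is paracompact, so I may refine $\{U_i\}$ to a locally finite open cover $\{V_j\}$, each $V_j$ still contained in some $U_i$ and hence with $f^{-1}(V_j)$ metrizable; by the shrinking lemma in the normal space $T$, I then find a closed cover $\{C_j\}$ with $C_j\subseteq V_j$. The family $\{f^{-1}(C_j)\}$ is a closed cover of $E$ which is locally finite, being the preimage of a locally finite family, and each member $f^{-1}(C_j)$ is a closed subspace of the metrizable space $f^{-1}(V_j)$, hence metrizable. At this point I would invoke Michael's theorem, that a regular space which is the union of a locally finite closed family of paracompact subspaces is paracompact, to conclude that $E$ is paracompact, and then Smirnov's metrization theorem, that a paracompact locally metrizable Hausdorff space is metrizable, to conclude that $E$ is metrizable. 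Alternatively one may apply directly the metrization theorem asserting that a regular space admitting a locally finite closed cover by metrizable subspaces is itself metrizable.

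The only genuine obstacle is this final globalization step: local metrizability alone never forces metrizability, so the metrizability of the base $T$ must be used to manufacture a locally finite closed cover of $E$ by metrizable pieces, after which the cited metrization theorems apply. The two technical points that require care, though both are routine, are verifying the regularity of $E$ and checking that the pulled-back cover $\{f^{-1}(C_j)\}$ is locally finite and closed.
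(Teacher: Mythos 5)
Your proof is correct and follows essentially the same route as the paper's: Smirnov's metrization theorem reduces the problem to local metrizability (obtained from local triviality of the torsor, exactly as you do), Hausdorffness (separating two points either by pulling back disjoint opens from $T$ or inside a single metrizable chart), and paracompactness (derived from a locally finite cover that exists because the metrizable base $T$ is paracompact). The only divergence is in the paracompactness step, where the paper asserts directly that the locally finite \emph{open} cover by metrizable charts forces $E$ to be paracompact, whereas you pass to a closed shrinking in the normal base and invoke Michael's theorem on locally finite closed covers of regular spaces --- a more careful argument that in fact supplies the justification the paper's version of this step glosses over.
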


\begin{proof}
  By Smirnov Metrization Theorem, we need to show that $E$ is locally metrizable, Hausdorff and
  paracompact. By local triviality of $E$ over $T$, we can find  an open cover $\{E_i\}$
  of $E$ such that each $E_i$ is homeomorphic to $T_i\times_X R_i$, where $T_i$ is an open
  subspace of $T$ trivializing $E$, and $R_i$ a subspace of $R$. 
  (The map $T_i \to X$ in the fiber product is the
  composition of the trivializing section $s_i \: T_i \to E$ with the structure map $E \to X$
  of the torsor.) It follows that $E_i=T_i\times_X R_i \subseteq T_i \times R_i$ is metrizable. Furthermore,
  since $T$ is metrizable (hence paracompact) we may assume that the open cover
  $\{T_i\}$ is locally finite. Hence, so is the open cover $\{E_i\}$ of $E$. Since each $E_i$ is metrizable
  (hence paracompact) it follows that $E$ is paracompact. Finally, to prove that $E$ is Hausdorff,
  pick two points $x$ and $y$ in $E$. If $f(x)$ and $f(y)$ are different, then we can separate them
  in $T$ by open sets $U$ and $V$, so $f^{-1}(U)$ and $f^{-1}(V)$ separate $x$ and $y$ in
  $E$. If $f(x)=f(y)$, then
  $x$ and $y$ are in some $E_i$. Since $E_i$ is Hausdorff (because it is metrizable)
  we can separate $x$ and $y$. This proves the lemma.
\end{proof}

Now we come to the proof of Proposition \ref{P:hoparacompact}.

\begin{proof}
   Consider the  explicit groupoid presentation $[E\toto X]$
  for $[G\backslash \XX]$ described in \S~\ref{SS:presentation}. 
   Recall that $E$ is a   bibundle 
        $$\xymatrix@M=6pt@C=0pt@R=10pt{ & E \ar[rd]^{\mu_2} \ar[ld]_{\mu_1} & \\
      G\times X && X}$$
  Since $E\to G\times G\times X$ is torsor for $[R\toto X]$, and $G\times X$
  is metrizable, it follows from  Lemma \ref{L:metrizable} that $E$ is metrizable. 
  The proposition follows from Proposition 8.5 of~\cite{Homotopytypes}.
\end{proof}

As a consequence, we see that   if $G$ and $\XX$ satisfy any of the conditions in
Proposition \ref{P:hoparacompact}, then any (co)homology theory $h$ that is invariant under homotopy
equivalences of topological spaces can be defined $G$-equivariantly for $\XX$. The resulting
(co)homology $h_G(\XX)$ is functorial in $\XX$ and in invariant under 2-morphisms.


\section{Group actions on mapping stacks}{\label{S:actionmapping}}

\subsection{Group actions on mapping stacks}{\label{SS:mappingaction}}

Let $\XX$ be a stack and $G$ a sheaf of groups. There is a natural  strict left $G$-action
   $$\mu \: G\times \Map(G,\XX) \to \Map(G,\XX)$$
on the mapping stack $\Map(G,\XX)$
induced from the right multiplication of $G$ on $G$. We spell out how this
works. Let $T$ be in $\sfT$ and $g \in G(T)$. We want to define the
action of $g$ on the groupoid $\Map(G,\XX)(T)$. Let $f \in \Map(G,\XX)(T)$ be an
object in this groupoid.
By definition of the mapping stack, $f$ is a map $f \: G\times T \to \XX$. We define
$g\cdot f$ by the rule $(g\cdot f)(a,t)=f(ag,t)$. More precisely, $g\cdot f$ is
the composition $f\circ m_g \: G\times T\to \XX \in \Map(G,\XX)(T)$, where
$m_g \:  G\times T \to  G\times T$ is the composition
   $$\xymatrix@M=6pt@C=48pt{  G\times T \ar[r]^(0.42){(\id_G,g)\times\id_T} & 
         (G  \times G)  \times T
         \ar[r]^(0.56){m\times \id_T} &  G\times T.}$$ 
Here, $m \: G\times G \to G$ is the multiplication in $G$.
The action of $g$ on arrows of $\Map(G,\XX)(T)$ is defined similarly.

Given a map $\XX \to \YY$ of stacks, the induced map $\Map(G,\XX) \to \Map(G,\YY)$ 
is strictly $G$-equivariant.

The case we are interested in is where $G=S^1$ is the circle. We find that the loop stack
$\LXX$ has a natural strict $S^1$-action.

\subsection{Interpretation of   $[G\backslash\!\Map(G,\XX)]$ in terms of 
torsors}{\label{SS:quotientviatorsor}}

We saw in \S~\ref{SS:mappingaction} that for every stack $\XX$ and every sheaf of groups $G$,
the mapping stack $\Map(G,\XX)$ has a natural left $G$-action. Our goal is to understand
the quotient stack $[G\backslash\!\Map(G,\XX)]$ of this action in the spirit of 
\S~\ref{SS:torsors}.

Let $\XX$ be a stack with an action of a sheaf of groups $G$. Let $T$ be an object in $\sfT$.
We define the groupoid $\Prinu_{G,\XX}(T)$ as follows. 
 {\small   $$\ob\Prinu_{G,\XX}(T)=\left\{\begin{array}{rcl}
              (P,\chi) & | &  P\to T \ \ \text{left $G$-torsor}, \\
                    &   &    \chi \: P \to \XX \ \
              \text{morphism of stacks}   
                                                          \end{array}\right\}$$
      $$\Mor_{\Prinu_{G,\XX}(T)}\big((P,\chi),(P',\chi')\big)=\left\{\begin{array}{rcl}
              (u,\phi) & | &  u \: P \to P'  \ \text{map of $G$-torsors}, \\
                    &   &    \phi \: \chi \Ra \chi'\circ u \ \ \text{2-morphism}
                                                          \end{array}\right\}$$  }     
 We can enhance the above construction to a fibered groupoid $\Prinu_{G,\XX}$ over $\sfT$. In fact,
$\Prinu_{G,\XX}$ is a stack over $\sfT$. 

The definition of the stack $\Prinu_{G,\XX}$ is very similar to $\Prin_{G,\XX}$, except that we have
dropped the $G$-equivariance condition on $\chi$ and $\phi$. A $T$-point of $\Prinu_{G,\XX}$
should be regarded as a `family of $G$-torsors in $\XX$ parametrized by $T$'. In the case
when $G=S^1$, $\Prinu_{S^1,\XX}$ is the stack of unparametrized loops in $\XX$.

There is a natural morphism of stacks  
                    $$p \: \Map(G,\XX) \to \Prinu_{G,\XX}$$ 
which sends $f  \in 
\Map(G,\XX)$, $f \: G\times T \to \XX$, to the pair $(G\times T,f)$ in $\Prinu_{G,\XX}(T)$.
Here, we are viewing $G\times T$ as a trivial $G$-torsor over $T$.

\begin{prop}{\label{P:unparametrized}}
    There is a canonical (up to a unique 2-morphism) equivalence of stacks
      $$\Phi \: [G\backslash\!\Map(G,\XX)]\risom\Prinu_{G,\XX}$$
    making the diagram
     $$\xymatrix@M=6pt@C=-14pt@R=10pt{ & \Map(G,\XX) \ar[rd]^p \ar[ld]_{\pq} & \\
       [G\backslash\!\Map(G,\XX)]\ar[rr]^(0.55){\sim}_(0.55){\Phi} && \Prinu_{G,\XX} }$$
     canonically 2-commutative.   
\end{prop}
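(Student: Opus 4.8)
The cleanest route, and the one I would take, is to reduce to the torsor description already established in $\S$\ref{SS:torsors} and then carry out a single ``exponential law'' bookkeeping step. Applying the equivalence $[G\backslash\YY]\risom\Prin_{G,\YY}$ of $\S$\ref{SS:torsors} to the $G$-stack $\YY=\Map(G,\XX)$ (which carries the $G$-action of $\S$\ref{SS:mappingaction}) yields a canonical equivalence
$$[G\backslash\Map(G,\XX)]\;\risom\;\Prin_{G,\Map(G,\XX)},$$
whose objects over $T$ are pairs $(P,\eta)$ with $P\to T$ a left $G$-torsor and $\eta\:P\to\Map(G,\XX)$ a $G$-equivariant map. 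So it remains to produce a canonical equivalence $\Prin_{G,\Map(G,\XX)}\risom\Prinu_{G,\XX}$ matching $(P,\eta)$ with a pair $(P,\chi)$ where now $\chi\:P\to\XX$ is an \emph{arbitrary} (non-equivariant) map.

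The key point is that, by the exponential law (Lemma \ref{L:exponential}), a morphism $\eta\:P\to\Map(G,\XX)$ is the same datum as a morphism $\widetilde\eta\:G\times P\to\XX$, and under the action $(g\cdot f)(a)=f(ag)$ of $\S$\ref{SS:mappingaction} the $G$-equivariance of $\eta$ translates into the single relation $\widetilde\eta(ag,p)=\widetilde\eta(a,g\cdot p)$. I would then send $(P,\eta)$ to $(P,\chi)$ with $\chi:=\widetilde\eta(1,-)\:P\to\XX$, and conversely send $(P,\chi)$ to the equivariant map given by $\widetilde\eta(a,p):=\chi(a\cdot p)$, where $a\cdot p$ denotes the torsor action $G\times P\to P$. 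The computation $\widetilde\eta(a,p)=\widetilde\eta(1,a\cdot p)=\chi(a\cdot p)$ (valid for equivariant $\widetilde\eta$) shows these assignments are mutually inverse, and they are manifestly functorial in $T$ and on arrows; this gives the desired equivalence $\Prin_{G,\Map(G,\XX)}\risom\Prinu_{G,\XX}$, and $\Phi$ is the composite with the equivalence of the first paragraph.

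To see the triangle $2$-commutes, I would trace an object $f\in\Map(G,\XX)(T)$ through the composite: $\pq$ carries $f$ to the trivialized pair $(G\times T,\chi_f)$ with $\chi_f(h)=h\cdot f$, and the identification above replaces this by the pair $(G\times T,\chi)$ whose non-equivariant map $\chi$ is the transpose of $\chi_f$ evaluated at the identity of the acting copy of $G$; a one-line check returns $\chi=f$, so the image in $\Prinu_{G,\XX}$ is exactly $(G\times T,f)=p(f)$. Equivalently — and this is the presentation that makes the triangle most transparent — one can build $\Phi$ by hand as the stackification of a fully faithful prestack morphism $\lfloor G\backslash\Map(G,\XX)\rfloor\to\Prinu_{G,\XX}$ sending $f\mapsto(G\times T,f)$ on objects and an arrow $(\gamma,g)$ to $(m_g,\gamma)$, with $m_g$ right multiplication by $g$ and $\gamma\:f\Ra f'\circ m_g$ read off from $g\cdot f'=f'\circ m_g$ (Remark \ref{R:strict}). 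This is literally the definition of $F_{pre}$ from $\S$\ref{SS:torsors} with the equivariance conditions on $\chi$ and $\phi$ deleted; since every $G$-torsor is locally trivial, $\Prinu_{G,\XX}$ is the stackification of its trivialized part, so stackification turns this fully faithful morphism into the equivalence $\Phi$ (Definition \ref{D:quotientstack}).

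The routine but slightly delicate part — which I regard as the main obstacle — is the bookkeeping of the \emph{two} copies of $G$: the group that acts on everything, and the ``domain'' copy appearing in $\Map(G,\XX)$. One must keep the conventions of $\S$\ref{SS:mappingaction} and of the torsor action straight in order to verify that the equivariance relation $\widetilde\eta(ag,p)=\widetilde\eta(a,g\cdot p)$ is \emph{exactly} matched, via $\chi=\widetilde\eta(1,-)$, by an unconstrained map $\chi\:P\to\XX$; collapsing the equivariant datum into a bare map is precisely what converts $\Prin_{G,\Map(G,\XX)}$ into $\Prinu_{G,\XX}$, and it is the one place where the specific form of the $G$-action of $\S$\ref{SS:mappingaction} is genuinely used. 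As in $\S$\ref{SS:torsors}, the only choices involved (an inverse to a trivialization, or the map $f_x$ attached to a section) are unique up to unique $2$-morphism, which yields the canonicity up to a unique $2$-morphism asserted in the statement.
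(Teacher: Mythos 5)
Your strategy is the same as the paper's: both proofs identify $[G\backslash\!\Map(G,\XX)]$ with its torsor description from \S\ref{SS:torsors} (applied to $\YY=\Map(G,\XX)$), use the exponential law to rewrite an equivariant map $P\to\Map(G,\XX)$ as a map $G\times P\to\XX$, and collapse the equivariance by evaluation at $1\in G$; like you, the paper leaves the 2-commutativity of the triangle to the reader.

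There is, however, one genuine gap: you treat $G$-equivariance as a strict equation, whereas in this paper it is \emph{data}. By Definition \ref{D:morphism}, a $G$-equivariant morphism $\eta\:P\to\Map(G,\XX)$ is a morphism \emph{together with} a 2-morphism $\sigma$, which under the exponential law gives arrows $\sigma_g^{a,p}\:\widetilde\eta(a,gp)\to\widetilde\eta(ag,p)$ subject to the cocycle condition $\sigma_g^{a,hp}\sigma_h^{ag,p}=\sigma_{gh}^{a,p}$. This is exactly how the paper unravels the $T$-points of $[G\backslash\!\Map(G,\XX)]$: they are triples $(P,\chi,\sigma)$, not pairs satisfying your relation $\widetilde\eta(ag,p)=\widetilde\eta(a,g\cdot p)$. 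Consequently your two assignments are not mutually inverse on the nose: restricting $(P,\widetilde\eta,\sigma)$ to $\chi:=\widetilde\eta(1,-)$ and re-expanding to $(a,p)\mapsto\chi(a\cdot p)$ produces a strictly equivariant map that is only canonically 2-isomorphic to $\widetilde\eta$, the 2-isomorphism being assembled from $\sigma$; that this is well defined and coherent is where the cocycle condition gets used, and it is the real content behind ``canonical up to a unique 2-morphism'' in the statement (the paper records it as: $\chi$ is determined by $\chi_1$ \emph{uniquely up to a unique 2-morphism}). The same care is needed on arrows: a morphism of $T$-points is a pair $(u,\phi)$ with $\phi$ subject to a compatibility involving $\sigma$ and $\sigma'$, and one must check that $\phi\mapsto\phi_1$ is a bijection using that compatibility. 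None of this derails your route --- it is precisely the bookkeeping the paper's proof carries out --- but as written your ``one-line check'' establishes the equivalence only on the strictly equivariant part of the $T$-points, which for a genuinely stacky $\XX$ (say $\XX=[H\backslash X]$ as in Example \ref{E:unparametrized}, where the 2-morphism data encodes an $H$-torsor) is not all of them.
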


\begin{proof}
  We define the effect of $\Phi$ on objects as follows.
  Let $T$ be an object in $\sfT$. As we saw in \S~\ref{SS:torsors}, a map 
$T \to  [G\backslash\!\Map(G,\XX)]$ is characterized by a pair $(P,\chi)$, where $P \to T$
is a left $G$-torsor and $\chi \: P \to  [G\backslash\!\Map(G,\XX)]$ is a $G$-equivariant
map. Unravelling the definition of $G$-equivariance, we find the following
description of $T$-points of $[G\backslash\!\Map(G,\XX)]$. A $T$-point of 
$[G\backslash\!\Map(G,\XX)]$ is given by 
 triple $(P,\chi,\sigma)$, where $P \to T$
is a left $G$-torsor,  $\chi \:  G \times P \to \XX$ is a morphism, and
$\sigma$ is a 2-morphism 
as in the diagram
         $$\xymatrix@R=22pt@C=36pt@M=5pt{  G \times G \times P
               \ars{[r]}{-1ex}{0.4}  ^(0.55){\id_G\times \mu}  
               \art{[d]}{1ex}{0.4}   _{m\times\id_P} 
                                  &  G\times P   \ar[d]^{\chi}     \ar  @/^/ @{=>}^{\sigma} "s";"t" \\
                                   G \times  P   \ar[r]_{\chi}       &   \XX        } $$  
where $\mu \: G \times P \to P$ is the  action of $G$ on $P$ and $m \: G \times G \to G$
is multiplication in $G$.
The following equality is required to be satisfied:
  \begin{itemize}
      \item $\sigma_g^{a,hp}\sigma_h^{ag,p}=\sigma_{gh}^{a,p}$, 
         for every $a,g,h$ in $G$ and $p$ in $P$.
  \end{itemize} 
 Here, $\sigma_g^{a,p}$ stands for the arrow $\sigma(a,g,p) \:  \chi(a,gp) \to \chi(ag,p)$.
 
 Observe that, because of the $G$-equivariance condition above,  
 $\chi\:  G \times P \to \XX$ is uniquely (up to a unique 2-morphism)
 determined by its restriction to $\{1\}\times P$, that is, by the composition
     $$\xymatrix@M=6pt@C=30pt{  \chi_1 \: P \ar[r]^{(1_G,\id_P)} & G\times P \ar[r]^{\chi}
      & \XX.}$$ 
 We define $\Phi(P,\chi,\sigma):=(P,\chi_1)$.    
    
Given two   $T$-point $(P,\chi,\sigma)$ and $(P',\chi',\sigma')$ in $[G\backslash\!\Map(G,\XX)]$, 
a morphism between them is a pair $(u,\phi)$, where 
$u \: P \to P' $ is a map of $G$-torsors and 
$\phi \: \chi \Ra \chi'\circ u$ is a 2-morphism satisfying 
     \begin{itemize}
      \item $\sigma_g^{a,p}\phi^{ag,p}=\phi^{a,gp}\sigma'_g{}^{a,u(p)}$, 
         for every $a,g$ in $G$ and $p$ in $P$.
  \end{itemize} 
  Here, $\phi^{a,p}$ stands for  the arrow $\phi(a,p) \: \chi(a,p) \to \chi'(a,u(p)\big)$. As in
  the case of $T$-points, the $G$-equivariance implies that
  an  $\phi$  is uniquely   determined by its restriction $\phi_1$ to $\{1\}\times P$, which
  is obtained by precomposing $\phi$ by 
 $(1_G,\id_P) \: P \to  G\times P$,
      $$\xymatrix@M=6pt@C=30pt{  P \ar[r]^(0.4){(1_G,\id_P)} & G\times P
      \ar@{}@<1.5ex> [r] | (0.55){}="s" \ar@<2ex>[r]^{\chi}
      \ar@{}@<-1.5ex>[r] | (0.55){.}="t" \ar@<-2ex>[r]_{\chi'\circ u} 
      \ar   @{=>}_{\phi} "s";"t"  
      & \XX.}$$ 
 We define $\Phi(u,\phi):=(u,\phi_1)$.
 
 We leave it to the reader to verify the last part of the proposition  (2-commutativity of the triangle).
\end{proof}

\begin{ex}{\label{E:unparametrized}}
   Suppose that $\X=[H\backslash X]$ is the quotient stack of the action of a topological group $H$ on
   a topological space $X$. Then, a $T$-point of $ [G\backslash\!\Map(G,\XX)]$ is
   a sequence
     $$Q\to  P\to T$$
   together with a continuous map $\chi \: Q \to X$, 
    where  $Q$ is an $H$-torsor over $P$  
    and $P$ is a $G$-torsor over $T$. The map $\chi$ is assumed to be
    $H$-equivariant. A morphism between such $T$-points is a
    commutative diagram
      $$\xymatrix@M=6pt@R=0pt{ Q \ar[dd]_{u_2} \ar[r] & P  \ar[dd]_{u_1} \ar[rd] &  \\
           &  &   T  \\
          Q'  \ar[r] & P'  \ar[ru] & }$$
     such that $u_1$ is $G$-equivariant, $u_2$ is $H$-equivariant, and $\chi=\chi'\circ u_2$.     
\end{ex}

\subsection{A slight generalization}{\label{SS:generalization}}

The set up being as in \S~\ref{SS:mappingaction}, let $H$ be 
another sheaf of groups, and $F\: G \to H$ a homomorphism. 
We have an induced morphism of mapping stacks
    $$\Map(H,\XX) \to \Map(G,\XX).$$
The homomorphism $F$ also gives rise to an action of $G$ on $\Map(H,\XX)$ 
making the above map a $G$-equivariant map.
Therefore, we have an induced map on the quotient stacks
  $$F^* \: [G\backslash\!\Map(H,\XX)] \to [G\backslash\!\Map(G,\XX)].$$

There is a torsor description  for $ [G\backslash\!\Map(H,\XX)] $ and $F^*$ as follows.
Let $\XX$ be a stack with an action of a sheaf of groups $G$. Let $T$ be an object in $\sfT$.
We define the stack $\Prinu_{G\to H,\XX}$ by the following rule: 
 {\small   $$\ob\Prinu_{G\to H,\XX}(T)=\left\{\begin{array}{rcl}
              (P,\chi) & | &  P\to T \ \ \text{left $G$-torsor}, \\
                    &   &    \chi \:  P_H \to \XX \ \
              \text{morphism of stacks}   
                                                          \end{array}\right\}$$
      $$\Mor_{\Prinu_{G\to H,\XX}(T)}\big((P,\chi),(P',\chi')\big)=\left\{\begin{array}{rcl}
              (u,\phi) & | &  u \: P \to P'  \ \text{map of $G$-torsors}, \\
                    &   &    \phi \:  {\chi} \Ra \chi'\circ u_H \ \ \text{2-morphism}
                                                          \end{array}\right\}$$  }     
Here, $P_H:=H\overset{G}{\times} P$ stands for extension of structure group from $G$ to $H$, and $u_H$ is the 
induced map on the extensions. As in Proposition \ref{P:unparametrized}, it can be shown that
there is a canonical (up to a unique 2-morphism) equivalence of stacks
      $$[G\backslash\!\Map(H,\XX)]\risom\Prinu_{G\to H,\XX}.$$
The torsor description of the map $F^* \: [G\backslash\!\Map(H,\XX)] \to [G\backslash\!\Map(G,\XX)]$
is given by
     $$ \Prinu_{G\to H,\XX} \to \Prinu_{G,\XX},$$
     $$(P,\chi) \mapsto (P,\chi\circ f),$$
where $f \: P \to P_H$ is the maps induced from the map $P \to H\times P$, $x \mapsto (1,x)$.

In the following examples  we consider two extreme case of the above construction.
\begin{ex}
 In the case where $H$ is the trivial group, denoted $1$,
  we have 
     $$[G\backslash\!\Map(1,\XX)]\cong\Prinu_{G\to 1,\XX}\cong BG\times \X\cong [G\backslash\!\X],$$
  where in the last term $G$ acts trivially on $\X$.    
   The map $F^*$ coincides with the map
       $$[G\backslash\!c] \:  [G\backslash\!\X] \to [G\backslash\!\Map(G,\XX)],$$
   where $c \: \X \to \Map(G,\XX)$ is the morphism parametrizing constant maps $G \to \XX$.
\end{ex}

\begin{ex}
 In the case where $G=1$ is the trivial group,
 we have
   $$[1\backslash\!\Map(H,\XX)]\cong \Map(H,\XX)\cong\Prinu_{1\to H,\XX},$$ 
  and the map $F^* \: \Map(H,\XX) \to \XX$ is the evaluation map at the identity element of $H$.
\end{ex} 
  
\subsection{Existence of a groupoid presentation for $[G\backslash\!\Map(H,\XX)]$}{\label{SS:atlas}}
   
\begin{prop}{\label{P:mappingquotient}}
  Let $G$ and $H$ be topological groups and  $G \to H$ a homomorphism. Assume that $H$ is
  compact.  Let $\XX$ be a topological stack. Then,
  $[G\backslash\!\Map(H,\XX)]$ is a topological stack.
\end{prop}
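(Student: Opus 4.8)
The plan is to deduce the statement formally from the two principal results already at our disposal: the representability of mapping stacks out of compact groupoids (Theorem \ref{T:mapping}) and the fact that the quotient of a topological stack by the action of a topological group is again a topological stack (Proposition \ref{P:toplogicalquotient}). The point is simply that $[G\backslash\!\Map(H,\XX)]$ is the quotient, by a natural $G$-action, of a stack that is already known to be topological, so it suffices to check that both cited results apply.

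First I would establish that $\Map(H,\XX)$ is a topological stack. Since $H$ is a compact topological group, its underlying space may be regarded as the trivial topological groupoid $[H\toto H]$, whose object and morphism spaces are both the compact space $H$. Thus $H$ is presented by a topological groupoid with compact $K_0=K_1=H$, and Theorem \ref{T:mapping} applies verbatim to yield that $\Map(H,\XX)$ is a topological stack. Note that here we use only the compactness of the underlying space of $H$, not its group structure.

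Next I would invoke the $G$-action. Recall from $\S$\ref{SS:mappingaction} and $\S$\ref{SS:generalization} that the homomorphism $G\to H$ equips $\Map(H,\XX)$ with a natural strict left $G$-action, induced from the right multiplication of $H$ on itself pulled back along $G\to H$. This is a continuous action of the topological group $G$ on the topological stack $\Map(H,\XX)$, and hence satisfies the hypotheses of Definition \ref{D:action}. Applying Proposition \ref{P:toplogicalquotient} to this action then gives immediately that $[G\backslash\!\Map(H,\XX)]$ is a topological stack, which is the desired conclusion. (Observe that $G$ itself need not be compact; compactness is required only of $H$.)

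The argument is thus essentially a composition of two black boxes, and the only delicate points are the two verifications that unlock them. The genuinely essential use of the hypothesis is that the compactness of $H$ produces the compact groupoid presentation demanded by Theorem \ref{T:mapping}; this is precisely the step that would break down for a noncompact $H$, and I expect it to be the main (though mild) obstacle. The remaining check, that the $G$-action of $\S$\ref{SS:generalization} is a bona fide continuous strict action so that Proposition \ref{P:toplogicalquotient} applies, is routine given the explicit formula for the action.
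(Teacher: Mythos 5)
Your proposal is correct and follows essentially the same route as the paper, whose entire proof is the one-line remark that the statement ``follows from Theorem \ref{T:mapping} and Proposition \ref{P:toplogicalquotient}.'' You have merely made explicit the two verifications the paper leaves implicit: that compactness of $H$ furnishes the compact groupoid presentation $[H\toto H]$ needed for Theorem \ref{T:mapping}, and that the $G$-action of $\S$\ref{SS:generalization} satisfies the hypotheses of Proposition \ref{P:toplogicalquotient}.
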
   

\begin{proof}
This follows from Theorem \ref{T:mapping} and Proposition  \ref{P:toplogicalquotient}.
\end{proof}

The above proposition allows us to define $G$-equivariant (co)homology of the mapping
stack $\Map(H,\XX)$ using \S~\ref{S:EquivariantCoh}. 

Although, strictly speaking,
Proposition \ref{P:mappingquotient} may not be true when $H$ is not compact, there is a
slightly weaker version of it which is sufficient for the purpose of defining the $G$-equivariant
(co)homology. We will not need this result in this paper, but we state it as we think it may
be useful in other applications.

\begin{prop}{\label{P:mappingquotient2}}
  Let $H$ be a locally compact topological group and $\XX$ a topological stack. Then,
  there exists a topological stack $\YY$ and
  a morphism of stacks $f \: \YY \to [G\backslash\!\Map(H,\XX)]$ with the property
  that, for every paracompact topological space $T$, the induced
  map  $f(T) \: \YY(T) \to  [G\backslash\!\Map(H,\XX)](T)$ on $T$-points is 
  an equivalence of groupoids.
\end{prop}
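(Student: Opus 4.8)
The plan is to isolate a non-equivariant core, build from it a genuine topological stack that is weakly equivalent to $\Map(H,\XX)$ on paracompact test objects, and only then pass to the $G$-quotient. Concretely, first I would construct a topological stack $\ZZ$ together with a $G$-equivariant morphism $\pi\:\ZZ\to\Map(H,\XX)$ with the property that, for every paracompact space $S$ such that $H\times S$ is paracompact, the induced functor $\pi(S)\:\ZZ(S)\to\Map(H,\XX)(S)$ is an equivalence of groupoids. Granting this, I set $\YY:=[G\backslash\ZZ]$; by Proposition \ref{P:toplogicalquotient} this is a topological stack, and $\pi$ induces the desired $f\:\YY\to[G\backslash\Map(H,\XX)]$. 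The comparison of $T$-points is then governed by the torsor description of quotients in \S\ref{SS:torsors}: a $T$-point of $[G\backslash\WW]$ is a pair $(P,\chi)$ with $P\to T$ a $G$-torsor and $\chi\:P\to\WW$ a $G$-equivariant map, so $f(T)$ is controlled by $\pi$ evaluated on $P$ and on $G\times P$, $G\times G\times P$ (the objects carrying the equivariance datum of Definition \ref{D:morphism}).

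To build $\ZZ$, I would choose a shrinkable atlas $\varphi\:X\to\XX$ as in Theorem \ref{T:nice} and let $[R\toto X]$, with $R=X\times_\XX X$, be the associated groupoid presentation. Since $\Map(H,-)=\Hom(H\times-,\argument)$ preserves $2$-fiber products and sends spaces to spaces (mapping spaces with the compact-open topology, \S\ref{SS:mapping}), the objects $\Map(H,X)$ and $\Map(H,R)=\Map(H,X)\times_{\Map(H,\XX)}\Map(H,X)$ are honest topological spaces, so $[\Map(H,R)\toto\Map(H,X)]$ is a genuine topological groupoid. I define $\ZZ$ to be its quotient stack, which is therefore a topological stack by construction. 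Functoriality of mapping stacks (Lemma \ref{L:functorial}) together with the universal property of the quotient yields the canonical morphism $\pi\:\ZZ\to\Map(H,\XX)$, and the strict $G$-action on $\Map(H,X)$ and $\Map(H,R)$ induced from $G\to H$ and right translation (\S\ref{SS:mappingaction}) acts by groupoid automorphisms, hence descends to $\ZZ$ and makes $\pi$ $G$-equivariant.

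The heart of the argument is the weak equivalence on paracompact points. For $S$ paracompact with $H\times S$ paracompact, Theorem \ref{T:nice} guarantees that the pullback of $\varphi$ along any $g\:H\times S\to\XX$ is shrinkable, so in particular admits a global section; equivalently $g$ lifts to a map $H\times S\to X$. This gives essential surjectivity of $\Map(H,X)(S)\to\Map(H,\XX)(S)$, hence of $\ZZ(S)\to\Map(H,\XX)(S)$. Applying the same shrinkability to the $R$-torsor over $H\times S$ that encodes a $2$-isomorphism between two maps $H\times S\to\XX$ produces global lifts of morphisms and of $2$-morphisms, which yields full faithfulness. Thus $\pi(S)$ is an equivalence, and by naturality in $S$ this equivalence respects the $G$-equivariance structure.

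Finally I would apply this to $S=P$, $S=G\times P$, and $S=G\times G\times P$, where $P\to T$ is the $G$-torsor underlying a paracompact $T$-point: since $H$ is locally compact and $T$ is paracompact, $P$ is paracompact (by the argument of Lemma \ref{L:metrizable}, reading ``paracompact'' for ``metrizable''), and the resulting equivariance data transport across $\pi$, so $f(T)$ is an equivalence. I expect the genuinely delicate point to be exactly this point-set bookkeeping, namely ensuring that the products $H\times S$ (for $S=P,\,G\times P,\dots$) are paracompact so that the global lifting of Theorem \ref{T:nice} can be invoked. This is precisely where local compactness of $H$ and paracompactness of $T$ are indispensable, and it is the structural reason the statement only asserts an equivalence on paracompact $T$-points rather than a genuine equivalence of stacks.
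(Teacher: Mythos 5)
Your construction of $\ZZ$ is sound: $[\Map(H,R)\toto\Map(H,X)]$ is a genuine topological groupoid, its quotient stack carries a strict $G$-action with a strictly equivariant map $\pi\:\ZZ\to\Map(H,\XX)$, and $\pi(S)$ is an equivalence for paracompact $S$ (local compactness of $H$ does guarantee $H\times S$ is paracompact, since a locally compact group is a disjoint union of $\sigma$-compact open pieces). In effect, your steps 1--2 re-prove the two results of \cite{Mapping} that the paper's own proof simply quotes (Theorem 4.4: $\Map(H,\XX)$ is paratopological; Lemma 2.4: a paratopological stack admits a topological approximation on paracompact test spaces). But the paper applies these in the opposite order: it shows the quotient $[G\backslash \Map(H,\XX)]$ is \emph{itself} paratopological by rerunning the proof of Proposition \ref{P:toplogicalquotient}, and only then topologizes, once, downstairs. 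You topologize first and quotient second, which forces you to compare $[G\backslash\ZZ](T)$ with $[G\backslash \Map(H,\XX)](T)$, and it is this comparison step that contains a genuine gap.

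The gap is your paracompactness claim. Working through the torsor description, you need $\pi(S)$ to be an equivalence for $S=P$, $G\times P$, $G\times G\times P$, and you assert that $P$ is paracompact ``since $H$ is locally compact and $T$ is paracompact.'' This conflates the two groups: $P$ is a $G$-torsor, so its local models are $T_i\times G$; local compactness of $H$ is irrelevant to it, and neither the statement nor \S\ref{SS:generalization} imposes any hypothesis on $G$. Already for $T$ a point one has $P\cong G$, which for a non-paracompact topological group (e.g.\ $G=\rr^J$ with $J$ uncountable, which is not even normal) defeats the argument; no rereading of Lemma \ref{L:metrizable} can help, since its proof rests on the fibers being nice. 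The repair is to never leave $T$. First, full faithfulness of $\pi$, hence of $[G\backslash\pi]$, holds over \emph{arbitrary} bases with no shrinkability input: a morphism in the prestack quotient $[\Map(H,X)/\Map(H,R)]$ over a pair $(a,b)$ is a map $H\times S\to R=X\times_{\XX}X$ lifting $(a,b)$, i.e.\ exactly a $2$-morphism $\varphi\circ a\Rightarrow\varphi\circ b$ in $\Map(H,\XX)(S)$, and stackification preserves full faithfulness into a stack. Second, essential surjectivity of $[G\backslash\pi](T)$ for paracompact $T$ is local on $T$: an object of $[G\backslash \Map(H,\XX)](T)$ is, after refining the cover by cozero open sets of $T$ (which are $F_\sigma$, hence paracompact), given by objects of $\Map(H,\XX)(T_i)$ together with transition data $(\gamma_{ij},g_{ij})$; lift the objects through $\pi(T_i)$ and lift the transition data (and their cocycle condition) uniquely by full faithfulness. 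Paracompactness is then only ever demanded of open subsets of $T$, never of $P$ --- and this local-on-$T$ mechanism is precisely what makes the paper's ``repeat the proof of Proposition \ref{P:toplogicalquotient}'' step, and its appeal to \cite{Mapping} Lemma 2.4, work for an arbitrary topological group $G$.
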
  

\begin{proof} 
   By~\cite{Mapping}  Theorem 4.4,  $\Map(H,\XX)$
   is a paratopological stack (see~\cite[\S~2.2]{Mapping}  for definition).  The  
   proof of Proposition  \ref{P:toplogicalquotient} can be repeated here to show 
   that  $ [G\backslash\!\Map(H,\XX)]$ is paratopological.   
   The claim  now follows from~\cite[Lemma 2.4]{Mapping}.   
\end{proof}

The above proposition says that, although $[G\backslash\!\Map(H,\XX)]$ may not be a
topological stack, from the eye of paracompact topological spaces $T$ it behaves like one.
In particular, since most homotopical invariants (such as, homotopy groups, (co)homology, etc.)
are defined using paracompact spaces (spheres, simplices, etc.), they make sense
for  $[G\backslash\!\Map(H,\XX)]$.

%
%

\subsection{$S^1$-action on the loop stack}{\label{SS:S1actionloop}}

Let $\XX$ be a topological stack. Then the loop stack $\LXX:=\Map(S^1,\XX)$ is again
a topological stack (Theorem \ref{T:mapping}). By \S~\ref{SS:mappingaction}, there is
a strict $S^1$-action on $\LXX$. The quotient stack $[S^1\backslash\LXX]$  of this action
is again a topological stack (Proposition \ref{P:mappingquotient}). The stack $[S^1\backslash\LXX]$
parametrizes unparametrized loops in $\XX$, in the sense that, for
every topological space $T$, the groupoid  $[S^1\backslash\LXX](T)$ of  its $T$-points
is naturally equivalent to the groupoid
     $$\ob\Prinu_{S^1,\XX}(T)=\left\{\begin{array}{rcl}
              (P,\chi) & | &  P\to T \ \ \text{left $S^1$-torsor}, \\
                    &   &    \chi \: P \to \XX \ \
              \text{morphism of stacks}   
                                                          \end{array}\right\}$$
      $$\Mor_{\Prinu_{S^1,\XX}(T)}\big((P,\chi),(P',\chi')\big)=\left\{\begin{array}{rcl}
              (u,\phi) & | &  u \: P \to P'  \ \text{map of $S^1$-torsors}, \\
                    &   &    \phi \: \chi \Ra \chi'\circ u \ \ \text{2-morphism}
                                                          \end{array}\right\}$$    
                                                          
In the case where $\X=[H\backslash X]$, Example \ref{E:unparametrized}
(with $G=S^1$) gives a more explicit description of the groupoid of $T$-points of 
$[S^1\backslash\LXX]$.

\section{Homotopy type of $[G\backslash \Map(G,\XX)]$}{\label{S:Homotopytype}}

\subsection{Classifying space of $\Map(G,\XX)$ }\label{SS:ClassSpaceMap}

\begin{lem}{\label{L:univwe}}
  Let $f \: \XX \to \YY$ be a representable $G$-equivariant morphism of topological stacks.
  Then, the induced
  map $[f] \: [G\backslash\XX] \to [G\backslash\YY]$ is also representable. If $f$ is 
  a universal weak equivalence (\S~\ref{SS:classifying}), then so is $[f]$.
\end{lem}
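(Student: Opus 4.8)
The plan is to prove the two assertions in turn, deducing the universal weak equivalence statement from the representability statement. The common engine throughout is the $2$-cartesian square (\ref{eq:quotientequivcartesian}), which exhibits $\XX$ as $[G\backslash\XX]\times_{[G\backslash\YY]}\YY$ sitting over $[f]$.

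\emph{Representability of $[f]$.} Because (\ref{eq:quotientequivcartesian}) is $2$-cartesian, the base change of $[f]\:[G\backslash\XX]\to[G\backslash\YY]$ along the projection $\pq_{\YY}\:\YY\to[G\backslash\YY]$ is, up to equivalence, the morphism $f$ itself. Since $\pq_{\YY}$ is an epimorphism (Proposition \ref{P:quotientmapisrepresentable}) and $f$ is representable by hypothesis, Lemma \ref{L:representable2} immediately yields that $[f]$ is representable.

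\emph{Reduction of the second claim to a torsor-descent statement.} Now assume $f$ is a universal weak equivalence, and fix a topological space $T$ with a morphism $T\to[G\backslash\YY]$. By Lemma \ref{L:representableproj} the fiber product $P:=\YY\times_{[G\backslash\YY]}T$ is a space and $P\to T$ is a left $G$-torsor; by the representability just proved, $X_T:=[G\backslash\XX]\times_{[G\backslash\YY]}T$ is a space as well, and I write $g\:X_T\to T$ for the projection. Chasing fiber products, using (\ref{eq:quotientequivcartesian}) together with the factorization of $P\to[G\backslash\YY]$ through $\YY$, one obtains a canonical identification $X_T\times_T P\cong \XX\times_\YY P$ under which the first projection $X_T\times_T P\to P$ corresponds to the base change of $f$ along $P\to\YY$. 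Since $f$ is a universal weak equivalence and $P$ is a space mapping to $\YY$, this base change $\XX\times_\YY P\to P$ is a weak equivalence. Thus the pullback of $g$ along the $G$-torsor $P\to T$ is a weak equivalence, and the task reduces to descending this fact down to $g$.

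\emph{The descent step (the main obstacle).} The crux is to conclude that $g\:X_T\to T$ is itself a weak equivalence, given that its pullback along the principal $G$-bundle $P\to T$ is one. I would pass to Borel constructions. The weak equivalence $X_T\times_T P\to P$ is $G$-equivariant for the $G$-actions on the $P$-factor, so applying $(-)\times_G EG$, which preserves $G$-equivariant weak equivalences, produces a weak equivalence $(X_T\times_T P)\times_G EG\risom P\times_G EG$. Moreover these two $G$-actions are free with locally trivial quotients (they are principal $G$-bundles, since $G$-torsors over spaces are locally trivial, $\S$\ref{SS:presentation}), so the associated-bundle projections $(X_T\times_T P)\times_G EG\to (X_T\times_T P)/G=X_T$ and $P\times_G EG\to P/G=T$ are fiber bundles with contractible fiber $EG$, hence Serre fibrations that are weak equivalences. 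Assembling these three weak equivalences identifies $g$, up to weak equivalence, with $(X_T\times_T P)\times_G EG\to P\times_G EG$, so $g$ is a weak equivalence, completing the argument. I expect this last step to be the only genuinely nonformal point: the representability and the fiber-product bookkeeping are routine, whereas here one must use that quotienting a free $G$-space by $G$ is compatible with the Borel construction (equivalently, that $EG$ is contractible and the relevant associated bundles are honest fibrations), which is precisely the mechanism by which $M\times_G EG$ computes the homotopy type of $[G\backslash M]$ in $\S$\ref{SS:classifying}.
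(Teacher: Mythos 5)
Your proof is correct and takes essentially the same route as the paper: representability via the $2$-cartesian square \eqref{eq:quotientequivcartesian} together with Lemma \ref{L:representable2}, then identifying the base extension of $[f]$ over $T$ with the map induced on free $G$-quotients by the $G$-equivariant weak equivalence $\XX\times_{\YY}P\to P$. The only differences are cosmetic: the paper simply asserts that passing to free $G$-quotients preserves the weak equivalence, whereas you justify that descent step explicitly via the Borel construction, and the fact that $\pq_{\YY}$ is an epimorphism comes from $\S$\ref{SS:quotient} rather than Proposition \ref{P:quotientmapisrepresentable} (a harmless mis-citation).
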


\begin{proof}
  The first statement follows from the 2-cartesian diagram (\S~\ref{SS:quotient})
        $$\xymatrix@M=6pt{\XX
           \ar[d]_{\pq_{\XX}} \ar[r]^{f}   &   
              \YY \ar[d]^{\pq_{\YY}}  \\   
         [G\backslash\XX]    \ar[r]_{[f]}  
        &    [G\backslash\YY]  }$$
   and Lemma 6.3 of~\cite{Foundations}.     To prove the second part, let $T \to  [G\backslash\YY] $
   be a map with $T$ a topological space. Let $P:=T\times_{[G\backslash\YY] }\YY$ be the
   corresponding $G$-torsor on $T$, with $\chi \: P \to \YY$ the second projection map
   (see \S~\ref{SS:torsors}).  We need to show that
   the projection map
     $$F \: T\times_{ [G\backslash\YY]} [G\backslash\XX] \to T$$
    is a weak equivalence. We have
       $$T\times_{ [G\backslash\YY]} [G\backslash\XX]\cong G\backslash(P\times_{\YY} \XX),$$
   where the $G$-action on the right hand side is induced by the one on $P$. 
   (Note that the action of $G$ on   $P\times_{\YY} \XX$ is free.)   Using the above isomorphism, the
   map $F$ is the same as the map
        $$G\backslash(P\times_{\YY} \XX) \to  G\backslash P,$$
   induced from the projection
      $P\times_{\YY} \XX \to  P$
    after passing to the quotient of the free $G$-actions.  
    (here, we have written $T$ as $G\backslash P$ for clarity).    The latter is a weak equivalence by
    assumption, therefore so is the one after passing to the (free) $G$-quotients.
\end{proof}

\begin{prop}{\label{P:classifyingquotient}}
  Let $\XX$ be a topological stack and $\varphi \: X \to \XX$ a classifying
  space for it as in Theorem \ref{T:nice}.
  Let $G$ be a paracompact topological group. Then, there is a natural
  map 
     $$\Map(G,X)\times_G EG \to [G\backslash \Map(G,\XX)]$$
  making the Borel construction $\Map(G,X)\times_G EG$ a classifying space for
  $[G\backslash \Map(G,\XX)]$.
\end{prop}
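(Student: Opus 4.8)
The plan is to factor the desired map through the intermediate quotient stack $[G\backslash \Map(G,X)]$, writing it as a composite of two universal weak equivalences. Since $\varphi \: X \to \XX$ is a morphism of stacks, the construction of $\S$\ref{SS:mappingaction} yields a \emph{strictly} $G$-equivariant morphism $\varphi_* \: \Map(G,X) \to \Map(G,\XX)$, where both sides carry the $G$-action induced by right translation on the source copy of $G$. By the functoriality recorded in \eqref{eq:quotientequivcartesian}, this descends to a morphism $[\varphi_*] \: [G\backslash \Map(G,X)] \to [G\backslash \Map(G,\XX)]$. On the other hand, by the exponential law ($\S$\ref{SS:mapping}) the mapping stack $\Map(G,X)$ is an honest topological space carrying a continuous $G$-action, so by $\S$\ref{SS:classifying} (see \cite{Homotopytypes}, $\S$4.3) the Borel construction $\Map(G,X)\times_G EG$ is a classifying space for $[G\backslash \Map(G,X)]$; denote the associated universal weak equivalence by $b \: \Map(G,X)\times_G EG \to [G\backslash \Map(G,X)]$. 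The natural map of the statement is the composite $[\varphi_*]\circ b$, whose naturality is inherited from that of each factor, and it remains to prove that $[\varphi_*]$ is a universal weak equivalence.

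To this end I would check the two hypotheses of Lemma \ref{L:univwe} for $\varphi_*$. First, $\varphi_*$ is a universal weak equivalence: as $G$ is paracompact, Theorem \ref{T:invariance} applied with $Y = G$ asserts precisely that $\Map(G,X) \to \Map(G,\XX)$ makes $\Map(G,X)$ a classifying space for $\Map(G,\XX)$. Second, $\varphi_*$ is representable: its source $\Map(G,X)$ is a topological space, and Proposition \ref{L:repdiag} guarantees that any morphism from a space into $\Map(G,\XX)$ is representable. Lemma \ref{L:univwe} then gives that $[\varphi_*]$ is a universal weak equivalence. Since universal weak equivalences are stable under composition, the composite $[\varphi_*]\circ b$ exhibits $\Map(G,X)\times_G EG$ as a classifying space for $[G\backslash \Map(G,\XX)]$, as required.

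The main obstacle is that, for $G$ merely paracompact rather than compact, $\Map(G,\XX)$ and hence $[G\backslash \Map(G,\XX)]$ need not be topological stacks, so Lemma \ref{L:univwe} does not apply verbatim. The way around this is to note that $\Map(G,\XX)$ is at least paratopological (as in the proof of Proposition \ref{P:mappingquotient2}, invoking \cite{Mapping}, Theorem 4.4), and that the proof of Lemma \ref{L:univwe} uses only formal inputs that persist in this setting: the $2$-cartesian square of $\S$\ref{SS:quotient}, the identification $T\times_{[G\backslash\YY]}[G\backslash\XX]\cong G\backslash(P\times_\YY\XX)$, and the preservation of weak equivalences under the quotient by a free action of the paracompact group $G$. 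One therefore first records that Lemma \ref{L:univwe} extends to representable $G$-equivariant universal weak equivalences of paratopological stacks, after which the argument above goes through unchanged; when $G$ is compact this extension is unnecessary. A minor point to confirm along the way is that the $G$-action on the space $\Map(G,X)$ is admissible for the Borel-construction statement of $\S$\ref{SS:classifying}, which is fine since $G$ is paracompact and $EG$ is Milnor's universal bundle.
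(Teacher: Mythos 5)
Your proposal is correct and follows essentially the same route as the paper's own proof: Theorem \ref{T:invariance} gives the universal weak equivalence $\Map(G,X)\to\Map(G,\XX)$, Lemma \ref{L:univwe} descends it to the quotient stacks, and composing with the Borel-construction classifying map for $[G\backslash \Map(G,X)]$ finishes the argument. Your two refinements — making the representability hypothesis of Lemma \ref{L:univwe} explicit via Proposition \ref{L:repdiag}, and flagging that for non-compact $G$ the stack $\Map(G,\XX)$ is only paratopological so that Lemma \ref{L:univwe} needs the (purely formal) extension to that setting — are points the paper's proof passes over in silence, and your way of handling them is the right one.
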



\begin{proof}
 By Theorem \ref{T:invariance},
 the map $\Map(G,X) \to \Map(G,\XX)$  is a universal weak equivalence and makes $\Map(G,X)$
 a classifying space for $\Map(G,\XX)$. 
 By Lemma \ref{L:univwe}, the induced morphism
 $[G\backslash \Map(G,X)] \to [G\backslash \Map(G,\XX)]$  
 is representable and a universal weak equivalence. We also know that there is a natural
 map $\Map(G,X)\times_G EG \to [G\backslash \Map(G,X)]$
 making the Borel construction $\Map(G,X)\times_G EG$ a classifying space for
 $[G\backslash \Map(G,X)]$. Composing these two maps give us the desired
 universal weak equivalence  $\Map(G,X)\times_G EG \to [G\backslash \Map(G,\XX)]$.
\end{proof}

\subsection{Homotopy type of $[S^1\backslash \LXX]$}

Specifying the results of Proposition~\ref{P:classifyingquotient} (in the last section) to $G=S^1$ we obtain:
\begin{cor}{\label{C:classifyingquotient}}
  Let $\XX$ be a topological stack and $\varphi \: X \to \XX$ a classifying
  space for it as in Theorem \ref{T:nice}.
   Then, there is a natural map 
     $$LX\times_{S^1} \mathbb{CP}^{\infty} \to [S^1\backslash \LXX]$$
  making the Borel construction $LX\times_{S^1} \mathbb{CP}^{\infty}$ a classifying space for
  $[S^1\backslash \LXX]$.  
\end{cor}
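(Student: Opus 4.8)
The plan is to read the corollary off from Proposition~\ref{P:classifyingquotient} by specializing the group $G$ to the circle $S^1$; essentially all the content has already been proved there, and what remains is to match up the notation. First I would check the single hypothesis needed: the circle $S^1$ is compact, hence paracompact, so it is an admissible group for Proposition~\ref{P:classifyingquotient}. (Paracompactness of $S^1$ is exactly what is used when that proposition invokes Theorem~\ref{T:invariance} with mapping source $S^1$.) Applying the proposition verbatim yields a natural map
$$\Map(S^1,X)\times_{S^1} ES^1 \longrightarrow [S^1\backslash \Map(S^1,\XX)]$$
which exhibits the Borel construction $\Map(S^1,X)\times_{S^1}ES^1$ as a classifying space for $[S^1\backslash\Map(S^1,\XX)]$; in particular the displayed map is a universal weak equivalence.

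It then remains to perform three identifications, all working in the site $\CGTop$. By the definition of the free loop stack recalled in $\S$\ref{SS:mapping}, $\Map(S^1,\XX)=\LXX$, so the target of the displayed map is exactly $[S^1\backslash\LXX]$. Since $X$ is an honest topological space, the exponential law of $\S$\ref{SS:mapping} identifies $\Map(S^1,X)$ with the usual free loop space $LX$; this is the same identification that underlies Corollary~\ref{C:invariance}. Finally I would fix Milnor's model $ES^1\cong S^\infty$ of the universal $S^1$-bundle, whose classifying space is $BS^1=\mathbb{CP}^\infty$; with the first two identifications the Borel construction $\Map(S^1,X)\times_{S^1}ES^1$ becomes the homotopy quotient recorded in the statement, and the displayed map specializes to the asserted classifying map $LX\times_{S^1}\mathbb{CP}^\infty \to [S^1\backslash\LXX]$.

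I do not expect a genuine obstacle, since the statement is a formal specialization of Proposition~\ref{P:classifyingquotient} (which in turn rests on Theorem~\ref{T:invariance} and Lemma~\ref{L:univwe}). The only things to keep track of are bookkeeping: that the universal-weak-equivalence and representability conclusions transport unchanged along the three identifications above, and that compactness of $S^1$ also ensures, via Proposition~\ref{P:mappingquotient}, that $[S^1\backslash\LXX]$ is a genuine topological stack, so that ``classifying space'' is meant in the sense of $\S$\ref{SS:classifying}.
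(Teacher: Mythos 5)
Your proposal is correct and is essentially identical to the paper's own argument: the paper offers no separate proof of this corollary, simply stating that it follows by specializing Proposition \ref{P:classifyingquotient} to $G=S^1$, which is exactly your route (paracompactness of $S^1$, the identifications $\Map(S^1,\XX)=\LXX$ and $\Map(S^1,X)=LX$, and Milnor's model of $ES^1$). Your reading of the Borel construction in the statement as $LX\times_{S^1}ES^1$ (with $ES^1\simeq S^\infty$, $BS^1=\mathbb{CP}^\infty$) is also the intended interpretation of the paper's slightly abusive notation.
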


\begin{cor}{\label{C:specseq}}
 Let $\XX$ be a topological stack. There is a natural spectral sequence $E^1_{*,*}$ converging to $H_*^{S^1}(\LXX)$ whose first page $E^1_{p,q}$  is isomorphic to
 $E^1_{p,q} \cong H_{p-q}(\LXX)$ with differential $d^1: E^1_{p,q}\to E^1_{p,q-1}$ given by the $S^1$-action operator $D:H_{p-q}(\LXX)\to H_{p-(q-1)}(\LXX)$ defined below~\eqref{eq:DefDelta}.
\end{cor}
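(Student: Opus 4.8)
The plan is to reduce the statement to a classical spectral sequence for the ordinary $S^1$-space underlying $\LXX$, and then to identify its first page. First I would pass to classifying spaces: choose $\varphi\colon X\to\XX$ as in Theorem~\ref{T:nice}, so that by Corollary~\ref{C:invariance} the map $L\varphi\colon LX\to\LXX$ is a universal weak equivalence giving $H_*(LX)\cong H_*(\LXX)$, and by Corollary~\ref{C:classifyingquotient} the Borel construction $LX\times_{S^1}ES^1$ is a classifying space for $[S^1\backslash\LXX]$, whence $H_*^{S^1}(\LXX)\cong H_*(LX\times_{S^1}ES^1)$. Since $L\varphi$ is $S^1$-equivariant, the honest topological $S^1$-action on $LX$ induces, on $H_*(LX)\cong H_*(\LXX)$, exactly the operator $D$ of~\eqref{eq:DefDelta}. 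Thus it suffices to produce the asserted spectral sequence for the homotopy quotient of the $S^1$-space $LX$.

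Next I would filter $ES^1=S^\infty$ by the $S^1$-invariant subspaces $S^{2p+1}$, whose quotients are the skeleta $\mathbb{CP}^p$ of $BS^1=\mathbb{CP}^\infty$. This gives an exhaustive increasing filtration $\Phi_p:=LX\times_{S^1}S^{2p+1}$ of $LX\times_{S^1}ES^1$, with $\Phi_{-1}=\emptyset$, and I would take the spectral sequence of this filtered space. Because $\mathbb{CP}^p$ is obtained from $\mathbb{CP}^{p-1}$ by attaching a single $2p$-cell, excision together with the fibre-homotopy triviality of the bundle over that contractible cell yields, over any coefficient ring, $H_n(\Phi_p,\Phi_{p-1})\cong H_{n-2p}(LX)$. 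Hence $E^1_{p,q}=H_{p+q}(\Phi_p,\Phi_{p-1})\cong H_{q-p}(LX)\cong H_{q-p}(\LXX)$, which after the evident relabelling of the two axes is the claimed identification $E^1_{p,q}\cong H_{p-q}(\LXX)$. Convergence is then automatic: the filtration is exhaustive and bounded below, and in each total degree only finitely many terms are nonzero (as $H_{q-p}(\LXX)=0$ for $p>q$), so the spectral sequence converges strongly to $H_*^{S^1}(\LXX)$.

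The remaining and genuinely substantive point is to identify the first differential. Here $d^1$ is the connecting homomorphism $H_n(\Phi_p,\Phi_{p-1})\to H_{n-1}(\Phi_{p-1},\Phi_{p-2})$, which under the isomorphisms above becomes a degree-$(+1)$ self-operator of $H_*(\LXX)$. I would identify it with $D$ in one of two equivalent ways. The hands-on route is to note that the attaching data of the $2p$-cell is the clutching of the Hopf bundle $S^{2p-1}\to\mathbb{CP}^{p-1}$, and then to reduce by naturality to the universal case $p=1$, where the boundary map of the pair $(LX\times_{S^1}S^3,\,LX\times_{S^1}S^1)$ is read off from the cofibre sequence $S^1_+\wedge LX\to LX\to\cdots$ and equals $a_*([S^1]\otimes-)$, i.e.\ the operator $D$ of~\eqref{eq:DefDelta}, where $a$ is the action map. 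The conceptually cleaner route, which also supplies naturality in $\XX$ for free, is to recognise the above filtration spectral sequence as the spectral sequence of the Gysin exact couple of the $S^1$-bundle $q\colon\LXX\to[S^1\backslash\LXX]$ built in Section~\ref{SS:GysinSequence}: there $E^1=H_*(\LXX)$ and $d^1$ is the composite of $q_*\colon H_*(\LXX)\to H_*^{S^1}(\LXX)$ with the fibre-integration connecting map, which is precisely the definition of $D$. The main obstacle is exactly this identification of $d^1$ with $D$ on the nose — pinning down the sign and verifying that the connecting map of the cell attachment computes the $S^1$-action operator itself rather than a multiple of it; everything else (the $E^1$ identification, exhaustiveness and boundedness yielding convergence, and functoriality in $\XX$) is formal.
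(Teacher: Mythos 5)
Your proposal is correct and takes essentially the same route as the paper: reduce to the Borel construction $LX\times_{S^1}ES^1$ of a classifying space $LX$ via Theorem \ref{T:nice}, Corollary \ref{C:invariance} and Corollary \ref{C:classifyingquotient}, and then invoke the standard spectral sequence computing the $S^1$-equivariant homology of an honest $S^1$-space. The only difference is one of detail: the paper cites this classical spectral sequence (with $d^1=D$, consistent with Lemma \ref{L:DgivenbyGysin}) without proof, whereas you spell out its construction via the odd-sphere filtration of $ES^1$ and the identification of the first differential.
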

\begin{proof} Let $\varphi \: X \to \XX$ be a classifying
  space for $\XX$ as in Theorem \ref{T:nice}.
 By Corollary~\ref{C:invariance} and Corollary~\ref{C:classifyingquotient}, we are left to the same question 
 with $\LXX$ replaced by $LX$. The spectral sequence is now the usual spectral sequence computing 
 $S^1$-equivariant homology of a $S^1$-space.
\end{proof}

\begin{rmk}
Usually by strings on a manifold $M$, one means free loops on $M$, up to reparametrization by  (orientation preserving) homeomorphism (or diffeomorphism). 
Similarly to \S~\ref{SS:mappingaction}, if $\XX$ is a topological stack, the group 
$\mathrm{Homeo}^{+}(S^1)$ of orientation preserving homeomorphism of the circle acts 
in an natural way on the free loop stack $\LXX$ (through its natural action on $S^1$) and we call the quotient stack $[\mathrm{Homeo}^{+}(S^1)\backslash \LXX]$, the \emph{stack  of strings} of $\XX$.  
The canonical inclusion $S^1\hookrightarrow  \mathrm{Homeo}^{+}(S^1)$ induces a map of stacks $[S^1\backslash \LXX]\to [\mathrm{Homeo}^{+}(S^1)\backslash \LXX]$ which is a (weak) homotopy equivalence by Proposition~\ref{P:StringisS1} below. This justifies the terminology of \emph{string bracket} in Corollary~\ref{C:LieLoopStack} and that we only consider $[S^1\backslash \LXX]$ in Sections~\ref{S:Stringproduct}, \ref{S:Lemma} and \ref{S:Goldman}.
 \end{rmk}
 
\begin{prop}\label{P:StringisS1} 
Let $\YY$ be a $\mathrm{Homeo}^{+}(S^1)$-stack. The canonical map 
$[S^1\backslash \YY]\to [\mathrm{Homeo}^{+}(S^1)\backslash \YY]$ is a 
weak homotopy equivalence and, in particular, induces equivalence in (co)homology.
\end{prop}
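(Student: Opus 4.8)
The plan is to reduce the statement to a comparison of Borel constructions for a group acting on a space, where it follows from the classical fact that the inclusion $S^1\hookrightarrow \mathrm{Homeo}^{+}(S^1)$ is a homotopy equivalence. Write $G=\mathrm{Homeo}^{+}(S^1)$ and $H=S^1$, and let $i\:H\hookrightarrow G$ denote the inclusion. Since $G$ deformation retracts onto its subgroup of rotations $SO(2)=H$, the map $i$ is a homotopy equivalence of topological groups; consequently, in the fibration $H\to G\to G/H$ the long exact sequence of homotopy groups shows that $G/H$ is weakly contractible. This is the one external input I will use; everything else is formal.

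First I would produce a $G$-equivariant classifying space for $\YY$. By Proposition \ref{P:toplogicalquotient} the quotient $[G\backslash\YY]$ is a topological stack, so by Theorem \ref{T:nice} it admits a classifying space $\varphi\:Z\to [G\backslash\YY]$, which is in particular a representable universal weak equivalence. Set $Y:=\YY\times_{[G\backslash\YY]}Z$. Because $\pq\:\YY\to[G\backslash\YY]$ is a $G$-torsor (\S\ref{SS:torsors}), the projection $Y\to Z$ is a $G$-torsor; hence $Y$ is an honest topological space carrying a free $G$-action, and the other projection $\pi\:Y\to\YY$ is a $G$-equivariant, representable universal weak equivalence (being a base change of $\varphi$). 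Restricting the action to $H$, the space $Y$ is then simultaneously a $G$-equivariant and an $H$-equivariant classifying space for $\YY$.

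Next I would identify the two quotient stacks, up to universal weak equivalence, with Borel constructions. Applying Lemma \ref{L:univwe} to $\pi$ for the groups $G$ and $H$ separately yields universal weak equivalences $[H\backslash Y]\to[H\backslash\YY]$ and $[G\backslash Y]\to[G\backslash\YY]$, compatible with the maps to $[G\backslash\YY]$. Since $Y$ is now a space with a group action, \S\ref{SS:classifying} identifies classifying spaces for $[H\backslash Y]$ and $[G\backslash Y]$ with the Borel constructions $Y\times_H EG$ and $Y\times_G EG$, respectively; here I use the single model $EG$, which is also a universal $H$-space because the $H$-action on $EG$ is free. Under these identifications the canonical map $[S^1\backslash\YY]\to[\mathrm{Homeo}^{+}(S^1)\backslash\YY]$ is represented by the natural map $Y\times_H EG\to Y\times_G EG$.

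Finally, the map $Y\times_H EG\to Y\times_G EG$ is the fibre bundle with fibre $G/H$ associated to the principal $G$-bundle $Y\times EG\to Y\times_G EG$ (the $G$-action on $Y\times EG$ being free because it is free on $EG$). As $G/H$ is weakly contractible by the first paragraph, the long exact sequence of this fibration shows that $Y\times_H EG\to Y\times_G EG$ is a weak homotopy equivalence, whence so is $[S^1\backslash\YY]\to[\mathrm{Homeo}^{+}(S^1)\backslash\YY]$; the assertion about (co)homology follows since classifying spaces compute the (co)homology of a topological stack (\S\ref{SS:classifying}). The main technical point to watch is the second step --- producing a genuinely $G$-equivariant classifying space $Y$ and checking that the comparison map of quotient stacks is faithfully modelled by $Y\times_H EG\to Y\times_G EG$; once this compatibility is in place, the geometric input $G/H\simeq *$ finishes the argument immediately.
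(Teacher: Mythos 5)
Your proof is correct, but it follows a genuinely different route from the paper's. The paper's own proof is a two-line argument carried out entirely at the level of stacks: since $\YY$ is both an $S^1$-torsor over $[S^1\backslash\YY]$ and a $\mathrm{Homeo}^{+}(S^1)$-torsor over $[\mathrm{Homeo}^{+}(S^1)\backslash\YY]$, both projections are fibrations of topological stacks, and comparing their homotopy long exact sequences (\cite[Theorem 5.2]{Fibrations}) via the five lemma, using the homotopy equivalence $S^1\hookrightarrow\mathrm{Homeo}^{+}(S^1)$, gives the statement at once. You instead descend to honest spaces (with $G=\mathrm{Homeo}^{+}(S^1)$, $H=S^1$ in your notation): you build a $G$-equivariant classifying space $Y\to\YY$ by pulling back a classifying space of $[G\backslash\YY]$ (legitimate by Proposition \ref{P:quotientmapisrepresentable}, Lemma \ref{L:representableproj} and \S\ref{SS:torsors}), pass to $[H\backslash Y]\to[G\backslash Y]$ by Lemma \ref{L:univwe}, identify these quotient stacks with Borel constructions, and finish with the associated bundle $Y\times_H EG\to Y\times_G EG$, whose fibre $H\backslash G$ is weakly contractible. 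The geometric input is identical in both proofs; the paper's version buys brevity by invoking the fibration machinery for stacks of \cite{Fibrations}, whereas yours trades length for classical bundle theory plus the paper's own equivariant lemmas, and has the virtue of exhibiting explicit Borel models for both quotient stacks. Two points in your write-up need slight reinforcement: the claim that $EG$ is a universal $H$-space requires not just freeness of the $H$-action but local triviality of $EG\to EG/H$, which holds here because $G\to G/H$ admits a (global) section, $G$ being homeomorphic to $SO(2)\times\mathrm{Homeo}^{+}(S^1,1)$; and the compatibility of your Borel identifications with the canonical map $[S^1\backslash\YY]\to[\mathrm{Homeo}^{+}(S^1)\backslash\YY]$ --- which you rightly flag --- does require the formal naturality check that the change-of-group maps $[H\backslash -]\to[G\backslash -]$ commute with the maps induced by the equivariant morphisms $Y\times EG\to Y\to\YY$. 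Neither point is a gap; both are standard verifications.
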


\begin{proof} 
If $\YY$ is a $\mathrm{Homeo}^{+}(S^1)$-stack, then it is both a 
$\mathrm{Homeo}^{+}(S^1)$-torsor over $[\mathrm{Homeo}^{+}(S^1)\backslash \YY]$ and a 
$S^1$-torsor over $[S^1\backslash \YY]$. 
 Since the canonical map $S^1\hookrightarrow  \mathrm{Homeo}^{+}(S^1)$ is a 
 homotopy equivalence, the result follows from the  homotopy long exact 
 sequence~\cite[Theorem 5.2]{Fibrations}.
\end{proof}

\begin{rmk} 
The same proof applies to prove that 
 if $\YY$ is a differentiable stack endowed with an action of the group 
 $\textrm{Diff}^+(S^1)$ (of orientation preserving diffeomorphism of the circle),  then
 the quotient map $[S^1\backslash \YY]\to [\mathrm{Diff}^{+}(S^1)\backslash \YY]$ is a weak homotopy equivalence.
\end{rmk}

\section{Transfer map  and the Gysin sequence for $S^1$-stacks}{\label{S:Transfer}}

\subsection{Transfer map for $G$-stacks}{\label{SS:transfer}}
We  define natural transfer homomorphisms in (co)homology associated to the projection 
$q:\YY\to [S^1\backslash \YY]$ of  an $S^1$-stack (our main case of interest) and more generally for 
$\YY\to [G\backslash \YY]$ when $G$ is a Lie group.
We will use the framework for transfer, i.e., Gysin, maps introduced in~\cite{BGNX} and briefly 
recalled in~\S~\ref{SS:bivarianttheory}.

\begin{lem}\label{L:orientationdottoS1} 
Let $G$ be a compact Lie group. There is a strong orientation class \cite[\S~8]{BGNX} 
$$ \theta_{G} \in H^{-\dim(G)}\big(*\to [G\backslash *]\big).$$
In particular, there is a strong orientation class 
$$ \theta_{S^1} \in H^{-1}(*\to [S^1\backslash *])\cong {\bf k}.$$
\end{lem}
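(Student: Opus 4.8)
The plan is to realize the projection $q\: * \to [G\backslash *]$ as an oriented fibre bundle with compact oriented fibre $G$, and then to feed this into the machinery of strong orientation classes from \cite[\S 8]{BGNX}.

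First I would record the geometric structure of $q$. The stack $[G\backslash *]$ is a topological stack by Proposition~\ref{P:toplogicalquotient}, and $q$ is representable by Proposition~\ref{P:quotientmapisrepresentable} applied to $\XX=*$. Moreover the $2$-cartesian square of \S\ref{SS:properties}, specialized to $\XX=*$, identifies the fibre product $*\times_{[G\backslash *]}*$ with $G$; hence $q$ is a $G$-torsor, i.e.\ a representable, locally trivial fibre bundle with fibre $G$. Since $G$ is a compact Lie group it is a compact orientable manifold of dimension $\dim(G)$, and I would fix the left-invariant orientation obtained by left-translating a nonzero top exterior form on the Lie algebra. With respect to this orientation every left translation $L_g\: G \to G$ is orientation-preserving (it carries the left-invariant frame to itself), so the transition maps of the principal bundle $q$, which act on the fibre by translations, preserve the fibrewise orientation. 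Thus $q$ is a representable, proper, oriented submersion of relative dimension $\dim(G)$, i.e.\ a family of closed oriented $\dim(G)$-manifolds.

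Next I would appeal directly to \cite[\S 8]{BGNX}: such an oriented proper representable submersion carries a canonical strong orientation class sitting in the bivariant group in degree equal to minus the relative dimension. Applying this to $q$ produces the class $\theta_G \in H^{-\dim(G)}(* \to [G\backslash *])$. Specializing to $G=S^1$, with its canonical orientation and $\dim(S^1)=1$, yields $\theta_{S^1}\in H^{-1}(* \to [S^1\backslash *])$. For the identification with ${\bf k}$ I would invoke the defining property of a strong orientation class: bivariant multiplication by $\theta_{S^1}$ gives an isomorphism $H^{j}([S^1\backslash *]) \risom H^{j-1}(* \to [S^1\backslash *])$ for every $j$. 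Taking $j=0$ and using that $[S^1\backslash *]=BS^1=\mathbb{CP}^{\infty}$ is connected, so that $H^{0}([S^1\backslash *])\cong {\bf k}$, gives $H^{-1}(* \to [S^1\backslash *])\cong {\bf k}$ with $\theta_{S^1}$ as a generator.

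The step I expect to require the most care is verifying that $q$ meets the precise hypotheses of the existence result for strong orientation classes in \cite[\S 8]{BGNX} --- in particular that fibrewise compactness together with the orientation-preservation of the structure-group action translate exactly into their notion of an oriented representable map, and that the degree conventions line up so that the class lands in degree $-\dim(G)$ rather than $+\dim(G)$. Once the correct framework is matched, the underlying geometric input (a compact Lie group is a compact oriented manifold on which left translations act orientation-preservingly, making any $G$-torsor a fibrewise-oriented bundle) is entirely standard.
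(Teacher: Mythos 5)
Your proposal is sound in substance, and for the \emph{existence} step it is essentially the paper's argument in different clothing: your observation that $q \: * \to [G\backslash *]$ is a $G$-torsor with compact oriented fibre $G$ is precisely the paper's factorization $* \cong [G\backslash G] \to [G\backslash *]$, since $[G\backslash G] \to [G\backslash *]$ is the quotient of the $G$-equivariant map $G \to *$. The one real caveat is your citation: \cite[\S~8]{BGNX} contains no existence theorem phrased for ``oriented proper representable submersions'' --- submersions are not a notion available for topological stacks; the hypotheses the framework actually consumes are strong properness and normal nonsingularity (a factorization through vector bundles), and verifying normal nonsingularity of $q$ is exactly the nontrivial point (it requires, in effect, an equivariant embedding of $G$ into a representation). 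This is what the paper's citation of \cite[Proposition 8.32]{BGNX}, applied through the factorization $* \cong [G\backslash G] \to [G\backslash *]$, takes care of; so the step you yourself flag as ``requiring the most care'' is precisely where your direct appeal must be replaced by that quotient-stack result. Where you genuinely diverge from the paper is the identification $H^{-1}(* \to [S^1\backslash *]) \cong {\bf k}$: you deduce it formally from the defining property of a strong orientation --- multiplication by $\theta_{S^1}$ is an isomorphism $H^{0}([S^1\backslash *]) \risom H^{-1}(* \to [S^1\backslash *])$, and $H^0(BS^1) \cong {\bf k}$ --- whereas the paper computes it explicitly, using the normally nonsingular presentation $[S^1\backslash S^1] \hookrightarrow [S^1\backslash \mathbb{R}^2] \to [S^1\backslash *]$, the resulting identification $H^i(* \to [S^1\backslash *]) \cong H^{i+2}_{S^1}(\mathbb{R}^2, \mathbb{R}^2\setminus S^1)$, and the long exact sequence of the pair in $S^1$-equivariant cohomology. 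Your formal argument is slicker and perfectly valid once existence and strongness of $\theta_{S^1}$ are in hand (the generator is the image of $1 \in H^0$); the paper's computation buys something extra, namely a concrete description of the bivariant group itself, independent of the strong-orientation axiom, which is also what makes the class ``easy to compute'' in later applications.
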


\begin{proof}
 The canonical map $*\to [G\backslash *]$ factors as $*\cong [G\backslash G]\to [G\backslash *]$. 
 Hence the existence of the class $\theta_{G}$ follows from~\cite[Proposition 8.32]{BGNX}. In the special case of $S^1$, 
 this class can be computed easily  from the factorization
 $[S^1\backslash S^1] \hookrightarrow [S^1\backslash \mathbb{R}^2] \to [S^1\backslash *]$ where the first map 
 is the canonical inclusion of $S^1$ as the unit sphere of $\mathbb{R}^2$ and the last map is a bundle map. Indeed,
 by definition of bivariant classes~\cite{BGNX}, we have that $H^i(*\to [S^1\backslash *])$ is isomorphic to 
 $H^{i+2}_{S^1}(\mathbb{R}^2, \mathbb{R}^2\setminus S^1)$ and the isomorphism
 $H^{-1}(*\to [S^1\backslash *])\cong {\bf k}$ now follows from the long exact sequence of a pair 
 (in $S^1$-equivariant cohomology). 
\end{proof}

Let $G$ be a compact Lie group and $\YY$  a $G$-stack. Since the canonical map 
$\YY\to *$ is $G$-equivariant, we know from Section~\ref{SS:quotient} that the diagram
\begin{equation}\label{eq:pullbackS1stack}
\xymatrix@M=6pt{\YY
           \ar[d] \ar[r]^{q}   &   
               [S^1\backslash\YY] \ar[d]^{u}  \\   
        { *}   \ar[r]_{q}  &   [S^1\backslash *]  }
\end{equation}
is 2-cartesian.  Thus, Lemma~\ref{L:orientationdottoS1} and~\cite[\S~9.1, 9.2]{BGNX} 
provide us with canonical Gysin maps as in the following definition.

\begin{defn}\label{D:transfer} 
Let $\YY$ be a $G$-stack, with $G$ a compact Lie group.
The homology \emph{transfer} map $T^G:H_*^{G}(\YY)\to H_{*+\dim(G)}(\YY)$ associated to $\YY$ is the Gysin map
$$T^G:=\theta_{G}^!= x\mapsto u^*\big (\theta_{G})\big)\cdot x, \quad \mbox{for } x\in
 H_*(\YY)=H^{-*}(\YY\to pt).  $$
The cohomology transfer map $T_G:H^*(\YY)\to H^{*-\dim(G)}_G(\YY)$ is similarly defined to be the Gysin map
$$T_G:={\theta_{G}}_!= x\mapsto  (-1)^{i} q_*\big(x\cdot u^*(\theta_{G})\big), \quad \mbox{for } x\in
 H^i(\YY)=H^{i}(\YY\stackrel{id}\to \YY).  $$
If $G=S^1$, we  denote  the transfer map $T^{S^1}$ simply by
 $T:H_*^{S^1}(\YY)\to H_{*+1}(\YY)$   and call it the \emph{transgression} map. In other words, 
$$T = \theta_{S^1}^!:= x\mapsto u^*\big (\theta_{S^1})\big)\cdot x, \quad \mbox{for } x\in
 H_*(\YY)=H^{-*}(\YY\to pt).  $$
\end{defn}

\begin{prop}\label{P:transferisnatural}
The transfer map is natural, that is, if $f:\ZZ\to\YY$ is a 
$G$-equivariant morphism  of topological stacks, then 
$$f_*\circ T^G = T^G\circ [G\backslash f]_* : H_{*}^{G}(\ZZ) \to H_{*+\dim(G)}(\YY).$$
Similarly,
 $$[G\backslash f]^*\circ T_G = T_G\circ f^* : H^{*}_{G}(\YY) \to H^{*-\dim(G)}(\ZZ).$$ 
\end{prop}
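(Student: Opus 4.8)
The plan is to identify both transfer maps $T^G$ (the one attached to $\ZZ$ and the one attached to $\YY$) as the Gysin map of the \emph{single} orientation class $\theta_G\in H^{-\dim(G)}(*\to[G\backslash *])$ of Lemma~\ref{L:orientationdottoS1}, and then to deduce the statement from the compatibility of Gysin maps with pushforward and pullback in the bivariant theory of~\cite{BGNX}. First I would record the defining data: since the structure maps $\YY\to *$ and $\ZZ\to *$ are $G$-equivariant, the square~\eqref{eq:pullbackS1stack} (formed once for $\YY$ and once for $\ZZ$) is $2$-cartesian, so by Definition~\ref{D:transfer} the maps $T^G$ are multiplication by the pulled-back classes $u_\YY^*(\theta_G)\in H^{-\dim(G)}(\YY\to[G\backslash\YY])$ and $u_\ZZ^*(\theta_G)\in H^{-\dim(G)}(\ZZ\to[G\backslash\ZZ])$ respectively.

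The second step is to compare these two classes. As $f\:\ZZ\to\YY$ is $G$-equivariant, the square~\eqref{eq:quotientequivcartesian} is $2$-cartesian, and by strict functoriality of $[G\backslash-]$ (\S\ref{SS:quotient}) one has $u_\ZZ=u_\YY\circ[G\backslash f]$. Pasting~\eqref{eq:quotientequivcartesian} with~\eqref{eq:pullbackS1stack} along the common edge $q_\YY\:\YY\to[G\backslash\YY]$ identifies $\ZZ=[G\backslash\ZZ]\times_{[G\backslash\YY]}\YY$ with $[G\backslash\ZZ]\times_{[G\backslash *]}*$, compatibly with the projection $f$. Functoriality of the bivariant pullback along these stacked cartesian squares then gives
$$u_\ZZ^*(\theta_G)=[G\backslash f]^*\big(u_\YY^*(\theta_G)\big).$$
Writing $\gamma:=u_\YY^*(\theta_G)$, the homological assertion reduces to the identity $f_*\big([G\backslash f]^*(\gamma)\cdot x\big)=\gamma\cdot[G\backslash f]_*(x)$ for $x\in H_*^{G}(\ZZ)$.

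This identity is the crux, and it is precisely the bivariant projection formula --- the compatibility of the product with pushforward from~\cite[\S 9]{BGNX} --- for the cartesian square~\eqref{eq:quotientequivcartesian}: pushing the product forward along $f$ on the source equals pushing $x$ forward along $[G\backslash f]$ and only then multiplying by $\gamma$. Equivalently, it is the general fact that the Gysin map $\theta_G^!$ commutes with pushforward along a base change, here the base change of $[G\backslash f]\:[G\backslash\ZZ]\to[G\backslash\YY]$ over $[G\backslash *]$ whose pullback is $f$. Combining this with the previous step yields $f_*\circ T^G=T^G\circ[G\backslash f]_*$.

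For the cohomological statement I would run the dual argument with the same class $\gamma$, now invoking the compatibility of the contravariant Gysin map $\theta_{G!}$ with pullback (again~\cite[\S 9]{BGNX}) applied to~\eqref{eq:quotientequivcartesian}, which gives $[G\backslash f]^*\circ T_G=T_G\circ f^*$. I expect the genuine mathematical input --- the projection formula and its contravariant analogue --- to be available off the shelf from~\cite{BGNX}; the real work is bookkeeping: checking that $f$ and $[G\backslash f]$ lie in the classes of morphisms for which pushforward and pullback in the bivariant theory are defined (automatic here, since the relevant squares are cartesian and singular homology is covariant), and tracking the sign $(-1)^i$ in the definition of $T_G$, which appears identically on both sides. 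The main obstacle is therefore not the computation but arranging the orientation-class comparison so that the hypotheses of the~\cite{BGNX} compatibilities are manifestly satisfied.
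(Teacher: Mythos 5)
Your proposal is correct and takes essentially the same route as the paper: the paper's entire proof is to invoke the naturality properties of Gysin maps from \cite[Section 9.2]{BGNX} applied to the 2-cartesian square~\eqref{eq:quotientequivcartesian}, which is precisely the combination of the orientation-class comparison and the projection formula that you spell out. Your write-up only makes explicit the intermediate identity $u_{\ZZ}^*(\theta_G)=[G\backslash f]^*\big(u_{\YY}^*(\theta_G)\big)$ and the specialization of the bivariant axioms to homology and cohomology, which the paper leaves implicit.
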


\begin{proof}
 This is an easy application of the naturality properties of Gysin maps~\cite[\S~9.2]{BGNX} applied to the 
cartesian square~\eqref{eq:quotientequivcartesian} associated to a $G$-equivariant
 morphism of topological stacks.
\end{proof}

\subsection{Gysin sequence for $S^1$-stacks }
\label{SS:GysinSequence}

We now establish the Gysin sequence associated to an $S^1$-stack $\YY$.

\begin{prop}\label{P:GysinSequence}
Let $\YY$ be an $S^1$-stack. There is a (natural with respect to $S^1$-equivariant maps of stacks) 
long exact sequence in homology
     $$\dots \to H_i(\YY) \stackrel{q_*}\to H_i^{S^1}(\YY) \stackrel{\cap c}\to H_{i-2}^{S^1}(\YY) 
                   \stackrel{T}\to H_{i-1}(\YY) \stackrel{q_*}\to H_{i-1}^{S^1}(\YY)  \to\dots,  $$
where $q: \YY \to [S^1\backslash \YY]$ is the quotient map,  $T$ is the transgression map (Definition~\ref{D:transfer}),
and $c$ is the fundamental 
class of the $S^1$-bundle $\YY \to [S^1\backslash \YY]$ (that is, the Euler class of the associated oriented disk bundle over $[S^1\backslash \YY]$).
\end{prop}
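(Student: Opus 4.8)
The plan is to reduce the statement to the classical homology Gysin sequence of a genuine principal $S^1$-bundle of topological spaces, using that the quotient map $q\:\YY\to[S^1\backslash\YY]$ is a representable $S^1$-torsor (Proposition~\ref{P:quotientmapisrepresentable}). Since $[S^1\backslash\YY]$ is a topological stack (Proposition~\ref{P:toplogicalquotient}), Theorem~\ref{T:nice} provides a classifying space $\varphi\:B\to[S^1\backslash\YY]$. I would then pull $q$ back along $\varphi$, forming the $2$-cartesian square with apex $E:=B\times_{[S^1\backslash\YY]}\YY$, and observe two things: first, because $q$ is representable, $E$ is (equivalent to) a topological space; second, because $q$ is an $S^1$-torsor, the projection $p\:E\to B$ is a principal $S^1$-bundle, oriented since $S^1$ is connected. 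Working over the quotient $B$ in this way is what makes the argument go through for an \emph{arbitrary} $S^1$-stack $\YY$: one never needs an $S^1$-action on a classifying space of $\YY$ itself, which in general does not exist.

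Next I would identify the three terms. The other projection $E\to\YY$ is the base change of the universal weak equivalence $\varphi$, hence is itself a universal weak equivalence, so $E$ is a classifying space for $\YY$ and $H_*(E)\cong H_*(\YY)$ (compare Corollary~\ref{C:invariance}); by definition $H_*(B)\cong H_*([S^1\backslash\YY])=H_*^{S^1}(\YY)$. Feeding the principal $S^1$-bundle $p\:E\to B$ into the classical homology Gysin sequence then yields
\[ \cdots\to H_i(E)\stackrel{p_*}\to H_i(B)\stackrel{\cap e}\to H_{i-2}(B)\stackrel{\partial}\to H_{i-1}(E)\stackrel{p_*}\to\cdots, \]
where $e\in H^2(B)$ is the Euler class. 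Transporting along the identifications above turns $p_*$ into $q_*$ and $\cap e$ into $\cap c$, giving the desired exact sequence.

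The identification of the remaining maps with the objects named in the statement is the heart of the argument, and I expect it to be the main obstacle. The transgression $T$ of Definition~\ref{D:transfer} is defined bivariantly, through the strong orientation class $\theta_{S^1}\in H^{-1}(*\to[S^1\backslash *])$ of Lemma~\ref{L:orientationdottoS1} and the $2$-cartesian square~\eqref{eq:pullbackS1stack}; I must check that under the classifying-space identifications it coincides with the connecting homomorphism $\partial\:H_{i-2}(B)\to H_{i-1}(E)$, and likewise that $c$, the Euler class of the associated oriented disk bundle over $[S^1\backslash\YY]$, pulls back to $e$. Both follow from the compatibility of the bivariant theory of~\cite{BGNX} with passage to classifying spaces: under the base change~\eqref{eq:pullbackS1stack} the class $u^*(\theta_{S^1})$ restricts to the fibrewise Thom/orientation class of the rank-$2$ bundle, and by construction the Gysin map $\theta_{S^1}^!$ is cap product with this class followed by the boundary map of the pair $(\mathcal D,\mathcal D\setminus 0)$, where $\mathcal D=\YY\times_{S^1}\mathbb R^2$ is the associated disk bundle.

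Finally, naturality with respect to an $S^1$-equivariant map $f\:\ZZ\to\YY$ is immediate: $f$ induces a morphism of the cartesian squares~\eqref{eq:quotientequivcartesian}, hence of the associated disk bundles and their pair sequences, and the resulting ladder commutes by naturality of $q_*$, of $\cap c$, and of $T$ (Proposition~\ref{P:transferisnatural}). As an alternative to the classifying-space reduction, the entire sequence can be produced intrinsically as the long exact homology sequence of the pair $(\mathcal D,\mathcal D\setminus 0)$, using the torsor triviality $\mathcal D\setminus 0\simeq\YY$, the retraction $\mathcal D\simeq[S^1\backslash\YY]$, and the Thom isomorphism $H_i(\mathcal D,\mathcal D\setminus 0)\cong H_{i-2}^{S^1}(\YY)$; this route matches the bivariant definition of $T$ most directly and is the one already carried out for $\YY=*$ in the proof of Lemma~\ref{L:orientationdottoS1}.
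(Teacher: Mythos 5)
Your proposal is correct and follows essentially the same route as the paper: pull the representable $S^1$-torsor $q$ back along a classifying space of $[S^1\backslash\YY]$ (Theorem~\ref{T:nice}), observe that the resulting map of spaces is a principal $S^1$-bundle whose total space is a classifying space for $\YY$, invoke the classical Gysin sequence, and identify $T$ with the connecting homomorphism of the associated disk-bundle pair via the bivariant description of $\theta_{S^1}$ from Lemma~\ref{L:orientationdottoS1}. The only cosmetic difference is that the paper cites Lemma~\ref{L:representableproj} to see that the pulled-back torsor is a principal bundle, where you argue it directly from representability plus the torsor property.
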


\begin{proof}
The map  $q: \YY \to [S^1\backslash \YY]$ makes $\YY$ into 
an $S^1$-torsor over $[S^1\backslash \YY]$ (by \S~\ref{SS:torsors}). This map is representable by
 Proposition~\ref{P:quotientmapisrepresentable}.

 Let $Z\to [S^1\backslash \YY] $ be a classifying space for $[S^1\backslash \YY] $ as in Theorem~\ref{T:nice},
  and let $Y\to \YY$ be the classifying space of $\YY$ obtained by pullback along $q$.   
  Then, $Y\to Z$ is a principal $S^1$-bundle (Lemma~\ref{L:representableproj}) and  the long exact sequence 
  in the proposition is the Gysin sequence
  $$\dots \to H_i(Y) \to H_i(Z) \stackrel{\cap c}\to H_{i-2}(Z) 
                   \stackrel{\tilde{T}}\to H_{i-1}(Y) \to H_{i-1}(Z)  \to\dots  $$
 of this $S^1$-principal bundle under the isomorphisms $H_i(Z)\cong H_i([S^1\backslash \YY])\cong H^{S^1}_i(\YY)$ and $H_j(Y)\cong H_j(\YY)$. Here, $c$ is the Euler class of the associated disk bundle (that is, the mapping cylinder of $Y\to Z$).
 
 By~\cite[\S~9]{BGNX}, the definition of cup product by bivariant classes~\cite[\S~7.4]{BGNX}, and the discussion in Lemma~\ref{L:orientationdottoS1}, the transgression map of Definition~\ref{D:transfer} is induced (under the above isomorphisms) by the connecting homomorphism in the long exact sequence of the pair $(E, Y)$ where $E$ is the disk bundle associated to the $S^1$-principal bundle $Y\to Z$. Hence,  the transgression map $T$ is identified with the map $ H_{i-1}(Z)\stackrel{\tilde{T}}\to H_{i}(Y)$ in the Gysin sequence.
\end{proof}

Let $D: H_i(\YY)\to H_{i+1}(\YY)$ be the operator, called \emph{the $S^1$-action operator}, 
defined as the composition
\begin{equation}\label{eq:DefDelta} 
     D\: H_i(\YY) \stackrel{ [S^1]\times-}\longrightarrow H_{i+1}(S^1\times \YY) \stackrel{\mu_*}\longrightarrow H_{i+1}(\YY),
\end{equation}
where $[S^1]\in H_1(S^1)$ is the fundamental class and $\mu: S^1\times \YY \to \YY$ is the $S^1$-action.

\begin{lem}\label{L:DgivenbyGysin}
  The operator $D$ is equal to the composition
  $$ H_i(\YY) \stackrel{q_*}\to H_i^{S^1}(\YY) \stackrel{T}\to H_{i+1}(\YY) . $$
In particular, $D\circ D=0$ and $D$ is natural with respect to $S^1$-equivariant maps of stacks.
\end{lem}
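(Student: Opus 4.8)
The plan is to observe first that both supplementary claims in the lemma are formal consequences of the single identity $D = T\circ q_*$, and then to prove that identity by transporting everything to an honest (free) $S^1$-space and invoking the classical description of the transgression. For the consequences: in the Gysin sequence of Proposition~\ref{P:GysinSequence} the maps $\cdots\to H_{i-1}(\YY)\stackrel{q_*}\to H_{i-1}^{S^1}(\YY)\stackrel{\cap c}\to\cdots$ are consecutive, so $q_*\circ T=0$; hence $D\circ D = T\circ(q_*\circ T)\circ q_* = 0$. Naturality of $D$ follows from the naturality of $T$ (Proposition~\ref{P:transferisnatural}) together with the naturality of $q_*$ coming from the $2$-cartesian square~\eqref{eq:quotientequivcartesian} attached to an $S^1$-equivariant morphism. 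Thus the entire content of the lemma is the equality $D = T\circ q_*$.

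To prove $D = T\circ q_*$ I would reduce to a topological space exactly as in the proof of Proposition~\ref{P:GysinSequence}. Choose a classifying space $Z\to[S^1\backslash\YY]$ as in Theorem~\ref{T:nice} and set $Y:=Z\times_{[S^1\backslash\YY]}\YY$. Then $Y\to Z$ is a principal $S^1$-bundle presenting $q$, and the projection $Y\to\YY$ is an $S^1$-equivariant universal weak equivalence (the second projection being strictly $S^1$-equivariant, see \S\ref{SS:torsors}). Since $D$ is assembled from the natural operations $[S^1]\times-$ and $\mu_*$, and since $T\circ q_*$ is natural by the first paragraph, both operators are intertwined by the isomorphisms $H_*(Y)\cong H_*(\YY)$ and $H_*(Z)\cong H_*^{S^1}(\YY)$ induced by $Y\to\YY$. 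It therefore suffices to establish $D = T\circ q_*$ for the free $S^1$-space $Y$, where $q$ becomes the bundle projection $\pi\colon Y\to Z$ and $q_* = \pi_*$.

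For the free space, Proposition~\ref{P:GysinSequence} identifies $T$ with the transgression $\tilde T\colon H_*(Z)\to H_{*+1}(Y)$ of the principal $S^1$-bundle $\pi$. Pulling $\pi$ back along itself and using freeness to trivialize the result, $Y\times_Z Y\cong S^1\times Y$ via the diagonal section, gives the $2$-cartesian square
\[
\xymatrix@M=6pt{S^1\times Y \ar[r]^{\mu} \ar[d]_{\pr_2} & Y \ar[d]^{\pi} \\ Y \ar[r]_{\pi} & Z}
\]
in which the two projections of $Y\times_Z Y$ become the action $\mu$ and $\pr_2$. By the base-change property of the transgression under pullback of bundles, $\tilde T\circ\pi_* = \mu_*\circ\tilde T_{\pr_2}$, where $\tilde T_{\pr_2}$ is the transgression of the trivial bundle $\pr_2\colon S^1\times Y\to Y$. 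The transgression of a trivial $S^1$-bundle is the cross product with the fundamental class, $\tilde T_{\pr_2} = [S^1]\times-$, so $T\circ q_* = \tilde T\circ\pi_* = \mu_*([S^1]\times-) = D$, which is the assertion.

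The step I expect to require the most care is the base-change identity $\tilde T\circ\pi_* = \mu_*\circ\tilde T_{\pr_2}$ together with the trivial-bundle computation $\tilde T_{\pr_2}=[S^1]\times-$: these are standard facts about the Gysin sequence of a principal $S^1$-bundle, but they must be phrased so as to match the bivariant definition of $T$ in Definition~\ref{D:transfer}. That compatibility is precisely the identification of $T$ with the classical connecting homomorphism already carried out inside the proof of Proposition~\ref{P:GysinSequence}, so in practice I would import it from there rather than re-derive it; the naturality of the connecting map and of the Thom isomorphism under the Cartesian square above then yields the desired base-change formula. (This is, in the end, the well-known identification of the $S^1$-action operator with the transgression, now in the setting of stacks.)
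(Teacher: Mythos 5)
Your proof is correct, and its first paragraph is exactly the paper's reduction: granting $D=T\circ q_*$, the vanishing $D\circ D=0$ and the naturality of $D$ follow from Propositions~\ref{P:transferisnatural} and~\ref{P:GysinSequence}. Where you genuinely differ is in the proof of the identity $D=T\circ q_*$ itself. The paper stays entirely in the stack setting: it places the 2-cartesian square of \S\ref{SS:properties} (which identifies $\YY\times_{[S^1\backslash\YY]}\YY$ with $S^1\times\YY$ via $(\pr_2,\mu)$) on top of the square~\eqref{eq:pullbackS1stack}, and concludes by applying the naturality of bivariant Gysin maps (\cite{BGNX}, Section 9.2) to this tower of stacks --- no classifying spaces appear in its proof of this lemma. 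You instead transport the identity along the classifying-space isomorphisms $H_*(Y)\cong H_*(\YY)$ and $H_*(Z)\cong H_*^{S^1}(\YY)$ (a legitimate step, since $Y\to\YY$ is strictly $S^1$-equivariant and both operators are natural, as you verify), thereby reducing to an honest principal $S^1$-bundle $\pi\colon Y\to Z$, and then run the analogous cartesian-square argument downstairs, using the classical naturality of the Gysin-sequence transgression together with the trivial-bundle computation $\widetilde{T}_{\pr_2}=[S^1]\times -$. The geometric pivot --- the square with $\mu$ and $\pr_2$ sitting over $q$ and $q$ --- is identical in the two arguments; what differs is the level at which naturality is invoked. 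Your route is more elementary after the reduction, needing only classical bundle theory, but it must re-import the identification of the bivariant $T$ with the classical connecting homomorphism from the proof of Proposition~\ref{P:GysinSequence}; the paper's route is shorter precisely because the bivariant formalism of \cite{BGNX} permits naturality to be applied to stacks directly, with no detour through spaces.
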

\begin{proof}
 By Proposition~\ref{P:transferisnatural} and~\ref{P:GysinSequence}, we only need to prove that $D=T\circ q_*$. From \S~\ref{SS:properties}, we get a tower of 2-cartesian diagrams 
 $$\xymatrix@M=6pt{S^1\times \YY
           \ar[d]_{\pr_2} \ar[r]^{\mu}   &   
              \YY \ar[d]^{\pq}  \\   
         \YY    \ar[r]_{\pq}  \ar[d]
        &   [S^1\backslash\YY]  \ar[d] \\
        {*} \ar[r] & [S^1\backslash *] }$$
        The result follows from naturality of Gysin maps, see~\cite[\S~9.2]{BGNX}.
\end{proof}

\section{Equivariant String Topology for free loop stacks}{\label{S:Stringproduct}}
In this section we look at natural algebraic operations on strings of a stack $\XX$, that is, on the quotient stack 
$[S^1\backslash \LXX]$.

\subsection{Batalin-Vilkovisky algebras}
We first  quickly recall the definition of a \BV-algebra and its  underlying Gerstenhaber algebra structure.

A {\bf Batalin-Vilkovisky algebra} ({\bf \BV-algebra} for short) is a graded commutative associative
algebra with a degree $1$ operator $D$  such that $D^2=0$ and the  following identity
is satisfied:
     \begin{multline} \label{eq:BVidentity} D(abc)-D(ab)c-(-1)^{|a|}aD(bc)-(-1)^{(|a|+1)|b|} bD(ac)+
      \\ +D(a)bc+(-1)^{|a|}aD(b)c+
      (-1)^{|a|+|b|}abD(c)=0.\end{multline}
In other words, $D$ is a second-order differential operator. Note that we do \emph{not} assume \BV-algebras to be unital. 

 Let $(A, \cdot, D)$ be a \BV-algebra. We can define a degree 1 binary operator $\{-;-\}$ by the following formula:
\begin{eqnarray}\label{eq:Gerstenhaberbracket}
 \{a;b\} &=& (-1)^{|a|}D(a\cdot b) -(-1)^{|a|}D(a)\cdot b -a \cdot D(b).
\end{eqnarray}
The BV-identity~\eqref{eq:BVidentity} and commutativity of the product imply that $\{\,;\,\}$ is a 
derivation in each variable (and anti-symmetric with respect to the degree shifted down by 1). 
Further, the relation $D^2=0$ implies the (graded) Jacobi identity for $\{\,;\,\}$. In other words, 
$(A,\cdot, \{-;-\})$ is a \emph{Gerstenhaber} algebra, that is a commutative graded algebra 
equipped with a bracket $\{-;-\}$ that makes $A[-1]$ into a graded Lie algebra satisfying a graded 
Leibniz rule~\cite{Ge}.

It is standard (see~\cite{Get}) that a graded commutative algebra $(A,\cdot)$ 
equipped with a degree 1 operator $D$ such that $D^2=0$ is a \BV-algebra 
if and only if the operator $\{-;-\}$ defined by the formula~\eqref{eq:Gerstenhaberbracket} is a 
derivation of the second variable, that is
\begin{eqnarray}\label{eq:BVidentity2}
 \{a;bc\}= \{a;b\}\cdot c + (-1)^{|b|(|a|+1)}b\cdot\{a;c\}.
\end{eqnarray}

\smallskip

The following lemma was first noticed by Chas-Sullivan~\cite{CS}.
\begin{lem}\label{L:BVtoLieviaGysin}
Let $(B_*, \star, \Delta)$ be a  BV-algebra and $H_*$  a graded module related to it
by an \lq\lq{}$S^1$-Gysin exact sequence,\rq\rq{} that is, sitting in a long exact sequence    
$$\dots \to B_i \stackrel{q}\to H_i\stackrel{c}\to H_{i-2}\stackrel{T}\to B_{i-1}\stackrel{q}\to H_{i-1}  \to\dots  $$
such that $\Delta= T\circ q$. Then, we have the following:

\begin{enumerate}
  \item The composition 
      \begin{equation*} \{-,-\} \: H_{i-2}\otimes H_{j-2}\stackrel{T\otimes T}\longrightarrow  B_{i-1}\otimes 
       B_{j-1} \stackrel{\star}\longrightarrow B_{i+j-2} \stackrel{q}\longrightarrow H_{i+j-2}
      \end{equation*}
    makes  the shifted module $H_{*}[2]$ into a graded Lie algebra. 
\item The induced map $T: H_*[2]\to B_*[1]$ is a graded Lie algebra morphism. 
 Here, $B_*[1]$ is equipped 
    with the graded Lie algebra structure underlying its BV-algebra structure.
  \end{enumerate}
\end{lem}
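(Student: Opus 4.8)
The plan is to deduce both assertions from the Gerstenhaber bracket $\{-;-\}$ on $B_*$ defined in \eqref{eq:Gerstenhaberbracket}, transporting its Lie axioms to $H_*$ along the maps $q$ and $T$. First I would record the purely formal consequences of the hypotheses. Exactness of the Gysin sequence gives $q\circ T=0$, $c\circ q=0$ and $T\circ c=0$; combined with $\Delta=T\circ q$ this yields the two relations that drive the whole argument,
\[
\Delta\circ T = T\,(q\,T)=0 \qquad\text{and}\qquad q\circ \Delta = (q\,T)\,q = 0 .
\]
Throughout, I would take the bracket on $H_*$ to be the composition in the statement, normalised by the Koszul sign $(-1)^{|x|}$ as in Corollary \ref{C:LieLoopStack}, so that $\{x,y\}=(-1)^{|x|}q\big(T(x)\star T(y)\big)$.

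Next I would establish the key identity relating the two brackets. For $x,y\in H_*$ the relation $\Delta\circ T=0$ gives $\Delta(T(x))=\Delta(T(y))=0$, so in \eqref{eq:Gerstenhaberbracket} every term involving $\Delta(T(x))$ or $\Delta(T(y))$ drops out and the Gerstenhaber bracket collapses to $\{T(x);T(y)\}=(-1)^{|T(x)|}\Delta\big(T(x)\star T(y)\big)$. Since $\Delta=T\circ q$, the right-hand side equals $(-1)^{|T(x)|}T\big(q(T(x)\star T(y))\big)$, which is $\pm\,T(\{x,y\})$. This one computation already yields part (2): once $H_*[2]$ is known to be a graded Lie algebra, the identity says precisely that $T$ carries its bracket to the Gerstenhaber bracket on $B_*[1]$ (up to the d\'ecalage sign built into the shifted Lie structure), i.e. $T$ is a morphism of graded Lie algebras.

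For part (1) I would verify the two Lie axioms directly. Antisymmetry is immediate from graded commutativity of $\star$: interchanging the factors of $q(T(x)\star T(y))$ produces the sign $(-1)^{|T(x)||T(y)|}$, and a short check shows that, with the normalising factor $(-1)^{|x|}$, this is exactly the graded antisymmetry required on $H_*[2]$ — indeed this is what forces that sign. For the Jacobi identity I would begin from $\{\{a,b\},c\}=\pm\,q\big(\Delta(T(a)\star T(b))\star T(c)\big)$, using $T\{a,b\}=\pm\,\Delta(T(a)\star T(b))$ from the previous step. Writing $u=T(a)\star T(b)$ and using $\Delta(T(c))=0$, the defining formula \eqref{eq:Gerstenhaberbracket} gives $\Delta(u)\star T(c)=\Delta\big(u\star T(c)\big)-(-1)^{|u|}\{u;T(c)\}$; applying $q$ and invoking $q\circ\Delta=0$ kills the first term, leaving $\{\{a,b\},c\}=\mp(-1)^{|u|}q\big(\{T(a)\star T(b);T(c)\}\big)$. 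Expanding the Gerstenhaber bracket of a product by the biderivation property \eqref{eq:BVidentity2} breaks this into two summands; replacing $\{T(b);T(c)\}$ and $\{T(a);T(c)\}$ by $\pm T\{b,c\}$ and $\pm T\{a,c\}$ via the key identity and unwinding the definition of the $H$-bracket reassembles them as $\pm\{a,\{b,c\}\}\pm\{\{a,c\},b\}$, which is the graded Jacobi identity. Notably, this route never needs $T$ to be injective, so the potential nuisance that $\ker T=\operatorname{im}c$ might be nonzero does not arise.

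The conceptual content is therefore short; the real work, and the only genuine obstacle, is the sign bookkeeping. The Koszul signs coming from the shifts $H_*[2]$ and $B_*[1]$, from graded commutativity of $\star$, from the biderivation rule, and from the d\'ecalage relating the bracket on a shifted module to the Gerstenhaber bracket must all be tracked at once; the task is to check that a single normalising sign in the definition of $\{-,-\}$ makes antisymmetry and Jacobi hold with the correct graded signs on $H_*[2]$ and makes $T$ a strict Lie morphism to $B_*[1]$. Everything else reduces formally to the three relations $q\,T=0$, $q\,\Delta=0$, $\Delta\,T=0$ and the Gerstenhaber algebra structure on $B_*$.
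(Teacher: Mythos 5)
Your proposal is correct, and for part (2) it is the same argument as the paper's: both exploit $\Delta\circ T = T\circ(q\circ T)=0$ to collapse the Gerstenhaber formula \eqref{eq:Gerstenhaberbracket} to $\{T(x);T(y)\}=(-1)^{|T(x)|}\Delta\big(T(x)\star T(y)\big)=-T(\{x,y\})$, and then absorb the sign into the shift operator $s$ to conclude that $s\circ T$ is a Lie algebra map. Where you genuinely differ is part (1): the paper does not prove it at all, it simply cites Theorem 6.1 of Chas--Sullivan \cite{CS}, whereas you verify antisymmetry and the Jacobi identity directly from the three relations $q\circ T=0$, $q\circ\Delta=0$, $\Delta\circ T=0$, graded commutativity of $\star$, and the derivation property of the Gerstenhaber bracket. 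This is in substance a reconstruction of the Chas--Sullivan computation, so nothing is lost; your version buys self-containedness (and makes visible that only those three relations are used), at the price of the sign bookkeeping you flag, which does work out with the Koszul normalization $\{x,y\}=(-1)^{|x|}q\big(T(x)\star T(y)\big)$. One small repair: the expansion of $\{T(a)\star T(b);T(c)\}$ is a Leibniz rule in the \emph{first} variable, which is not literally \eqref{eq:BVidentity2}; you must combine \eqref{eq:BVidentity2} with the graded antisymmetry of the Gerstenhaber bracket (stated in the paper right after \eqref{eq:Gerstenhaberbracket}) to get it — a one-line fix that does not affect the structure of your argument.
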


Note that, since $T$ is an operator 
of odd degree, following the Koszul-Quillen sign convention, the bracket in statement (1) is 
given by $$\{x,y\}:= (-1)^{|x|} q\big(T(x)\star T(y)\big).$$

\begin{proof}  
The proof of statement (1) is the same as the proof of Theorem 6.1 in~\cite{CS}.

\smallskip

The Lie bracket $\{-,-\}_{\Delta}$ on the (shifted) modules $B_*[1]$ is defined by the degree 1 
operator (from $B_*\otimes B_*$ to $B_*$)
$$ \{a,b\}_{\Delta}:=(-1)^{|a|}\Delta(a\star b) -(-1)^{|a|}\Delta(a)\star b -a\star \Delta (b).$$
We denote the shift operator ($x\mapsto (-1)^{|x|} x$) by $s: B_*\to B_*[1]$. The Lie bracket on $B_*[1]$ 
is, by definition, the transport along $s$ of  the degree 1 operator above.  Now, 
for $x,y\in H_{*}$, since $T\circ q = \Delta$ and $\Delta \circ T=T\circ (q\circ T)=0$,  we deduce
from the above formula for   $\{-,-\}_{\Delta}$ that
  \begin{eqnarray*}
     T\big(\{x,y\} \big) &=& (-1)^{|x|} \Delta \big(T(x)\star T(y)\big) \\
              &=& - \,\{T(x),T(y)\}_{\Delta}\\
              &=& s^{-1}\{ s(T(x)), s(T(y))\}_{\Delta}.
    \end{eqnarray*}
This proves that $s\circ T$ is a Lie algebra map. 
\end{proof}

\begin{rmk}
We will apply Lemma~\ref{L:BVtoLieviaGysin} in the context of string topology  operations
(following~\cite{CS}). However, this Lemma also applies when 
$B_*$ is the Hochschild cohomology of any Frobenius algebra and
 $H_*$ is its negative cyclic cohomology 
(for instance see~\cite{Tradler, ATZ, Me}).
\end{rmk}

\begin{ex}
 Lemma \ref{L:BVtoLieviaGysin} also applies in the following situation. Let 
$C_*$ be the graded module $C_*=\bigoplus_{n\geq 0}(A[-1])^{\otimes n}$, where $A$ is a unital 
associative (possibly differential graded) algebra. In other words, as a 
$\bbZ$-graded module, 
$C_k=CHoch_{-k}(A)$, where $(CHoch_*(A),b)$ is the standard Hochschild chain 
complex~\cite{Loday}. Let 
$B:C_*\to C_{*-1}$ be the Connes operator, which makes $(C_*,D)$ into a chain complex. 
Since the Connes operator $D$ is a derivation for the shuffle product (see~\cite{Loday}),
the shuffle product makes  $sh: C_*\otimes C_* \to C_*$  into a differential graded commutative
algebra, and its homology $B_*:=H_*(C_*,B)$ into a graded commutative  algebra. Since $A$
is not necessarily commutative, the standard Hochschild differential $b:C_*\to C_{*+1}$ 
is not necessarily a derivation with respect to the shuffle product, but it is a second order
differential-operator. Thus $(C_*,B,b)$ is a differential graded \BV-algebra and, 
consequently, $(B_*,sh,b)$ 
is a \BV-algebra.

Let $NC_k:= \prod_{i\geq 0} C_{k+2i}$. It is easy to check that $(NC_k,B,(-1)^k b)$
 is a bicomplex (which can be thought as an analogue of the standard  cyclic chain complex 
where the role of $b$ 
and $B$ have been exchanged). Let $TC_*$ be the associated total complex of $NC_*$,
and let $H_*=H_*(TC_*)$ be its homology.
 The inclusion $q:C_k\hookrightarrow \prod_{i\geq 0} C_{k+2i}=TC_k$
is an injective chain map and, further, its cokernel is $TC_*[2]$. Let $T:TC_k\to C_{k+1}$
be the composition 
$$T: TC_k=\prod_{i\geq 0} C_{k+2i} \stackrel{projection}\twoheadrightarrow C_k \stackrel{b}\to C_{k+1}.$$
One checks easily that $T$ is a chain map; in fact, it is 
the connecting homomorphism of the short exact 
sequence $0\to C_*\to TC_*\to TC_*[2]\to 0$.

It follows that $H_*, B_*$ satisfy the assumption of Lemma~\ref{L:BVtoLieviaGysin}. Thus,
$H_*[2]$ inherit a natural Lie algebra structure.
\end{ex}

\subsection{Quick review of string topology operations for stacks}

Let $\XX$ be an oriented Hurewicz stack. 
It is shown in~\cite{BGNX} that  $H_*(\LXX)$ carries a natural structure 
of a $d$-dimensional Homological Conformal Field Theory, where $d=\dim\XX$. 
Restricting this structure to genus $0$-operations, one obtains the following.

\begin{thm}[\cite{BGNX}, Theorem 13.2] \label{T:BVforLoop}
Let $\XX$ be an oriented Hurewicz stack of dimension  $d$. Then, the shifted 
homology $(H_{i+d}(\LXX), \star, D)$
is a BV-algebra, where $D$ is the operator~\eqref{eq:DefDelta} 
induced by the $S^1$-action on $\LXX$ and 
$\star:H_i(\LXX)\otimes H_{j}(\LXX)\to H_{i+j-d}(\LXX)$ is the loop product.
\end{thm}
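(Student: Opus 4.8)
The plan is to obtain the \BV-algebra structure by restricting the full Homological Conformal Field Theory structure on $H_*(\LXX)$, constructed in \cite{BGNX}, to its genus-$0$ sector, and then checking the three defining features of a \BV-algebra directly. Throughout, the product $\star$ and all string operations are the Gysin-type maps produced by the bivariant theory of \S\ref{SS:bivarianttheory} from the orientation of $\XX$; concretely, $\star$ is the composite of the Gysin map associated to the diagonal $\XX \to \XX \times \XX$ (pulled back along the two evaluation maps $\LXX \times \LXX \to \XX \times \XX$, which accounts for the degree shift $-\dim\XX$) with the concatenation map on the resulting substack of composable loops. The operator $D$ is the $S^1$-action operator \eqref{eq:DefDelta} coming from the rotation action on $\LXX$.

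First I would dispose of the easy axioms. Graded commutativity and associativity of $\star$ are formal consequences of the HCFT being a monoidal functor out of a cobordism category: the transposition of the two inputs of the pair-of-pants cobordism is realized by a diffeomorphism of surfaces, and the two iterated triple products are the operations of two cobordisms diffeomorphic rel boundary; functoriality under gluing then turns these diffeomorphisms into the corresponding identities on $H_*(\LXX)$. The relation $D^2 = 0$ is precisely Lemma \ref{L:DgivenbyGysin} applied to $\YY = \LXX$, since there $D = T \circ q_*$ and $q_* \circ T = 0$ as consecutive maps in the Gysin sequence of Proposition \ref{P:GysinSequence}.

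The heart of the matter is the \BV-identity \eqref{eq:BVidentity}. By the standard criterion recalled just after \eqref{eq:BVidentity2} (due to Getzler \cite{Get}), it suffices to prove that the bracket defined from $\star$ and $D$ by \eqref{eq:Gerstenhaberbracket} is a derivation in the second variable, i.e.\ \eqref{eq:BVidentity2}. I would establish this along the lines of Chas--Sullivan \cite{CS}: the operator $D$ is induced by the fundamental class of the rotation circle, so each of the terms in \eqref{eq:BVidentity2} is the string operation attached to a genus-$0$ surface carrying prescribed boundary rotations, and the identity is the image under the HCFT functor of the defining relation of the \BV-operad inside the homology of the moduli space of genus-$0$ surfaces with parametrized (and rotated) boundary circles. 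Since the reduction via Corollary \ref{C:invariance} identifies $H_*(\LXX)$ with $H_*(LX)$ for a classifying space $X$, one may realize this relation by an explicit homotopy between the two $S^1$-parametrized families of concatenated, partially rotated triples of loops, exactly as in the manifold case.

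The hard part will be the verification of \eqref{eq:BVidentity2}, that is, controlling the interaction of the rotation $S^1$-action with the Gysin map for the composable-loops substack and checking that the boundary terms of the relevant chain homotopy cancel in the signed combination prescribed by \eqref{eq:BVidentity}; all the signs are forced by the Koszul--Quillen convention fixed in \S\ref{S:Notation} together with the odd degree of $D$. Once the HCFT of \cite{BGNX} is granted and the genus-$0$ parameter spaces are identified with the \BV-operad, the remaining work is the bookkeeping of these families and their orientations.
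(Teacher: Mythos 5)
Your proposal takes essentially the same route as the paper, which in fact offers no independent proof of this statement: it is quoted directly as \cite{BGNX}, Theorem 13.2, with the one-line justification that it is obtained by restricting the Homological Conformal Field Theory structure on $H_*(\LXX)$ constructed in \cite{BGNX} to its genus-$0$ operations. Your outline --- restriction of the HCFT to genus zero, identification of the genus-$0$ operations with the \BV-operad, and Getzler's second-variable-derivation criterion for the \BV-identity --- is precisely the skeleton of that argument, with the substantive verifications (the compatibility of the rotation action with the Gysin maps coming from the bivariant theory) carried out in \cite{BGNX} itself, exactly as you indicate.
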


Note that, in general, the multiplication $\star$ may \emph{not} be unital  for stacks.

\subsection{Lie algebra structure on the $S^1$-equivariant homology of the free loop stack}

\begin{prop}\label{P:LieS1Stacks}
Let $\YY$ be an $S^1$-stack, and $d$ an integer.
Assume that the operator $D$ of~\eqref{eq:DefDelta}
induces a BV-algebra structure $(H_{i+d}(\YY), \star, D)$ on the (shifted) homology of $\YY$.
Then, we have the following:

\begin{itemize}
 \item The composition 
   \begin{multline*} \{-,-\} \: H_{i+d-2}^{S^1}(\YY)\otimes H_{j+d-2}^{S^1}(\YY)\stackrel{T\otimes T}\longrightarrow  
    H_{i+d-1}(\YY)\otimes  H_{j+d-1}(\YY)\\ 
    \stackrel{\star}\longrightarrow H_{i+j+d-2}(\YY) \stackrel{q_*}\longrightarrow H_{i+j+d-2}^{S^1}(\YY)
  \end{multline*}
 makes  the shifted equivariant homology $H_{*+d-2}^{S^1}(\YY)$ into a graded Lie algebra.
\item The induced map $T: H_*^{S^1}(\YY)[2]\to H_*(\YY)[1]$ is a graded Lie algebra morphism. Here, $H_*(\YY)[1]$ is equipped 
    with the graded Lie algebra structure underlying its BV-algebra structure.
\end{itemize}
\end{prop}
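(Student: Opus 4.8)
The plan is to deduce Proposition~\ref{P:LieS1Stacks} directly from the abstract Lemma~\ref{L:BVtoLieviaGysin}. Essentially all the ingredients have been assembled in the preceding sections, so the argument amounts to matching the hypotheses of that lemma with the structures carried by the $S^1$-stack $\YY$, keeping track of the degree shift by $d$.

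First I would set $B_* := H_{*+d}(\YY)$ equipped with the loop product $\star$ and the operator $D$ of~\eqref{eq:DefDelta}; by hypothesis this is a BV-algebra, so it plays the role of $(B_*,\star,\Delta)$ with $\Delta = D$. Then I would take the graded module $H_*$ of the lemma to be the shifted equivariant homology $H^{S^1}_{*+d}(\YY)$. The required ``$S^1$-Gysin exact sequence'' relating $B_*$ and $H_*$ is obtained simply by shifting the Gysin long exact sequence of Proposition~\ref{P:GysinSequence} by $d$: there $q_*$ is the quotient map, $c$ is cap product with the Euler class, and $T$ is the transgression of Definition~\ref{D:transfer}, which raises homological degree by one. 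Thus, in the notation of Lemma~\ref{L:BVtoLieviaGysin}, the map $T\colon H_{i-2}\to B_{i-1}$ becomes $T\colon H^{S^1}_{i+d-2}(\YY)\to H_{i+d-1}(\YY)$, as needed.

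The last hypothesis to check is the compatibility $\Delta = T\circ q$. This is exactly the content of Lemma~\ref{L:DgivenbyGysin}, which identifies the $S^1$-action operator $D$ with the composition $T\circ q_*$. With all hypotheses verified, Lemma~\ref{L:BVtoLieviaGysin}(1) yields the Lie bracket on $H^{S^1}_{*+d-2}(\YY)$ as the composition $q_*\circ\star\circ(T\otimes T)$, which is precisely the map displayed in the statement; and Lemma~\ref{L:BVtoLieviaGysin}(2) gives at once that $T\colon H^{S^1}_*(\YY)[2]\to H_*(\YY)[1]$ is a Lie algebra morphism, proving the second bullet.

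I expect the only genuine point requiring care to be the degree accounting. One must confirm that the shift by $d$ converts the loop-product convention $\star\colon H_i(\YY)\otimes H_j(\YY)\to H_{i+j-d}(\YY)$ into a degree-preserving product $B_i\otimes B_j\to B_{i+j}$, and that the degree $+1$ of the transgression is propagated consistently, so that the abstract bracket of the lemma lands in $H^{S^1}_{i+j+d-2}(\YY)$. Since all the relevant maps have already been constructed and shown to be natural (Sections~\ref{S:Transfer} and~\ref{S:Stringproduct}), no further input is required beyond this bookkeeping.
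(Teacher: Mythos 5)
Your proposal is correct and follows exactly the paper's own argument: the paper's proof likewise invokes Proposition~\ref{P:GysinSequence} and Lemma~\ref{L:DgivenbyGysin} to verify that $B_*=H_{*+d}(\YY)$ and $H_*=H^{S^1}_{*+d}(\YY)$ satisfy the hypotheses of the purely algebraic Lemma~\ref{L:BVtoLieviaGysin}, whose two parts then give the two bullets. Your additional remarks on the degree bookkeeping are consistent with the conventions in the paper and do not change the argument.
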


Recall the sign convention for bracket and similarly for higher brackets in  statement (2).

\begin{proof} 
 By Proposition~\ref{P:GysinSequence} and Lemma~\ref{L:DgivenbyGysin}, 
 the shifted equivariant homology $H^{S^1}_{*+d}(\YY)$ and shifted homology
 $H_{*+d}(\YY)$ satisfy the assumption of  
 Lemma~\ref{L:BVtoLieviaGysin}.  
\end{proof}

Let $\XX$ be a topological stack. Then, the free loop stack $\LXX=\Map(S^1,\XX)$ is a topological stack
 (Theorem~\ref{T:mapping}) with a (strict) $S^1$-action (see Section~\ref{SS:mappingaction}). 
Further, if $\XX$ is a Hurewicz (for instance differentiable) oriented stack, then, by Theorem~\ref{T:BVforLoop}, 
its (shifted down by $\dim\XX$) homology carries a structure of a \BV-algebra. Hence, we can apply
 Proposition~\ref{P:LieS1Stacks} to $\YY=\LXX$.

\begin{cor}\label{C:LieLoopStack}
 Let $\XX$ be an oriented differentiable (or more generally, Hurewicz) stack of dimension $d$.
\begin{itemize}
 \item For $x,y\in H_*(\LXX)$,  the formula
           $$\{x,y\}:= (-1)^{|x|} q\big(T(x)\star T(y)\big)$$ 
makes the equivariant homology $H_*^{S^1}(\LXX)[2-d]$ into a graded Lie algebra. Here,
$T$ is the transgression map (Definition~\ref{D:transfer}) and 
$q:\LXX \to [S^1\backslash \LXX]$ is the canonical projection.

\item The transgression map $T: H_*^{S^1}(\LXX)[2-d]\to H_*(\LXX)[1-d]$ 
is a graded Lie algebra homomorphism. Here, $H_*(\LXX)[1-d]$ 
   is the graded Lie algebra structure underlying the \BV-algebra 
   structure of Theorem~\ref{T:BVforLoop}.
\end{itemize}
\end{cor}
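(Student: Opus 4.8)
The plan is to deduce the corollary directly from Proposition~\ref{P:LieS1Stacks} by specializing to $\YY=\LXX$ and $d=\dim\XX$, so the entire argument consists of checking that the hypotheses of that proposition are in force. First I would record the structural input. By Theorem~\ref{T:mapping} the free loop stack $\LXX=\Map(S^1,\XX)$ is a topological stack, and by the construction of Section~\ref{SS:mappingaction} applied to $G=S^1$ it carries a strict $S^1$-action; hence $\LXX$ is an $S^1$-stack and in particular the $S^1$-action operator $D$ of \eqref{eq:DefDelta} is defined on $H_*(\LXX)$.

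Next I would verify the one nontrivial assumption of Proposition~\ref{P:LieS1Stacks}, namely that this operator $D$ extends to a BV-algebra structure on the shifted homology. Since $\XX$ is an oriented Hurewicz stack (which holds in particular whenever $\XX$ is differentiable), Theorem~\ref{T:BVforLoop} provides a BV-algebra $(H_{i+\dim\XX}(\LXX),\star,D)$ whose BV-operator is precisely the operator $D$ of \eqref{eq:DefDelta} induced by the $S^1$-action, and whose product is the loop product $\star$. Thus the hypothesis of Proposition~\ref{P:LieS1Stacks} is met with $d=\dim\XX$.

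Applying Proposition~\ref{P:LieS1Stacks} then yields both assertions at once: the composition $q_*\circ\star\circ(T\otimes T)$ makes the shifted equivariant homology $H_*^{S^1}(\LXX)[2-\dim\XX]$ into a graded Lie algebra, and the transgression $T\colon H_*^{S^1}(\LXX)[2-\dim\XX]\to H_*(\LXX)[1-\dim\XX]$ is a Lie algebra homomorphism onto the Lie algebra underlying the BV-structure of Theorem~\ref{T:BVforLoop}. Finally I would note that the explicit formula $\{x,y\}=(-1)^{|x|}q\big(T(x)\star T(y)\big)$ is exactly the Koszul--Quillen normalization recorded after Lemma~\ref{L:BVtoLieviaGysin}, so the bracket displayed in the corollary coincides with the one produced by the proposition. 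There is essentially no obstacle here; the only point demanding care is to confirm that the BV-operator furnished by Theorem~\ref{T:BVforLoop} is literally the $S^1$-action operator \eqref{eq:DefDelta}, and this is precisely what the statement of that theorem asserts, so the two operators agree and the proposition applies verbatim.
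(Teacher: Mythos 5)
Your proposal is correct and takes essentially the same route as the paper: the paper likewise notes that $\LXX$ is a topological stack (Theorem~\ref{T:mapping}) with a strict $S^1$-action (Section~\ref{SS:mappingaction}), invokes Theorem~\ref{T:BVforLoop} to get the BV-algebra structure whose BV-operator is the $S^1$-action operator~\eqref{eq:DefDelta}, and then applies Proposition~\ref{P:LieS1Stacks} to $\YY=\LXX$ with $d=\dim\XX$. Your added remark matching the sign convention to the Koszul--Quillen normalization after Lemma~\ref{L:BVtoLieviaGysin} is consistent with the paper's discussion.
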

The bracket $\{-,-\}$ defined in Corollary~\ref{C:LieLoopStack} is called the \textbf{string bracket}. 

\subsection{Some Examples}{\label{SS:examples}}

\begin{ex}[Oriented manifolds] \label{E:manifold}
 Let $M$ be an oriented closed manifold. Then, by~\cite[Proposition 17.1]{BGNX}, the 
 \BV-algebra structure of $H_*(LM)$ given by Theorem~\ref{T:BVforLoop} agrees 
 with Chas-Sullivan construction (and other constructions as well).  Since the 
 $S^1$-action on $LM$ agrees with the stacky one (\cite[Example 5.8]{BGNX}), 
 it follows immediately that the Lie algebra structure given by Corollary~\ref{C:LieLoopStack} 
 agrees with the one in Chas-Sullivan~\cite{CS} for oriented closed manifolds. Note that 
 Corollary~\ref{C:LieLoopStack} also applies to open oriented manifolds. 
\end{ex}

\begin{ex}[Classifying stack of compact Lie groups]
Let $G$ be a compact  Lie group. Its associated classifying stack $[G\backslash *]$ 
is oriented (see~\cite{BGNX}) of dimension $-\dim G$, hence its $S^1$-equivariant 
homology $H^{S^1}_*(\Lo [G\backslash *])$ has a degree $2+\dim G$ Lie bracket.

\begin{prop}
If $\kor$ is of characteristic zero and $G$ is  either connected or 
finite, then  
the Lie algebra $H_{*}(\textrm{L}[G\backslash *], \kor)$ is abelian. 
\end{prop}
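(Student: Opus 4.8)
The plan is to reduce everything to the classifying space of the loop stack and then treat the two cases separately, the intended object being the string Lie algebra of Corollary~\ref{C:LieLoopStack}, carried by $H^{S^1}_*(\mathrm{L}[G\backslash *])$. By Corollary~\ref{C:invariance} and Corollary~\ref{C:classifyingquotient}, a classifying space for $[S^1\backslash \LXX]$ is the Borel construction $LX\times_{S^1}\mathbb{CP}^{\infty}$, where $X$ is a classifying space for $\XX=[G\backslash *]$; hence all the homology in sight, together with the operations $\star$, $T$ and $q_*$ entering the bracket, is computed on the free loop space $LBG$ equipped with its rotation $S^1$-action. First I would record the identification $LBG\simeq EG\times_G G$ (the Borel construction for the conjugation action), so that $H_*(LBG;\kor)\cong H_*^G(G;\kor)$ with $G$ acting on itself by conjugation.

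For $G$ finite this finishes the argument at once. Since $\kor$ has characteristic zero, $H_*(BG;\kor)$ is concentrated in degree $0$, and so is $H_*(LBG;\kor)=H_*^G(G;\kor)\cong\kor[\,\mathrm{Conj}(G)\,]$, the free module on the set of conjugacy classes. Thus the transgression $T\colon H_i^{S^1}(\LXX)\to H_{i+1}(\LXX)$ has zero target for every $i\ge 0$ and hence vanishes identically; as $\{x,y\}=(-1)^{|x|}q_*\big(T(x)\star T(y)\big)$, the bracket is zero.

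For $G$ connected the argument is more substantial and rests on two points. The first is that $\im(q_*)\cap\ker(T)=0$. I would establish this by analysing $H^{S^1}_*(LBG;\kor)$ as a module over $H_*(BS^1;\kor)=\kor[u]$: using the rational model $H^*(LBG;\kor)\cong\kor[x_i]\otimes\Lambda(\bar x_i)$ and the fact that the $S^1$-operator $D$ is dual to the de~Rham type derivation $\delta(x_i)=\bar x_i$, one computes (as in Corollary~\ref{C:specseq}) that $H^{S^1}_*(LBG;\kor)$ is the sum of a free $\kor[u]$-module and a part annihilated by $u$; consequently $\ker(\cap c)\cap\im(\cap c)=0$, which by the Gysin sequence of Proposition~\ref{P:GysinSequence} is exactly $\im(q_*)\cap\ker(T)=0$. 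The second point is that the Gerstenhaber bracket underlying the Batalin--Vilkovisky structure of Theorem~\ref{T:BVforLoop} vanishes on $\im(T)$. Here I would exploit that the loop product on the oriented stack $[G\backslash *]$, of dimension $-\dim G$, is governed by the Euler class of the adjoint bundle $\mathfrak{g}$, and that $e(\mathfrak{g})=0$ in $H^*(BG;\kor)$ — because the adjoint representation has vanishing weights on a maximal torus $T$ and $H^*(BG;\kor)\hookrightarrow H^*(BT;\kor)$ is injective in characteristic zero. Granting the vanishing $\{T(x);T(y)\}_{\mathrm{Gerst}}=0$, Lemma~\ref{L:BVtoLieviaGysin} gives $T(\{x,y\})=0$, i.e. $\{x,y\}\in\ker T$; since moreover $\{x,y\}=\pm q_*\big(T(x)\star T(y)\big)\in\im(q_*)$, the first point forces $\{x,y\}=0$.

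The hard part is the second point: making precise how the loop product on $H_*(LBG;\kor)$ depends on $e(\mathfrak{g})$, so that its vanishing kills the Gerstenhaber bracket on $\im(T)$ for all connected $G$. I expect this to require either an explicit description of the BV-algebra $H_*(LBG;\kor)$ (in the spirit of Chataur--Menichi) or a direct proof that for connected $G$ the operator $D$ acts as a derivation of the loop product. The rank-one case $G=U(1)$, where $\dim\XX=-1$ and products of constant-loop classes land in odd-degree homology of $BG$ and therefore vanish outright, is a useful guide and a check that the final statement is correct.
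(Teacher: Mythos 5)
Your finite-group case is exactly the paper's argument: in characteristic zero $H_{*}(\mathrm{L}[G\backslash *],\kor)$ is concentrated in degree $0$, so the transfer $T$ has zero target and vanishes, and with it the bracket. The connected case, however, contains a genuine gap, and you have flagged it yourself: the whole argument hinges on the claim that the Gerstenhaber bracket vanishes on $\im(T)$, which you propose to deduce from a relation between the loop product on $H_*(LBG;\kor)$ and the Euler class $e(\mathfrak{g})$ that you never establish (``I expect this to require either an explicit description of the BV-algebra \dots or a direct proof that $D$ acts as a derivation''). As written this is a conditional argument, not a proof. Moreover, the auxiliary step $\im(q_*)\cap\ker(T)=0$, which you need in order to pass from $T(\{x,y\})=0$ to $\{x,y\}=0$, is itself only sketched via rational models and would have to be verified for every connected compact Lie group, not just checked in examples such as $G=U(1)$.

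The paper's proof of the connected case is much shorter and bypasses both of your points: by \cite[Theorem 17.23]{BGNX} the hidden loop product vanishes when $G$ is connected, and by \cite[Lemma 17.14]{BGNX} the hidden loop product coincides with the loop product $\star$ itself. Since the string bracket is $\{x,y\}=\pm\, q_*\bigl(T(x)\star T(y)\bigr)$, the vanishing of $\star$ kills the bracket outright --- no analysis of $\ker T$, of the $\kor[u]$-module structure of $H^{S^1}_*$, or of $e(\mathfrak{g})$ is needed. The missing ingredient in your route is precisely this stronger statement: it is the entire loop product that vanishes for connected $G$, not merely the induced bracket on $\im(T)$; once that is in hand, your two-step reduction becomes superfluous.
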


\begin{proof}
By~\cite[Theorem 17.23]{BGNX}, if $G$ is connected, the hidden loop product 
(which coincides with the loop product by~\cite[Lemma 17.14]{BGNX}) vanishes. 
Thus, the string bracket vanishes as well. 

If $G$ is finite, then $H_{*}(\textrm{L}[G\backslash *], \kor)$ is concentrated in degree 
$0$ and it follows that the transfer map 
$T:H_{*}^{S^1}(\textrm{L}[G\backslash *], \kor)\to H_{*+1}(\textrm{L}[G\backslash *], \kor)$ vanishes, hence so does the string bracket.
\end{proof}
\end{ex}

If $G$ is a finite group with order coprime to the characteristic of $\kor$, then 
the same proof shows that $H_{*}(\textrm{L}[G\backslash *], \kor)$ is abelian. 

However, if the characteristic of $\kor$ divides the order of $G$, then, 
in view of the results of~\cite{SF} on the nontriviality of the Gerstenhaber bracket on
the Hochschild cohomology of the group algebra $\kor[G]$ of $G$, and the close 
relationship between the Gerstenhaber product and loop bracket~\cite{FT, GTZ3}, 
it is reasonable to expect that the string Lie algebra of $[G\backslash *]$ is 
no longer abelian in this case. 

\begin{ex}[A non-nilpotent example]
 Let $\sz$ acts on the euclidean sphere 
   $$S^{2n+1}=\{
|z_0|^2+\cdots+|z_{n}|^2=1, z_i\in \mathbb{C}\}$$ 
as the group generated by the reflections  across the
hyperplanes  $z_i=0$, $0\leq i\leq n$. Let $\TT=[S^{2n+1}\times S^{2n+1}/\sz]$ be
the induced quotient stack, where $\sz$ acts diagonally. This is  an oriented orbifold 
in the sense of~\cite{BGNX}. 

Recall that there is an isomorphism of coalgebras 
$H_*^{S^1}(*)\cong H_*(BS^1)\cong \kor[u]$, where $|u|=2$. Thus,
$H_*^{S^1} (\textrm{L} (S^{2n+1}\times S^{2n+1}),\kor)$ is a $k[u]$-comodule, and
$H_*^{S^1}(\textrm{L}\TT, \kor)$ is a $\kor[\sz][u]$-comodule, where
$\kor[\sz][u]$ is the coalgebra obtained by tensoring $k[u]$ with the group
algebra $\kor[(\mathbb{Z}/2\mathbb{Z})^{n+1}]$.

\begin{prop}\label{P:loopbracketnontrivial}
 Let $\kor$ be a field of characteristic
  different from $2$. 
  \begin{itemize}
\item[i)] There is an isomorphism of (graded)  Lie algebras 
$$H_*^{S^1}(\textrm{L}\TT, \kor) \, \cong \, 
   H_*^{S^1} (\textrm{L} (S^{2n+1}\times S^{2n+1}),\kor)\otimes_{\kor}
  \kor[(\mathbb{Z}/2\mathbb{Z})^{n+1}].$$
   \item[ii)] As a  $\kor[\sz][u]$-comodule,
 $H_*^{S^1}(\textrm{L}\TT, \kor) $ is  free and is spanned by the basis elements
 $$\big(e_{i,j}\big)_{(i,j)\in \mathbb{N}^2\setminus\{(0,0)\}}, 
      \qquad  \big(f_{i,j}\big)_{(i,j)\in \mathbb{N}^2} $$
where $|e_{i,j}|=2n(i+j)$ and $|f_{k,l}|=2n(i+j+2)+1$.
 \item[iii)] The string bracket is $\kor[\sz][u]$-colinear and satisfies the formulas
 \begin{align*}
  [f_{i,j}, e_{k,l}] &= \binom{i+k}{i} \binom{j+l}{j} 
  \frac{il-jk}{(i+k)(j+l)} f_{i+k-1, j+l-1}, \\
  [e_{i,j}, e_{k,l}] &= \binom{i+k}{i} \binom{j+l}{j} 
  \frac{jk-il}{(i+k)(j+l)} e_{i+k-1, j+l-1},\\
  [e_{i,j}, e_{k,l} ] &=0.
 \end{align*}
\end{itemize}
\end{prop}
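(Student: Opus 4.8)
The plan is to reduce everything to the string topology of the product of odd spheres $M:=S^{2n+1}\times S^{2n+1}$ and then to transport the computation across the finite group $\sz$. Since $\kor$ has characteristic different from $2$, the order $2^{n+1}$ of $\sz$ is invertible, so for any $\sz$-space $Z$ one has $H_*^{S^1}([\,\sz\backslash Z\,])\cong H_*^{S^1}(Z)^{\sz}$. First I would use the standard description of the loop stack of a global quotient by a finite group: writing $P_g M$ for the $g$-twisted path space $\{\gamma\colon[0,1]\to M\mid \gamma(1)=g\cdot\gamma(0)\}$, one has an $S^1$-equivariant decomposition $\textrm{L}\TT\simeq\coprod_{g\in\sz}[\,\sz\backslash P_g M\,]$ (see \cite{LuUrXi}), the circle acting by rotation on each sector and $\sz$ acting on $P_g M$ by $\gamma\mapsto h\cdot\gamma$ (which preserves the sector because $\sz$ is abelian). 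The crucial geometric input is that each $g\in\sz$ acts on $S^{2n+1}\subset\cc^{n+1}$ as the diagonal unitary $\mathrm{diag}(\epsilon_0,\dots,\epsilon_n)$, $\epsilon_i=\pm1$, hence lies in the connected group $U(n+1)$; choosing a path $g_t$ from the identity to $g$ produces a homotopy equivalence $\textrm{L}M\xrightarrow{\sim}P_gM$, $\gamma\mapsto(t\mapsto g_t\cdot\gamma(t))$, and exhibits the $\sz$-action on $H_*^{S^1}(P_gM)$ as homotopic to the identity. The delicate point, which I would treat with care, is that these homotopies must be made compatible with the $S^1$-rotation at the level of $H_*^{S^1}$; I expect this to follow by observing that rotation and the above reparametrization commute up to an $S^1$-homotopy, so the induced maps on equivariant homology are unaffected. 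This yields the module isomorphism $H_*^{S^1}(\textrm{L}\TT)\cong H_*^{S^1}(\textrm{L}M)\otimes\kor[\sz]$, the factor $\kor[\sz]$ recording the sector $g$.

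For the Lie-algebra statement of the first bullet, I would verify that the loop product of \cite{BGNX} respects the sector grading additively: concatenating loops multiplies their holonomies, so the product of the $g$- and $h$-sectors lands in the $gh$-sector. Combined with the triviality of the $\sz$-action established above and with $\kor[\sz]$ being commutative, this identifies the string bracket on $H_*^{S^1}(\textrm{L}\TT)$ with the bracket on $H_*^{S^1}(\textrm{L}M)\otimes\kor[\sz]$ given by $[x\otimes g,\,y\otimes h]=[x,y]\otimes gh$; in particular the bracket is $\kor[\sz]$-linear. This reduces both remaining bullets to a computation on $M=S^{2n+1}\times S^{2n+1}$. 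Next I would compute $H_*^{S^1}(\textrm{L}M)$ together with its $\kor[u]$-comodule structure ($|u|=2$): using the Cohen--Jones--Yan description of $H_*(\textrm{L}S^{2n+1})$, the Künneth theorem for $\textrm{L}M\cong \textrm{L}S^{2n+1}\times \textrm{L}S^{2n+1}$, and the Gysin sequence of Proposition~\ref{P:GysinSequence} relating $H_*(\textrm{L}M)$ to $H_*^{S^1}(\textrm{L}M)$, I would extract the stated basis: the classes $e_{i,j}$ in degree $2n(i+j)$ coming from $q_*$ of products of the two families of generators, and the classes $f_{i,j}$ in degree $2n(i+j+2)+1$ coming from the complementary part of the Gysin sequence. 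Tensoring with $\kor[\sz]$ gives the second bullet.

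The heart of the argument, and the step I expect to be hardest, is the third bullet. By Corollary~\ref{C:LieLoopStack} the bracket is $\{X,Y\}=(-1)^{|X|}q_*\big(T(X)\star T(Y)\big)$, so I must (i) evaluate the transgression $T$ of Definition~\ref{D:transfer} on $e_{i,j}$ and $f_{i,j}$, reading it off the Gysin sequence and the identity $D=T\circ q_*$ of Lemma~\ref{L:DgivenbyGysin}; (ii) compute the loop products $\star$ of the resulting non-equivariant classes using the explicit ring structure of $H_*(\textrm{L}M)$ from Cohen--Jones--Yan and Künneth; and (iii) push forward by $q_*$. Here the two index slots $i,j$ track the winding in the two sphere factors, and I expect the determinant $il-jk=\det\left(\begin{smallmatrix} i & k\\ j & l\end{smallmatrix}\right)$ to arise from the antisymmetrization built into $T\otimes T$ followed by the graded-commutative product, while the binomial coefficients $\binom{i+k}{i}\binom{j+l}{j}$ should come from the $\kor[u]$-comodule structure, i.e.\ from the coproduct $\Delta(u^{N})=\sum_m\binom{N}{m}u^m\otimes u^{N-m}$ interacting with the transgression. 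The main obstacle is bookkeeping: normalizing $T$, $\star$ and $q_*$ (including Koszul signs and the degree shift by $\dim\TT=\dim M=2(2n+1)$) so that the rational prefactor collapses exactly to $\frac{il-jk}{(i+k)(j+l)}\binom{i+k}{i}\binom{j+l}{j}$, and in particular checking the vanishing bracket among the $e$-classes (and the corresponding statement for the $f$-classes). I would carry out this computation first in the rank-one case $S^{2n+1}$ to fix all conventions, and then obtain the rank-two formulas by the multiplicativity of the loop product and bracket under the Künneth decomposition.
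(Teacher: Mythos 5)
Your proposal follows essentially the same route as the paper's proof: decompose $\textrm{L}\TT$ into $\sz$-indexed twisted sectors, use that $\sz$ lies in a connected group ($U(n+1)\subset SO(2n+2)$) to identify each sector with $\textrm{L}(S^{2n+1}\times S^{2n+1})$ with homologically trivial $\sz$-action (handling the same $S^1$-compatibility delicacy the paper addresses via Corollary~\ref{C:specseq} and naturality of the Gysin sequence), and then transport the product and bracket to reduce everything to the equivariant string topology of the product of odd spheres. The only real difference is cosmetic: you propose to redo the sphere computations via Cohen--Jones--Yan, K\"unneth and the Gysin sequence, where the paper simply cites the standard results of \cite{FTV, BV}, and it defers the multiplicativity of the sector identification to \cite{BGNX} (Proposition 17.10) rather than arguing it directly from holonomy additivity as you do.
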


Since $[e_{1,1}, e_{i,j} ] = (i-j) e_{i,j}$, it follows that 
$H_*^{S^1}(\textrm{L}\TT, \kor) $ is not nilpotent as a Lie algebra.
 
\begin{proof}
The explicit computations in (ii) and (iii) follow from (i) 
and the standard computations of equivariant homology of loop spheres; see~\cite{FTV, BV}. 

The statement (i) follows from~\cite[\S~17]{BGNX}, as we now 
explain. By~\cite[Proposition 5.9]{BGNX}, 
the free loop stack $\textrm{L}\TT$ is presented by the transformation topological groupoid
$$LT:=\big[\coprod_{g \in R} \mathcal{P}_g \rtimes \sz 
\toto \coprod_{g \in R} \mathcal{P}_g\big], $$ 
where $\mathcal{P}_g$ is the space of continuous maps 
  $$\mathcal{P}_g:=\{f: \bbR\to S^{2n+1} \times S^{2n+1},\, 
  f(t) =f(t+1)\cdot g \textrm{ for all } t\}.$$ 
The  $\sz$ action on $\mathcal{P}_g$ is pointwise.  
The action of $S^1$, or rather $[\bbZ\backslash \bbR]$, on $\textrm{L}\TT$ 
is presented by the morphism of topological groupoids 
  $$\big(\bbZ\times \bbR \big) \times 
  \big(\coprod_{g \in R} \mathcal{P}_g \rtimes \sz\big)  
  \stackrel{\theta}\longrightarrow \big(\coprod_{g \in R} \mathcal{P}_g \rtimes \sz\big)$$
defined, for  $(n,x)\in \mathbb{Z}\times \mathbb{R}$, $f\in \mathcal{P}_g$, and
$h\in \sz$ by
\begin{align*}{\theta}(x,n,f,h) & = \big((t\mapsto f(t+x)\big), g^nh\big).
\end{align*} 
The map is compatible with the group structure of the stack 
$[\bbZ\backslash \bbR]$, hence is a groupoid morphism presenting the
$S^1$-action on $\LXX$ defined in Section~\ref{S:actionmapping}.

Since $\sz$ is a subgroup of the connected Lie group $SO(2n+2)$, which acts  
diagonally on $S^{2n+1}\times S^{2n+1}$,  for all $g\in \sz$ there is a 
continuous path $\rho: [0,1]\to SO(2n+2)$ connecting $g$ to the identity 
(that is $\rho(0)=g$, $\rho(1)=1$). This allows us to define a map  
  $$\Upsilon: \coprod_{g\in \sz} \mathcal{P}_g \to \coprod_{g\in \sz} 
    {\rm L}\big(S^{2n+1}\times S^{2n+1}\big),$$ 
which is given, for any path $f\in P_g$, by the loop 
$$
\Upsilon_g(f)(t) = \left\{ \begin{array}{ll} f(2t) & \textrm{if } 0\leq t\leq \frac{1}{2}, \\
 f(0)\cdot\rho(2t-1) &  \textrm{if } \frac{1}{2} \leq t\leq 1. \end{array}\right.
$$
It is a general fact  that $\Upsilon$ is  a $\sz$-equivariant  homotopy equivalence, 
where $\sz$ acts pointwise  
(see~\cite[\S~6]{LuUrXi} for details); note that this 
action is trivial in homology with coefficients in a field of characteristic coprime 
to $2$ since the (naive) quotient map 
$$S^{2n+1}\times S^{2n+1}\to \sz\backslash(S^{2n+1}\times S^{2n+1})
\cong S^{2n+1}\times S^{2n+1}$$ 
is invertible in homology in this case.  It follows that $\Upsilon$ induces an isomorphism  
  $$H_*(\textrm{L}\TT)\cong 
  H_*\big(\textrm{L}(S^{2n+1}\times S^{2n+1}),\kor\big)\otimes_{\kor} \kor[\sz]$$ 
and, by Corollary~\ref{C:specseq},  an isomorphism of $\kor[u]$-comodules 
 \begin{equation} \label{eq:LSmodZ2} H_*^{S^1}(\textrm{L}\TT, \kor) \, 
  \cong \, H_*^{S^1} (\textrm{L} (S^{2n+1}\times S^{2n+1}),\kor)\otimes_{\kor}
  \kor[(\mathbb{Z}/2\mathbb{Z})^{n+1}]. 
 \end{equation}
The proof that the above isomorphism is multiplicative with respect to the 
loop product is similar to the proof of~\cite[Proposition 17.10]{BGNX}. 
Furthermore, by naturality of the Gysin sequence (Proposition~\ref{P:GysinSequence}), 
the Gysin sequence of the $S^1$-stack $\textrm{L}\TT$ is identified with the Gysin 
sequence of the $S^1$-stack 
$$\coprod_{g\in \sz} [\sz\backslash \textrm{L}(S^{2n+1}\times S^{2n+1})],$$ 
where the $[\bbZ\backslash\bbR]$-action is induced by the map $\Upsilon$. 
By definition of the Lie algebra structure, it follows that the 
isomorphism~\eqref{eq:LSmodZ2} is an isomorphism of Lie algebras (after  
shifting the degree by $\dim\XX-2$).
\end{proof}
\end{ex}

\section{Functoriality of with respect to open embeddings}{\label{S:Lemma}}

In this section we show that the Batalin-Vilkovisky structure and the string bracket are 
functorial with respect to open 
embeddings (Proposition \ref{P:functoriality}). 
This will be used later  to describe the string bracket for 2-dimensional orbifolds.

\begin{lem}{\label{L:tubular}}   
 Let $\XX$ be a topological stack whose coarse moduli space $|\XX|$ is paracompact. 
 Let $\FF$ be a metrizable vector bundle over $\XX$, and let $\UU \subset \FF$ be 
 an open substack of the total space of $\FF$ through which the zero 
 section $s \:\XX \to \FF$ factors. Then, the map $s \: \XX \to \UU$ admits 
 a tubular neighborhood \cite[Definition 8.5]{BGNX}. That is, there is a  vector 
 bundle $\N$ over $\XX$ and a factorization   
      $$\XX  \stackrel{i}{\hookrightarrow} \NN  \stackrel{j}{\hookrightarrow}   \UU$$
 for $s$, where $i$ is the zero section of  $\NN$ and $j$ is an open embedding.
\end{lem}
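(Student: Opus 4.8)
The statement is about the existence of a tubular neighborhood for the zero section $s \: \XX \to \UU$, where $\UU$ is an open substack of a vector bundle $\FF$ containing the image of the zero section. Let me think about what's really going on here.

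We have a vector bundle $\FF \to \XX$ with zero section $s_0 \: \XX \to \FF$. The open substack $\UU \subseteq \FF$ contains $s_0(\XX)$. We want to find an intermediate vector bundle $\NN$ and factor $s_0$ through $\NN$ as an open embedding into $\UU$.

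The key idea: we want to "shrink" $\FF$ into $\UU$ by rescaling fibers. If we had a continuous positive function $r \: \XX \to \mathbb{R}_{>0}$ such that the open disk bundle of radius $r$ sits inside $\UU$, then that open disk bundle is an open substack of $\FF$, it's diffeomorphic (via fiberwise rescaling) to all of $\FF$, hence is itself a vector bundle $\NN$ over $\XX$ (isomorphic to $\FF$), and the inclusion into $\UU$ is an open embedding.

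So this reduces to: (a) equip $\FF$ with a metric (here metrizability is used), (b) find such a function $r$, and (c) check the disk bundle is an open substack isomorphic to a vector bundle.

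Let me think about how to prove this at the level of stacks, working through an atlas.

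**Plan.**

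The plan is to reduce the construction to the existence of a fiberwise radial rescaling that shrinks the total space $\FF$ into $\UU$. First I would equip $\FF$ with a fiberwise inner product (a metric); this is where the metrizability hypothesis on $\FF$ and the paracompactness of $|\XX|$ enter, since a metrizable vector bundle over a stack with paracompact coarse moduli admits a continuous fiber metric by a partition-of-unity argument on a suitable atlas (or directly via \cite{Homotopytypes}, using that $|\XX|$ is paracompact). This gives a continuous ``norm'' function $\|\cdot\| \: \FF \to \mathbb{R}_{\geq 0}$, well defined on the stack because the metric is chosen $\XX$-equivariantly on the groupoid presenting $\XX$.

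Next I would produce a continuous function $r \: \XX \to \mathbb{R}_{>0}$ such that the open radial ball bundle $\NN:=\{v\in\FF : \|v\| < r(\pi(v))\}$ lands inside $\UU$, where $\pi \: \FF \to \XX$ is the projection. Concretely, on the coarse moduli space $|\XX|$ (which is paracompact) the set $|\UU|$ is an open neighborhood of the image of the zero section $|\XX|$, and for each point there is a local radius making a small ball fit inside $\UU$; patching these together with a partition of unity on $|\XX|$ yields a global positive continuous $r$. This is the step I expect to be the main obstacle: one must argue that this radius-selection, which is elementary for an open neighborhood of the zero section of a vector bundle over a paracompact space, can be carried out on the coarse space $|\XX|$ and pulled back to give an honest open substack $\NN \subset \UU$ of the stack $\FF$. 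The content is that openness and the containment $\NN\subseteq\UU$ can be checked after passing to an atlas $X \to \XX$, where $\FF$ pulls back to an ordinary metrizable vector bundle over $X$ and $\UU$ to an ordinary open neighborhood of the zero section; the function $r$ descends from $|\XX|$ so the construction is automatically compatible with the groupoid structure.

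Finally, I would observe that fiberwise rescaling $v \mapsto v/\sqrt{1+\|v\|^2/r^2}$ (or any standard diffeomorphism $\mathbb{R}^k \risom B(0,r)$ depending continuously on the base) gives an isomorphism of vector bundles $\FF \risom \NN$ over $\XX$; composing with the identity map $\NN \hookrightarrow \FF$ realizes $\NN$ as a vector bundle over $\XX$ whose zero section is $i$ and whose inclusion $j \: \NN \hookrightarrow \UU$ is an open embedding. Setting $\XX \stackrel{i}{\hookrightarrow} \NN \stackrel{j}{\hookrightarrow} \UU$ then factors $s$ as required, and $i$ is the zero section of $\NN$ by construction. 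All of these constructions are stack-theoretic but reduce, via a presenting groupoid $[R\toto X]$, to the corresponding statements for metrizable vector bundles over paracompact spaces, where they are classical.
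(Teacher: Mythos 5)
Your proposal is correct and takes essentially the same route as the paper's proof: both construct a positive function $r \: \XX \to \mathbb{R}^{>0}$ by a local-radius argument patched together with a partition of unity on the paracompact coarse space $|\XX|$, and then identify the resulting open ball bundle inside $\UU$ with the total space of $\FF$ via fiberwise rescaling, so that $\NN\cong\FF$ gives the desired factorization. The only cosmetic difference is that you make the choice of fiber metric explicit, while the paper treats it as part of the metrizability hypothesis on $\FF$.
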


\begin{proof}
 We show that there is a function $f \: \XX \to \mathbb{R}^{>0}$ such that the map
 $\Phi \: \FF \to \FF$ defined by fiberwise multiplication by $f$ identifies  
 the open unit ball bundle $\DD \subset \FF$ with an open substack 
 $\VV \subseteq \UU$. It would then follow that $\VV$ is isomorphic, as a 
 stack over $\XX$, with $\DD$, which is in turn isomorphic to the total space of $\FF$. 
 Thus, taking $\NN:=\FF$ gives the desired factorization.
 
 Since we have partition of unity on $|\XX|$, construction of $f \: \XX \to \mathbb{R}^{>0}$ 
 can be done locally on $|\XX|$,  so we are allowed to pass to open substacks of $\XX$. 
 Thus, we may assume that $\XX$ admits a chart $\pi \: X \to \XX$ 
 such that after base extending along $\pi$, the resulting bundle $F$ over $X$ and 
 the open set $U \subseteq F$  corresponding to $\UU$ have the property that $U$ contains  
 an $\varepsilon$-ball bundle of $F$, for some $\varepsilon >0$. So, it is enough to take 
 $f \: \XX \to \mathbb{R}^{>0}$ to be the constant function  $\varepsilon$.
\end{proof}

\begin{lem}{\label{L:nnspullback}}   
   Consider the 2-cartesian diagram of topological stacks
             $$\xymatrix@=26pt@M=8pt{ 
                \XX \ar[r]_{f}^*+<4pt>[o][F-]{\theta_{f}} \ar[d]& \YY \ar[d] \\
                  \XX' \ar[r]_{f'}^*+<4pt>[o][F-]{\theta_{f'}} & \YY'    }$$
     in which the vertical arrows are open embeddings. If $f'$ is  
    bounded proper (respectively, normally nonsingular),
     then so is $f$ (see~\cite[Definitions 6.1, 8.15]{BGNX}). Suppose,
     in addition, that
     $f$ and $f'$ are strongly proper (see~\cite[Definition 6.2]{BGNX}), and let 
     $\theta_f$ and $\theta_{f'}$ be the corresponding strong orientation classes 
     \cite[Proposition 8.25]{BGNX}. Then, $\theta_{f}$ is the independent pullback of
     $\theta_f'$ in the sense of bivariant theory~\cite[\S~7.2]{BGNX}. 
\end{lem}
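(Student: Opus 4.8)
The plan is to use that, since the square is 2-cartesian and $g\:\YY\hookrightarrow\YY'$ is an open embedding, the top arrow $f$ is precisely the independent pullback of $f'$ along $g$. Concretely, $\XX\simeq\XX'\times_{\YY'}\YY=(f')^{-1}(\YY)$, the left vertical arrow is the induced open embedding $\XX\hookrightarrow\XX'$, and $f$ is the restriction of $f'$ to this open substack. All three assertions will then follow by transporting the relevant structure on $f'$ to $f$ along this restriction to an open substack of the target.

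First I would treat bounded properness, strong properness and normal nonsingularity, all of which are stable under base change, hence in particular under restriction to an open substack of the target. For the properness conditions this is transparent, since passing to $\YY$ does not alter preimages: for a compact $K$ in $\YY$ one has $f^{-1}(K)=(f')^{-1}(K)$ because $\XX=(f')^{-1}(\YY)$, so the compactness/boundedness condition imposed on $f'$ (\cite{BGNX}, Definitions 6.1, 6.2) passes verbatim to $f$. This yields the first assertion; it also shows that the strong properness of $f$ in the last part is automatic once $f'$ is strongly proper.

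For normal nonsingularity I would pull back the normally nonsingular structure of $f'$. Suppose $f'$ factors as $\XX'\stackrel{i'}{\hookrightarrow}\VV'\stackrel{\pi'}{\to}\YY'$, with $\pi'$ a vector bundle over $\YY'$ and $i'$ a closed embedding realizing $\XX'$ as the zero section inside a tubular neighborhood $\UU'\subseteq\VV'$ (\cite{BGNX}, Definition 8.15, cf.\ Lemma \ref{L:tubular}). Base-changing along $g$ produces a vector bundle $\VV:=\VV'\times_{\YY'}\YY\to\YY$, an open substack $\UU:=\UU'\times_{\YY'}\YY\subseteq\VV$, and a closed embedding $i\:\XX\hookrightarrow\VV$ onto the zero section lying inside $\UU$. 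Since pullbacks of vector bundles are vector bundles, and the pullback of a closed embedding with a tubular neighborhood retains both properties, the factorization $\XX\stackrel{i}{\hookrightarrow}\VV\stackrel{\pi}{\to}\YY$ is a normally nonsingular structure for $f$, establishing the normally nonsingular case.

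Finally, for the orientation classes I would use that the strong orientation class $\theta_{f'}$ is built from the normally nonsingular data of $f'$, essentially as the relative Thom class of the normal bundle $\VV'$ (\cite{BGNX}, Proposition 8.25). Because Thom classes of vector bundles pull back to Thom classes of the pulled-back bundles, and because the normal bundle and tubular neighborhood of $f$ were just exhibited as the $g$-pullbacks of those of $f'$, the class constructed from this data is exactly the independent pullback of $\theta_{f'}$ along $g$ in the sense of the bivariant theory (\cite{BGNX}, 7.2); that is, $\theta_f=g^{*}\theta_{f'}$. I expect the only genuine work to lie in this last step, namely unwinding the definition of the strong orientation class and matching the geometric pullback of the Thom class with the formal independent-pullback operation of the bivariant calculus; the preceding parts are immediate consequences of stability under base change.
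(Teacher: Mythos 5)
Your treatment of bounded properness (stability under base change) is fine and agrees with the paper, but the normally nonsingular case contains a genuine gap: you assert that ``the pullback of a closed embedding with a tubular neighborhood retains both properties,'' and this is precisely the point that fails and that the paper's proof has to work around. A normally nonsingular diagram for $f'$ consists of a vector bundle $\NN'$ over $\XX'$ with zero section $s'$, an open embedding $i'\: \NN' \hookrightarrow \EE'$ into a vector bundle $\EE'$ over $\YY'$, and the projection $p'\:\EE'\to\YY'$. Base changing along the open embedding $\YY\hookrightarrow\YY'$ does produce the vector bundle $\EE=\EE'|_{\YY}$ over $\YY$, but the preimage $\UU=(p'\circ i')^{-1}(\YY)$ is only an open substack of the vector bundle $\FF:=\NN'|_{\XX}$ containing the zero section: a fiber of $\FF$ over a point of $\XX$ may well stick out of $\UU$, so $\UU$ is \emph{not} a vector bundle over $\XX$, and the pulled-back diagram is \emph{not} a normally nonsingular diagram. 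This is exactly where the paper invokes Lemma \ref{L:tubular} (which you cite in passing but never actually use): using partitions of unity on the coarse moduli space and metrizability of the bundle, one rescales to find an honest tubular neighborhood $\NN$ of the zero section $\XX\to\UU$, i.e.\ a vector bundle over $\XX$ openly embedded in $\UU$, and replacing $\UU$ by $\NN$ gives the desired normally nonsingular diagram for $f$. Without this step your argument is incomplete.

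The same gap propagates to your comparison of orientation classes. Since the normal bundle and tubular neighborhood of $f$ are \emph{not} literally the pullbacks of those of $f'$ (they are a shrunk neighborhood sitting openly inside the pullback $\UU$), the identification of $\theta_f$ with the independent pullback of $\theta_{f'}$ cannot be read off from ``Thom classes pull back to Thom classes''; one also needs excision to compare the class defined via the tubular neighborhood $\NN$ with the class defined via the open substack $\UU$, which is how the paper concludes. A final side remark: your parenthetical claim that strong properness of $f$ is automatic from that of $f'$ is not something the lemma asserts, and the paper deliberately avoids it --- strong properness of both $f$ and $f'$ is taken as a hypothesis, and in the application (Proposition \ref{P:functoriality}) the strong properness of $\Delta_{\UU}$ is established by a separate argument, the lemma yielding only bounded properness.
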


\begin{proof}
  Being bounded proper is invariant under arbitrary base change. Suppose that $f'$ is
  normally nonsingular, and let
        $$\xymatrix@=16pt@M=8pt{ \NN'  \ar@{^(->}^{i'} [r] & \EE' \ar[d]^{p'} \\
                          \XX'\ar[u]^{s'} \ar[r]_{f'} & \YY'     }$$ 
   be a normally nonsingular diagram for it.  Base changing the diagram along the open 
   embedding $\YY \to \YY'$,
  we obtain a  diagram
              $$\xymatrix@=16pt@M=8pt{ \UU  \ar@{^(->}^{i} [r] & \EE \ar[d]^{p} \\
                          \XX\ar[u]^s \ar[r]_{f} & \YY     }$$    
  where $\UU=(p'\circ i')^{-1}(\YY)$ is an open substack
 of  the vector bundle $\FF:=\NN'|_{\XX}$ over $\XX$ which contains the 
 zero section $s \: \XX \to \FF$. This diagram is not quite a  normally nonsingular diagram,
 as $\UU$ is not a vector bundle over $\XX$, but by Lemma \ref{L:tubular} the map 
 $s \: \XX \to \UU$ admits a tubular  neighborhood $\NN$. Replacing $\UU$ by his 
 tubular neighborhood $\NN$ we obtain the desired normally nonsingular diagram for $f$.
 
 The statement about $\theta_{f}$ being the independent pullback of
 $\theta_f'$ follows from the definition of independent pullback~\cite[\S~7.2]{BGNX}
 and excision.
\end{proof}

\begin{prop}{\label{P:functoriality}}
  Let $\XX$ be an oriented  Hurewicz stack of dimension $d$, and $\UU \subseteq \XX$ 
  an open substack.   Then, $\UU$ inherits a natural orientation  from $\XX$, and the 
  induced map $H_{*+d}(L\UU) \to H_{*+d}(\Lo\XX)$  is a morphism of BV-algebras. Therefore, 
  the induced map  $H_*^{S^1}(L\UU)[2-d] \to H_*^{S^1}(\Lo\XX)[2-d]$ is a morphism of graded 
  Lie algebras.
\end{prop}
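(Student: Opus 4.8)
The plan is to reduce the whole statement to the compatibility of the umkehr maps defining the loop product with base extension along open embeddings, which is precisely what Lemma~\ref{L:nnspullback} was designed to supply. First I would construct the orientation on $\UU$. The diagonal $\Delta_\UU\:\UU\to\UU\times\UU$ sits in a $2$-cartesian square over $\Delta_\XX\:\XX\to\XX\times\XX$ whose vertical arrows are the open embeddings $\UU\hookrightarrow\XX$ and $\UU\times\UU\hookrightarrow\XX\times\XX$; indeed $\UU=\XX\times_{\XX\times\XX}(\UU\times\UU)$ since $\UU\hookrightarrow\XX$ is a monomorphism. As $\XX$ is oriented, $\Delta_\XX$ is strongly proper and normally nonsingular and carries a strong orientation class $\theta_{\Delta_\XX}$, so Lemma~\ref{L:nnspullback} shows that $\Delta_\UU$ is again strongly proper and normally nonsingular and that its orientation class $\theta_{\Delta_\UU}$ is the independent pullback of $\theta_{\Delta_\XX}$. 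This is the natural orientation inherited by $\UU$.

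The open embedding $\UU\hookrightarrow\XX$ induces, by functoriality of mapping stacks (Lemma~\ref{L:functorial}), an open embedding $j\:L\UU\hookrightarrow L\XX$ and hence the pushforward $j_*\:H_{*+d}(L\UU)\to H_{*+d}(L\XX)$. To see that $j_*$ respects the loop product, I would write the product, both for $\XX$ and (analogously) for $\UU$, as the composite
$$H_i(L\XX)\otimes H_j(L\XX)\xrightarrow{\times}H_{i+j}(L\XX\times L\XX)\xrightarrow{\Delta_\XX^!}H_{i+j-d}(L\XX\times_\XX L\XX)\xrightarrow{\gamma_*}H_{i+j-d}(L\XX),$$
where $\Delta_\XX^!$ is the Gysin map attached to $\theta_{\Delta_\XX}$ via the square with legs $ev\times ev$ and $\Delta_\XX$, and $\gamma$ is concatenation of loops. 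One checks that $L\UU\times_\UU L\UU$ is the open substack of $L\XX\times_\XX L\XX$ obtained by base extension, and that $j_*$, together with the open embeddings $L\UU\times L\UU\hookrightarrow L\XX\times L\XX$ and $L\UU\times_\UU L\UU\hookrightarrow L\XX\times_\XX L\XX$, commutes with each of the three stages. The first and third stages are routine (naturality of the cross product, and the fact that $\gamma_\UU$ is the restriction of $\gamma_\XX$ to an open substack). The middle stage, commutation of $j_*$ with $\Delta^!$, is the crux: the square computing $\Delta_\UU^!$ is the base extension along open embeddings of the one computing $\Delta_\XX^!$, and $\theta_{\Delta_\UU}$ is the independent pullback of $\theta_{\Delta_\XX}$ by the first paragraph, so the compatibility of the bivariant product with independent pullback (\cite{BGNX}, \S~7.2) forces this square to commute.

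For the \BV-operator I would invoke naturality of the $S^1$-action: since $j$ is $S^1$-equivariant, the square relating the action maps $\mu_\UU$ and $\mu_\XX$ commutes, so naturality of the cross product with $[S^1]$ and of pushforward give $j_*\circ D_\UU=D_\XX\circ j_*$ for the operator $D$ of~\eqref{eq:DefDelta}. Together with the previous paragraph this exhibits $j_*$ as a morphism of \BV-algebras.

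The passage to the string bracket is then formal. The bracket is manufactured from the \BV-data through the $S^1$-Gysin sequence exactly as in the algebraic Lemma~\ref{L:BVtoLieviaGysin}, and every ingredient of the formula $\{x,y\}=(-1)^{|x|}q\big(T(x)\star T(y)\big)$ is natural along $j$: the transgression $T$ by Proposition~\ref{P:transferisnatural}, the map $q_*$ by the $2$-cartesian square~\eqref{eq:quotientequivcartesian}, and $\star$ by the \BV-morphism property just proved. Chasing these through the formula yields $(j^{S^1})_*\{x,y\}=\{(j^{S^1})_*x,(j^{S^1})_*y\}$ with $j^{S^1}=[S^1\backslash j]$, so $H_*^{S^1}(L\UU)[2-d]\to H_*^{S^1}(L\XX)[2-d]$ is a morphism of graded Lie algebras. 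The one genuinely nontrivial point is the middle (Gysin) step of the second paragraph; it is exactly there that Lemma~\ref{L:nnspullback} is used.
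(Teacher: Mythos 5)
Your proposal follows essentially the same route as the paper's proof: equip $\UU$ with an orientation by pulling $\theta_{\Delta_\XX}$ back along the open-embedding square over the diagonals (Lemma~\ref{L:nnspullback}), reduce compatibility of the loop product to the commutation of pushforward with the Gysin maps in the square relating $L\UU\times_\UU L\UU=\Map(8,\UU)$ to $L\XX\times_\XX L\XX=\Map(8,\XX)$, deduce that commutation from the fact that $\omega$ is the independent pullback of $\theta$ (by composing pullback squares) together with the naturality of Gysin maps, and then get the Lie statement formally from naturality of $T$, $q_*$ and $\star$. You also spell out the compatibility of the operator $D$ via $S^1$-equivariance of $L\UU\to L\XX$, a point the paper leaves implicit; that part is fine. (One small citation slip: the relation $g_*(\omega^!(c))=\theta^!(f_*(c))$ is the ``Naturality'' of Gysin maps, \cite{BGNX}~\S~9.2, not \S~7.2, which only defines independent pullback.)

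There is, however, one genuine gap in your first paragraph: you assert that Lemma~\ref{L:nnspullback} shows $\Delta_\UU$ is \emph{strongly} proper. It does not. The lemma's first assertion only transfers \emph{bounded} properness (and normal nonsingularity) along base extension by open embeddings; strong properness of both $f$ and $f'$ enters its second assertion as a \emph{hypothesis}, and it is exactly this hypothesis that allows one to speak of the strong orientation classes $\theta_f$, $\theta_{f'}$ and to conclude that one is the independent pullback of the other. So before the lemma can produce $\theta_{\Delta_\UU}$, you must prove separately that $\Delta_\UU$ is strongly proper; the paper does this by appealing to the observation in \cite{BGNX}, Example~6.3.2. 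As written, your construction of the inherited orientation rests on a conclusion the lemma does not deliver, and this is precisely the one point the paper is careful to single out. Once that is patched, the rest of your argument coincides with the paper's proof.
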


The Proposition  applies, in particular, to an embedding of oriented manifolds 
(see Example~\ref{E:manifold}).

\begin{proof}
  To prove that $\UU$ inherits an orientation, we have to show that 
  $\Delta_{\UU} \: \UU \to \UU\times\UU$ is   strongly 
  proper~\cite[Definition 6.2]{BGNX}, normally nonsingular, and   that the class 
  $\theta_{\UU} \in H^d(\Delta_{\UU})$ obtained by pulling back the strong orientation 
  class   $\theta_{\XX} \in H^d(\Delta_{\XX})$ via independent pullback, as in the 
  2-cartesian diagram
      $$\xymatrix@=26pt@M=8pt{ 
            \UU \ar[r]_{\Delta_{\UU}}^*+<4pt>[o][F-]{\theta_{\UU}} \ar[d] & 
             \UU\times \UU \ar[d] \\
             \XX \ar[r]_{\Delta_{\XX}}^*+<4pt>[o][F-]{\theta_{\XX}} & \XX\times \XX   }$$
  is a strong orientation~\cite[Definition 8.21]{BGNX}.  These all follow 
  from Lemma \ref{L:nnspullback}, except the fact
  that $\Delta_{\UU}$ is strongly proper (the lemma only says that it is bounded proper). 
  The fact that $\Delta_{\UU}$ is strongly proper follows from the observation made 
  in~\cite[Example 6.3.2]{BGNX}.

  Let us now prove that the loop product is preserved under the  map 
  $H_{*+d}(L\UU) \to H_{*+d}(\Lo\XX)$. By the
  construction of the loop product~\cite[\S~10.1]{BGNX}, proving this reduces 
  to showing that in the    2-cartesian diagram
       $$\xymatrix@=26pt@M=8pt{ 
         \Map(8,\UU) \ar[r]^*+<4pt>[o][F-]{\omega} \ar[d]_g & L\UU\times L\UU \ar[d]^f \\
            \Map(8,\XX) \ar[r]_*+<4pt>[o][F-]{\theta} & \Lo\XX\times \Lo\XX    }$$
 the  Gysin maps $\theta^!$ and $\omega^!$ are compatible, in the sense that 
 \begin{eqnarray}\label{eq:Gysin}
      g_*(\omega^!(c))=\theta^! (f_*(c)), \ \  \text{for every} \ c \in H_*(L\UU\times L\UU).
\end{eqnarray}
  Here, the bivariant class $\theta$ is the one obtained via independent 
  pullback from the strong orientation
  class  $\theta_{\XX} \in H^d(\Delta_{\XX})$, as in the diagram
         $$\xymatrix@=26pt@M=8pt{ 
              \Map(8,\XX) \ar[r]^*+<4pt>[o][F-]{\theta} \ar[d]& \Lo\XX\times \Lo\XX \ar[d] \\
             \XX \ar[r]_{\Delta_{\XX}}^*+<4pt>[o][F-]{\theta_{\XX}} & \XX\times\XX    }$$
  (Similarly, the class $\omega$ is obtained from  the strong orientation
  class $\theta_{\UU} \in H^d(\Delta_{\UU})$.)     
  
  To prove the compatibility relation (\ref{eq:Gysin}), we note that, by what we just 
  showed in the first part of the proof,   the bivariant class $\omega$ is the 
  independent pullback  of $\theta$. Hence, the relation (\ref{eq:Gysin})
  follows from the `Naturality' of Gysin maps~\cite[\S~9.2]{BGNX}.    
\end{proof}

\section{Goldman bracket for $2$-dimensional orbifolds and stacks}{\label{S:Goldman}}
 
By Corollary \ref{C:LieLoopStack}, when $\XX$ is an oriented 2-dimensional 
Hurewicz stack, the  equivariant homology $H_*^{S^1}(\LXX)$ is a graded Lie algebra.   
When $\XX=X$ is an honest surface, it is well-known that the degree $0$
part  $H_0^{S^1}(\Lo X)$ is freely generated by the homotopy classes of free loops on $X$,
and the Lie bracket is the famous Goldman bracket (from~\cite{Go}). The above 
relationship between equivariant homology and free loops 
holds for general stacks  as well (see Lemma~\ref{L:surjectionpi1} below).

\begin{defn} The \emph{Goldman bracket} of  an oriented $2$-dimensional stack $\XX$    
is the restriction to $H_0^{S^1}(\Lo\XX)$ of the (degree $0$) Lie bracket on 
$H_*^{S^1}(\Lo\XX)$.
\end{defn}

In this section, we describe in detail the case were 
$\XX$ is a reduced (or effective) oriented 2-dimensional orbifold.
The functoriality lemma proved in the previous section allows us to explicitly 
write down the Goldman bracket for 
such $\XX$. The idea is that the inclusion
$\UU \hookrightarrow \XX$ of the complement
of the orbifold locus of $\XX$ induces a surjection on fundamental 
groups $\pi_1(\UU) \to \pi_1(\XX)$. Therefore,
thanks to the functoriality (Proposition~\ref{P:functoriality}) and the following lemma, 
to compute the bracket of free loops in $\XX$, we can  first lift them to free loops 
in $\UU$, compute the bracket there, and then project back down to $\XX$ 
(see Lemma~\ref{L:bracketreducedorbifold} below).

\begin{lem}\label{L:surjectionpi1}
   Let $\XX$ be a topological stack. Then, we have a natural isomorphism 
      $$H_0^{S^1}(\Lo\XX)=k[C],$$  
   where  $C$ is the set of free homotopy classes of  loops on $\XX$, and $k$ 
   is the coefficients  of the homology. When $\XX$ is connected,
   $C$ is equal to the set $\Conj(\pi_1\XX)$ of conjugacy classes in
   $\pi_1\XX$.
\end{lem}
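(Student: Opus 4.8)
The plan is to compute $H_0^{S^1}(\LXX)$ via the classifying space machinery established earlier. By Corollary~\ref{C:classifyingquotient}, if $\varphi \: X \to \XX$ is a classifying space as in Theorem~\ref{T:nice}, then the Borel construction $LX\times_{S^1}\mathbb{CP}^\infty$ is a classifying space for $[S^1\backslash\LXX]$, so $H_0^{S^1}(\LXX)\cong H_0(LX\times_{S^1}\mathbb{CP}^\infty)$. First I would reduce to understanding $\pi_0$ of the relevant space: since $H_0$ of any space with coefficients in $k$ is the free $k$-module on its set of path components, the statement amounts to identifying $\pi_0\big(LX\times_{S^1}\mathbb{CP}^\infty\big)$ with the set $C$ of free homotopy classes of loops on $\XX$.

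Next I would analyze these path components. The Borel construction is the total space of a fibration over $\mathbb{CP}^\infty=BS^1$ with fiber $LX$. Since $\mathbb{CP}^\infty$ is connected, the long exact sequence of the fibration (or the fact that the orbit space of a connected group action and the $\pi_0$ of the total space of an associated bundle agree) gives $\pi_0(LX\times_{S^1}\mathbb{CP}^\infty)\cong \pi_0(LX)/S^1$, and because $S^1$ is connected the $S^1$-action on the discrete set $\pi_0(LX)$ is trivial; hence $\pi_0(LX\times_{S^1}\mathbb{CP}^\infty)\cong \pi_0(LX)$. Now $\pi_0(LX)$ is by definition the set of free homotopy classes of loops in the space $X$. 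The final identification I would make is $\pi_0(LX)\cong C$: since $\varphi\: X\to\XX$ is a universal weak equivalence, by Corollary~\ref{C:invariance} the induced map $L\varphi\: LX\to\LXX$ is also a universal weak equivalence, in particular a weak equivalence inducing a bijection on $\pi_0$. Thus the free homotopy classes of loops on $X$ are in natural bijection with the free homotopy classes of loops on $\XX$, which is exactly $C$.

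Assembling these identifications yields a natural isomorphism
\[
  H_0^{S^1}(\LXX)\;\cong\; H_0\big(LX\times_{S^1}\mathbb{CP}^\infty\big)\;\cong\; k[\pi_0(LX)]\;\cong\; k[C].
\]
Naturality follows because every step is induced by the functorial constructions of the classifying space (Theorem~\ref{T:nice}), the mapping stack $L(-)$, and the Borel construction, all of which are natural in the topological stack and compatible with the universal weak equivalences of Corollaries~\ref{C:invariance} and~\ref{C:classifyingquotient}.

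The main obstacle I anticipate is making the identification $\pi_0(LX\times_{S^1}\mathbb{CP}^\infty)\cong\pi_0(LX)$ genuinely \emph{natural} and independent of the choice of classifying space $\varphi\: X\to\XX$, rather than merely a set-level bijection. The cleanest way to handle this is to observe that all maps in sight are universal weak equivalences, so the induced maps on $H_0$ are canonical isomorphisms (exactly as in the well-definedness discussion of homotopy invariants in~\S\ref{SS:classifying}); the $S^1$-quotient step then only requires that the $S^1$-action on the discrete set $\pi_0(LX)$ is trivial, which holds since $S^1$ is path-connected and $\pi_0(LX)$ is discrete. One should also confirm that ``free homotopy classes of loops on $\XX$'' is defined precisely as $\pi_0$ of a classifying space, so that the identification with $C$ is tautological rather than an additional theorem to prove.
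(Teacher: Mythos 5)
Your proof is correct and takes essentially the same route as the paper: both reduce to the classifying space $X$ of $\XX$ via the universal weak equivalences of \S\ref{SS:classifying} and then invoke the standard identification of $H_0$ of the $S^1$-Borel construction on $LX$ with the free $k$-module on free homotopy classes of loops. The only cosmetic difference is that you cite Corollary~\ref{C:classifyingquotient} and spell out the $\pi_0$ computation of the Borel construction, whereas the paper cites Corollary~\ref{C:invariance} and labels that computation as standard.
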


\begin{proof}
   The result is standard when $\XX=X$ is an honest topological space. We 
   reduce the general stacks    to this case as follows. 
   By definition, $H_0^{S^1}(\Lo\XX)=H_0 [S^1\backslash \Lo\XX]$. By Corollary 
   \ref{C:invariance},    $H_0 [S^1\backslash \Lo\XX]\cong
   H_0 [S^1\backslash \Lo X]=k[C']$, where $\varphi \: X \to \XX$ is a classifying 
   space for $\XX$ and    $C'$   is the set of free homotopy classes of  loops 
   on $X$. Since $\varphi$ induces a bijection  $C' \risom C$, the result follows. 
\end{proof}
 
\subsection{Goldman bracket for reduced  2-dimensional orbifolds}\label{SS:GoldmanOrbifold}

First we look at the case of reduced orbifolds.
Let $\XX$ be a 2-dimensional reduced oriented orbifold, and let $\UU \subseteq \XX$ 
be the complement of the orbifold locus. Recall that a reduced $2$-dimensional 
orbifold is a surface together with a discrete set of orbifold points. Each such 
orbifold point  $x$ has an isotropy group which is a cyclic group 
$\bbZ\slash n\bbZ$, and the complement 
of the orbifold locus is a surface with a discrete set of punctures.

The following lemma is a consequence of van Kampen,  Lemma~\ref{L:surjectionpi1} and 
Proposition~\ref{P:functoriality}.
 
\begin{lem}\label{L:bracketreducedorbifold}
 Let $\XX$ be a 2-dimensional orbifold (not necessarily reduced), 
 and let $\UU \subseteq \XX$ be the 
 complement a finite set of points. The natural map $H_0^{S^1}(\Lo\UU) \to H_0^{S^1}(\LXX)$ 
 is a surjective map of Lie algebras.
\end{lem}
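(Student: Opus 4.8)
The plan is to deduce both assertions from results already established: that the map is a morphism of Lie algebras comes from Proposition \ref{P:functoriality}, and that it is surjective comes from the explicit description of $H_0^{S^1}$ in Lemma \ref{L:surjectionpi1} together with the surjectivity of $\pi_1(\UU) \to \pi_1(\XX)$ recalled above.

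First I would observe that $\UU$ is an open substack of the oriented $2$-dimensional Hurewicz stack $\XX$, so Proposition \ref{P:functoriality} applies directly: it equips $\UU$ with an inherited orientation and shows that the induced map
$$H_*^{S^1}(\Lo\UU)[2-\dim\XX] \to H_*^{S^1}(\LXX)[2-\dim\XX]$$
is a morphism of graded Lie algebras. Since $\dim\XX = 2$, the shift $[2-\dim\XX]$ is trivial, so this is just a morphism of graded Lie algebras $H_*^{S^1}(\Lo\UU) \to H_*^{S^1}(\LXX)$. A quick degree count for the bracket of Corollary \ref{C:LieLoopStack} (with $d=2$) shows it sends $H_i^{S^1}\otimes H_j^{S^1}$ into $H_{i+j}^{S^1}$; in particular $H_0^{S^1}$ is a Lie subalgebra, and restricting to degree $0$ gives that $H_0^{S^1}(\Lo\UU) \to H_0^{S^1}(\LXX)$ is a morphism of Lie algebras.

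For surjectivity, I would pass to free homotopy classes of loops. By the naturality of Lemma \ref{L:surjectionpi1}, the map in question is identified with the linearization $k[C_\UU] \to k[C]$ of the natural map $C_\UU \to C$ on sets of free homotopy classes of loops induced by the open embedding $\UU \hookrightarrow \XX$. Hence it suffices to show that $C_\UU \to C$ is surjective. Via the classifying spaces of $\UU$ and $\XX$ (Corollary \ref{C:invariance}) and the standard identification of free homotopy classes of loops with conjugacy classes in the fundamental group, this map is the one induced on conjugacy classes by $\pi_1(\UU) \to \pi_1(\XX)$. As recalled above (van Kampen), this homomorphism is surjective, since $\pi_1(\XX)$ is the quotient of $\pi_1(\UU)$ by the relations $a^n = 1$ at the orbifold points; and any surjection of groups induces a surjection on conjugacy classes (lift a representative of a target class to the source). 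Therefore $C_\UU \to C$, and hence the map on $H_0^{S^1}$, is surjective.

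The two genuinely nontrivial inputs, the functoriality of Proposition \ref{P:functoriality} and the $\pi_1$-computation, are already in hand, so the only real work is the bookkeeping above: checking that the degree-$0$ part is a Lie subalgebra (which is where the hypothesis $\dim\XX = 2$ enters, making the bracket degree-preserving on $H_0^{S^1}$) and that the map on $H_0^{S^1}$ coincides, under Lemma \ref{L:surjectionpi1}, with the set map on free homotopy classes. The mildest subtlety I expect is confirming this last identification is natural with respect to the open embedding, but this is immediate from the construction of the isomorphism in Lemma \ref{L:surjectionpi1} through classifying spaces and Corollary \ref{C:invariance}.
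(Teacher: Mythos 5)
Your proposal is correct and follows exactly the paper's own route: the paper states this lemma as an immediate consequence of combining Proposition \ref{P:functoriality} (for the Lie algebra morphism) with Lemma \ref{L:surjectionpi1} and the van Kampen surjectivity of $\pi_1(\UU)\to\pi_1(\XX)$ (for surjectivity). Your additional bookkeeping---the degree count showing $H_0^{S^1}$ is a Lie subalgebra when $\dim\XX=2$, and the passage from a surjection on $\pi_1$ to a surjection on conjugacy classes---is exactly the detail the paper leaves implicit.
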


The above lemma allows us to compute the string bracket of a reduced orbifold by lifting  
free loops to the complement of the orbifold locus and computing the 
string bracket there using the usual intersection theory 
of curves on an honest surface. 

\subsection{Explicit description for disk with orbifold points}{\label{SS:explicitGoldman}}

As a trivial example, consider a disk with an orbifold point with isotropy group 
$\bbZ\slash n\bbZ$. Then, the Goldman Lie algebra is the free $\kor$-module spanned 
by the set $\bbZ\slash n\bbZ$. It is an abelian Lie algebra since every two loops 
around the orbifold point can be homotopically deformed so that 
they do not intersect.  We describe below the general case of a disk with more than one
orbifold points. We start with the case of two orbifold points.

\subsubsection{The disk with two orbifold points}\label{SS:diskwithtwoorbipoints}

Consider a disk $\DD$ with two orbifold points $x$ and $y$. 
Let $a\in \bbZ\slash n\bbZ$ and  $b\in \bbZ \slash m\bbZ$ be the generators 
of the isotropy groups at the points $x$ and $y$, respectively. 
By van Kampen, the fundamental  group of $\DD$ at a chosen based point, different from
$x$ and $y$,  is  isomorphic 
to the free product of the isotropy groups of the orbifold points,   
                 $$\pi_1(\DD) \,\cong\, \bbZ\slash n \bbZ * \bbZ\slash m \bbZ,$$ 
so that every free loop is given  by a (cyclic) word in the generators $a$ and $b$,  as in 
Figure~\eqref{Fig:presentationofalpha}. Since $a$ and $b$ have finite order, we do not need to 
consider negative powers of $a$ or $b$ to present a loop.  

\begin{figure}\begin{center}
\includegraphics[scale=0.6]{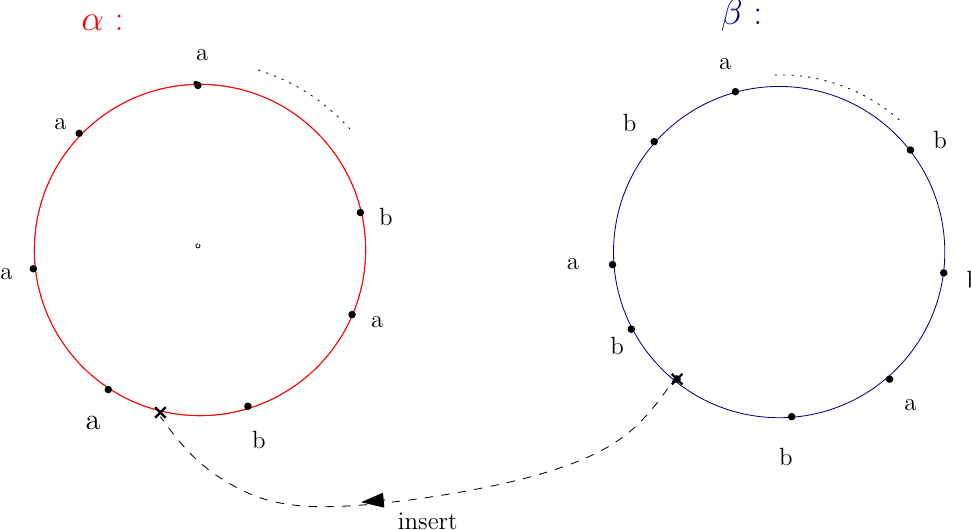}
\caption{A presentation of $\alpha$ and  $\beta$ with an admissible insertion}\label{Fig:presentationofalpha}\end{center}
\end{figure}

We now describe the Goldman bracket. Let $\alpha$ and $\beta$ be 
two free loops presented by cyclic words as in Figure~\eqref{Fig:presentationofalpha}, 
that is as circle with finitely many points labelled 
by either $a$ or $b$; the intervals between these points are colored  red  for $\alpha$ 
and blue  for $\beta$.

We define the bracket $\{\alpha, \beta\}$ as follows.  
\begin{enumerate}
 \item \label{eq:step1} Determine all \emph{admissible pairs}, that is  
   the pairs consisting  of a red and  a blue interval such that
 \begin{enumerate}
  \item the end points in the blue interval have the same labels;
  \item the end points in the red interval have different labels.
 \end{enumerate}
 \item \label{eq:step2} For each admissible pair, cut both circles in the middle of the 
  chosen intervals and insert the blue circle into the red one by joining the cut 
  intervals and   preserving the cyclic ordering. 
 \item \label{eq:step3} Assign the sign $+$ to the new circle obtained in Step~\ref{eq:step2} 
  if 
  \begin{itemize} \item the red interval is $ab$ (in the cyclic ordering) and the 
    blue interval is $aa$,
    \item or if the red interval is $ba$ (in the cyclic ordering) and the blue interval is $bb$
  \end{itemize}
Otherwise, assign the opposite sign.
 \item \label{eq:step4} Sum up all circles obtained in Step~\ref{eq:step1}, 
 for all admissible pairs,  with signs  
 as in   Step~\ref{eq:step3}. This sum is our bracket $\{\alpha, \beta\}$.
\end{enumerate}
In the above procedure, a loop given by a single generator is regarded as a loop with a 
single interval which has the same end points. It is straightforward to check that its 
bracket with any other loop is trivial. 

\begin{prop}
 The bracket $\{\alpha, \beta\}$ given by the above procedure is the Goldman bracket of 
 $\alpha$ and $\beta$ in $H_0^{S^1}(\Lo\DD)$.
\end{prop}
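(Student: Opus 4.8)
The overall strategy is dictated by Lemma~\ref{L:bracketreducedorbifold}. Writing $\UU:=\DD\setminus\{x,y\}$ for the complement of the two orbifold points, the projection $H_0^{S^1}(\Lo\UU)\to H_0^{S^1}(\Lo\DD)$ is a \emph{surjective morphism of Lie algebras}. Now $\UU$ is an honest oriented surface, a twice-punctured disk, whose fundamental group is the free group $F_2=\langle a,b\rangle$ on the loops encircling $x$ and $y$, and under which $\pi_1(\UU)\to\pi_1(\DD)$ is the quotient by the normal closure of $a^n$ and $b^m$. Moreover, for the open oriented surface $\UU$ the degree-$0$ string bracket on $H_0^{S^1}(\Lo\UU)$ agrees with the classical Goldman bracket: this is the honest-surface statement recalled at the beginning of this section, together with the identification of the stacky bracket with the Chas--Sullivan one in the manifold case (Example~\ref{E:manifold}). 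Hence the plan is to lift $\alpha,\beta$ to free loops $\tilde\alpha,\tilde\beta$ on $\UU$ given by the same (positive) cyclic words, compute the Goldman bracket $\{\tilde\alpha,\tilde\beta\}$ on $\UU$ by intersection theory, and push the answer down to $\DD$, i.e.\ read the resulting cyclic words modulo $a^n=b^m=1$. Since the Lie map is a homomorphism, $\pi_*\{\tilde\alpha,\tilde\beta\}=\{\alpha,\beta\}$, so this does compute the desired bracket, and independence of the chosen lift is automatic.

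I would then invoke Goldman's formula~\cite{Go}: for transverse representatives,
\[ \{\tilde\alpha,\tilde\beta\}=\sum_{p\in\tilde\alpha\cap\tilde\beta}\varepsilon(p)\,[\tilde\alpha\ast_p\tilde\beta], \]
where $\varepsilon(p)\in\{\pm1\}$ is the local intersection sign and $\tilde\alpha\ast_p\tilde\beta$ is the loop obtained by joining the two loops at $p$ compatibly with orientation. As the formula is homotopy invariant, it suffices to produce one convenient pair of representatives. I would realize each cyclic word as an immersed curve that runs around a large embedded circle and, at its $i$-th marked point, makes a small excursion (a ``dip'') encircling $x$ if the letter is $a$ and $y$ if it is $b$; consecutive dips are joined by the arcs of the large circle, which are precisely the red (for $\alpha$) and blue (for $\beta$) intervals of the combinatorial picture. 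I would nest all the $a$-dips (resp.\ $b$-dips) of $\tilde\alpha$ and $\tilde\beta$ so they are pairwise disjoint near each puncture, and place the blue curve as a small push-off of the red one, so that the only transverse crossings are between a red and a blue interval.

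With such representatives the correspondence becomes combinatorial. A red interval and a blue interval cross, after the push-off, exactly when their endpoint labels force their connecting arcs to link; inspecting the four label patterns $aa,ab,ba,bb$ against one another shows that an essential, non-removable crossing occurs precisely for the admissible pairs of step~\ref{eq:step1} (a blue interval with equal endpoint labels against a red interval with distinct endpoint labels), all other pairs being homotopic off one another past the punctures. Reading off the local orientation at each such crossing reproduces the sign rule of step~\ref{eq:step3}, and the Goldman smoothing $\tilde\alpha\ast_p\tilde\beta$ (cut both loops at $p$ and reconnect respecting the cyclic orders) is exactly the insertion of the blue circle into the red one in step~\ref{eq:step2}. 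Summing over all admissible pairs, step~\ref{eq:step4}, therefore reproduces Goldman's sum, and projecting to $\DD$ (reading each output word modulo $a^n=b^m=1$, which is built into interpreting it as a free loop on $\DD$) yields $\{\alpha,\beta\}$.

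The hard part will be the geometric verification underlying the middle steps: exhibiting representatives whose transverse intersections biject exactly with the admissible pairs and carry no spurious crossings, and then tracking orientations carefully enough to land on precisely the asymmetric sign convention of step~\ref{eq:step3}. The two delicate points are (i) showing that a crossing coming from a non-admissible label pattern can always be pushed across a puncture and removed, so the enumerated crossings are the only ones, and (ii) fixing the push-off direction of the blue curve consistently, which is what breaks the red/blue symmetry in the admissibility condition. The resulting formula is nonetheless antisymmetric, as it must be for a Lie bracket, and I would use this antisymmetry as a consistency check on the sign bookkeeping.
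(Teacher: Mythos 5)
Your proposal follows essentially the same route as the paper's proof: reduce to the punctured surface $\UU=\DD\setminus\{x,y\}$ via the surjective Lie algebra map of Lemma~\ref{L:bracketreducedorbifold}, choose transverse representatives in which the red loop is nested inside the blue one near each puncture so that crossings occur only between red travelling arcs and blue turning arcs (i.e.\ exactly at admissible pairs), apply the classical Goldman intersection formula there, and quotient by $a^n=b^m=1$. Your ``dip and push-off'' construction of representatives and the sign/smoothing matching are precisely the content the paper encodes in its figures, so the argument is correct and not genuinely different.
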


\begin{proof} 
By Lemma~\ref{L:bracketreducedorbifold}, $\alpha$ and $\beta$ can, respectively,  be
presented by a red and a blue loop, as in Figure~\eqref{Fig:loops2orbipoints}. 
These loops lie entirely in the surface $\DD\setminus\{x,y\}$, and their 
Goldman bracket is simply  their usual loop bracket  computed in $\DD\setminus\{x,y\}$. 

Note that, up to homotopy, we may assume  that  the red loop is sitting inside 
the blue loop in the neighborhood of $x$ and $y$ and the intersection between 
the loops $\alpha$, $\beta$ are transverse. Furthermore,  we may also assume that, 
when going from the neighborhood of a point  to the other, the blue loop and the 
red loop does not intersect along the way; see
  Figure~\eqref{Fig:loops2orbipoints}. 
\begin{figure} \begin{center}
\includegraphics[scale=0.7]{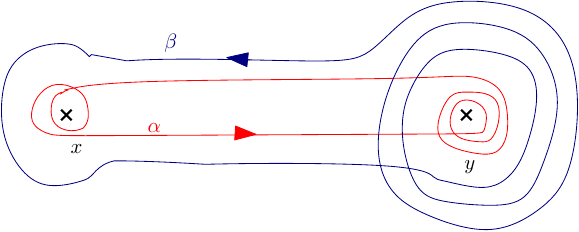}
\caption{A red loop $\alpha$ sitting inside a blue loop $\beta$ }\label{Fig:loops2orbipoints}\end{center}
\end{figure}

With these conventions, the blue loop and the red loop 
intersect only when the blue loop is making turns 
around one of the orbifold points $x$ and $y$, and the red loop is travelling 
from one orbifold point to the other;  see Figure~\eqref{Fig:loops2orbipoints}. 
Now we apply the usual formula for computing the Goldman bracket on the  
honest surface $\DD\setminus\{x,y\}$, which yields the result in 
Step~\ref{eq:step4} after  quotienting out the relations  $a^n=1$ and $b^m=1$.
\end{proof}

%
%
%

\begin{ex}
  Suppose $a^2=1$ and $b^4=1$. Let $\alpha:= a^2b$ and $\beta:= ab^2$. Note that
  $\alpha= a^2b=b$, so 
    $$\{\alpha, \beta\}=\{b, b^2a\}=0,$$
  because we can choose a very small representative for the loop $b$ which does 
  not intersect (a given representative of) the  loop $b^2a$. 
  
  On the other hand, if we follow our algorithm above, we get
  $$\{\alpha, \beta\}=bab^2a^2 - b^2aba^2.$$  
  It is not obvious that this is equal to zero. Using the relation $a^2=1$, we can reduce it to
      $$\{\alpha, \beta\}=bab^2 - b^2ab.$$ 
 The latter is zero because these are free loops so we are allowed to 
 perform cyclic permutation on the words.
\end{ex}

\begin{ex}\label{ex:nonzerodiskwithtwopoints}
  Suppose $a^3=1$ and $b^4=1$. Let $\alpha:= a^2b$ and $\beta:= ab^2$, as 
  in the previous example.   Using our algorithm above, we get
  $$\{\alpha, \beta\}=bab^2a^2 - b^2aba^2.$$  
 It is easy to see that this is indeed non-zero.
\end{ex}

\subsubsection{The disk with finitely many orbifold points}\label{SS:diskwithmanyorbipoints}
We now describe the general case of a disk $\DD$  with finitely many orbifold points 
$x_1,\dots, x_r$. Let $n_i$ be the order of the orbifold point 
$x_i$. Choose a simple loop $a_i$ going counterclockwise around $x_i$. This
identifies the isotropy group of $\DD$ at the point $x_i$ with 
$\bbZ\slash n_i \bbZ$. For simplicity of visualization, we assume that
the points $x_i$ are arranged on a circle in the same order. 

By van Kampen, the fundamental  group of $\DD$ at a chosen base point (different from all $x_i$) 
is  isomorphic to the free product of the isotropy groups of the orbifold points,   
                 $$\pi_1(\DD) \,\cong\, \bbZ\slash n_1 \bbZ * \cdots * \bbZ\slash n_r \bbZ,$$ 
and $a_i$, $1\leq i \leq r$, are its generators.

 As in the case of two orbifold 
points, we present free loops $\alpha$, $\beta$ by  (cyclic) words on 
the generators $a_i$, as in Figure~\eqref{Fig:presentationalphamanypoints}. 
Again, the intervals in $\alpha$ are colored red   and  the ones in $\beta$ are colored blue.

\begin{figure} \begin{center}
\includegraphics[scale=0.6]{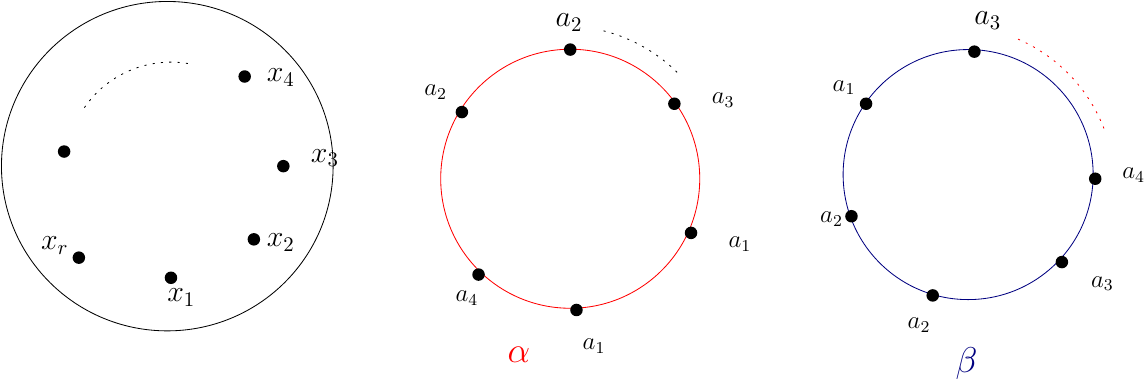}
\caption{A disk with $r$ orbifold points and the presentation of a red loop $\alpha$ and a blue 
loop $\beta$}\label{Fig:presentationalphamanypoints}
\end{center}
\end{figure}

The bracket $\{\alpha,\beta\}$ is given by a similar cut and insert procedure as 
in the case of two orbifold points. 
The only difference is in Step~\ref{eq:step1} and Step~\ref{eq:step3} where we determine 
which red and blue intervals are to be cut and what sign to assign after inserting the 
blue loop into the red loop. 

\begin{enumerate}
 \item   Determine the admissible pairs.  A pair consisting
 of a red interval $a_{i} a_{j}$ and a 
 blue interval $a_{k}a_{l}$
 is called   \textit{admissible} if it  satisfies the following conditions:
 \begin{itemize}
 
 \item We have that $i\neq j$, the (unoriented) intervals are distinct, and the 
 red segment $[ij]$   intersects the (possibly degenerate) 
 blue segment $[kl]$ (in the cyclic arrangement of the $x_i$; see Figure~\eqref{Fig:ijcutskl}).
\begin{figure} \begin{center}
\includegraphics[scale=0.5]{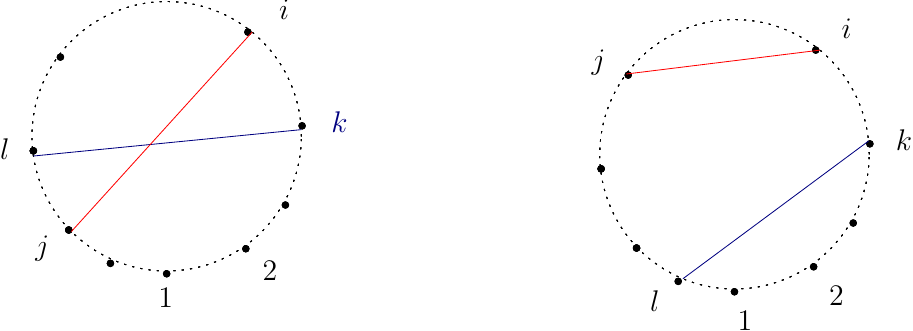}
\caption{The left pair is  admissible while the right pair is not admissible}\label{Fig:ijcutskl}
\end{center}

\end{figure}
  \item If either $k=l$ or all four points are distinct, then there is no 
  further condition. Otherwise, the red segment $[ij]$ and the 
  blue segment $[kl]$ share a vertex (or two). There are two possible cases:
\begin{itemize}

\item Case 1: the intervals intersect in $k$. 
  In this case, the pair is admissible if the red  segment is above
  the blue segment 
  in the cyclic ordering; see Figures~\eqref{Fig:segmentconditionforcut}. Otherwise
  it is not  admissible. 
 
\begin{figure} \begin{center}
\includegraphics[scale=0.5]{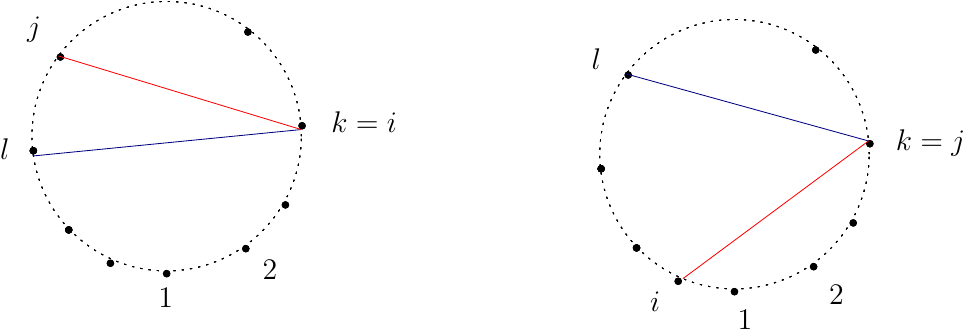}
\caption{The left triangle is an  admissible pair while the right triangle  is not}\label{Fig:segmentconditionforcut}
\end{center}
\end{figure}

\item  Case 2:  the intervals intersect in $l$. 
In this case, the pair is admissible if the red segment is below the blue segment in 
the cyclic ordering (and is not admissible otherwise). 
\end{itemize}\end{itemize}
 
\item  For every admissible pair as in Step~(1), cut the blue circle 
in the middle of the segment $a_i a_j$ and the red circle in the middle of the segment 
$a_k a_l$. Then  insert  the blue circle into the red one by joining the cut intervals and
preserving the cyclic ordering. 
 
\item To determine what sign to assign to each new loop obtained in Step~(2), 
think of the segments $[kl]$ and $[ij]$ as oriented lines in  $\bbR^2$; 
in the case where $k=l$, use the tangent
line to the circle at $k=l$, with the anti-clockwise orientation. 
Then, the sign rule is the same as the sign rule for the intersection
$[kl]\cap [ij]$ of oriented lines in $\bbR^2$. 
  
\item  Sum up all circles obtained in Step~\ref{eq:step1}, 
 for all admissible pairs,  with signs  
 as in   Step~\ref{eq:step3}. This sum is our bracket $\{\alpha, \beta\}$.
\end{enumerate}

\begin{prop}\label{P:GoldmanbracketDisk}
 The bracket $\{\alpha, \beta\}$ given by the above procedure is the Goldman 
 bracket of $\alpha$ and $\beta$ in  $H_0^{S^1}(\Lo\DD)$.
\end{prop}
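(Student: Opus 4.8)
The plan is to reduce, exactly as in the two orbifold point case treated above (whose proof is the model to follow), to an \emph{ordinary} Goldman bracket computation on the honest punctured surface $\UU = \DD \setminus \{x_1, \dots, x_r\}$. By Lemma~\ref{L:bracketreducedorbifold}, the natural map $H_0^{S^1}(\Lo\UU) \to H_0^{S^1}(\Lo\DD)$ is a surjective homomorphism of Lie algebras, and on the level of loops its effect is to impose the relations $a_i^{n_i}=1$ (one for each orbifold point), which is precisely the van Kampen presentation of $\pi_1(\DD)$ as the free product of the cyclic isotropy groups. Thus it suffices to choose convenient representatives of $\alpha$ and $\beta$ as immersed curves in $\UU$, compute their ordinary Goldman bracket there by transverse intersection theory, and check that the result, after reducing modulo the relations $a_i^{n_i}=1$, agrees with the sum produced by Steps~(i)--(iv).

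First I would fix the geometric model: place the orbifold points $x_1, \dots, x_r$ in cyclic order on a small circle inside $\DD$. A cyclic word $a_{i_1}\cdots a_{i_m}$ is realized by a loop that makes a small positively oriented turn around each successive point $x_{i_s}$ and travels between consecutive points along arcs closely approximating the chords $[i_s\,i_{s+1}]$. I would draw $\alpha$ in red and $\beta$ in blue and, generalizing the configuration of Figure~\eqref{Fig:loops2orbipoints}, arrange by a homotopy that near each orbifold point the red turns sit inside the blue turns, that the two curves are transverse, and that no intersection occurs while a curve travels between the neighborhoods of two points. With these normalizations every red--blue intersection lies where a red travel-arc crosses a blue turn, or where two travel-arcs cross.

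The heart of the argument is the dictionary between transverse intersections and admissible pairs. A red travel-arc over the chord $[ij]$ meets a blue arc over $[kl]$ transversally precisely when the two chords cross in the disk, i.e. when $\{i,j\}$ and $\{k,l\}$ interleave in the cyclic order; the requirement $i\neq j$ discards the case where the red datum is a turn rather than a genuine travel, which contributes nothing. The degenerate situations --- when $k=l$ (the blue arc is a turn around a single point) or when the chords share an endpoint --- are exactly the boundary cases governed by the triangle conditions of Step~(i), Cases~1 and~2, which record on which side the red arc passes the shared point, with the tangent-line convention of Step~(iii) supplying the missing direction (cf. Figures~\eqref{Fig:ijcutskl} and~\eqref{Fig:segmentconditionforcut}). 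For each such intersection $p$ the classical Goldman formula contributes $\varepsilon(p)\,(\alpha *_p \beta)$, where $\alpha *_p \beta$ is the based concatenation at $p$; cutting both circles at $p$ and splicing while preserving the cyclic order is precisely the cut-and-insert of Step~(ii), and the orientation sign $\varepsilon(p)$ of the two oriented chords in $\bbR^2$ is precisely the sign of Step~(iii). Summing over all intersections and reducing modulo $a_i^{n_i}=1$ yields Step~(iv).

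The main obstacle I expect is the verification of the degenerate cases and of their sign conventions. When two of the four endpoints $i,j,k,l$ coincide, the chords meet at a shared orbifold point rather than in the interior, and whether the chosen representatives actually cross depends on the relative heights of the red and blue arcs near that point; this is exactly what Cases~1 and~2 of Step~(i) encode, and matching their combinatorial rule with the honest transverse-intersection sign (using the tangent-line orientation at the shared point) is the delicate bookkeeping. Everything else is a routine transcription of the Goldman bracket on a surface, composed with the surjection of Lemma~\ref{L:bracketreducedorbifold}.
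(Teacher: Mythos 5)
Your proposal is correct and follows essentially the same route as the paper's own proof: lift $\alpha$ and $\beta$ via Lemma~\ref{L:bracketreducedorbifold} to the punctured disk $\DD\setminus\{x_1,\dots,x_r\}$, normalize the red and blue representatives so that transverse intersections occur exactly at admissible pairs, apply the classical Goldman intersection formula there, and push forward along the surjection imposing $a_i^{n_i}=1$. Your treatment of the degenerate configurations (blue turns and shared endpoints) and of the sign dictionary is in fact more explicit than the paper's, which simply asserts the corresponding geometric arrangement.
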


\begin{proof} The proof is similar to the case of two orbifold points. 
By Lemma~\ref{L:bracketreducedorbifold}, $\alpha$ and $\beta$ can, respectively,  be
presented by a red and a blue loop,  as in Figure~\eqref{Fig:loopsmanyorbipoints}. 
These loops lie entirely in the surface  $\DD\setminus\{x_1,\dots, x_r\}$, and their 
Goldman bracket is simply  their usual loop bracket  computed in 
$\DD\setminus\{x_1,\dots, x_r\}$. 
Note that, up to homotopy, we may assume   that  near any orbifold point the red loop 
is confined to a very small neighborhood of the point and is not intersecting the blue loop in
that neighborhood, and  that   $\alpha$ 
and $\beta$ intersect transversally.
\begin{figure} \begin{center}
\includegraphics[scale=0.6]{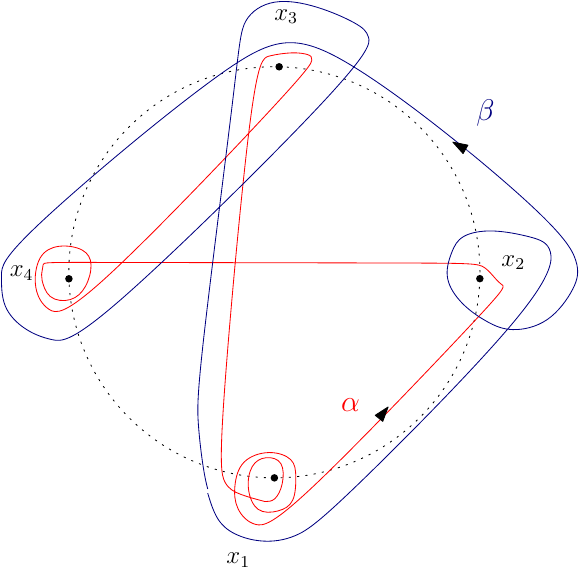}
\caption{A presentation a the red loop $\alpha$ inside a blue loop $\beta$}\label{Fig:loopsmanyorbipoints}
\end{center}
\end{figure}
 Furthermore,  we may also assume that,   when going from the neighborhood of one orbifold 
 point to another, say from $x_k$  to $x_l$, the blue loop intersects  the  
 red loop only when this red  loop is going from a neighborhood of $x_i$ to a
  neighborhood of a distinct $x_j$ in a way that $a_i a_j$ and $a_k a_l$ form an 
  admissible pair; see 
  Figure~\eqref{Fig:loopsmanyorbipoints}. Now, we apply the usual formula for 
  computing the Goldman bracket on   the honest   surface 
  $\DD\setminus\{x_1,\dots, x_r\}$, which yields the result in Step~(4) after 
  quotienting out the relations $a_i^{n_i}=1$, $1\leq i\leq r$.
\end{proof}

\subsubsection{General case}
Now, let $\XX$ be a reduced 2-dimensional
orbifold with finitely many orbifold points $x_1,\dots, x_r$. 
We may assume that $\XX$ is the connected sum of a surface with no orbifold points and a 
genus $0$ surface with $r$ orbifold points, denoted $\DD$. Using
van Kampen we can write a loop in $\XX$ as a sequence  of loops lying alternatively 
in $\DD$ and  $\XX\setminus \DD$. We can then combine the standard definition of the 
Goldman bracket on $\XX\setminus \DD$ and the algorithm given above for $\DD$
to compute the Goldman bracket of free loops in $\XX$.

\begin{ex}
Consider a torus with one orbifold point with isotropy group $\bbZ\slash n \bbZ$. By 
van Kampen, the fundamental group (at a chosen base point) is the 
quotient of the free group on generators $x,y,a$ by the relations $a^n=1$ and $xya=yx$. 
Every free homotopy classes of loops can be presented 
by a word of the form $x^{i_1}y^{j_1}a^{n_1}\dots x^{i_k} y^{j_k} a^{n_k}$.  
 
We find that the loop bracket intertwines the orbifold loop $a$ 
and the non-orbifold loops in a nontrivial manner. For instance, 
one has
 $$ \{x^i, xy^{j}\}= i\sum_{k=1}^j x^{i+1}(ya)^k y^{j-k},  $$
 $$\{x a^n, y^{j} a^m\}= xy^{j}a^{n+m}, \quad  \{x^2 a^{n}, 
 y^{j}a^m\}= xy^{j}a^{n+m}+ x a^{n} xy a^{m}. $$
\end{ex}
  
\begin{ex}
Consider a torus with two orbifolds points of order 3 and 4. By 
van Kampen, the fundamental group (at a chosen base point) is the 
quotient of the free group on generators $x,y,a,b$ by the relations 
$a^3=1$, $b^4=1$ and $xyab=yx$.  Again,  free loops can be presented 
by words of the form 
$x^{i_1}y^{j_1}Q_1(a,b)\dots x^{i_k} y^{j_k} Q_k(a,b)$, where 
$Q_i(a,b)\in \mathbb{Z}/3\mathbb{Z} * \mathbb{Z}/4\mathbb{Z}$ is a 
(noncommutative) word on the loops $a$ and $b$.
 
The string bracket of this orbifold mixes the string product 
of the punctured torus with the string product of the disk with two orbifold
points in a nontrivial way.
For instance, consider the two free loops $\alpha:= x a^2b$ and $\beta:= yab^2$. 
Then, applying our algorithm as in Example~\ref{ex:nonzerodiskwithtwopoints}, 
we get 
$$ \{\alpha, \beta \} = xyab^2a^2b + bxa^2byab^2 -xbyaba^2b = 2xyab^2a^2b - xbyaba^2b, $$ 
where the two terms on the right hand side are  different free loops.
\end{ex}

\subsection{Non-reduced 2-dimensional orbifolds}

Let $\XX$ be an  arbitrary connected 
2-dimensional orbifold with finitely many orbifold points.
It is well known that $\XX$ is a $G$-gerbe over a reduced orbifold $\XX_{\rm red}$, where $G$ 
is a finite group. Let $p\: \XX\to\XX_{\rm red}$ be the structure map.  
By \cite[Proposition 4.6]{Fibrations}, the map
$p\: \XX\to\XX_{\rm red}$ is a weak Serre fibration with fiber $[*/G]$. Therefore, we 
have the homotopy fiber exact sequence
 $$1 \to \pi_2\XX \to \pi_2\XX_{\rm red} \to G \to \pi_1\XX \xrightarrow{p_*}  
 \pi_1\XX_{\rm red} \to 1.$$
Let $K \subseteq G$ be the image of $\pi_2\XX_{\rm red}$ in $G$. So, we have a short exact 
sequence
 $$1  \to G/K \to \pi_1\XX \xrightarrow{p_*} \pi_1\XX_{\rm red} \to 1.$$

\begin{rem}
  The subgroup $K \subseteq G$ is trivial unless $\XX_{\rm red}$ is spherical in the sense of
  \cite{B-N}. In this case, $K$ is a cyclic central subgroup of $G$.
\end{rem} 
 
The map $p\: \XX\to\XX_{\rm red}$ induces a map
 $$p_* \: H_0^{S^1}(\Lo\XX) \to H_0^{S^1}(\Lo\XX_{\rm red}).$$
We also have a map going backwards which is defined as follows.
By  Lemma \ref{L:surjectionpi1}, we have isomorphisms 
 $$H_0^{S^1}(\Lo\XX)\cong k[\Conj(\pi_1\XX)] \text{ and }  
H_0^{S^1}(\Lo\XX_{\rm red})\cong k[\Conj(\pi_1\XX_{\rm red})].$$
We define 
  $$\tau \: k[\Conj(\pi_1\XX_{\rm red})]  \to k[\Conj(\pi_1\XX)]$$
to be the map that sends the conjugacy class $[c] \in  \Conj(\pi_1\XX_{\rm red})$ to
$|K|\sum\pi_*^{-1}(c)$, where $|K|$ is the cardinality of $K$; this 
is easily seen to be independent of the choice of the representative $c$.

\begin{lem}{\label{L:trace}}
We have $p_*(\tau(x))=|G|x$, for every $x \in H_0^{S^1}(\Lo\XX_{\rm red})$.
Furthermore, for any open substack $j \: \UU \to \XX$, the diagram
   $$\xymatrix@M=6pt{ H_0^{S^1}(\UU)   \ar[r]^{j_*}   &    H_0^{S^1}(\XX)   \\   
            H_0^{S^1}(\UU_{\rm red})   \ar[r]_{j_*} \ar[u]^{\tau_{\UU}}
        &   H_0^{S^1}(\XX_{\rm red})   \ar[u]_{\tau_{\XX}} }$$
commutes. 
\end{lem}

\begin{proof}
 The first statement is trivial. The second statement follows from the functoriality 
 of the homotopy fiber exact sequence.
\end{proof}

The following proposition describes the Goldman bracket  $\{-,-\}_{\XX}$ on 
$H_0^{S^1}(\Lo\XX)$ in terms of the Goldman
bracket $\{-,-\}_{\XX_{\rm red}}$ on $H_0^{S^1}(\Lo\XX_{\rm red})$.

\begin{prop}{\label{P:Goldmannonreduced}}
 Let $\alpha, \beta \in H_0^{S^1}(\Lo\XX_{\rm red})$. Then,
  $\{\alpha,\beta\}_{\XX}=\tau\{p_*\alpha,p_*\beta\}_{\XX_{\rm red}}$.
\end{prop}

\begin{proof}
We will only give a sketch of the proof. First of all, note that, by Lemma  \ref{L:trace}
and the fact that $\pi_1\UU \to \pi_1\XX$ is surjective for any 
open substack $\UU \subseteq \XX$ that is the complement of finitely many points in $\XX$
(Lemma \ref{L:bracketreducedorbifold}), 
we may, after replacing $\XX$ by a suitable open substack, 
assume that $X:=\XX_{\rm red}$ is an open surface. In particular, 
$p\: \XX\to X$ is a neutral gerbe for a sheaf of groups $\mcG$ on $X$ that
is locally isomorphic to the constant sheaf $G$. In particular, the fundamental group of
$\XX$ fits into a short exact sequence
   $$1  \to G \to \pi_1\XX \xrightarrow{p_*} \pi_1 X \to 1.$$
Now, using \cite[Proposition 9.3, Lemma 9.4]{BGNX} and functoriality of Gysin maps~\cite[\S~9.2]{BGNX} to compute
the Chas-Sullivan product, we see
that for given loops  $\alpha$ and  $\beta$ in $\XX$, their intersection points are calculated as
follows. For any intersection point $x$ of the loops $p_*\alpha$ and $p_*\beta$ on the honest
surface $X$, we have $|G|$-many intersection points for $\alpha $ and $\beta$ in $\XX$ lying 
above $x$. This is due to the $G$-gerbe structure of $\XX$ at $x$; see 
Remark \ref{R:gerbe} below. The composition
of $\alpha $ and $\beta$ at any of these $|G|$ point  gives a loop in $\XX$
that maps to
$p_*\alpha\circ_{x}p_*\beta$ in $X$; here $\circ_{x}$ stands for loop composition 
at the base point $x$. The resulting $|G|$ loops in $\XX$ are indeed the preimages of 
the loop $p_*\alpha\circ_{x}p_*\beta \in\pi_1 X$ under $p_* \: \pi_1\XX \to \pi_1 X$.
Summing up over all $x \in p_*\alpha\cap p_*\beta$ yields the desired result.
\end{proof}  
  
\begin{rem}\label{R:gerbe}
 Intersecting cycles in a (finite) gerbe can result in intersection numbers that at first may 
 sound odd. The typical situation is when we intersect a point in $[*/G]$ with itself: the 
 intersection is transverse and 
 has multiplicity $|G|$, as $*\times_{[*/G]}*$ is a disjoint union of 
 $|G|$-many points. Similarly, consider $X=\mathbb{R}^2\times [*/G]$, 
 and suppose that we have two transverse lines in $\mathbb{R}^2$, which we view 
 as cycles in $X$.
 By the same reasoning as above,  these two cycles are transverse in
 $X$ and their intersection has multiplicity $|G|$; indeed the intersection points are
 in a  natural bijection with $G$.
\end{rem}  
  

\subsection{A non-orbifold example: quotient stack of a $3$-manifold by $S^1$-action}

We consider here  $2$-dimensional oriented stacks of the form 
 $[M/S^1]$, where $M$ is a $3$-manifold $M$ endowed with a smooth circle action. 
Such a stack is differentiable of dimension 2 and (in general) is not an orbifold. 

Since  $[M/S^1]$  is $2$-dimensional, it has a Goldman bracket defined 
on the equivariant degree $0$ homology $H_0^{S^1}([M/S^1],\mathbb{Z})$,
which is the same as the free abelian group 
spanned by the free homotopy classes of  loops in $[M/S^1]$ (see Lemma~\ref{L:surjectionpi1}).

Compact oriented manifolds equipped with a $S^1$-action are classified 
in~\cite[Theorem 1]{Ra}. In the case where there are no exceptional orbits, such a manifold
is equivariantly diffeomorphic to a manifold $M_{ g, h, t}$ obtained as a quotient
\begin{equation} \label{eq:Mghtquotient} 
M_{ g, h, t} := \Big(\Sigma_{g, h, t} \times S^{1} \Big)/ \simeq, 
\end{equation} 
where $\Sigma_{g, h, t}$ is a compact oriented surface 
of index $g$, with $h+t$ boundary components, $h\geq 1$. The equivalent  
relation $\simeq$ works as follows. For each of the $h$ boundary circles, the (trivial) 
circle bundle over it is collapsed onto the base circle
via the projection map. Over each of  the remaining $t$ circles, the fibers
of the corresponding (trivial) circle bundle are quotiented out by the antipodal action. 

It can be shown (see~\cite{Ra}) that $M_{ g, h, t}$ is a connected sum 
 \begin{equation} \label{eq:Mghtdecomp} M_{g,h,t} \cong S^3\# (S^2\times S^1)_1\#\cdots \# 
 (S^2\times S^1)_{2g+h-1} \#(\mathbb{C}P^1)_1 \#\cdots \#(\mathbb{C}P^1)_t.\end{equation}

From now on, we will consider, 
for simplicity\footnote{one can also do computations for arbitrary $t$  and obtain 
the same result as Proposition~\ref{P:MmodoutbyS1}.},
the case $t=0$. We write $\XX_{g,h}:= [M_{g,h,0}/S^1]$ for the induced quotient stack. 
Note that $[M_{g,h,0}/S^1]$ is not
an orbifold since the locus $F$ of the $S^1$-fixed points of $M_{g,h,0}$ is not empty. 
Our goal, now, is to relate the Goldman Lie algebra of  $\XX_{g,h}$ 
with the interior of the surface $\Sigma_{g,h,0}$, see Proposition~\ref{P:MmodoutbyS1}.

We first compute the homology of the stack $\XX_{g,h}$.
\begin{prop}
 One has $H_0(\XX_{g,h})\cong \mathbb{Z}$ and 
 $$H_1(\XX_{g,h})\cong \mathbb{Z}^{2g+h-1} , \quad H_{i\geq 2}(\XX_{g,h})\cong \mathbb{Z}^{h}.$$
\end{prop}
\begin{proof}
  From~\eqref{eq:Mghtdecomp}, one  sees that 
  $H_0(M_{g,h,0})\cong \mathbb{Z} \cong H_3(M_{g,h,0})$ 
  and 
 $$H_1(M_{g,h,0})\cong \mathbb{Z}^{2g+h-1} \cong H_2(M_{g,h,0}), 
 \quad H_{i>3}(M_{g,h,0})\cong 0.$$
 We now apply the Gysin sequence of the $S^1$-principal bundle $M_{g,h,0}\to \XX_{g,h}$
 and the Leray-Serre spectral sequence of $\XX_{g,h}\to [*/S^{1}]$ to compute the homology of
 $\XX_{g,h}$ (which is the $S^1$-equivariant homology of $M_{g,h,0}$).
 
 To do this we need the operator $D: H_n(M_{g,h,0}) \to H_{n+1}(M_{g,h,0})$ 
 induced by the $S^1$-action.  Since $M_{g,h,0}$ is path connected, 
 we can take any point $x$ as a generator of  $H_0(M_{g,h,0})$.
 Taking $x$ to be $S^1$-invariant, we see that that $D$ is zero in degree $0$,
 and thus in degree $3$ by Poincar\'e duality. 
 A set of generators of $H_1(M_{g,h,0})$ is given by the class 
 (in the quotient $M_{g,h,0}=\Big(\Sigma_{g, h, t} \times S^{1} \Big)/ \simeq$) 
 of products of the standard loops $\gamma_1,\dots,\gamma_{2g}$, $a_1,\dots, a_{h-1}$ of 
 $\Sigma_{g, h, 0}$ with $\{1\}\in S^1$, 
 where the $\gamma_i$ are the $2g$ generators  corresponding to the genus of the surface 
 and $a_1,\dots, a_h$ are the boundary class of  $\Sigma_{g, h, t}$. 
 Note that in  $M_{g,h,0}$, 
 we can choose to have the generators satisfying 
 $$a_h\times \{1\} = \sum_{i=1}^{h-1} a_i\times\{1\} +\sum_{j=1}^{2g}\gamma_{i}\times\{1\}.$$
 The operator $D$ maps the generators $\gamma_i\times\{1\}$ to $\gamma_i\times S^1$,
 which are linearly independent in $H_2(M_{g,h,0})$. 
 However, since $S^1$ acts trivially on $a_i\times \{1\}$, 
 the generator $D$ maps it to $0$. 
 
 The homology of $\XX_{g,h}$ being the $S^1$-equivariant homology of $M_{g,h,0}$, 
 it is also the homology of the bicomplex $ (C_*(M_{g,h,0})[u], +u^{-1}D)$
 where $|u|=2$ and $d:C_*(M_{g,h,0})\to C_{*-1}(M_{g,h,0})$ is 
 the usual singular differential. Here we use the standard bicomplex 
 to compute equivariant homology (note that our $u$ is sometimes written as $u^{-1}$ 
 in the literature). 
 
 The spectral sequence associated to this bicomplex (which is a special case
 of Leray spectral sequence in that case) has $E^1$ page given by 
 $E^1_{p,q} = H_{p-q}(M_{g,h,0}) u^q$, and the induced differential $d^1$ 
 is given by $d^1(x u^i)= D(x) u^{i-1}$ for $x\in H_{p-q}(M_{g,h,0})$.   
 From our computation of $D$, we find that 
 $E^2_{0,0}\cong  \mathbb{Z} $, $E^2_{0,1} = 0$ and 
 $E^2_{1,0}  \cong \mathbb{Z}^{2g+h-1}$. All others $E^2_{p,q}$ are zero except
 $$E^2_{n+1,n-1}\cong  \mathbb{Z}^{h-1}, \quad E^2_{n,n}\cong  \mathbb{Z},\quad
 E^2_{n+1,n}\cong  \mathbb{Z}^{h-1}, \quad E^2_{n+2,n-1}\cong  \mathbb{Z},$$ 
 where $n\geq 1$. 
 The only possible nontrivial higher differentials in the spectral sequence are the 
 $u$-linear maps 
   $$d^2: H_0(M_{g,h,0})u^{i} \to H_3(M_{g,h,0})u^{i-2}.$$
 By the Gysin sequence associated to the map $M_{g,h,0}\to \XX_{g,h}$, 
 we see right away that $H_{n}(\XX_{g,h}) \cong H_{n+2}(\XX_{g,h})$ for all $n\geq 3$, 
 since $H_{i>3}(M_{g,h,0})=0$. In particular, 
 $$ \bigoplus_{p+q=2}E^{\infty}_{p,q}=\bigoplus_{p+q=2} E^2_{p,q} = 
 \mathbb{Z}^{h-1}\oplus \mathbb{Z} \text{ and }  \bigoplus_{p+q=1}E^{\infty}_{p,q}=
 \bigoplus_{p+q=1} E^2_{p,q} = \mathbb{Z}^{2g+h-1}. $$  
 We are left to compute $d^2: H_0(M_{g,h,0})u^2 \to H_3(M_{g,h,0})$ 
 to find the degree $3$ and degree $4$ homology classes in the abutment 
 $E^\infty_{*,*}$ of the spectral sequence.  
 Again, since $H_{i>3}(M_{g,h,0})=0$, the Gysin long exact sequence 
 gives rise to the long exact sequence 
 \begin{multline*} H_4(\XX_{g,h}) \hookrightarrow H_2(\XX_{g,h}) 
 \stackrel{T}\to H_3(M_{g,h,0})\cong \mathbb{Z} \to 
 H_3(\XX_{g,h})\\ \to H_1(\XX_{g,h}) \stackrel{T}\to H_2(M_{g,h,0}) \to 
 H_2(\XX_{g,h}) \twoheadrightarrow H_0(\XX_{g,h})\cong \mathbb{Z}
 \end{multline*}
Using what we have already computed, we see that the last line boils down 
(on the associated graded induced by the filtration of the bicomplex in) to an  exact sequence
 $$\to \mathbb{Z}^{2g} \oplus \mathbb{Z}^{h-1}\stackrel{T}\to\mathbb{Z}^{2g}\oplus 
 \mathbb{Z}^{h-1}  \to  \mathbb{Z}^{h-1}\oplus \mathbb{Z} \twoheadrightarrow  \mathbb{Z} .$$ 
 By Lemma~\ref{L:XX=M}, the map 
   $$\mathbb{Z}^{2g+h-1}\stackrel{T}\to\mathbb{Z}^{2g+h-1}$$ 
 can be identified with the map  
  $$H_1(M_{g,h,0}) \stackrel{D}\to H_2(M_{g,h,0}),$$ 
 hence is just the trivial projection 
 $$\mathbb{Z}^{2g}\oplus \mathbb{Z}^{h-1} \twoheadrightarrow \mathbb{Z}^{2g} 
 \hookrightarrow \mathbb{Z}^{2g}\oplus \mathbb{Z}^{h-1}$$ 
 on $\mathbb{Z}^{2g}$. 
 Similarly, since the map $H_2(M_{g,h,0})\to H_2(\XX_{g,h})$ is induced on the 
 associated graded to the filtration by the map  
 $$H_2(M_{g,h,0}) = E^1_{2,0} \hookrightarrow E^1_{2,0}/ d^1(E^1_{2,1})= 
 E^2_{2,0}\hookrightarrow  E^2_{2,0} \oplus E_{1,1},$$ 
 and that $D$ is trivial in degrees $0$ and $3$, 
 we see that $H_2(\XX_{g,h}) \stackrel{T}\to H_3(M_{g,h,0}) $ is the zero map as well.
 This proves that  $H_4(\XX_{g,h}) \cong H_2(\XX_{g,h})$,  and that  
 $d^2: H_0(M_{g,h,0})u^2 \to H_3(M_{g,h,0})$ is zero and 
  $$\bigoplus_{p+q=3}E^{\infty}_{p,q} \cong \mathbb{Z}^{h-1}\oplus \mathbb{Z}.$$
This gives us the associated graded to the homology of $\XX_{g,h}$; since it consists only of finitely generated free modules, there is no nontrivial extensions  and this is isomorphic to
 $H_3(\XX_{g,h})$.
 \end{proof}

The action of $S^1$ on $M_{g,h,0}\setminus F$ is free and the quotient stack 
$[(M_{g,h,0}\setminus F)/S^1]$ is isomorphic to the surface $(\Sigma_{g, h, 0})^0$,  
the interior of $\Sigma_{g, h, 0}$. We thus have an embedding of differentiable 
stacks 
$$(\Sigma_{g, h, 0})^0 \cong [(M_{g,h,0}\setminus F)/S^1] 
\ \hookrightarrow \ [M_{g,h,0}/S^1]= \XX_{g, h}.$$

\begin{lem}\label{L:XX=M}
 The canonical epimorphism $  M_{g,h,0} \to \XX_{g,h}$ and the canonical embedding  
 $(\Sigma_{g, h, 0})^0 \to  \XX_{g, h}$ induce isomorphisms  on $\pi_1$.
\end{lem}

\begin{proof}
 Note that all the stacks involved are pathwise connected. 
 The long exact sequence of $S^1$-fibrations yields a commutative diagram of exact sequences
 \[\xymatrix{ \{*\} & \pi_1(\XX_{g,h}) \ar[l] & \pi_1(M_{g,h,0}) \ar[l] 
    & \mathbb{Z} \ar[l] \\
  \{*\} & \pi_1((\Sigma_{g, h, 0})^0) \ar[u] \ar[l] & \pi_1(M_{g,h,0}\setminus F) 
  \ar[l] \ar[u]   & \mathbb{Z} \ar@{_{(}->}[l] \ar@{=}[u]  .}  \]
 The lower sequence is split exact since $M_{g,h,0} \setminus F 
 \cong (\Sigma_{g, h, 0})^0 \times S^1$.
  The top right map is the zero map since it is loop homotopic to the class of 
  $\{x\}\times S^1$,  which is just a point when $x$ belongs to a $h$ boundary component.  
  
  Furthermore, the middle vertical  map is surjective by definition of $M_{g,h}$:
  it kills the generator coming from the $S^1$ factor. It remains to prove that the induced map 
  $$ \pi_1((\Sigma_{g, h, 0})^0)  \cong \pi_1(M_{g,h,0}\setminus F)/\mathbb{Z} 
    \to  \pi_1(M_{g,h,0})$$ 
  is an isomorphism.  This isomorphism follows from  van Kampen  and the 
  presentation~\eqref{eq:Mghtdecomp} which identifies  both fundamental groups with the free group
\begin{eqnarray*} F(\gamma_1,\dots, 
    \gamma_{2g}, a_1,\dots, a_h)/(a_h=\gamma_1\cdots\gamma_{2g}\cdot a_1\cdots a_{h-1}) \\ \cong
    F(\gamma_1,\dots, 
\gamma_{2g}, a_1,\dots, a_{h-1}) 
\end{eqnarray*}  
on the generators $\gamma_i$ and $a_j$. 
\end{proof}

 From the previous lemma, we deduce that the stack embedding 
 $(\Sigma_{g, h, 0})^0\hookrightarrow \XX_{g,h}$ induces an isomorphism of 
 Goldman Lie algebras (after passing to free loops).
 
\begin{prop}\label{P:MmodoutbyS1}
 The induced map $H_0^{S^1}(L(\Sigma_{g, h, 0})^0) \to H_0^{S^1}(\Lo\XX_{g,h})$
 is an isomorphism of Lie algebras.
\end{prop}

\begin{proof}
 When $\XX$ is a pathwise connected stack, we have an natural identification of $H_0(\Lo\XX)$ 
with the free module on the set of conjugacy classes in $\pi_1(\XX)$  
(Lemma~\ref{L:surjectionpi1}). 
The proposition then follows from   Lemma~\ref{L:XX=M} combined with 
Proposition~\ref{P:functoriality}.
\end{proof}

In particular, we see that the Goldman bracket on $\XX_{g,h}$ is highly nontrivial.  
By van Kampen and  Lemma~\ref{L:XX=M}, the
Goldman Lie algebra $H_0^{S^1}(\Lo\XX_{g,h})$ of $\XX_{g,h}$ is isomorphic to 
\begin{eqnarray*}   
\mathbb{Z}\big[\Conj\big(F(\gamma_1,\dots, 
\gamma_{2g}, a_1,\dots, a_h)/(a_h=\gamma_1\cdots\gamma_{2g}\cdot a_1\cdots a_{h-1})\big)
\big] \\ \cong \mathbb{Z}\big[\Conj\big(F(\gamma_1,\dots, 
\gamma_{2g}, a_1,\dots, a_h)  \big) \big],
\end{eqnarray*}
where $F(\gamma_1,\dots, \gamma_{2g}, a_1,\dots, a_{h-1})$ is the free group on the
given generators and
$\Conj(H)$ stands for the set of conjugacy classes of a group $H$. The Lie bracket 
can be computed purely in terms of geometric intersections of the words in the generators.
In particular, the Lie bracket of the images of the generators $\gamma_i$ and $\gamma_{j}$ seen
as free loops in $\XX_{g,h}$ is a nontrivial free loop in general. 

\medskip

It would certainly be interesting to see if there are higher dimensional 
non-zero brackets on $H_{\bullet}^{S^1}(\Lo\XX_{g,h})$,  but the computations are 
much more involved in that case.


\providecommand{\bysame}{\leavevmode\hbox
to3em{\hrulefill}\thinspace}
\providecommand{\MR}{\relax\ifhmode\unskip\space\fi MR }
\providecommand{\MRhref}[2]{%
  \href{http://www.ams.org/mathscinet-getitem?mr=#1}{#2}
} \providecommand{\href}[2]{#2}

\end{document}